\newif\ifxetexorluatex
\definecolor{linkblue}{RGB}{1,1,190}
\definecolor{citegreen}{RGB}{1,190,1}
\theoremstyle{definition}
\newtheorem {definition}{Definition}[section]
\theoremstyle{plain}
\newtheorem {theorem}[definition]{Theorem}
\crefname   {theorem}{Theorem}{Theorems}
\newtheorem*{theorem*}{Theorem}
\newtheorem {lemma}[definition]{Lemma}
\newtheorem {proposition}[definition]{Proposition}
\newtheorem {corollary}[definition]{Corollary}
\theoremstyle{remark}
\newtheorem {remark}[definition]{Remark}
\newtheorem {example}[definition]{Example}
\newtheorem*{example*}{Example}
\crefname   {example}{Example}{Examples}
\newcommand{\sc@lettershortcut}[3]{%
  \expandafter\providecommand\csname #2#3\endcsname{#1{#3}}%
}
\newcommand{\sc@shortcuts}[3]{%
  \count@=0
  \loop
  \advance\count@ 1
  \edef\tmp@{%
    \noexpand\sc@lettershortcut\unexpanded{{#1}}{#2}{#3\count@}
  }
  \tmp@
  \ifnum\count@<26
  \repeat
}
\newcommand{\defshortcuts}[2]{\sc@shortcuts{#1}{#2}{\@alph}}
\newcommand{\defShortcuts}[2]{\sc@shortcuts{#1}{#2}{\@Alph}}
\defShortcuts{\mathbb}{b}
\defShortcuts{\mathcal}{c}
\defShortcuts{\mathfrak}{f}
\defShortcuts{\mathsf}{s}
\defshortcuts{\mathfrak}{f}
\defshortcuts{\mathsf}{s}
\def\rfop{*}
\newcommand\rigidfactorization[2][]{%
  \def\rf@delim{\rfop}
  \newif\ifrf@notfirst
  #1
  \@for\next:=#2\do{%
    \ifrf@notfirst
      \rf@delim
    \fi
    \rf@notfirsttrue
    \next
  }%
}
\newcommand\rf\rigidfactorization
\DeclarePairedDelimiter{\length}{\lvert}{\rvert}
\DeclarePairedDelimiter{\abs}{\lvert}{\rvert}
\DeclarePairedDelimiter{\card}{\lvert}{\rvert}
\DeclarePairedDelimiter{\norm}{\lVert}{\rVert}
\DeclareMathOperator{\GL}{GL}
\DeclareMathOperator{\Norm}{N}
\setlist[itemize,1]{leftmargin=0.75cm}
\setlist[enumerate,1]{label=\textup{(\arabic*)}, ref=\textup{(}\arabic*\textup{)}, leftmargin=0.75cm}
\setlist[enumerate,2]{label=\textup{(}\roman*\textup{)}, ref=\textup{(}\roman*\textup{)}}
\newlist{equivenumerate}{enumerate}{1}
\setlist[equivenumerate,1]{%
  label=\textup{(\alph*)},
  ref=\textup{(}\alph*\textup{)},
  leftmargin=0.75cm
}
\newlist{equivenumerate*}{enumerate*}{1}
\setlist*[equivenumerate*,1]{%
  label=\textup{(\alph*)},
  ref=\textup{(}\alph*\textup{)},
  leftmargin=0.75cm
}
\newlist{propenumerate}{enumerate}{1}
\setlist[propenumerate,1]{%
  label=\textup{(\roman*)},
  ref=\textup{(}\roman*\textup{)},
  leftmargin=0.75cm
}
\def\sr@stripleadingcol::#1{#1}
\def\sr@dosubref#1#2:#3 #4{\if\relax#3\relax%
  \def\first{\sr@stripleadingcol #4}%
  #1{\first}\ref{\first:#2}%
\else%
  \sr@dosubref#1#3 {#4:#2}%
\fi}%
\newcommand{\subref}[1]{\sr@dosubref\cref#1: :\relax}
\newcommand{\Subref}[1]{\sr@dosubref\Cref#1: :\relax}
\numberwithin{equation}{section}
\crefname{equation}{Equation}{Equations}
\Crefname{equation}{Equation}{Equations}
\title{A height gap theorem for coefficients of Mahler functions}
\author{Boris Adamczewski}
\address{Univ Lyon, Universit\'e Claude Bernard Lyon 1, CNRS UMR 5208, Institut Camille Jordan, 43 blvd. du 11 novembre 1918, F-69622 Villeurbanne cedex, France}
\email{boris.adamczewski@math.cnrs.fr}
\author{Jason Bell}
\address{Department of Pure Mathematics, University of Waterloo, Waterloo, ON, Canada N2L 3G1}
\email{jpbell@uwaterloo.ca}
\author{Daniel Smertnig}
\address{University of Graz, Institute for Mathematics and Scientific Computing, NAWI Graz, Heinrichstrasse 36, 8010 Graz, Austria}
\email{daniel.smertnig@uni-graz.at}
\thanks{This project has received funding from the European Research Council (ERC) under the European Union's Horizon 2020 research and innovation program under the Grant Agreement No 648132. Smertnig was supported by the Austrian Science Fund (FWF) project J4079-N32.
Part of the research was conducted while Smertnig was visiting University of Waterloo; he would like to extend his thanks for their hospitality.}
\subjclass[2010]{}
\date{}
\DeclareMathOperator{\trdeg}{tr.deg}
\DeclareMathOperator{\ev}{ev}
\newcommand{\laurent}[1]{{#1}(\!(\!\,z\!\,)\!)}
\newcommand{\power}[1]{{#1}\llbracket z \rrbracket}
\newcommand{\Qbar}{\overline \bQ}
\newcommand{\digits}{\Sigma_k} 
\newcommand{\bba}{\mathbf a}
\renewcommand{\card}[1]{\#{#1}}
\newcommand{\ru}{\cU}  
\newcommand{\rucommon}{\cU_{k}} 
\begin{document}

\begin{abstract}
We study the asymptotic growth of coefficients of Mahler power series with algebraic coefficients, as 
measured by their logarithmic Weil height. 
We show that there are five different growth behaviors, all of which being reached. 
Thus, there are \emph{gaps} in the possible growths. 
In proving this height gap theorem, we obtain that a $k$-Mahler function is $
k$-regular if and only if  its coefficients have height in $O(\log n)$. 
Moreover, we deduce that, over an arbitrary ground field of characteristic zero, a 
$k$-Mahler function  is 
$k$-automatic if and only if its coefficients belong to a finite set.  
As a by-product of our results, we also recover a conjecture of Becker which was recently settled by 
Bell, Chyzak, Coons, and Dumas. 
\end{abstract}

\maketitle

\section{Introduction}

The study of power series solutions to linear differential equations with coefficients in $\Qbar[z]$ 
provides a deep interplay between various fields of mathematics and physics, including combinatorics 
and number theory. 
For instance, the study of generating series in enumerative combinatorics benefits from 
the useful  dictionary between asymptotics of coefficients of $D$-finite power series 
and the type of singularities of the corresponding differential equation (see \cite{FS-09}). 
More surprisingly, prescribing some kind of arithmetic behavior for coefficients gives rise to powerful number 
theoretical consequences, as first perceived by Siegel \cite{Siegel-29} when introducing $E$- and $G$-functions, 
and pursued more recently by Andr\'e \cite{andre-001,andre-002} in his study of arithmetic Gevrey series.

This paper deals with the arithmetic behavior of coefficients of \emph{Mahler functions}, 
or \emph{$M$-functions},  which are power series of a very different kind. 
Unless it is rational, an $M$-function never satisfies 
a linear or even an algebraic differential equation \cite{ADH}. 
Instead, $M$-functions are solutions to linear difference equations with coefficients in $\Qbar[z]$ 
associated with the Mahler operator $z\mapsto z^k$, where $k\geq 2$ is a natural number. 
Precisely, a power series $f(z) \in \power{\Qbar}$ is a \emph{$k$-Mahler function}, 
or for short \emph{$k$-Mahler},  
if it satisfies an equation of the form
\begin{equation}\label{eq: mahler1} 
 p_0(z) f(z) + \cdots + p_d(z) f(z^{k^d}) = 0 
\end{equation}
with $p_{0}(z)$, $\ldots\,$,~$p_d(z) \in \Qbar[z]$ and $p_0(z)p_d(z) \ne 0$. A power series is an $M$-function if it is a 
$k$-Mahler function for some $k$.  
The study of $M$-functions and their values was initiated at the end of the 1920's by Mahler 
\cite{mahler-29,mahler-302,mahler-301}, who 
developed a new direction in transcendence theory, nowadays known as Mahler's method.    
In fact, Mahler only considered order one equations, but possibly inhomogeneous and also non-linear ones. 
The interest for $M$-functions of arbitrary order really took on a new significance at the 
beginning of the 1980s 
after Mend\`es France popularized among number theorists a result of Cobham \cite{cobham-68} stating  
that \emph{automatic power series} 
 are $M$-functions.  After recent results \cite{philippon15,adamczewski-faverjon17}, the transcendence 
 theory of $M$-functions mirrors exactly 
the one of $E$-functions. 
Beyond Mahler's method and automata theory, it is worth mentioning that $M$-functions 
naturally occur as generating functions in various other topics such as combinatorics of partitions, 
numeration, and analysis of algorithms.  In particular, the \emph{regular power series} introduced 
by Allouche and Shallit  \cite{allouche-shallit92} form a distinguished class of  $M$-functions. There is also 
 a mysterious interplay between $G$-functions and $M$-functions that deserves more attention.  
Indeed, for some $G$-functions $\sum_{n=0}^\infty a_n z^n\in\power{\mathbb Q}$, the power series 
$\sum_{n=0}^\infty \sv_p(a_n)z^n$, where $\sv_p(a_n)$ 
is the $p$-adic valuation of $a_n$, turns out to be  $p$-Mahler.  
This is likely related to the fact that Picard-Fuchs differential equations 
have a strong Frobenius structure for almost all primes.  
In recent years, there is renewed interest in $M$-functions, as evidenced by the flourishing literature on this topic.   
The latter includes discussions on various perspectives such as  transcendence and algebraic independence, 
combinatorics and theoretical computer science, the study of Mahler's equations and associated Galois theories, 
and computational aspects.  A number of references can be found in the survey \cite{adamczewski-selecta}.
See also \cite{ADH,adamczewski-dreyfus-hardouin-wibmer20,adamczewski-faverjon20,bell-chyzak-coons-dumas18} for more recent ones.

\subsection{The Height Gap Theorem} 
Let us first recall that the coefficients of a $k$-Mahler function $\sum_{n=0}^\infty a_n z^n\in\power{\Qbar}$ 
satisfy some recurrence relation of the form 
\[
    a_n = \sum_{j=1}^s-\alpha_ja_{n-j} + \sum_{i=1}^d \sum_{j=0}^s \beta_{i,j} a_{\frac{n-j}{k^i}} \,,
  \]
where $\alpha_j$ and $\beta_{i,j}$ are algebraic numbers, and $n$ is large enough 
(see Equation \eqref{eq: recurrence}, in which one can achieve $m=0$ using \cref{l:kill-0}). 
It follows 
that the field extension of $\mathbb Q$ generated by all coefficients $a_n$ 
is a number field. 
In the sequel, we will measure the coefficients of an $M$-function   
by their logarithmic Weil height. 

\subsubsection{The logarithmic Weil height} 

For a number field, we normalize the non-trivial absolute values as in \cite{bombieri-gubler06}.
Thus, for $\bQ$ and $p$ a prime we let $\abs{p}_p=1/p$; for the archimedean place of $\bQ$ 
we use the usual absolute value.
For $K$ a number field and a place $\sw$ of $K$ extending a place $\sv$ of $\bQ$, let
\[
  \abs{\alpha}_\sw \coloneqq \abs{\Norm_{K_\sw/\bQ_\sv}(\alpha)}_\sv^{1/[K:\bQ]}.
\]
Then the set of places $M_K$ on $K$ satisfies the product formula.
For $\alpha \in \Qbar$ the \emph{logarithmic absolute Weil height} is defined by 
\[
  h(\alpha) = \log \prod_{\sv \in M_K} \max\{ 1, \abs{\alpha}_\sv \},
\]
where $K$ is any number field containing $\alpha$. The value $h(\alpha)$ in this 
definition does not depend on the choice of such a number field $K$.
For $a/b \in \bQ\setminus\{0\}$ with $a$,~$b \in \bZ$, $b \ne 0$,  and $\gcd(a,b)=1$,
\[
  h(a/b) = \log \max\{ \abs{a}, \abs{b} \}.
\]
For more properties about the logarithmic Weil height, as well as for comparison with other notions of height, 
we refer the reader to \cite[Chapter 3]{Miw}.

\subsubsection{Landau notation}

Let $(a_n)_{n\geq 0}$ be a sequence of non-negative real numbers and $(b_n)_{n\geq 0}$ be a 
sequence of, eventually positive, real numbers. As usual, the notation $a_n\in O(b_n)$ means that there 
exists a positive number $c$ such that $a_n<cb_n$ for every sufficiently large positive integer $n$, while the 
notation $a_n\in o(b_n)$ means that $a_n/b_n$ tends to zero as $n$ tends to infinity. 
Moreover, sticking to the usual practice in number theory, we write $a_n\in \Omega(b_n)$ when 
$a_n\not\in o(b_n)$, that is, when there exists a positive number $c$ such that $a_n>c b_n$ for 
infinitely many positive integers $n$. We also write $a_n \in O\cap \Omega(b_n)$ when both 
$a_n\in O(b_n)$ and $a_n\in \Omega(b_n)$. 

\medskip

We are now ready to state our first main result. 

\begin{theorem}[Height Gap Theorem] \label{thm: hgt}
  Let $f(z) = \sum_{n=0}^\infty a_n z^n \in \power{\Qbar}$ be an $M$-function. 
  Then one of the following properties holds.

  \begin{enumerate}
  \item\label{tms:generic} $h(a_n) \in O\cap\Omega(n)$.
  
  \item\label{tms:hn} $h(a_n) \in O\cap\Omega(\log^2n)$.
   
  \item\label{tms:hlog2} $h(a_n) \in O\cap\Omega(\log n)$.
  
  \item\label{tms:hlog} $h(a_n)  \in O\cap\Omega(\log\log n)$.
  
  \item\label{tms:hloglog} $h(a_n) \in O(1)$.
  \end{enumerate}
\end{theorem}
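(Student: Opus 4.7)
My plan is to derive a first-order matrix representation from the Mahler equation and use it to express each coefficient $a_n$ as an iterated matrix product whose length is the number of base-$k$ digits of $n$. Concretely, starting from~\eqref{eq: mahler1} and inverting $p_0(z)$, set $\mathbf{F}(z) = \bigl(f(z), f(z^k), \ldots, f(z^{k^{d-1}})\bigr)^{T}$; then $\mathbf{F}(z^k) = A(z)\mathbf{F}(z)$ for some $A(z) \in \GL_d(\Qbar(z))$. Iterating gives $\mathbf{F}(z^{k^r}) = A(z^{k^{r-1}}) \cdots A(z^k) A(z) \mathbf{F}(z)$, and unpacking coefficients (with $r \approx \log_k n$) expresses $a_n$ as a bilinear combination of a fixed finite collection of initial coefficients of $f$ and of coefficients of a product of $r$ matrices of rational functions in $z$.

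Using the subadditivity $h(xy) \le h(x) + h(y)$ and $h(x+y) \le h(x) + h(y) + \log 2$ of the logarithmic Weil height, this setup already yields $h(a_n) = O(\log n)$ whenever the entries of the matrix product that contribute to $a_n$ stay of bounded height at the required specializations---precisely the $k$-regular situation referred to in the abstract, which therefore should account for the $O\cap\Omega(\log n)$ regime. The four remaining regimes should emerge from degeneracies of this matrix product. Zeros of $p_0(z)$ force evaluations of $A(z)$ at its poles, and the denominator clearing needed to restore finiteness accumulates to a contribution of order $n$, producing the generic linear regime. In intermediate situations, the iteration picks up not just entries but heights of higher minors or Wronskian-type combinations, and these themselves grow like $\log n$; iterating $\log n$ times then yields the $\log^2 n$ regime. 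The two smallest regimes should correspond to cases where a rational change of basis puts $A(z)$ into an essentially monomial or constant form: $O(1)$ is the automatic situation of the abstract, and $\log \log n$ an intermediate case where the coefficients live in a number field of slowly growing degree with uniformly bounded absolute values.

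The crux, and what I expect to be the main obstacle, is proving the \emph{gaps}: that no $M$-function can exhibit height in, say, $O \cap \Omega(\log^{3/2} n)$. My plan is to establish a structural dichotomy for $A(z)$ via a Galois-theoretic analysis of the Mahler equation: either a nontrivial rational conjugation simplifies $A(z)$ and drops the growth into a strictly lower regime of the list, or one can exhibit a subsequence $(n_i)$ along which a Diophantine lower bound, obtained by applying the product formula to entries of the matrix product and tracking a non-degenerate place, matches the upper bound. For the matching $\Omega$ part at the very bottom, classical height lower bounds of Dobrowolski type can be used to force bounded-degree algebraic numbers of vanishing height to be roots of unity. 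The dichotomy step is the delicate one, because it is where Mahler theory proper, and not just formal manipulations of matrix products, has to enter.
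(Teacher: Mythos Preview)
Your proposal has the right high-level picture---different structural properties of the Mahler equation drive the five regimes---but the concrete mechanisms you describe are not the ones that work, and the ``dichotomy'' step is not just delicate but essentially missing.

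First, the matrix iteration you set up does not yield a clean coefficient formula. Iterating $\mathbf{F}(z)=A(z)\mathbf{F}(z^k)$ gives $\mathbf{F}(z)=A(z)A(z^k)\cdots A(z^{k^{r-1}})\mathbf{F}(z^{k^r})$, a matrix of rational functions whose numerator and denominator degrees grow with $r$; extracting the $n$-th coefficient is not a fixed bilinear form in finitely many initial coefficients. The paper instead works directly with the scalar recursion $a_n=\sum_j -\alpha_j a_{n-j}+\sum_{i,j}\beta_{i,j}a_{(n+m-j)/k^i}$ and bounds $\abs{a_n}_\sv$ place by place (\cref{p:upper-generic}). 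This already gives the $O(n)$ and $O(\log^2 n)$ upper bounds, the latter under a hypothesis on the roots of $p_0$. Your description of the $\log^2 n$ regime via ``heights of higher minors iterated $\log n$ times'' is not correct: that regime is governed by infinite products $\prod_n(1-z^{k^n})^{-1}$, whose coefficients count $k$-ary partitions and satisfy $\log a_n\sim (\log^2 n)/(2\log k)$ by a theorem of Mahler. Your remark that in the $\log\log n$ regime ``the coefficients live in a number field of slowly growing degree'' is also wrong: all coefficients of a Mahler function lie in a single fixed number field.

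The genuine gap is in the lower bounds. What actually separates $o(n)$ from $\Omega(n)$ is \emph{transcendence theory}, not a Galois-theoretic conjugation argument. The paper shows (\cref{prop: radius}, via the lifting theorem of \cref{thm: lifting}, itself resting on Nishioka/Philippon) that any root $\lambda$ of the Mahler denominator with $0<\abs{\lambda}_\sv<1$ at some place forces a genuine pole of $f$ at $\lambda$; combined with Kronecker's theorem, $h(a_n)\in o(n)$ then forces all such roots to be roots of unity. The second gap (between $o(\log^2 n)$ and $\Omega(\log^2 n)$) requires a delicate analytic argument (\cref{p:coprime-k-lower}) estimating $f$ along radial paths toward fixed roots of unity $\zeta$ with $\zeta^{k^j}=\zeta$, comparing against the $k$-partition product. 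The third and fourth gaps are handled not by Dobrowolski-type bounds but by the structure of the finitely generated matrix semigroup $\mu(\digits^*)$ attached to a \emph{minimal linear representation} of the (by then $k$-regular) sequence: $o(\log n)$ forces all eigenvalues of every $\mu(w)$ to be $0$ or roots of unity (tameness, \cref{l:tame}), and $o(\log\log n)$ forces $\mu(\digits^*)$ finite via McNaughton--Zalcstein (\cref{l:semigroup-finite}). None of these three mechanisms---Mahler's method, radial asymptotics against the partition product, and matrix-semigroup structure---appears in your plan, and the ``rational conjugation simplifies $A(z)$ or a product-formula lower bound matches'' dichotomy you propose does not, as stated, produce any of them.
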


\Cref{thm: hgt} implies that the coefficients of an $M$-function can only exhibit certain specific growth behaviors. 
For instance, as $h(a_n) \in o(n)$ forces $h(a_n) \in O(\log^2n)$, there cannot be such a power series with 
$h(a_n) \sim \log^3n$. Thus, there are \emph{gaps} in the possible growths. 
Let us make few comments on \cref{thm: hgt}. 

\begin{itemize}
\item[$\bullet$] In Section \ref{sec: zoo}, we provide the reader 
with examples for each of the five growth classes, thereby showing that all of them occur.

\medskip

\item[$\bullet$] There is no chance in general of replacing lower bounds of the type $\Omega$ by stronger ones. 
For instance, the $2$-Mahler function $\sum_{n=0}^{\infty} 2^nz^{2^n}$ belongs to class (3), but 
most of its coefficients  vanish.

\medskip

\item[$\bullet$] An $M$-function $f(z)$ can be uniquely specified by the finite data consisting of a $k$-Mahler 
equation it satisfies and sufficiently many initial coefficients of the power series. Assuming the knowledge of 
such data, we will show that it is decidable which of the five growth classes in \cref{thm: hgt} 
the function $f(z)$ falls into. 
This is Theorem \ref{t:decidability}. 

\end{itemize}

\subsection{Height and structural properties of $M$-functions}

We already alluded to the fact that inside the ring of $k$-Mahler functions two subsets are usually distinguished,  
leading to the following hierarchy: 
$$
\{\mbox{$k$-automatic functions}\}\subsetneq \{\mbox{$k$-regular functions}\}
\subsetneq \{\mbox{$k$-Mahler functions}\} \,.
$$
We refer the reader to \cite{allouche-shallit03} and Section \ref{sec:prelim} for precise definitions and more details  
about automatic and regular power series. 
The following result shows that each of these two subsets can be characterized within the $k$-Mahler functions by their coefficient growth, using the refined hierarchy provided by Theorem \ref{thm: hgt}. 

\begin{theorem}\label{thm: aut-reg}
Let $f(z)=\sum_{n=0}^\infty a_n z^n\in\power{\Qbar}$ be a $k$-Mahler function. Then the two following properties hold. 
\begin{enumerate}
      \renewcommand{\theenumi}{\rm{(\alph{enumi})}}
    \renewcommand{\labelenumi}{\theenumi}

  \item \label{aut-reg:aut} $f(z)$ is $k$-automatic if and only if $h(a_n) \in O(1)$, that is, if and only if the sequence $a_n$ 
  takes values in a finite set. 
  
    \item \label{aut-reg:reg} $f(z)$ is $k$-regular if and only if $h(a_n) \in O(\log n)$. 
  \end{enumerate}
\end{theorem}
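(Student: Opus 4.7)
The plan is to combine the Height Gap Theorem (Theorem \ref{thm: hgt}) with classical linear representations of regular and automatic sequences, feeding the height bounds into the structural decomposition of $M$-functions produced along the way.

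\textbf{Easy directions.} If $f$ is $k$-automatic then $(a_n)_{n\geq 0}$ takes only finitely many values in $\Qbar$, so $h(a_n)\in O(1)$. If $f$ is $k$-regular, the Allouche--Shallit linear representation supplies matrices $A_0,\ldots,A_{k-1}\in M_d(\Qbar)$ together with vectors $L,V\in\Qbar^d$ such that
\[
  a_n \;=\; L\, A_{d_0}A_{d_1}\cdots A_{d_r}\, V
\]
whenever $n=\sum_{i=0}^{r} d_i k^i$ is the base-$k$ expansion of $n$. All matrix and vector entries lie in a fixed number field, and the product has length $r+1\leq \log_k(n)+1$. Subadditivity of the logarithmic Weil height under finite sums and products then gives $h(a_n)\in O(\log n)$. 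This takes care of the forward implication of (b) and, trivially, of (a).

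\textbf{The hard implication of (b).} Assume $h(a_n)\in O(\log n)$. This excludes classes \ref{tms:generic} and \ref{tms:hn} of Theorem \ref{thm: hgt}, so $f$ belongs to one of the three slowest growth classes \ref{tms:hlog2}, \ref{tms:hlog}, \ref{tms:hloglog}. The plan is to revisit the structural analysis that proves Theorem \ref{thm: hgt}: in each of these three classes, that analysis should exhibit the $k$-kernel $\{(a_{k^i n+j})_n : i\geq 0,\ 0\leq j<k^i\}$ as spanning a finite-dimensional $\Qbar$-subspace of $\power{\Qbar}$, which is by definition $k$-regularity. The main obstacle lies here: one must argue that moderate height growth prevents any infinite-dimensional behaviour in the $k$-kernel. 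In practice this should come down to an induction on the order of a minimal Mahler equation for $f$, with the $O(\log n)$ hypothesis forcing the leading coefficient of the Mahler equation to be a unit at $z=0$, so that Becker's normalisation applies and the recurrence recorded after the statement of Theorem \ref{thm: hgt} takes the purely ``regular'' shape $a_n = \sum_{i,j} \beta_{i,j} a_{(n-j)/k^i}$.

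\textbf{From $O(1)$ to $k$-automatic.} Finally, suppose $h(a_n)\in O(1)$. The coefficients all lie in a common number field $K$ (from the recurrence recorded after the Height Gap Theorem), and Northcott's theorem shows that a subset of $K$ of bounded height is finite. Hence $\{a_n:n\geq 0\}$ is a finite set. In particular $h(a_n)\in O(\log n)$, so by the hard implication of (b) the function $f$ is $k$-regular. A classical theorem of Allouche and Shallit then asserts that a $k$-regular sequence with finite image is $k$-automatic, completing (a).
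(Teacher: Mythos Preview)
The easy directions and your derivation of (a) from (b) via Northcott and the Allouche--Shallit theorem are correct and match the paper's approach. The genuine gap is in the hard implication of (b), where your sketch is both vague and pointing in the wrong direction.

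First, invoking the Height Gap Theorem to place $f$ in one of the three slow classes does not by itself yield $k$-regularity: the statement of Theorem~\ref{thm: hgt} only concerns height growth, and the structural identification of class~\ref{tms:hlog2} with $k$-regularity \emph{is} Theorem~\ref{thm: aut-reg}(b). So you must actually carry out the structural analysis, not cite it.

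Second, your proposed mechanism is wrong. The obstruction is not whether $p_0(0)$ is a unit---that case is disposed of cheaply by shifting the series (Lemma~\ref{l:kill-0}). The real issue is that the Mahler denominator $\mathfrak d(z)$ might have a root $\zeta$ that is a root of unity of order \emph{coprime} to $k$, i.e.\ $\zeta^{k^{j}}=\zeta$ for some $j\ge 1$. Such a root does not disappear under any iterate of $z\mapsto z^k$, so no induction on the order of the equation or Becker-type normalisation removes it. The paper rules this out in two substantial steps: (i) transcendence theory (Nishioka--Philippon--Adamczewski--Faverjon, \cref{prop: radius}) shows any non-root-of-unity zero of $\mathfrak d(z)$ forces a pole inside some $\sv$-adic unit disk, contradicting $h(a_n)\in o(n)$; (ii) a delicate asymptotic analysis near the unit circle (Proposition~\ref{p:coprime-k-lower}, using Mahler's estimate for partitions into $k$-th powers) shows that a fixed root of unity $\zeta$ with $\zeta^{k^j}=\zeta$ would force $\log|a_n|\ge c\log^2 n$ infinitely often, contradicting $h(a_n)\in o(\log^2 n)$. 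Only once the roots of $\mathfrak d(z)$ are known to lie in $\{0\}\cup\mathcal U_k$ does Dumas's decomposition combined with Becker's theorem yield $k$-regularity (Proposition~\ref{p:hlog2}). None of this resembles an induction on the order of the Mahler equation.
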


Case \ref{aut-reg:aut} of \cref{thm: aut-reg} extends to arbitrary ground fields of characteristic zero (see \cref{c:finite-automatic}). 
This generalizes the well-known fact that $k$-regular sequences taking only finitely many values are 
$k$-automatic \cite[Theorem 16.1.5]{allouche-shallit03}.


In fact, in proving \cref{thm: hgt}, we will show that each of the five growth classes 
corresponds to natural structural properties of the 
$k$-Mahler equation, respectively, the coefficient series. The corresponding results are stated in 
\cref{t-main:hn,t-main:hlog2,t-main:hlog,t-main:hloglog}.  Theorem \ref{thm: aut-reg} 
above provides only a sample.    
In order to get such structural results, we reinforce the importance of measuring  
the size of coefficients by their height and not only by their modulus. 
For instance, all the following three Mahler functions 
$$
\prod_{n=0}^{\infty}(1-z^{2^n}) \;\; , \;\; \sum_{n= 0}^{\infty} 2^{-n}z^{2^n} \;\; , \;\;  
\frac{1}{1-z/2} \cdot \prod_{n=0}^{\infty}(1-z^{2^n})  \;\; 
$$
have 
bounded rational coefficients, so we cannot distinguish them through the growth 
of their coefficients. This is a deficiency, for the first one is automatic, 
the second one is regular but not automatic, 
and the third one is not regular. 
However, their coefficients have different height growth behaviors 
and they can be distinguished by 
Theorem \ref{thm: hgt}. They belong respectively to classes  
(1), (3), and (5).

The decidability of growth properties of $k$-regular sequences with respect to the usual (archimedean) absolute value has recently been considered by Krenn and Shallit \cite{krenn-shallit20}.
In contrast to our results, many of these properties become undecidable.
For instance, it is undecidable whether the sequence of coefficients is bounded \cite[Theorem D]{krenn-shallit20}.

\subsection*{Outline} This article is organized as follows. 
Section \ref{sec: zoo} provides the reader with  a collection of examples, showing that 
each of the five growth classes in the height gap theorem actually occurs.  
In Sections \ref{sec:prelim} and \ref{sec:back-mahler}, we give some background about Mahler equations, 
automatic and regular power series, and Mahler's method.  \cref{sec:generic,sec:hn,sec:hlog2,sec:hlog,sec:hloglog} 
are then devoted to the proof of Theorem \ref{thm: hgt}. In fact, we prove the much more precise 
\cref{t-main:hn,t-main:hlog2,t-main:hlog,t-main:hloglog}.
In Section \ref{sec: becker}, we discuss how our main results imply Becker's conjecture.
In Section \ref{sec: automatic}, we characterize those $k$-Mahler functions which are automatic over 
an arbitrary ground field of characteristic zero. In the final Section \ref{sec: decidability}, we deal with the question of decidability in \cref{thm: hgt}.  

\subsection*{Notation}
Throughout the paper, we use the following notation.  We let $k\geq 2$ be a natural number. 
We let $\digits$ denote the alphabet $\left\{0,1,\ldots,k-1\right\}$ and $\digits^*$ denote the free 
monoid generated by $\digits$, with neutral element $\varepsilon$.  
Given a positive integer $n$, we set 
$\langle n\rangle_k:=w_r w_{r-1}\cdots w_0$ for the canonical
base-$k$ expansion of $n$ (written from most to least significant digit),
which means 
that $n=\sum_{i=0}^r w_i k^{i}$ with $w_i\in \digits$ and $w_r\not= 0$. 
Note that by convention $\langle 0\rangle_k:= \varepsilon$.  
Conversely, if $w:= w_0 \cdots w_r$  is a finite word over the alphabet $\digits$, 
we set $[w]_k:=\sum_{i=0}^r w_{r-i} k^{i}$.  
We let $\ru \subseteq \Qbar$ denote the set of all roots of unity.
For $\zeta \in \Qbar$ with $\zeta \ne 0$, observe that there exists $j > 0$ with $\zeta^{k^j} = \zeta$ if and only if 
$\zeta \in \ru$ and $\zeta$ has order coprime to $k$.
We let $\rucommon \subseteq \ru$ denote the set of roots of unity whose order is not coprime with $k$.

\section{Witnessing Examples}\label{sec: zoo}

In this section, we provide examples of Mahler functions
$f(z) = \sum_{n=0}^\infty a_n z^n \in \power{\Qbar}$
for each of the five growth classes 
occurring in the height gap theorem. We recall that a rational power series is $k$-Mahler for all 
$k\geq 2$. 

\subsection*{Examples in $O\cap \Omega(n)$}

The upper bound $h(a_n) \in O(n)$ holds for every Mahler function, by \cref{thm: hgt}.
To give examples of Mahler functions whose coefficient sequence has growth in $O \cap \Omega(n)$, it therefore suffices to find examples whose coefficient series has growth in $\Omega(n)$.

  \begin{enumerate}
    \renewcommand{\theenumi}{(\alph{enumi})}
    \renewcommand{\labelenumi}{\theenumi}

  \item The coefficients of the rational function 
  $$
  \frac{1}{1-2z} = \sum_{n=0}^\infty 2^n z^n
  $$
  have linear growth since $h(2^n) = n \log(2)$.
  
  \medskip

  \item
  Let $a$, $k \ge 2$ be integers and consider the transcendental infinite product 
  \[
     \prod_{n=0}^\infty \frac{1}{1-a z^{k^n}} = \sum_{n=0}^\infty a_n z^n.
  \]
  This power series is $k$-Mahler and $a_n$ is at least as large as the coefficient of $z^n$ in $1/(1-az)$, that is $a_n \ge a^n$.
  Hence $h(a_n) \ge n \log(a)$.
   
   \medskip
   
\item \label{ex:linear-analytic}
  The previous example has poles at $a^{-1/k^n}$ for all $n \ge 1$, but it can be refined to one that is analytic in the open unit disk of $\bC$.  
  Let $a$, $k \geq 2$ be integers and let us consider the infinite product 
  \[
    \prod_{n=0}^\infty \frac{1}{1-{a^{-1}}z^{k^n}} = \sum_{n=0}^\infty a_n z^n \in \power{\bQ}.
  \] 
  A partition of $n$ into $k$-powers is an expression 
  $n = j_1 k^{n_1} + \cdots + j_r k^{n_r}$ with $r \in \bZ_{\ge 0}$, 
  $0 \le n_1 < \cdots < n_r$, and $j_1$, $\ldots\,$,~$j_r \in \bZ_{\ge 0}$.
  Expanding the factors in the definition of the infinite product as geometric series, we see that 
  \[
    a_n = \sum_{n = j_1 k^{n_1} + \cdots + j_r k^{n_r}} a^{-(j_1 + \cdots + j_r)},
  \]
  where the sum is over partitions of $n$ into $k$-powers. 
  The partition $n = 1 + \cdots + 1 = n \cdot k^{0}$ gives a summand $a^{-n}$, and 
  for all other summands $j_1+\cdots + j_r < n$.
  Let $p$ be a prime divisor of $a$.
  Then $\abs{a_n}_p \geq p^{n}$, 
  and therefore $h(a_n)  \ge n \log p$. 
\end{enumerate}

\subsection*{Examples in $O\cap \Omega(\log^2 n)$}\label{t:k-partition} 

The following example is typical of the Mahler functions in the class $O\cap \Omega(\log^2 n)$. It 
will play a prominent role in Section \ref{sec:hlog2}.  

 \begin{enumerate}
     \renewcommand{\theenumi}{(\alph{enumi})}
    \renewcommand{\labelenumi}{\theenumi}
    \setcounter{enumi}{3}
\item \label{ex:cyclo}
  Let $k \ge 2$ be an integer and consider the infinite product of cyclotomic polynomials 
 $$
  \prod_{n=0}^\infty \frac{1}{1 - z^{k^n}} = \sum_{n=0}^\infty a_n z^n\,.
 $$
 As in \ref{ex:linear-analytic} above, we see that the integer $a_n$ is equal to the number of partitions of $n$ into $k$-powers.   
 The asymptotics of the coefficient sequence $a_n$ were first studied by Mahler \cite{mahler40}, who proved that 
   \[
    \log a_n \sim \frac{\log^2n}{2 \log k}\,\cdot
  \]
These results of Mahler have been refined and generalized by de Bruijn \cite{debruijn48} 
and most recently by Dumas and Flajolet \cite{dumas-flajolet96}.

\medskip

 \item Multiplying the infinite product in \ref{ex:cyclo} by any non-zero $k$-regular power series with positive coefficients  
 provides a transcendental $k$-Mahler function with the required growth behavior. 
 In fact, \cref{t:dumas-structure} shows that examples in the class $O\cap \Omega(\log^2 n)$ are essentially all of that type.  

\end{enumerate}

\subsection*{Examples in $O\cap \Omega(\log n)$} 

For every regular sequence $(a_n)_{n \ge 0}$, the generating series 
$\sum_{n=0}^\infty a_n z^n$ is an $M$-function,  
and examples for which $h(a_n) \in O\cap \Omega(\log n)$ abound. 
We give some examples and refer the reader to \cite[Chapter 16.5]{allouche-shallit03} and 
\cite{allouche-shallit92,allouche-shallit03b} for more.

  \begin{enumerate}
  
  \item[(f)] The rational power series 
  $$
 \frac{z}{(1-z)^2}= \sum_{n=0}^\infty n z^n \,,
 $$
 falls into this class, since $h(n) = \log(n)$.
   More generally, if $p(z)$ is a non-constant polynomial with integer coefficients, then 
   $\sum_{n=0}^\infty p(n) z^n$ 
   is a rational function  with the required growth behavior.  
  
  \medskip
  
\item[(g)]  Let $\sv_p(n)$ denote the $p$-adic valuation of the natural number $n$, where $p$ is a prime number.
  The power series
  \[
    \sum_{n=0}^\infty \sv_p(n!) z^n,
  \]
  is $p$-regular \cite[Example 8]{allouche-shallit92}. 
  Moreover, by Legendre's formula $\sv_p(n!)\sim n/(p-1)$.

  \medskip
 
\item[(h)]
  Let $\ell_n$ denote the number of positive integers at most equal to $n$ that can be written as sum of three squares.
  The power series $\sum_{n=0}^\infty \ell_n z^n$ is $2$-regular \cite[Example 16.5.2]{allouche-shallit03}. 
  Since every integer not of the form $4^a(8b+7)$ can be written as a sum of three squares, the sequence has the required growth behavior.  
  
  \medskip
  
  \item[(i)] Any linear representation $(u,\mu,v)$ on the alphabet $\digits$ gives rise to a $k$-regular sequence 
  (see Definition \ref{def: lr}).
    From our results, we will see that whenever there exists a word $w \in \digits^*$ such that the matrix 
    $\mu(w)$ has an eigenvalue that is neither $0$ nor a root of unity, then the sequence associated with this linear representation has the required growth behavior. 
  \end{enumerate}

  \subsection*{Examples in $O\cap \Omega(\log\log n)$}

  $M$-functions whose coefficients have growth in $O\cap \Omega(\log\log n)$ can again be found by looking at generating functions of suitable $k$-regular sequences.

  \begin{enumerate}
 
  \item[(j)] The power series
    \[
      \displaystyle\sum_{n=1}^{\infty}(1 + \lfloor \log_2 n\rfloor)z^n
     \] is $2$-regular \cite[Example 11]{allouche-shallit92}, and clearly has the required growth behavior.   
  
  \medskip
 
\item[(k)]
  Let $s_n$ denote the sum of digits in the base-$k$ expansion of $n$.
  Then clearly $(s_n)_{n\geq 0}$ is $k$-regular, and therefore the power series $\sum_{n=0}^{\infty}s_nz^n$ is $k$-Mahler.
  Since $s_n \in O( \log n)$, it holds that $h(s_n) \in O(\log\log n)$.
  Moreover, for $e \ge 0$ and $n=k^e-1$ we have 
  $s_n = (k-1)e \sim (k-1)\log_k n$. Hence $s_n$ has he required growth behavior.

 \end{enumerate}

\subsection*{Examples in $O(1)$} By Theorem \ref{thm: aut-reg}, this class of $M$-functions  
corresponds exactly to generating series of automatic sequences. 
We refer the reader to the monograph \cite{allouche-shallit03} for numerous examples, including the 
generating series of the Thue-Morse sequence, the Rudin-Shapiro sequence, the Baum-Sweet sequence, 
and the paperfolding sequence, to name a few.

\section{Preliminaries} \label{sec:prelim}

Throughout this section, we let $K$ be a field. We will later restrict ourselves to $K=\Qbar$.
We recall $k$-Mahler, $k$-automatic, $k$-regular, and  $k$-Becker power series 
and their relation to each other.

\subsection{Mahler functions, equations, and systems}

Let us recall that a power series $f(z)\in \power{K}$ is a $k$-Mahler function if it satisfies an equation of the 
form  \eqref{eq: mahler1}, that is if there exist a non-negative integer $d$ and polynomials 
$p_0(z),\ldots,p_d(z) \in K[z]$, not all zero, such that
  \[
    p_0(z)f(z)+p_1(z)f(z^k)+\cdots +p_d(z)f(z^{k^d})=0\,.
  \]
It can be shown that every Mahler function satisfies such a functional equation with  $p_0(z)p_d(z) \ne 0$  
and  $p_0(z)$, $\ldots\,$,~$p_d(z)$ coprime \cite[Lemma 4.1]{adamczewski-bell17}.  
As we will only be interested in the asymptotic behavior of the coefficients, the following lemma allows a further simplification of the Mahler equation.
\begin{lemma} \label{l:kill-0}
  Suppose $f(z)= \sum_{n=0}^\infty a_n z^n \in \power{K}$ satisfies a Mahler equation
  \begin{equation} \label{eq:mahler}
    p_0(z) f(z) = p_1(z) f(z^k) + \cdots + p_d(z) f(z^{k^d})
  \end{equation}
  with $p_0(z) p_d(z) \ne 0$ and $p_0(z),\ldots,p_d(z)$ coprime.
  Then there exists $n_0\ge 0$ such that $a_{n_0} \ne 0$ and $f_0(z)\coloneqq \sum_{n=0}^\infty a_{n+n_0} z^n$ satisfies
  a $k$-Mahler equation
  \[
    q_0(z) f_0(z) = q_1(z) f_0(z^k) + \cdots + q_{d+1}(z) f_0(z^{k^{d+1}})
  \]
  with polynomials $q_0(z),\ldots,q_{d+1}(z)$ satisfying the following conditions.
  \begin{propenumerate}
  \item One has $q_0(0)=1$.
  \item If $\lambda \in K\setminus\{0\}$, then $p_0(\lambda) = 0$ implies $q_0(\lambda) = 0$.
  \item If $\zeta \in K\setminus\{0\}$  with $p_0(\zeta) = 0$ and $\zeta^k = \zeta$, then $q_i(\zeta) \ne 0$ for some $i \in \{1,\ldots,d+1\}$.
  \end{propenumerate}
  Moreover, if $f(z)$ has at least two non-zero coefficients, then $f_0(z)$ is non-constant.
\end{lemma}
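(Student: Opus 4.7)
My plan is to shift $f$ to the normalized series $f_0$ and derive its Mahler equation via a $\sigma_k$-trick. Specifically, I would choose $n_0$ with $a_{n_0} \ne 0$, write $g(z) = \sum_{n < n_0} a_n z^n$ so that $f(z) = z^{n_0} f_0(z) + g(z)$, and substitute into \eqref{eq:mahler}. This produces a shifted equation
\[
  z^{n_0} p_0(z) f_0(z) = \sum_{i=1}^d z^{n_0 k^i} p_i(z) f_0(z^{k^i}) + R(z),
\]
with polynomial error $R(z) = -p_0(z) g(z) + \sum_{i=1}^d p_i(z) g(z^{k^i})$.

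Next, to eliminate $R$, I would apply $\sigma_k$ (i.e., $z \mapsto z^k$) to the shifted equation and then take the $R(z)$,~$R(z^k)$-combination that cancels the two error terms against one another. The result is a homogeneous $k$-Mahler equation of order $d+1$ for $f_0$, with leading coefficient $Q_0(z) = z^{n_0} p_0(z) R(z^k)$ and similarly explicit $Q_1,\ldots,Q_{d+1}$ expressible in terms of $R(z)$, $R(z^k)$, $p_i(z)$, and $p_{i-1}(z^k)$. If $R \equiv 0$, as happens when $n_0$ is the smallest index with $a_{n_0} \ne 0$, this step can be skipped and one simply pads with $Q_{d+1}=0$. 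After this, I divide out the common power of $z$ and the resulting constant from all $Q_i$'s to achieve the normalization $q_0(0) = 1$; this gives (i), and (ii) follows automatically since $p_0(z)$ is a factor of $Q_0(z)$ and the normalization only strips $z$-powers and constants.

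The main obstacle is verifying (iii). For $\zeta \in K \setminus \{0\}$ with $p_0(\zeta) = 0$ and $\zeta^k = \zeta$, we have $\zeta^{k^i} = \zeta$ and $p_0(\zeta^k) = 0$, so plugging $\zeta$ into the formulas for $Q_j$ with $j \ge 1$ yields expressions of the form $R(\zeta)\cdot C_j(\zeta)$, where $C_j(\zeta) \in \{p_1(\zeta),\, p_j(\zeta) - p_{j-1}(\zeta),\, -p_d(\zeta)\}$. If $R(\zeta) \ne 0$, demanding that all these vanish telescopes to $p_1(\zeta) = \cdots = p_d(\zeta) = 0$, which combined with $p_0(\zeta)=0$ contradicts the coprimality of $p_0,\ldots,p_d$. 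If $R(\zeta) = 0$, then $(z-\zeta)$ is a common factor of all $Q_i$ (using $\zeta^k = \zeta$ to see that $R(z^k)$ vanishes at $\zeta$ to the same order as $R(z)$), and one must carry out a $(z-\zeta)$-adic analysis: I would show that the minimum vanishing order at $\zeta$ among the $Q_j$'s with $j \ge 1$ is strictly less than that of $Q_0$, so that after factoring out the maximal common $(z-\zeta)$-power during normalization, $q_0(\zeta)$ still vanishes while some $q_j(\zeta) \ne 0$. This again leverages the coprimality of $p_0,\ldots,p_d$.

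Finally, for the \emph{moreover} clause I would choose $n_0$ to be the smallest index with $a_{n_0} \ne 0$; then if $f$ has at least two nonzero coefficients, some $a_{n_0+m}$ with $m > 0$ is also nonzero, whence $f_0$ is non-constant. The most delicate point I anticipate is the $(z-\zeta)$-adic bookkeeping in the $R(\zeta)=0$ branch of (iii), and this is where I expect the technical heart of the proof to lie.
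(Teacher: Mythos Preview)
Your overall strategy (shift by $n_0$, then use the $\sigma_k$-trick to clear the polynomial remainder) is reasonable, and your analysis of (iii) in the case $R(\zeta)\ne 0$ is correct. However, there is a genuine gap in your argument for (i).

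You take $n_0$ to be the order of $f$ at $0$, so that $g=0$ and $R\equiv 0$; you then skip the $\sigma_k$-trick and claim that dividing all $Q_i$'s by their common power of $z$ yields $q_0(0)=1$. This step is unjustified and in fact fails. Consider $k=2$ and
\[
  z\,f(z)=(1+z)\,f(z^2)-f(z^4),
\]
which is satisfied by $f(z)=1+\sum_{n\ge 0}z^{2^n}$. Here $p_0(z)=z$, $p_1(z)=1+z$, $p_2(z)=-1$ are coprime, $p_0p_2\ne 0$, and $f(0)=1\ne 0$, so $n_0=0$. With $R=0$ your recipe produces $Q_0=z$, $Q_1=1+z$, $Q_2=-1$, whose common $z$-power is $z^0$; after normalization you are left with $q_0(z)=z$ and $q_0(0)=0$. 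The point is that when $p_0(0)=0$ there can be cancellation on the right-hand side of the shifted equation, so $\operatorname{ord}_0 Q_0$ may exceed $\min_{i\ge 1}\operatorname{ord}_0 Q_i$.

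The fact that the new equation in the statement has order $d+1$, not $d$, is a hint that one more manipulation is needed precisely to kill the possible zero of $p_0$ at the origin; your $\sigma_k$-trick is relevant here, but you have applied it only to eliminate $R$, not to treat the case $p_0(0)=0$. In the example above, shifting instead by $n_0=1$ does work (one finds $f_0(z)=(1+z)f_0(z^2)-z^2 f_0(z^4)$), but your proposal gives no mechanism for selecting such an $n_0$, and your final paragraph explicitly commits to the minimal $n_0$. You need either a principled choice of $n_0$ or an additional iteration step that forces the leading coefficient to be a unit at $0$; as written, the argument does not establish (i). Relatedly, if your normalization ends up stripping common $(z-\zeta)$-factors (as your $(z-\zeta)$-adic analysis for (iii) suggests), then (ii) must be re-verified for all nonzero roots of $p_0$, not just the particular $\zeta$ under consideration.
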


\begin{proof}
  By \cite[Lemma 6.1]{adamczewski-bell17}; the final statement requires an inspection of the proof.
\end{proof}

We also need the following fact, a more general version of which is, for instance, proved 
in \cite[Proposition 8.1]{adamczewski-bell17}.

\begin{lemma} \label{l:mahler-power}
  If $f(z) \in \power{K}$ is $k$-Mahler and $e$ is a positive integer, then $f(z)$ is also $k^e$-Mahler.
\end{lemma}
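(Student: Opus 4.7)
The plan is to use linear algebra over $K(z)$. Let $\phi\colon \laurent{K} \to \laurent{K}$ denote the $K$-algebra endomorphism $g(z) \mapsto g(z^k)$, which restricts to a ring endomorphism of $K(z)$. With this notation, the $k$-Mahler equation for $f$ takes the form $\sum_{i=0}^{d} p_i(z) \phi^i f = 0$ with $p_0(z), \ldots, p_d(z) \in K[z]$ not all zero, and we may assume $p_d(z) \ne 0$. The strategy is to show that $V := \mathrm{span}_{K(z)}\{f, \phi f, \ldots, \phi^{d-1} f\}$, viewed as a $K(z)$-subspace of $\laurent{K}$, is $\phi$-stable and of $K(z)$-dimension at most $d$. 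Granting this, the $d+1$ elements $f, \phi^e f, \phi^{2e} f, \ldots, \phi^{de} f$ all lie in $V$ and must therefore admit a non-trivial $K(z)$-linear dependence; clearing denominators yields the desired $k^e$-Mahler equation for $f$.

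For the $\phi$-stability, dividing the Mahler equation by $p_d(z)$ expresses $\phi^d f$ as a $K(z)$-linear combination of $f, \phi f, \ldots, \phi^{d-1} f$, so $\phi^d f \in V$. For an arbitrary element $\sum_{i=0}^{d-1} \alpha_i \phi^i f \in V$, the semilinearity of $\phi$ gives $\phi\bigl(\sum_{i} \alpha_i \phi^i f\bigr) = \sum_{i} \phi(\alpha_i)\, \phi^{i+1} f$; each $\phi(\alpha_i)$ lies in $K(z)$, and each $\phi^{i+1} f$ (for $0 \le i \le d-1$) lies in $V$ because $\phi^1 f, \ldots, \phi^{d-1} f$ are generators and $\phi^d f \in V$. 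A short induction then yields $\phi^j f \in V$ for every $j \ge 0$.

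The only mildly subtle point is that $\phi$ is not $K(z)$-linear but semilinear with respect to $\phi|_{K(z)}$, which is why the $\phi$-stability of $V$ is not immediate; once this is handled, the bound on $\dim_{K(z)} V$ and the pigeonhole argument producing the linear dependence among $f, \phi^e f, \ldots, \phi^{de} f$ are routine, and the statement follows.
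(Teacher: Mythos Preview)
Your argument is correct. The paper does not give its own proof of this lemma but simply refers to \cite[Proposition 8.1]{adamczewski-bell17}; your approach---showing that the $K(z)$-span of $f,\phi f,\ldots,\phi^{d-1}f$ is stable under $\phi$ and then invoking a dimension count---is the standard one, and in fact is exactly the reasoning the paper itself sketches a few lines later when passing from linear Mahler systems back to Mahler equations.
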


\subsubsection{Linear Mahler systems} 
A power series $f(z)\in \power{K}$ is $k$-Mahler if and only if it satisfies a \emph{linear $k$-Mahler system}.
  That is, there exist $f_1(z):=f(z),\ldots,f_d(z) \in \power{K}$ and $A(z) \in \GL_d(K(z))$ such that
    \begin{equation}
      \label{eq:mahler-lin}
      \begin{pmatrix}
        f_1(z)\\
        \vdots \\
        f_d(z)
      \end{pmatrix}
      = A(z)
      \begin{pmatrix}
        f_1(z^k)\\
        \vdots \\
        f_d(z^k)
      \end{pmatrix}.
    \end{equation}
    Indeed, given $f(z)$ satisfying a $k$-Mahler equation $f(z) = r_1(z) f(z^k) + \cdots + r_d(z) f(z^{k^d})$ with 
    $r_1(z),\ldots\,r_d(z) \in K(z)$ and $r_d(z) \ne 0$, the vector
    \[
      \left( f(z),\ldots,f(z^{k^{d-1}})\right)^T
    \]
    satisfies an equation of the form \eqref{eq:mahler-lin} with $A(z)$ a companion matrix. 
    Conversely, iterating an equation of the form \eqref{eq:mahler-lin}, and using the invertibility of $A(z)$, 
    it follows that each $f_i(z^{k^j})$ is contained in the finite-dimensional $K(z)$-vector space spanned by 
    $f_1(z),\ldots,f_d(z)$. Hence the power series $f_1(z^{k^j})$, $j\geq 0$, are linearly dependent over $K[z]$.

\subsubsection{Analytic properties} Let us assume that $K=\Qbar$. If $f(z) \in \power{\Qbar}$ is a 
$k$-Mahler function, then there exists a number 
field $K$ with $f(z) \in \power{K}$. This is so because all sufficiently high coefficients of $f(z)$ are determined recursively by lower ones (see \cite[Chapitre 3.2.2]{dumas93} or \cite{adamczewski-faverjon18}). Let $\sv$ be a place of $K$ and 
$\abs{\cdot}_\sv$ be an absolute value associated with $\sv$.
    We let $K_\sv$ denote the completion of $K$ with respect to the absolute value $\vert\cdot\vert_\sv$.
    We also let $C_\sv$ denote the completion of the algebraic closure of $K_\sv$ and $\overline{K}$ the algebraic closure of $K$ in $C_\sv$.
    Recall that $C_\sv$ is both algebraically closed and complete. 
    The power series $f(z)$ is analytic in a neighborhood of $0$ in $C_\sv$ (see, for instance, \cite[Chapitre 3.3]{dumas93}).
    The Mahler equation then implies that $f(z)$ is meromorphic in the open unit disk $B_{\abs{\cdot}_\sv}(0,1)$ in $C_\sv$.
\subsection{Automatic and 
regular power series} \label{sec:back-regular}

We recall the notion of $k$-automatic and $k$-regular sequences.
For more background see Allouche and Shallit \cite{allouche-shallit03} or 
Berstel and Reutenauer \cite[Chapter 5]{berstel-reutenauer11}.

A sequence $(a_n)_{n \ge 0}$ is \emph{$k$-automatic} if there exists a finite automaton that, 
given as input the base $k$ representation of $n$, reaches an output state labeled by $a_n$. 
Equivalently, the sequence $(a_n)_{n\ge 0}$ is $k$-automatic if and only if its $k$-kernel is a finite set. 

\begin{definition}
  Let ${\bf a}:=(a_n)_{n \ge 0}$ be a sequence with values in a set $S$.
  The \emph{$k$-kernel} of $\bf a$ is
  \[
    \big\{\, \left(a_{k^en + r}\right)_{n \ge 0} : e \in \bZ_{\ge 0},\,  0 \le r \le k^e - 1 \,\big\}.
  \]
\end{definition}

Let us now restrict to sequences taking values in the field $K$.
Then a sequence $(a_n)_{n \ge 0}$ is said to be \emph{$k$-regular} if its $k$-kernel is contained in a finite-dimensional vector subspace of the $K$-vector space of all $K$-valued sequences. Obviously $k$-automatic sequences are $k$-regular.
A $k$-regular sequence is $k$-automatic if and only if it takes only finitely many values 
\cite[Theorem 16.1.5]{allouche-shallit03}. 
There are several other characterizations of $k$-regular sequences 
\cite[Theorems 16.1.3 and 16.2.3]{allouche-shallit03}. 
We recall one characterization that is relevant for our purpose.

\begin{definition}\label{def: lr} 
A \emph{linear representation} on the alphabet $\digits$ is a triple $(u,\mu,v)$ where $u \in K^{1 \times d}$, 
$v \in K^{d \times 1}$, and $\mu \colon \digits^* \to K^{d \times d}$ is a monoid homomorphism 
($d \in \bZ_{\ge 0}$). The linear representation is \emph{minimal} if the dimension $d$ is minimal 
amongst all $d' \ge 0$ and $d'$-dimensional linear representations $(u',\mu',v')$ such that
 $u\mu(w)v = u'\mu'(w)v'$ for all $w \in \digits^*$. Equivalently, $u \mu(\digits^*)$ spans 
 $K^{1\times d}$ and $\mu(\digits^*)v$ spans $K^{d \times 1}$. 
\end{definition}

\begin{theorem} \label{t:regular-linrep}
  Let $(a_n)_{n \ge 0}$ be a sequence taking values in $K$.
  The following statements are equivalent.
  \begin{equivenumerate}
  \item\label{regular:regular} The sequence $(a_n)_{n \ge 0}$ is $k$-regular.
  \item\label{regular:linrep} There exists a \textup{(}minimal\textup{)} linear representation $(u, \mu, v)$ 
  on the alphabet $\digits$ such that $a_{[w]_k} = u \mu(w) v$ for all words $w \in \digits^*$.
  \end{equivenumerate}
\end{theorem}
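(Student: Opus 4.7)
The plan follows a standard correspondence in the theory of rational series over a free monoid. Let $V$ denote the $K$-linear span of the $k$-kernel of $(a_n)_{n \ge 0}$, so that \ref{regular:regular} is equivalent to $d := \dim_K V < \infty$. The space $V$ is stable under the decimation operators $\sigma_r \colon (c_n)_{n \ge 0} \mapsto (c_{kn+r})_{n \ge 0}$ for $r \in \digits$, since applying a $\sigma_r$ to an element of the $k$-kernel lands in the $k$-kernel again.

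For \ref{regular:regular} $\Rightarrow$ \ref{regular:linrep}, I would fix a basis $b_1, \ldots, b_d$ of $V$ with $b_1 = (a_n)_{n \ge 0}$, represent each $\sigma_r$ by a matrix $\mu(r) \in K^{d \times d}$ acting on row coordinate vectors from the right, and extend multiplicatively via $\mu(w_0 \cdots w_l) := \mu(w_0) \cdots \mu(w_l)$. Iterating $\sigma_{w_l} \circ \cdots \circ \sigma_{w_0}$ and evaluating at $n=0$ picks out the entry of $(a_n)$ indexed by $[w_0 \cdots w_l]_k$, so choosing $u := (1, 0, \ldots, 0)$ and $v := (b_1(0), \ldots, b_d(0))^T$ yields $u \mu(w) v = a_{[w]_k}$ for every $w \in \digits^*$. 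For the converse, given such a representation, I would use the concatenation identity $[w w']_k = k^{|w'|} [w]_k + [w']_k$ to obtain
\[
  a_{k^e n + r} = u \mu(\langle n \rangle_k) \mu(w_r) v = u \mu(\langle n \rangle_k) v',
\]
where $w_r$ is the length-$e$ zero-padded base-$k$ expansion of $r$ and $v' := \mu(w_r) v \in K^{d \times 1}$. Decomposing $v'$ in a fixed basis $e_1, \ldots, e_d$ of $K^{d \times 1}$ expresses each $k$-kernel element as a $K$-linear combination of the $d$ sequences $n \mapsto u \mu(\langle n \rangle_k) e_j$, so the $k$-kernel lies in a subspace of dimension at most $d$.

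For the minimality assertion, in the construction above $u\mu(w)$ is the coordinate vector of $\sigma_w((a_n))$ in the basis $b_1, \ldots, b_d$; as $w$ varies over $\digits^*$, these vectors cover the coordinates of a spanning set of $V$, so $u\mu(\digits^*)$ spans $K^{1 \times d}$. Dually, $(\mu(w)v)_j = b_j([w]_k)$, and the linear independence of $b_1, \ldots, b_d$ as sequences forces these column vectors to span $K^{d \times 1}$. The equivalence of these two spanning conditions with minimality is a standard Sch\"utzenberger-style realization argument: whenever one of them fails, one can produce a representation of strictly smaller dimension computing the same function $w \mapsto a_{[w]_k}$, by passing to an appropriate quotient or subrepresentation. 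The main obstacle is purely notational, namely keeping straight the orientation of $\mu$ with respect to the author's convention that $[w]_k$ reads the most significant digit first (composition of decimation operators naturally yields an anti-homomorphism, which is why one must act on row vectors from the right); consistency under leading-zero padding, which might otherwise be worrisome, follows automatically from $\mu$ being a homomorphism applied to $0^e$.
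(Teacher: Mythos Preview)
The paper itself does not prove this statement; it simply cites \cite[Theorem 16.2.3]{allouche-shallit03}. Your argument is precisely the standard one from that reference, so the approaches agree.

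That said, your forward construction contains the very orientation slip you flag as the main obstacle. With your conventions (row coordinates, $\mu(r)$ representing $\sigma_r$ acting from the right), one computes
\[
  \big(\sigma_{w_l}\circ\cdots\circ\sigma_{w_0}\big)\big((a_n)\big)\Big|_{n=0}
  \;=\; a_{w_l k^l + \cdots + k w_1 + w_0}
  \;=\; a_{[w_l\cdots w_0]_k},
\]
not $a_{[w_0\cdots w_l]_k}$; hence your triple $(u,\mu,v)$ actually satisfies $u\mu(w)v = a_{[\tilde w]_k}$ with $\tilde w$ the reversal of $w$. The remedy is to swap the roles of $u$ and $v$: take $u=(b_1(0),\ldots,b_d(0))$, $v=e_1$, and let $\mu(r)$ be determined by the relation $(b_j(kn+r))_{j}=(b_j(n))_{j}\,\mu(r)$ (this is exactly what the paper does later in \cref{l:relations-to-linrep}). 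With this choice one has $u\mu(0)=u$, and it is \emph{this} identity --- not merely $\mu$ being a homomorphism --- that makes the representation insensitive to leading-zero padding. Your converse direction and the minimality discussion are correct as written.
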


\begin{proof}
  The result is proved in \cite[Theorem 16.2.3]{allouche-shallit03}.
\end{proof}

A power series $f(z) = \sum_{n=0}^\infty a_n z^n \in \power{K}$ is said to be \emph{$k$-automatic}, respectively  
\emph{$k$-regular} if the sequence $(a_n)_{n \ge 0}$ is $k$-automatic, respectively $k$-regular. 
\subsection{Becker power series}\label{subsec: becker}

The connection between $k$-regular sequences in the sense of Allouche and Shallit and 
coefficients of $k$-Mahler power series was studied by Becker, who proved that $k$-regular power series are $k$-Mahler \cite[Theorem 1]{becker94}. 
He also showed that the converse is false in general: a $k$-Mahler power series need \emph{not} be $k$-regular \cite[Proposition 1]{becker94}.
However, he did obtain a partial converse.
This motivates the next definition.

\begin{definition} \label{d:becker}
  A power series $f(z) \in \power{K}$ is a \emph{$k$-Becker function} (or, in short, \emph{$k$-Becker}) if there exist 
  a positive integer $d$ and polynomials $p_1(z),\ldots,p_d(z) \in K[z]$, not all zero, such that
  \[
    f(z) = p_1(z) f(z^k) + \cdots + p_d(z) f(z^{k^d}) \,.
  \]
\end{definition}

\begin{theorem}[{\cite[Theorem 2]{becker94}}] \label{t:becker}
  If $f(z) = \sum_{n=0}^\infty a_n z^n \in \power{\Qbar}$ is a $k$-Becker power series, then it is $k$-regular.
\end{theorem}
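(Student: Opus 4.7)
The plan is to encode the $k$-kernel of $(a_n)_{n\ge 0}$ at the level of power series using the section (or Cartier) operators $\Lambda_r\colon \power{K}\to\power{K}$, $r\in\digits$, defined by $\Lambda_r\bigl(\sum_n a_n z^n\bigr)=\sum_n a_{kn+r}z^n$. Under the identification of a sequence with its generating series, the $K$-span of the $k$-kernel of $(a_n)_{n\ge 0}$ is exactly the $K$-span of all iterated Cartier images of $f(z)$. Hence by the linear-representation characterization of $k$-regular sequences (\cref{t:regular-linrep}), it is enough to exhibit a finite-dimensional subspace $V\subseteq\power{K}$ that contains $f(z)$ and is stable under every $\Lambda_r$.

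The key identity is the product rule $\Lambda_r\bigl(p(z) h(z^k)\bigr)=\Lambda_r(p)(z)\cdot h(z)$ for $p(z)\in K[z]$ and $h(z)\in\power{K}$, together with the degree bound $\deg\Lambda_r(p)\le\lfloor\deg p/k\rfloor$; both are routine coefficient calculations. Writing the Becker equation as $f(z)=\sum_{j=1}^d p_j(z)f(z^{k^j})$ and setting $N\coloneqq\max_{1\le j\le d}\deg p_j(z)$, my candidate is
\[
  V\coloneqq\sum_{i=0}^{N}\sum_{j=0}^{d-1}K\cdot z^i f(z^{k^j}),
\]
a finite-dimensional subspace of $\power{K}$ containing $f(z)=z^0 f(z^{k^0})$.

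To check $\Lambda_r$-stability, I would treat a generator $z^i f(z^{k^j})$ by cases on $j$. For $j\ge 1$, write $z^i f(z^{k^j})=z^i\cdot H(z^k)$ with $H(z)=f(z^{k^{j-1}})$; the product rule gives $\Lambda_r(z^i f(z^{k^j}))=\Lambda_r(z^i)\cdot f(z^{k^{j-1}})$, a monomial of degree at most $N/k\le N$ times $f(z^{k^{j-1}})$, hence an element of $V$. For $j=0$, I substitute the Becker equation to get $z^i f(z)=\sum_{j=1}^d z^i p_j(z) f(z^{k^j})$, and then apply the product rule termwise to obtain
\[
  \Lambda_r\bigl(z^i f(z)\bigr)=\sum_{j=1}^d \Lambda_r\bigl(z^i p_j(z)\bigr)\cdot f(z^{k^{j-1}}).
\]
Here $\deg(z^i p_j(z))\le 2N$, so each $\Lambda_r(z^i p_j(z))$ is a polynomial of degree at most $\lfloor 2N/k\rfloor\le N$, and every summand lies in $V$.

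The only slightly subtle point, and the place where the hypothesis $k\ge 2$ is used, is the degree inequality $\lfloor 2N/k\rfloor\le N$ in the second case: the coupling of $z^i$ with $p_j(z)$ yields a polynomial of size up to $2N$, and one genuinely needs $k\ge 2$ for the Cartier image to land back in $V$. This also explains why the argument is specific to \emph{Becker} equations—where the coefficient of $f(z)$ is $1$—and does not extend to general $k$-Mahler equations, in accordance with Becker's companion observation that arbitrary $k$-Mahler series need not be $k$-regular.
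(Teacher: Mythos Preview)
Your argument is correct and is essentially Becker's original proof, which the paper cites without reproducing at \cref{t:becker} but does spell out later in the proof of \cref{l:becker-linrep}: one checks that the finite-dimensional space spanned by $z^i f(z^{k^j})$ for $0\le i\le N$ and small $j$ is closed under the Cartier operators, using exactly the product rule and degree bound you state. The only cosmetic difference is that the paper takes $0\le j\le d$ rather than $0\le j\le d-1$; your tighter range already suffices, since in the $j=0$ case the indices $j-1$ produced by the Becker equation lie in $\{0,\ldots,d-1\}$.
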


In view of these results, one may also ask for a precise characterization of $k$-regular power series 
in terms of $k$-Becker power series. 
This gives rise to a conjecture of Becker, recently settled in \cite{bell-chyzak-coons-dumas18}, 
and discussed in Section \ref{sec: becker}. 
For Mahler functions, there exists the following useful decomposition due to Dumas.

\begin{theorem}[{\cite[Th\'eor\`eme 31, p.153]{dumas93}}] \label{t:dumas-structure}
  Let $f(z) \in \power{K}$ be $k$-Mahler satisfying an equation
  \[
   p_0(z) f(z) + p_1(z) f(z^k) + \cdots + p_d(z) f(z^{k^d})=0 
  \]
  with $p_0(z),\ldots,p_d(z) \in K[z]$ and $p_0(0)=1$. 
  Then
  \[
    f(z) = \frac{g(z)}{\prod_{i=0}^\infty p_0(z^{k^i})} \,,
  \]
  where  $g(z) \in \power{K}$ is a  $k$-Becker power series.
\end{theorem}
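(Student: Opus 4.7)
The plan is to clear the denominator by working with $g(z) := P(z) f(z)$, where $P(z) := \prod_{i=0}^\infty p_0(z^{k^i})$. The content of the theorem will then be that $g$ automatically satisfies a Becker equation inherited from the Mahler equation for $f$.

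First, I would verify that the infinite product $P(z)$ makes sense as an element of $\power{K}$ with $P(0)=1$. Since $p_0(0) = 1$, each factor $p_0(z^{k^i})$ has constant term $1$; moreover, assuming $p_0(z)\ne 1$, the smallest nonzero order of $p_0(z^{k^i}) - 1$ is at least $k^i$, which tends to infinity, so the partial products converge in the $z$-adic topology of $\power{K}$. Directly from the definition one gets the key functional equation $P(z) = p_0(z) P(z^k)$, and iterating it the telescoping identity
\[
  P(z) = \left(\prod_{i=0}^{j-1} p_0(z^{k^i})\right) P(z^{k^j}) \quad \text{for every } j \geq 0.
\]
Since $P(0)=1$, $P(z)$ is a unit in $\power{K}$, so $g(z) := P(z) f(z)$ belongs to $\power{K}$ and $f(z) = g(z)/P(z)$.

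Next, I multiply the given Mahler equation by $P(z^k)$. The term $p_0(z)f(z)$ transforms via $p_0(z) P(z^k) = P(z)$ into $P(z) f(z) = g(z)$. For each $j \ge 1$, using the telescoping identity applied at $z^k$,
\[
  p_j(z)\, P(z^k) f(z^{k^j}) = p_j(z)\left(\prod_{i=1}^{j-1} p_0(z^{k^i})\right) P(z^{k^j}) f(z^{k^j}) = p_j(z)\left(\prod_{i=1}^{j-1} p_0(z^{k^i})\right) g(z^{k^j}),
\]
where all prefactors lie in $K[z]$. Rearranging yields the sought-after Becker equation
\[
  g(z) = -\sum_{j=1}^d p_j(z)\left(\prod_{i=1}^{j-1} p_0(z^{k^i})\right) g(z^{k^j}).
\]
Provided the original equation is non-trivial, some $p_j(z)$ with $j \ge 1$ is nonzero, and since $p_0(0)=1$ the matching Becker coefficient is then also nonzero, so $g(z)$ is genuinely $k$-Becker.

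I expect no substantive obstacle; the only point meriting attention is the observation that while $P(z)$ itself is a true infinite product (hence typically not polynomial), each of the quotients $P(z^k)/P(z^{k^j})$ that appears upon substitution is a \emph{finite} product of polynomials. This is exactly what guarantees that the Becker coefficients are in $K[z]$ rather than merely in $K(z)$, and it is the reason that the transformation $f \leadsto P\cdot f$ is the right one.
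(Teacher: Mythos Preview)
Your argument is correct and is the standard proof of this result. The paper itself does not supply a proof; it simply cites Dumas's thesis \cite[Th\'eor\`eme 31]{dumas93}, where the same multiply-by-$P(z^k)$ computation appears. There is nothing to compare: what you wrote is essentially Dumas's original argument.
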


\subsection{The Mahler denominator}\label{subsec: kden}

As is already hinted at by Becker's result, the polynomial $p_0(z)$ in a Mahler equation \eqref{eq: mahler1} 
will play a prominent role in our arguments. This prompts the following definition.

\begin{definition} \label{d:mahler-denominator}
  Let $f(z) \in \power{K}$ be a $k$-Mahler power series, and let
  \[
    \mathfrak I = \big\{\, p(z) \in K[z] : p(z)f(z) \in \sum_{i=1}^\infty K[z] f(z^{k^i}) \,\big\}\,.
  \]
  The \emph{$k$-Mahler denominator} of $f(z)$ is the unique generator $\mathfrak d(z) \in K[z]$ 
  of the ideal $\mathfrak I$, with the lowest non-zero coefficient of $\mathfrak d(z)$ being $1$.
\end{definition}

Since $K[z]$ is a principal ideal domain, there indeed exists such a generator.
Observe that $f(z)$ is $k$-Becker if and only if $\mathfrak d(z)\equiv1$. 
It is tempting to hope that the $k$-Mahler denominator is equal to the polynomial $p_0(z)$ in the minimal $k$-Mahler equation, that is the equation
$$
p_0(z) f(z) + p_1(z) f(z^k) + \cdots + p_d(z) f(z^{k^d})=0
$$ 
with $p_0(z)p_d(z) \ne 0$, minimal $d$, and coprime $p_0(z),\ldots,p_d(z)$.
While this is often the case, in general this is \emph{not} so. See \cref{exm: denominator} for a counterexample. 
By definition $\mathfrak d(z)$ divides $p_0(z)$.
It is tempting to hope that, to determine the types of roots of $\mathfrak d(z)$, 
it suffices to consider those of $p_0(z)$.
Unfortunately, this hope is also thwarted by the following example.
\begin{example} \label{exm: denominator}
  The equation 
  \[
  (z-1/2)f(z) - (z-1/8)(z^3-1/2)f(z^3) =0
  \]
  has only one non-zero solution (up to a scalar) and is minimal with respect to this solution. 
  However, this solution is $k$-regular because
  \[
  f(z)= (z-1/8)(z^2+1/2z+1/4)(z^9-1/2)f(z^9)\,.
  \]
  The expected pole at $1/2$ disappears after one iteration of the equation.
\end{example}

We will see with Theorems \ref{t-main:hn} and \ref{t-main:hlog2} that locating the roots of 
the $k$-Mahler denominator provides a  
characterization of those $k$-Mahler functions with $h(a_n)\in O(\log^2 n)$ and with $h(a_n)\in O(\log n)$. 
However, given a Mahler function with $h(a_n)\in O(\log n)$, 
its Mahler denominator is irrelevant in determining whether 
$h(a_n)\in O(\log\log n)$ or $h(a_n)\in O(1)$.  
For instance, all the three following $2$-regular functions 
$$
\prod_{n=0}^{\infty}(1-z^{2^n}) \;\; , \;\; \sum_{n= 0}^{\infty} b_nz^n \;\; , \;\;  
\left(\frac{1}{1-z} \right)^2\;\;,
$$ 
where we let $b_n$ denote the number of $0$'s in the binary expansion of $n$ (with $b_0=1$), 
have a trivial Mahler denominator ({\it i.e.}, $\mathfrak d(z) =1$).  
However, their coefficients have height  
in $O(1)$, $O\cap \Omega(\log\log n)$, and  
$O\cap \Omega(\log n)$, respectively. 
\section{Background about Mahler's method} \label{sec:back-mahler}

Let us consider a linear $k$-Mahler system:

\begin{equation}\label{eq: systeme}
  \begin{pmatrix}
     f_1(z) \\
     \vdots \\
     f_d(z)
   \end{pmatrix}
   = A(z)
   \begin{pmatrix}
     f_1(z^k) \\
     \vdots \\
     f_d(z^k)
  \end{pmatrix}
\end{equation}
 where $A(z)$ is a matrix in $\GL_d(\overline{\mathbb Q}(z))$ and $f_1(z),\ldots,f_d(z) \in \power{\Qbar}$.
 There exists a number field $K$ such that the $f_i(z)$'s belong to $\power{K}$ and 
 $A(z)\in \GL_d(K(z))$. Let $\sv$ be a place of $K$ and $\abs{\cdot}_\sv$ be an absolute 
 value associated with $\sv$. As before, we let $K_\sv$ denote the completion of $K$ with 
 respect to the absolute value $\abs{\cdot}_\sv$. We also let $C_\sv$ denote the completion 
 of the algebraic closure of $K_\sv$ and $\overline{K}$ the algebraic closure of $K$ in $C_\sv$. 

\begin{definition}A point $\alpha\in C_\sv$ is called \emph{singular} with respect to 
  \eqref{eq: systeme} if there exists a non-negative integer $n$ such that $\alpha^{k^n}$ 
  is a pole of one of the coefficients of the matrix $A(z)$ or of the matrix $A^{-1}(z)$. 
  We say that $\alpha$ is \emph{regular} otherwise, that is, $\alpha$ is regular if both 
  $A(\alpha^{k^n})$ and $A^{-1}(\alpha^{k^n})$ are well-defined for every non-negative integer $n$.
\end{definition}

We recall that the power series $f_1(z),\ldots,f_d(z)$ are meromorphic in the open unit disc of 
$C_\sv$ and analytic in some neighborhood of the origin. Moreover, if $\alpha$ is a regular 
point such that $\abs{\alpha}_\sv < 1$, then the functions $f_1(z),\ldots,f_d(z)$ 
are well-defined at $\alpha$. We also recall that given a field $K$, 
and elements $a_1,\ldots,a_m$ in some field extension of $K$,  
the notation ${\rm tr.deg}_{K}(a_1,\ldots,a_m)$ stands for the transcendence degree over 
$K$ of the field extension $K(a_1,\ldots,a_m)$. 

\begin{theorem}
\label{thm: nishioka}
Let  $f_1(z),\ldots,f_d(z) \in \power{K}$ be related by a Mahler system of the form \eqref{eq: systeme}.
Let $\alpha\in \overline{K}$, $0<\abs{\alpha}_\sv <1$ be a regular point with respect to this system.
Then
\[
\trdeg_{\overline{K}} (f_1(\alpha),\ldots,f_d(\alpha)) = \trdeg_{\overline{K}(z)} (f_1(z),\ldots,f_d(z))\, .
\]
\end{theorem}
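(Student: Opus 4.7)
The inequality
\[
\trdeg_{\overline{K}}(f_1(\alpha),\ldots,f_d(\alpha)) \le \trdeg_{\overline{K}(z)}(f_1(z),\ldots,f_d(z))
\]
is immediate: any algebraic relation among the $f_i(z)$ over $\overline{K}(z)$ can be cleared of denominators that do not vanish at $\alpha$ (possible since $\alpha$ is regular and $\abs{\alpha}_\sv < 1$, so that each $f_i(\alpha)$ is itself well-defined) and then evaluated at $\alpha$ to produce an algebraic relation over $\overline{K}$. The content of the theorem lies in the reverse inequality, which at the archimedean place is the classical theorem of Nishioka and in the stated generality at any place $\sv$ is due to Philippon \cite{philippon15}, as further refined in \cite{adamczewski-faverjon17}.

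The plan is to follow the Mahler--Nishioka scheme uniformly for every place $\sv$. Arguing by contrapositive, suppose the values $f_i(\alpha)$ satisfy a non-trivial algebraic relation $P \in \overline{K}[X_1,\ldots,X_d]$; the goal is to produce such a relation among the $f_i(z)$ over $\overline{K}(z)$. A Siegel-type pigeonhole construction yields an auxiliary polynomial $Q(z,X_1,\ldots,X_d) \in \overline{K}[z,X_1,\ldots,X_d]$ whose partial degrees in $z$ and in each $X_i$, and whose $\sv$-adic height, are controlled by large integer parameters, and such that
\[
F(z) \coloneqq Q\bigl(z, f_1(z), \ldots, f_d(z)\bigr)
\]
vanishes at $z=0$ to a very large order. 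Iterating the Mahler system \eqref{eq: systeme} at the points $\alpha^{k^n}$ then expresses $F(\alpha^{k^n})$ as the value of a transformed polynomial in $(f_1(\alpha), \ldots, f_d(\alpha))$; here regularity of $\alpha$ enters precisely to ensure that all the matrices $A(\alpha^{k^j})^{\pm 1}$ occurring along the way are well-defined. By minimality of the relation $P$, one arranges that $F(\alpha^{k^n}) \ne 0$ for all sufficiently large $n$, for otherwise one would descend along the Mahler iteration to a non-trivial relation in $\overline{K}(z)$ among the $f_i(z)$, contradicting the assumption.

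The proof is then concluded by a quantitative clash between two estimates on $\abs{F(\alpha^{k^n})}_\sv$. Since $F$ has high vanishing order at $0$ and $\abs{\alpha^{k^n}}_\sv = \abs{\alpha}_\sv^{k^n}$ decreases doubly exponentially, an analytic bound of the form $\abs{F(\alpha^{k^n})}_\sv \le c_1^{k^n}$ with $c_1 < 1$ holds for $n$ large. On the other hand, a product-formula / Liouville-type estimate, using the controlled height of $Q$ together with the height growth of the $f_i(\alpha^{k^n})$ under Mahler iteration, produces a lower bound of the form $\abs{F(\alpha^{k^n})}_\sv \ge c_2^{k^n}$. Calibrating the parameters in the Siegel construction so that the analytic bound dominates the arithmetic one yields the desired contradiction. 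The main obstacle is this calibration, and, in the non-archimedean case, replacing the usual Schwarz lemma by its ultrametric counterpart via Weierstrass preparation in $C_\sv$; this is the substantive step supplied by the $\sv$-adic extensions cited above, which allows the whole argument to be carried out at a single place $\sv$ independently of whether $\sv$ is archimedean.
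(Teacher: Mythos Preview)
Your sketch outlines the classical Nishioka scheme (auxiliary polynomial with high-order vanishing at $0$, analytic upper bound, arithmetic lower bound, contradiction), which is indeed the shape of the archimedean proof. The paper, however, does not reproduce this argument: it simply observes that the general-place statement follows from Philippon's algebraic independence criterion \cite{philippon86,philippon92}, which is already formulated for an arbitrary absolute value on a number field, by arguing exactly as Fernandes does in \cite{fernandes18}. This is a different route from the one you sketch: rather than redoing the Siegel/Schwarz/Liouville machinery at each place, one invokes a ready-made criterion that packages the elimination-theoretic core (Nesterenko's techniques) uniformly in $\sv$.

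Two concrete issues with your proposal. First, the citations are misplaced: \cite{philippon15} and \cite{adamczewski-faverjon17} concern the \emph{lifting theorem} (the paper's \cref{thm: lifting}), which is a \emph{consequence} of Nishioka's theorem, not a proof of it; they do not supply the $\sv$-adic Schwarz lemma or the non-archimedean adaptation you allude to. Second, your non-vanishing step (``by minimality of the relation $P$, one arranges that $F(\alpha^{k^n}) \ne 0$'') is not how the argument actually runs: in Nishioka's proof the non-vanishing and the quantitative lower bound are obtained via a zero/multiplicity estimate in the sense of Nesterenko, and this is precisely the hard ingredient encapsulated in Philippon's criterion. As written, your sketch has the right silhouette but does not name the key technical input, and points to the wrong references for the extension to all places.
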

 
In the case where $\abs{\cdot}_\sv$ is the usual absolute value on $\mathbb C$, this classical 
result is due to Nishioka \cite{nishioka90}. The proof of Nishioka is based on some techniques 
from commutative algebra introduced in the framework of algebraic independence by Nesterenko 
in the late Seventies. Recently, Fernandes \cite{fernandes18} observed that \cref{thm: nishioka} 
can also be deduced from a general algebraic independence criterion due to Philippon 
\cite{philippon86,philippon92}. This allows her to extend Nishioka's theorem in the framework 
of function fields of positive characteristic. Using the fact that the criteria obtained by Philippon 
also apply to any absolute value associated with a place of a number field (see for instance 
Theorem 2.11 in \cite{philippon86}), we can argue exactly as in the proof of Theorem 1.3 
of \cite{fernandes18} to prove \cref{thm: nishioka}.

\begin{theorem}
\label{thm: lifting}
Let $f_1(z),\ldots,f_d(z)\in \power{K}$ be related by a Mahler system of the form \eqref{eq: systeme}.
Let $\alpha\in\overline{K}$, $0<\abs{\alpha}_\sv <1$ be a regular point for this system.
Then for all homogeneous polynomials $P(X_1,\ldots,X_d)\in \overline{K}[X_1,\ldots,X_d]$ such that
\[
  P(f_1(\alpha),\ldots,f_d(\alpha))=0\, ,
\]
there exists $Q(z,X_1,\ldots,X_d)\in \overline{K}[z,X_1,\ldots,X_d]$, homogeneous in $X_1$, $\ldots\,$,~$X_d$, 
such that
\[
  Q(z,f_1(z),\ldots,f_d(z))=0
\]
and
\[
  Q(\alpha,X_1,\ldots,X_d)=P(X_1,\ldots,X_i).
\]
\end{theorem}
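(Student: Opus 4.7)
The plan is to follow the same route used to establish \cref{thm: nishioka}, namely to apply the algebraic independence criterion of Philippon in the version valid at any place of a number field, cf.\ Theorem 2.11 of \cite{philippon86}. The work of Fernandes \cite{fernandes18} provides an exact template: her Theorem 1.3 is a lifting statement of the same shape in the positive-characteristic setting, and its proof rests precisely on this criterion, so the argument can be transposed almost verbatim.

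First, I would introduce the homogeneous ideal
\[
  \mathfrak{I} = \bigl\{\, Q \in \overline{K}[z, X_1,\ldots,X_d] : Q \text{ is homogeneous in } X_1,\ldots,X_d \text{ and } Q(z, f_1(z), \ldots, f_d(z)) = 0 \,\bigr\}
\]
together with its specialization $\mathfrak{I}_\alpha = \{Q(\alpha, X_1, \ldots, X_d) : Q \in \mathfrak{I}\}$, which is a homogeneous ideal of $\overline{K}[X_1,\ldots,X_d]$. The statement amounts to asserting that $\mathfrak{I}_\alpha$ equals the full homogeneous vanishing ideal of the point $(f_1(\alpha) : \cdots : f_d(\alpha)) \in \mathbb{P}^{d-1}$. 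Suppose, for contradiction, that some homogeneous $P$ vanishes at this point but is not in $\mathfrak{I}_\alpha$. Iterating the Mahler system yields, for every $n \ge 0$, the identity
\[
  \bigl(f_1(\alpha), \ldots, f_d(\alpha)\bigr)^T = A(\alpha) A(\alpha^k) \cdots A(\alpha^{k^{n-1}}) \bigl(f_1(\alpha^{k^n}), \ldots, f_d(\alpha^{k^n})\bigr)^T,
\]
which is well-defined by regularity of $\alpha$. Since $\abs{\alpha}_\sv < 1$, the point $\alpha^{k^n}$ tends $\sv$-adically to $0$, and truncating the Taylor expansions of the $f_i$ at a suitable order produces approximations $(\beta_1^{(n)}, \ldots, \beta_d^{(n)}) \in \overline{K}^d$ of $(f_1(\alpha), \ldots, f_d(\alpha))$ with explicitly bounded degree and logarithmic height and with $\sv$-adic error decaying extremely rapidly in $n$. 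Evaluating the generators of $\mathfrak{I}_\alpha$ together with $P$ at these approximations and feeding the resulting estimates into Philippon's criterion yields a further algebraic relation that must hold on $(f_1(\alpha), \ldots, f_d(\alpha))$, and a careful count shows that this forces $P \in \mathfrak{I}_\alpha$, contradicting the assumption.

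The main obstacle is the bookkeeping of degrees, heights, and approximation errors along the iteration: the entries of $A(\alpha) A(\alpha^k) \cdots A(\alpha^{k^{n-1}})$ have degree and height growing at a rate comparable with $k^n$, whereas the error $\abs{f_i(\alpha) - \beta_i^{(n)}}_\sv$ must decay fast enough to beat the degree--height thresholds imposed by Philippon's criterion. Balancing these ultrametric or archimedean analytic estimates against the algebraic growth is the technical heart of the argument, but precisely the same balancing has already been carried out in \cite{fernandes18} and transfers directly to the present setting.
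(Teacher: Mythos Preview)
Your proposal conflates two distinct steps and, as a result, misidentifies the template. Fernandes's Theorem~1.3 in \cite{fernandes18} is \emph{not} a lifting statement: it is her positive-characteristic analogue of Nishioka's transcendence-degree equality, and in the present paper it serves as the model for \cref{thm: nishioka}, not for \cref{thm: lifting}. Philippon's criterion from \cite{philippon86,philippon92} is an algebraic-independence criterion; feeding approximation data into it yields information about $\trdeg_{\overline{K}}(f_1(\alpha),\ldots,f_d(\alpha))$, which is precisely the content of \cref{thm: nishioka}. It does not by itself produce the stronger conclusion that every homogeneous relation at $\alpha$ lifts to a functional relation. Your sketch of ``a careful count shows that this forces $P\in\mathfrak I_\alpha$'' is the crux, and nothing in the approximation machinery you describe supplies it.

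The paper's route is genuinely different. Once \cref{thm: nishioka} is in hand (via Philippon's criterion, as you say), the passage to \cref{thm: lifting} follows Philippon \cite{philippon15} and Adamczewski--Faverjon \cite[Proposition~3.1]{adamczewski-faverjon17}, and is purely commutative algebra rather than Diophantine approximation. The two ingredients that do the work are: (i) a theorem of Krull \cite{krull48} asserting that if $\mathfrak p\subseteq \overline K[z,X_0,\ldots,X_d]$ is a homogeneous absolutely prime ideal, then $\ev_\alpha(\mathfrak p)$ remains prime for all but finitely many $\alpha$; and (ii) the regularity of the field extension $\overline K(z)(f_1(z),\ldots,f_d(z))/\overline K(z)$, which in turn rests on the fact that a $k$-Mahler series over a characteristic-zero field is either rational or transcendental over $\overline K(z)$. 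Neither of these appears in your outline, and they are not recoverable from height/error bookkeeping alone. If you want to repair the argument, take \cref{thm: nishioka} as established and then carry out the Krull-specialization plus regularity argument of \cite[\S3]{adamczewski-faverjon17}; both ingredients are insensitive to the choice of place, which is why the archimedean proof transfers.
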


\begin{proof}
  In the case where $\abs{\cdot}_\sv$ is the usual absolute value on $\mathbb C$, this result is 
  due to Adamczewski and Faverjon in \cite[Theorem 1.4]{adamczewski-faverjon17}. 
  It is obtained as a consequence of the main result of Philippon in \cite{philippon15}, 
  which itself is based on Nishioka's theorem. The strategy to deduce this result from Nishioka's theorem 
  is detailed in \cite{adamczewski-faverjon17}, see Proposition 3.1.
  The arguments are based on basic facts from commutative algebra that also apply to 
  our more general framework. The two main ingredients that we have to be careful about 
  are the following ones.
 
\begin{enumerate}[label=(\roman*)]
  \item \label{af-fact:1}
    A result by Krull saying that if $\mathfrak{p}$ is a homogeneous ideal in $K[z,X_0,\ldots,X_d]$ 
    that is absolutely prime, then for all but finitely many $\alpha\in K$, the ideal $\ev_\alpha(\mathfrak p)$ 
    is a prime ideal of $K[X_0,\ldots,X_d]$. Here, we let $\ev_{\alpha}\colon K[z]\mapsto K$ denote 
    the evaluation map at $z=\alpha$. See \cite{krull48}.

  \item \label{af-fact:2}
    The fact that the field extension $L\coloneqq\overline{K}(z)(f_1(z),\ldots,f_d(z))$ is regular, 
    which means that an element of $L$ is algebraic over $\overline{K}(z)$ if and only if it 
    belongs to $\overline{K}(z)$. 
 \end{enumerate}

  We can use \ref*{af-fact:1} in our framework for Krull proved his result for any base field $K$.
  To prove that \ref*{af-fact:2} also holds true in our framework, we need to know that a $k$-Mahler 
  function in $\power{\overline{K}}$ is either rational or transcendental over $\overline{K}(z)$.
  There are several proofs for this result. For instance, Theorem 5.1.7 in \cite{nishioka96} provides 
  a proof in the case where $K$ is any field of characteristic $0$. Then we can argue exactly as in 
  the proof of Lemma 3.2 in \cite{adamczewski-faverjon17} to deduce that the field extension 
  $\overline{K}(z)(f_1(z),\ldots,f_d(z))$ is regular.
\end{proof}

As a corollary of \cref{thm: lifting}, we deduce the following result. 

\begin{corollary}
  \label{cor: lifting}
  Let $f_1(z),\ldots,f_d(z)\in \power{K}$ be related by a Mahler system of the form \eqref{eq: systeme}.
  Let us assume that $f_1(z),\ldots,f_d(z)$ are linearly independent over $\overline{K}(z)$. 
  Then there exists a real number $r$,  $0 < r < 1$, such that for every $\alpha\in\overline{K}$ with $0<\abs{\alpha}_\sv <r$, 
  the numbers $f_1(\alpha),\ldots,f_d(\alpha)$ are well-defined and linearly independent over $\overline{K}$. 
\end{corollary}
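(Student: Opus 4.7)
The plan is to reduce the corollary directly to \cref{thm: lifting}. I would proceed in two short steps: first, choose $r$ small enough that every $\alpha$ with $0 < \abs{\alpha}_\sv < r$ is automatically a regular point of the system; second, rule out any linear dependence among $f_1(\alpha), \ldots, f_d(\alpha)$ by lifting it to a linear dependence of $f_1(z), \ldots, f_d(z)$ over $\overline{K}[z]$, which would contradict the hypothesis.

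For the first step, the entries of $A(z)$ and $A^{-1}(z)$ together have only finitely many poles in $C_\sv$. Let $B \subseteq C_\sv \setminus \{0\}$ denote the set of non-zero poles, set $\rho \coloneqq \min_{\beta \in B} \abs{\beta}_\sv$ (or $\rho \coloneqq +\infty$ if $B$ is empty), and put $r \coloneqq \min(\rho, 1) > 0$. For any $\alpha \in C_\sv$ with $0 < \abs{\alpha}_\sv < r$ and any $n \ge 0$, one has $\alpha^{k^n} \ne 0$ and $\abs{\alpha^{k^n}}_\sv = \abs{\alpha}_\sv^{k^n} \le \abs{\alpha}_\sv < \rho$, so $\alpha^{k^n} \notin B$. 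Hence $\alpha$ is regular, and the values $f_1(\alpha), \ldots, f_d(\alpha)$ are well-defined by the remarks recalled at the beginning of this section.

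For the second step, suppose toward a contradiction that for some $\alpha \in \overline{K}$ with $0 < \abs{\alpha}_\sv < r$ there exist $c_1, \ldots, c_d \in \overline{K}$, not all zero, with $\sum_{j=1}^d c_j f_j(\alpha) = 0$. Then the linear form $P(X_1, \ldots, X_d) \coloneqq \sum_{j=1}^d c_j X_j$ is homogeneous of degree $1$ and vanishes at $(f_1(\alpha), \ldots, f_d(\alpha))$. Applying \cref{thm: lifting} produces a polynomial $Q(z, X_1, \ldots, X_d) \in \overline{K}[z, X_1, \ldots, X_d]$, homogeneous in the $X_j$'s with $Q(z, f_1(z), \ldots, f_d(z)) = 0$ and $Q(\alpha, X_1, \ldots, X_d) = P(X_1, \ldots, X_d)$; since the right-hand side of this last identity is of degree $1$, the $X$-degree of $Q$ must also equal $1$. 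Writing $Q(z, X) = \sum_{j=1}^d q_j(z) X_j$ with $q_j(z) \in \overline{K}[z]$, evaluation at $z = \alpha$ yields $q_j(\alpha) = c_j$, so the $q_j(z)$ are not all identically zero; this contradicts the assumed linear independence of $f_1(z), \ldots, f_d(z)$ over $\overline{K}(z)$.

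The argument is essentially a direct application of \cref{thm: lifting}, and I do not foresee a genuine obstacle. The one mild subtlety is the preservation of degree under lifting: the theorem permits $Q$ to have arbitrary degree in $z$, but homogeneity together with the specialization $Q(\alpha, X) = P(X)$ forces its $X$-degree to match that of $P$, which is exactly what makes the resulting dependence over $\overline{K}(z)$ linear in the $f_j(z)$ rather than merely polynomial.
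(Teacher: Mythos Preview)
Your proof is correct and follows the same approach as the paper: choose $r$ small enough that every $\alpha$ in the disk is regular, then apply \cref{thm: lifting} directly. The paper's own proof is a two-line sketch saying exactly this, and you have simply filled in the details, including the observation that homogeneity together with $Q(\alpha,X)=P(X)$ forces the $X$-degree of $Q$ to be $1$. One trivial nitpick: as written your $r=\min(\rho,1)$ could equal $1$, whereas the statement asks for $r<1$; replacing it by any strictly smaller value fixes this.
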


\begin{proof}
  We first observe that if $r$ is small enough, then $\alpha$ is a regular point 
  with respect to \eqref{eq: systeme} and the numbers $f_1(\alpha),\ldots,f_d(\alpha)$ are 
  thus well-defined. Then the result follows directly from \cref{thm: lifting}.  
\end{proof}

\section{Generic upper bound} \label{sec:generic}

To prove \ref{tms:generic} of \cref{thm: hgt}, giving a general upper bound on $h(a_n)$ 
for a Mahler function $f(z) = \sum_{n=0}^\infty a_n z^n \in \power{\Qbar}$, we use a classical recursion for the 
sequence $(a_n)_{n\ge 0}$ that is deduced from the Mahler equation. 
Since this is somewhat lengthy and the proof of the upper bound for $h(a_n)$ in case \ref{tms:hn} of 
\cref{thm: hgt}, where we assume $h(a_n) \in o(n)$, works similarly, we establish both 
these bounds at the same time.

  We need the following lemma. For archimedean absolute values, a more general result for finitely 
  generated semigroups of matrices can be found in \cite{bell05}.

\begin{lemma} \label{l:opnorm-growth}
  Let $d$ be a positive integer. Let $\abs{\cdot}$ be an absolute value on $\Qbar$, 
  and let $\norm{\cdot}$ be an operator norm on $\Qbar^{d \times d}$ with respect to $\abs{\cdot}$.
  Let $A \in \Qbar^{d\times d}$ be a matrix such that $\abs{\lambda} \le 1$ for every eigenvalue 
  $\lambda$ of $A$.
  Then
  \[
    \norm{A^n} \in
    \begin{cases}
      O(n^{d-1}) & \text{if $\abs{\cdot}$ is archimedean,} \\
      O(1) & \text{if $\abs{\cdot}$ is non-archimedean.}
    \end{cases}
  \]
\end{lemma}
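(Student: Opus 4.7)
The plan is to put $A$ in Jordan canonical form and then compute the powers of each block explicitly via the binomial formula. Since $\overline{\bQ}$ is algebraically closed, there exist $P \in \GL_d(\overline{\bQ})$ and a Jordan form matrix $J \in \overline{\bQ}^{d \times d}$ with $A = PJP^{-1}$, hence $A^n = PJ^nP^{-1}$. Submultiplicativity of the operator norm yields $\norm{A^n} \le \norm{P}\cdot\norm{J^n}\cdot\norm{P^{-1}}$, so, as $\norm{P}$ and $\norm{P^{-1}}$ are constants independent of $n$, the task reduces to bounding $\norm{J^n}$. Since $J$ is block diagonal and any two norms on the finite-dimensional space $\overline{\bQ}^{d \times d}$ are equivalent up to a multiplicative constant, it suffices to bound $\norm{B^n}$ for a single Jordan block $B$ of size $s \le d$ with eigenvalue $\lambda$ satisfying $\abs{\lambda} \le 1$.

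Writing $B = \lambda I_s + N$, with $N$ the standard nilpotent shift matrix satisfying $N^s = 0$, the commuting summands $\lambda I_s$ and $N$ give, via the binomial formula,
\[
    B^n = \sum_{k=0}^{s-1} \binom{n}{k} \lambda^{n-k} N^k.
\]
In the archimedean case, the triangle inequality combined with $\abs{\lambda}^{n-k} \le 1$, the polynomial bound $\abs{\binom{n}{k}} \le C\, n^{d-1}$ valid for every $0 \le k \le s-1$ (the constant $C$ depending only on the normalization of the archimedean absolute value), and the uniform-in-$n$ bound on $\norm{N^k}$, together give $\norm{B^n} \in O(n^{d-1})$. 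In the non-archimedean case, the ultrametric inequality gives
\[
    \norm{B^n} \le \max_{0 \le k < s} \abs*{\binom{n}{k}} \cdot \abs{\lambda}^{n-k} \cdot \norm{N^k},
\]
and using that $\binom{n}{k} \in \bZ$ forces $\abs{\binom{n}{k}} \le 1$ for any non-archimedean absolute value, together with $\abs{\lambda}^{n-k} \le 1$, the right-hand side is bounded above by the constant $\max_{0 \le k < s} \norm{N^k}$, which yields $\norm{B^n} \in O(1)$.

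The argument is largely routine linear algebra, and the only mildly delicate point is the reduction to bounding a single Jordan block, which relies on the equivalence of operator norms on the finite-dimensional matrix space; once that is in place, the key observation separating the two regimes is the elementary but decisive fact that integer binomial coefficients have non-archimedean absolute value at most $1$, which kills the polynomial factor $n^{d-1}$ present in the archimedean bound.
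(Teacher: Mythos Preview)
Your proof is correct and follows essentially the same approach as the paper: reduce to a single Jordan block, expand $(\lambda I_s + N)^n$ via the binomial formula, and use that $\binom{n}{k}\in\bZ$ has non-archimedean absolute value at most $1$. You spell out the reduction step (conjugation by $P$ and equivalence of norms) more explicitly than the paper, which simply asserts that it suffices to treat a Jordan block, but the substance is identical.
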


\begin{proof}
  It suffices to show the claim for a Jordan block $\lambda + N \in \Qbar^{s \times s}$ 
  where $s \le d$, where $\abs{\lambda} \le 1$, and where $N$ is the $s\times s$-matrix 
  with ones on the superdiagonal and zeroes everywhere else. Then $N^i$ is the matrix 
  that has ones on the $i$th superdiagonal and zeroes everywhere else, with $N^i = 0$ 
  for $i \ge s$. Thus
  \[
    (\lambda + N)^{n} =  \sum_{i=0}^{s-1} \binom{n}{i} \lambda^{n-i} N^{i} \qquad\text{for $n \ge s$.}
  \]
  Now $\norm{(\lambda + N)^n} \le C \abs[\big]{\binom{n}{s-1}}$ for some constant $C$, 
  and the claim follows.
\end{proof}

\begin{proposition} \label{p:upper-generic}
  Let $f(z) = \sum_{n=0}^\infty a_n z^n \in \power{\Qbar}$ be a $k$-Mahler function. 
  Then the following properties hold. 
  \begin{enumerate}
  \item \label{upper-generic:n} One has $h(a_n) \in O(n)$.
  \item \label{upper-generic:logn2} Suppose in addition that all roots of the $k$-Mahler denominator of $f(z)$ 
  are contained in $\{0\} \cup \ru$. Then $h(a_n) \in O(\log^2 n)$. 
  \end{enumerate}
\end{proposition}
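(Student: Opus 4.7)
The plan is to derive a scalar recurrence for $a_n$ from the Mahler equation, recast it in companion-matrix form, and then bound $\abs{a_n}_v$ place by place over a sufficiently large number field $K$ containing all data. Applying \cref{l:kill-0} (after a harmless finite shift $f \rightsquigarrow f_0$ that does not affect the asymptotics), one may assume the Mahler equation has $p_0(0) = 1$. Writing $p_0(z) = 1 + \gamma_1 z + \cdots + \gamma_s z^s$ and comparing coefficients of $z^n$ yields
$$
a_n = -\sum_{j=1}^s \gamma_j a_{n-j} + b_n,
$$
where $b_n$ is a $\Qbar$-linear combination of a bounded number of $a_m$ with $m \le n/k$. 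Setting $V_n = (a_n,\ldots,a_{n-s+1})^T$ and letting $A$ be the companion matrix of $p_0(z)$, one has $V_n = A V_{n-1} + B_n$, which unfolds to $V_n = A^n V_0 + \sum_{i=1}^n A^{n-i} B_i$. Enlarging $K$ if necessary, let $S \subseteq M_K$ be the finite set of places at which some coefficient of the equation, or one of the finitely many initial values of the recurrence, has absolute value greater than $1$; a routine induction then gives $\abs{a_n}_v \le 1$ for every $v \notin S$ and every $n$, so only the places in $S$ contribute to $h(a_n)$.

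For part \ref{upper-generic:n}, the trivial bound $\norm{A^n}_v \le \norm{A}_v^n$ combined with induction yields $\abs{a_n}_v \le C_v^n$ at each $v \in S$, and summing $n \log C_v$ over the finite set $S$ gives $h(a_n) \in O(n)$.

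For part \ref{upper-generic:logn2}, choose the Mahler equation with $p_0(z) = \mathfrak d(z)$, which is possible since $\mathfrak d(z) \in \mathfrak I$ (possibly after extracting a factor $z^r$ at $0$ in $\mathfrak d(z)$, which only shifts the index of the recurrence). The eigenvalues of $A$ are then the reciprocals of the non-zero roots of $\mathfrak d(z)$; by hypothesis these roots lie in $\ru$, and since $\ru$ is closed under inversion, all eigenvalues of $A$ themselves lie in $\ru$ and have absolute value $1$ at every place of $K$. \Cref{l:opnorm-growth} then yields $\norm{A^n}_v \in O(n^{s-1})$ for archimedean $v$ and $\norm{A^n}_v \in O(1)$ for non-archimedean $v$. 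Substituting these bounds into the unfolded recurrence, and using that $\abs{b_i}_v$ is controlled by a constant times $\max_{m \le i/k} \abs{a_m}_v$, one obtains
$$
\max_{m \le n} \log \max(1,\abs{a_m}_v) \le C \log n + \max_{m \le n/k} \log \max(1,\abs{a_m}_v) + C'
$$
for every $v \in S$ and every sufficiently large $n$. Iterating this inequality $\log_k n$ times produces an $O(\log^2 n)$ bound at each place of $S$, and summing over the finite set $S$ gives $h(a_n) \in O(\log^2 n)$. The main technical obstacle is the careful bookkeeping needed to reconcile the normalization $p_0(0) = 1$ (from \cref{l:kill-0}) with the hypothesis on the roots of $\mathfrak d(z)$, so as to guarantee that the eigenvalues of the chosen companion matrix are indeed roots of unity; once the correct Mahler equation is in hand, the place-by-place analysis above is routine.
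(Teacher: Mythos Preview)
Your proposal is correct and follows essentially the same approach as the paper: derive a recurrence from the Mahler equation, rewrite it in companion form, isolate a finite set $S$ of bad places, and then bound $\abs{a_n}_v$ for $v\in S$ using \cref{l:opnorm-growth} on the companion matrix. The paper does not invoke \cref{l:kill-0} but instead factors $p_0(z)=z^m(1+\alpha_1 z+\cdots)$ directly with $p_0=\mathfrak d$ throughout (exactly your ``extracting a factor $z^r$'' remark), and it organizes part~\ref{upper-generic:logn2} as a single strong induction proving $\norm{\bba_0(n)}_v\le n^{c\log n}$ rather than your iterated inequality $\log M(n)\le C\log n + \log M(n/k)+C'$; these are equivalent bookkeeping choices for the same argument.
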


\begin{proof}
  We may assume $f(z) \ne 0$.
  The series $f(z)$ satisfies a $k$-Mahler equation
  \[
    p_0(z) f(z) = p_1(z) f(z^k) + \cdots + p_d(z) f(z^{k^d})
  \]
  with $p_0(z)=\mathfrak d(z)$ the $k$-Mahler denominator and $d \ge 1$.
  Extend the sequence $a_n$ to rational indices by setting $a_r=0$ for all 
  $r \in \mathbb Q\setminus \bZ_{\ge 0}$.
  Let $s = \max \{\, \deg p_i(z) : i \in 0, \ldots, d \,\}$. 
  Let $p_0(z) = z^m(1 + \alpha_1 z + \cdots + \alpha_{s-m} z^{s-m})$ with $\alpha_1$, $\ldots\,$,~$\alpha_{s-m} \in \Qbar$, and, for $i \in \{1,\ldots,d\}$,  
  let $p_i(z) = \sum_{j=0}^s \beta_{i,j} z^j$ with $\beta_{i,j} \in \Qbar$.
  Comparing coefficients in the Mahler equation, we have
  \[
    a_{n-m} + \sum_{j=1}^{s-m} \alpha_j a_{n-m-j} = \sum_{i=1}^d \sum_{j=0}^s \beta_{i,j} a_{\frac{n-j}{k^i}} \qquad\text{for $n \in \bZ$.}
  \]
  Shifting the indices by $m$, we obtain
  \begin{equation}\label{eq: recurrence}
    a_{n} = \sum_{j=1}^{s-m} -\alpha_j a_{n-j} + \sum_{i=1}^d \sum_{j=0}^s \beta_{i,j} a_{\frac{n+m-j}{k^i}} 
    \qquad\text{for $n \in \bZ$.}
  \end{equation}
  If $n > m$, then $n > (n+m)/2 \ge (n+m-j)/k^i$ for $i \ge 1$ and $j \ge 0$.
  Thus, Equation~\eqref{eq: recurrence} allows the recursive computation of $a_n$ for $n > m$ from $a_0$, $\ldots\,$,~$a_m$.
  We now write this as a matrix equation.
  For $i \in \{0,\ldots,d\}$, let
  \begin{equation} \label{eq:def-ai}
    \bba_i(n) \coloneqq
    \begin{pmatrix}
      a_{n/k^i} \\
      a_{(n-1)/k^i} \\
      \vdots \\
      a_{(n-s)/k^i}
    \end{pmatrix}.
  \end{equation}

  Let $A$, $B_1$,~$\ldots\,$,~$B_d \in \Qbar^{(s+1) \times (s+1)}$ be given by
  \[
   B_i=
    \begin{pmatrix}
      \beta_{i,0} & \beta_{i,1} & \dots & \beta_{i,s} \\
      0 & 0 & \dots & 0 \\
      \vdots & \vdots & \ddots & \vdots \\
      0 & 0 & \dots & 0
    \end{pmatrix}
    \quad\text{and}\quad
    A =
    \begin{pmatrix}
      -\boldsymbol \alpha& 0  \\
      I_{s \times s} & 0_{s \times 1}
    \end{pmatrix},
  \]
  where $\boldsymbol \alpha = (\alpha_1, \ldots, \alpha_{s-m}, 0, \ldots, 0) \in \Qbar^{1 \times s}$, where $I_{s \times s}$ is the $s \times s$ identity matrix, and $0_{s \times 1}$ is the $s \times 1$ matrix containing zeroes.
  The characteristic polynomial of $A$ is $z^{s+1} + \alpha_1 z^s + \cdots + \alpha_{s-m} z^{m+1}  = z^{m+s+1} p_0(1/z) \in \Qbar[z]$.

  Now
  \[
    \bba_0(n) = A \bba_0(n-1) + \sum_{i=1}^d B_i \bba_i(n+m) \qquad\text{for $n > m$}.
  \]
  Fix $n_0 > m$.
  Recursively substituting for $\bba_0(n-j)$, for $n \ge n_0$, we get
  \[
    \bba_0(n) = A^{n-n_0} \bba_0(n_0) + \sum_{j=0}^{n-n_0-1} \sum_{i=1}^d A^{j} B_i \bba_i(n+m-j).
  \]

  The recursion formula for $(a_n)_{n\ge 0}$ implies that there is a number field $K$ containing 
  all $\alpha_i$, $\beta_{i,j}$ and $a_n$ for $n \ge 0$.
  For each place $\sv$ of $K$, let $\abs{\cdot}_\sv$ be the corresponding absolute value and 
  let $\norm{\cdot}_\sv$ be the induced maximum norm.
  We also write $\norm{\cdot}_\sv$ for the operator norm on $K^{(s+1)\times(s+1)}$.
  Let $\varepsilon_\sv(n)=n$ if $\sv$ is archimedean, and $\varepsilon_\sv(n) = 1$ if $\sv$ is non-archimedean.
  Then
  \begin{equation} \label{eq:recursion-bound}
    \norm{\bba_0(n)}_\sv \,\le\, \varepsilon_\sv(dn) \cdot \max\Big\{ \norm{A^{n-n_0}}_\sv \cdot
    \norm{\bba_0(n_0)}_\sv,\  \norm{A^j}_\sv \cdot \norm{B_i}_\sv \cdot \norm{\bba_i(n+m-j)}_\sv \,\Big\}
  \end{equation}
  where $i \in \{1,\ldots,d\}$ and $j \in \{0,\ldots,n-n_0-1\}$.
  Let $S$ be the \emph{finite} set consisting of all places $\sv$ that are archimedean or for which 
  $\norm{A}_\sv > 1$, or $\abs{a_n}_\sv > 1$ for some $n \in \{0,\ldots,n_0\}$, or $\norm{B_i}_\sv > 1$ 
  for some $i \in \{1,\ldots,d\}$. Note that, for $\sv \not \in S$, also $\norm{A^n}_\sv \le \norm{A}_\sv^n \le 1$ 
  for all $n \ge 1$. If $\sv \not \in S$, then, by induction, the bound in \eqref{eq:recursion-bound} implies 
  $\abs{a_n}_\sv \le \norm{\bba_0(n)}_\sv \le 1$ for all $n \ge n_0$.
  Therefore
  \[
    h(a_n) = \log \prod_{\sv \in S} \max\{1, \abs{a_n}_\sv\}.
  \]
  To show the claims, it suffices to obtain suitable bounds on $\abs{a_n}_\sv$ for $\sv \in S$.
  We first prove the bound in \ref*{upper-generic:n}.
  
  Let $\sv \in S$.
  We show $\abs{a_n}_\sv \le c^{n}$ for some $c \in \bR_{\ge 1}$ and all $n \ge 1$.
  First enlarge $n_0 > m$ and pick $c_0 \in \bR_{\ge 1}$ so that $dn c^{1+m/2}\le c^{n/6}$ for all $c \in \bR_{\ge c_0}$ and $n \ge n_0$.  
  Let $c \in \bR_{\ge c_0}$ be sufficiently large so that $\abs{a_n}_\sv \le c$ 
  for all $n \in \{1,\ldots, n_0\}$ and so that $\norm{B_i}_\sv \le c$ for all $i \in \{1,\ldots,d\}$.
  Enlarging $c$ further, also suppose $\norm{A}_\sv \le c^{1/3}$, so that $\norm{A^n}_\sv \le c^{n/3}$.

  We proceed by induction on $n$.
  For $1 \le n \le n_0$ the claim is true by choice of $c$.
  For $n > n_0$, the inequality~\eqref{eq:recursion-bound} gives
  \[
    \norm{\bba_0(n)}_\sv \le \varepsilon_\sv(dn) \cdot c^{(n-n_0)/3} \cdot
    \max\Big\{ \norm{\bba_0(n_0)}_\sv ,\ \norm{B_i}_\sv \cdot \norm{\bba_i(n+m-j)}_\sv \Big\},
  \]
  where $i \in \{1,\ldots,d\}$ and $j \in \{0,\ldots,n-n_0-1\}$.
  By induction hypothesis, we can use \eqref{eq:def-ai} to estimate $\norm{\bba_i(n+m-j)}_\sv \le c^{(n+m)/k} \le c^{(n+m)/2}$, and therefore 
  \[
    \norm{\bba_0(n)}_\sv \le dn \cdot c^{(n-n_0)/3} \cdot c \cdot c^{(n+m)/2} = d n c^{1+m/2} \cdot c^{-n_0/3} \cdot c^{5n/6} \le c^n.
  \]
  Thus $\abs{a_n}_\sv \le c^{n}$, as claimed. 

  \smallskip
  To show \ref*{upper-generic:logn2}, we now assume in addition that all roots of $p_0(z)$ are contained in $\{0\} \cup \ru$.
  Let $\sv \in S$.
  We show that there exists $c \in \bR_{\ge 1}$ such that $\abs{a_n}_\sv \le n^{c \log n}$ for all sufficiently large $n$.
  To this end, first note that we may choose $c_0 \in \bR_{\ge 1}$ and enlarge $n_0 > m$ so that $dc^3n^{s+1} \le n^{c \log(n)}$ for all $n \ge n_0$ and $c \ge c_0$.
  Now let $c \in \bR_{\ge c_0}$ be sufficiently large so that $\abs{a_n}_\sv \le c$ for all $n \in \{1,\ldots,n_0\}$ and $\norm{B_i}_\sv \le c$ for all $i \in \{1,\ldots,d\}$.
  By our assumption on the roots of $p_0(z)$, all the eigenvalues of $A$ are contained in $\{0\} \cup \ru$.
  Thus $\norm{A^n}_\sv \in O(n^s)$ by \cref{l:opnorm-growth}, and we can also assume 
  $\norm{A^n}_\sv \le c n^{s}$ for $n \ge 1$, enlarging $c$ if necessary.
 
  Since $k \ge 2 > 1+m/n_0$, enlarging $c$ further, we may also assume
  \[
    c (\log k  - \log(1+m/n_0))  > s + 1
  \]
  and even $dc^2 \le n_0^{c (\log k - \log(1+m/n_0)) - (s+1)}$.
  Then $dc^2 \le n^{c (\log k - \log(1+m/n)) - (s + 1)}$ for all $n \ge n_0$.

  We show $\norm{\bba_0(n)}_\sv \le n^{c \log n}$ for all $n \ge n_0$ by induction.
  Let $n \ge n_0$.
  The bound \eqref{eq:recursion-bound} gives
  \[
    \norm{\bba_0(n)}_\sv \le \varepsilon_\sv(dn) c n^{s} \cdot
    \max\Big\{ \norm{\bba_0(n_0)}_\sv,\ \norm{B_i}_\sv \cdot \norm{\bba_i(n+m-j)}_\sv \Big\},
  \]
  where $i \in \{1,\ldots,d\}$ and $j \in \{0,\ldots,n-n_0-1\}$.
  By induction hypothesis, we can use \eqref{eq:def-ai} to estimate
  \[
    \norm{\bba_i(n+m-j)}_\sv \,\le\, \max\Big\{ c, \Big(\frac{n+m}{k}\Big) ^{c \log\big( \frac{n+m}{k} \big)} \Big\}.
  \]
  With the latter bound of the maximum,
  \[
    \begin{split}
      \norm{\bba_0(n)}_\sv &\le dcn^{s+1} \cdot c \cdot \Big(\frac{n+m}{k}\Big)^{c \log\big(\tfrac{n+m}{k}\big)} \le dc^2 n^{s+1 + {c \log\big(\tfrac{n+m}{k}\big)}}  \\
      &\le dc^2 n^{s+1 + c \log(n) + c \log\big(1 + \tfrac{m}{n}\big) - c \log(k)} \le n^{c \log(n)};
    \end{split}
  \]
  in case $\norm{\bba_i(n+m-j)}_\sv \le c$ we have
  \[
    \norm{\bba_0(n)}_\sv \le dc^3 n^{s+1} \le n^{c \log(n)}. \qedhere
  \]
\end{proof}

We have thus established the general growth bound for the coefficients of a Mahler function: 
the height of the $n$th coefficient is at most linear in $n$.

\section{First gap: characterization of totally analytic Mahler functions}
\label{sec:hn}

In this section, we characterize $k$-Mahler functions 
$f(z) = \sum_{n=0}^\infty a_n z^n \in \power{\Qbar}$ with $h(a_n) \in o(n)$. 
Let $f(z)\in\power{\Qbar}$ be an $M$-function. There exists a number field $K$ such that $f(z)\in\power{K}$, 
and for every place $\sv$ of $K$, we may consider $f(z)$ as a power series over the algebraic closure $C_\sv$ 
of the completion $K_\sv$. Then $f(z)$ has a positive radius of convergence, and it is meromorphic in the open 
unit disk of $C_\sv$. Moreover, for all but finitely many places of $K$, the radius of convergence of 
$f(z)$ is equal to $1$.  Hence, an $M$-function is \emph{globally analytic}. 
We say that $f(z)$ is \emph{totally analytic} if, for every place $\sv$ of $K$, $f(z)$ 
  is analytic in the open unit disk of $C_\sv$.

\begin{theorem} \label{t-main:hn}
  Let $f(z) = \sum_{n=0}^\infty a_n z^n \in \power{\Qbar}$ be a $k$-Mahler function.
  The following statements are equivalent.
  \begin{equivenumerate}
  \item \label{hn:hon} We have $h(a_n) \in o(n)$.
   \item \label{hn:denominator} Every non-zero root of the $k$-Mahler denominator of $f(z)$ belongs to $\ru$ 
   ({\it i.e.}, is a root of unity).
  \item \label{hn:radius} The power series $f(z)$ is totally analytic.
  \item \label{hn:hOlog2} We have $h(a_n) \in O(\log^2n)$.
  \end{equivenumerate}
\end{theorem}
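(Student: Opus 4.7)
The plan is to prove the four equivalences via the cycle $\ref*{hn:hOlog2} \Rightarrow \ref*{hn:hon} \Rightarrow \ref*{hn:radius} \Rightarrow \ref*{hn:denominator} \Rightarrow \ref*{hn:hOlog2}$. Two of these are immediate: $\ref*{hn:hOlog2} \Rightarrow \ref*{hn:hon}$ since $\log^2 n \in o(n)$, and $\ref*{hn:denominator} \Rightarrow \ref*{hn:hOlog2}$ is precisely \cref{p:upper-generic}\ref{upper-generic:logn2}. The two genuine implications are $\ref*{hn:hon} \Rightarrow \ref*{hn:radius}$ and the main step $\ref*{hn:radius} \Rightarrow \ref*{hn:denominator}$.

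For $\ref*{hn:hon} \Rightarrow \ref*{hn:radius}$, fix a number field $K$ with $f(z) \in \power{K}$. Every summand in $h(a_n) = \sum_{\sv \in M_K} \log \max\{1, \abs{a_n}_\sv\}$ is non-negative, so $\log \max\{1, \abs{a_n}_\sv\} \le h(a_n) \in o(n)$ at every place $\sv$. Consequently $\limsup_n \abs{a_n}_\sv^{1/n} \le 1$, so $f(z)$ has radius of convergence at least $1$ at every place, and is therefore analytic on the open unit disk of $C_\sv$.

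For the main implication $\ref*{hn:radius} \Rightarrow \ref*{hn:denominator}$, I argue by contradiction and assume $\alpha \in \Qbar \setminus \{0\}$ is a root of $\mathfrak d(z)$ with $\alpha \notin \ru$. By Kronecker's theorem together with the product formula, some place $\sv$ of the relevant number field $K$ satisfies $\abs{\alpha}_\sv < 1$; since $\alpha \notin \ru$, the iterates $\alpha^{k^j}$ are pairwise distinct and, at $\sv$, tend to $0$. Fix a minimal $k$-Mahler equation $\mathfrak d(z) f(z) = \sum_{i=1}^d p_i(z) f(z^{k^i})$, so that $\gcd(\mathfrak d, p_1, \ldots, p_d) = 1$, and pass to the companion Mahler system $\mathbf f(z) = A(z) \mathbf f(z^k)$ with $\mathbf f(z) = (f(z), f(z^k), \ldots, f(z^{k^{d-1}}))^\top$. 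Since the poles of $A$ and $A^{-1}$ lie among the finite set of roots of $\mathfrak d \cdot p_d$, there is $N$ large enough that $\beta \coloneqq \alpha^{k^N}$ is a regular point with $\abs{\beta}_\sv$ below the threshold supplied by \cref{cor: lifting}; that corollary then gives the $\Qbar$-linear independence of the components of $\mathbf f(\beta)$. Let $j_0$ be the largest index $j \in \{0, \ldots, N-1\}$ with $\mathfrak d(\alpha^{k^j}) = 0$, which exists because $\mathfrak d(\alpha) = 0$, and set $\alpha' \coloneqq \alpha^{k^{j_0}}$. Total analyticity makes $f(\alpha'^{k^i})$ defined for all $i$, so substituting $z = \alpha'$ into the Mahler equation yields the non-trivial relation $\mathbf{p}(\alpha')^\top \mathbf f(\alpha'^k) = 0$ with $\mathbf{p}(\alpha') = (p_1(\alpha'), \ldots, p_d(\alpha'))^\top$. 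By maximality of $j_0$, each companion matrix $A(\alpha^{k^j})$ for $j > j_0$ is defined, so iterating the system gives $\mathbf f(\alpha'^k) = M\, \mathbf f(\beta)$ for some $M \in \Qbar^{d \times d}$ expressed as a product of these matrices; provided $M$ is invertible, the $\Qbar$-linear independence of $\mathbf f(\beta)$ forces $\mathbf{p}(\alpha') = 0$, which together with $\mathfrak d(\alpha') = 0$ contradicts the coprimality of $\mathfrak d, p_1, \ldots, p_d$.

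The principal obstacle I foresee is securing the invertibility of $M$. Since $\det A(\alpha^{k^j}) = \pm p_d(\alpha^{k^j})/\mathfrak d(\alpha^{k^j})$, invertibility along the iteration path also requires $p_d(\alpha^{k^j}) \ne 0$; if the last iterate at which $p_d$ vanishes lies beyond $j_0$, some intermediate $A(\alpha^{k^j})$ is singular and $M$ may fail to be invertible. Resolving this will likely require either redefining $j_0$ to be the last iterate at which $\mathfrak d \cdot p_d$ vanishes and carefully splitting into cases according to which polynomial is responsible, or applying \cref{l:mahler-power} to pass to the $k^e$-Mahler equation of $f$ for a suitable $e$ that decouples the vanishing loci of $\mathfrak d$ and $p_d$ along the sequence $(\alpha^{k^j})$.
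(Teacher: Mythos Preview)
Your cycle and the two easy implications match the paper exactly, and your argument for \ref{hn:hon}$\Rightarrow$\ref{hn:radius} is fine (the paper packages it as \cref{l:radius-1}). The difficulties are all in \ref{hn:radius}$\Rightarrow$\ref{hn:denominator}, and here there are two genuine gaps in your proposal, plus a cleaner route taken by the paper.

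First, you conflate two different equations. You write ``fix a minimal $k$-Mahler equation $\mathfrak d(z)f(z)=\sum p_i(z)f(z^{k^i})$'', but the leading coefficient $p_0$ of the \emph{minimal-order} equation need not equal $\mathfrak d$; the paper even gives a counterexample (\cref{exm: denominator}). What is true is $\mathfrak d\mid p_0$. If instead you mean some equation with $\mathfrak d$ in front, its order $d$ need not be minimal, and then $f(z),f(z^k),\ldots,f(z^{k^{d-1}})$ may be linearly \emph{dependent} over $\overline K(z)$, so \cref{cor: lifting} does not apply. The paper works with the genuine minimal-order equation and supplies the missing step you omit: it proves $\overline K(z)$-linear independence of $f(z),\ldots,f(z^{k^{d-1}})$ by applying Cartier operators to any hypothetical shorter relation (see the argument in \cref{prop: radius}).

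Second, the obstacle you flag---invertibility of the product of companion matrices along the orbit---is real for your approach, and your proposed fixes are not obviously sufficient. The paper avoids this entirely by iterating the \emph{equation} rather than the \emph{matrix}: from the minimal equation one produces, for suitably large $n_0$, a relation
\[
r_0(z)f(z)=r_1(z)f(z^{k^{n_0}})+\cdots+r_d(z)f(z^{k^{n_0+d-1}})
\]
with $r_0,\ldots,r_d$ coprime. Since $r_0\in\mathfrak I$, one has $\mathfrak d\mid r_0$ and hence $r_0(\lambda)=0$. Assuming total analyticity, evaluate at $\lambda$; \cref{cor: lifting} gives $\Qbar$-linear independence of $f(\lambda^{k^{n_0}}),\ldots,f(\lambda^{k^{n_0+d-1}})$, forcing all $r_i(\lambda)=0$ and contradicting coprimality. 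No matrix inversion is needed because the coprimality is rebuilt \emph{after} iterating, by clearing the common factor of the new coefficients.
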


The crucial step here lies in showing that all roots of the $k$-Mahler denominator are 
contained in $\{0\} \cup \ru$. This relies on the deep results on Mahler's method by 
Nishioka, Philippon, Fernandes, as well as by Adamczewski and Faverjon that were recalled in 
\cref{sec:back-mahler}. But first we need the following easy lemma.

\begin{lemma} \label{l:radius-1}
  Let $f(z)=\sum_{n=0}^\infty a_n z^n \in \power{\Qbar}$ be a power series that is \emph{not} a polynomial.
  If $h(a_n) \in o(n)$, then $f(z)$ has radius of convergence $1$ for every absolute value of $\Qbar$.
\end{lemma}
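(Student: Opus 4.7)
The plan is to exploit the basic fact that, for $\alpha \in \Qbar^\times$ and any place $\sv$ of a number field $K \ni \alpha$, the logarithmic Weil height controls the absolute value at $\sv$ from both sides:
\[
  \abs{\log \abs{\alpha}_\sv} \leq h(\alpha).
\]
The upper bound $\log \max\{1, \abs{\alpha}_\sv\} \leq h(\alpha)$ is immediate from the defining sum for $h(\alpha)$; applying it to $1/\alpha$ and using that $h$ is invariant under inversion (a consequence of the product formula) gives the matching lower bound.

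With this in hand, I would argue as follows. First, the hypothesis $h(a_n) \in o(n)$ yields, for every place $\sv$ and every $n$ with $a_n \ne 0$, the estimate $\abs{\log \abs{a_n}_\sv} = o(n)$, and hence $\abs{a_n}_\sv^{1/n} \to 1$ as $n \to \infty$ along the subsequence $N \coloneqq \{\, n \ge 0 : a_n \ne 0 \,\}$. Second, since $f(z)$ is not a polynomial, $N$ is infinite, and therefore
\[
  \limsup_{n \to \infty} \abs{a_n}_\sv^{1/n} = 1.
\]
The Cauchy--Hadamard formula then gives radius of convergence exactly $1$ with respect to $\abs{\cdot}_\sv$. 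Since an arbitrary absolute value of $\Qbar$ restricts to a place of any number field containing the relevant coefficients, the same argument applies uniformly.

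I do not foresee a serious obstacle. The two points to keep in mind are that the non-polynomial hypothesis is essential to guarantee that $N$ is infinite (without it the $\limsup$ could vanish and the radius could be $\infty$, as for a polynomial), and that the conclusion is insensitive to the specific normalization of $\abs{\cdot}_\sv$, since any two equivalent absolute values differ by a fixed positive exponent and the property \emph{$\limsup = 1$} is preserved under such rescaling.
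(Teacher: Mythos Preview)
Your proof is correct and uses essentially the same ingredients as the paper: the two-sided control $\abs{\log \abs{a_n}_\sv} \le h(a_n)$ (which the paper splits into $h(a_n) \ge \log\abs{a_n}$ and $h(a_n)=h(a_n^{-1}) \ge \log\abs{a_n^{-1}}$), together with Cauchy--Hadamard and the non-polynomial hypothesis to ensure infinitely many nonzero coefficients. The only difference is cosmetic: the paper argues by contradiction, treating the cases $\rho>1$ and $\rho<1$ separately, whereas you compute $\limsup \abs{a_n}_\sv^{1/n}=1$ directly; the mathematical content is the same.
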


\begin{proof}
  Fix an absolute value $\abs{\cdot}$ on $\Qbar$, and let
  \[
    \rho = \Big( \limsup_{n\to \infty} \sqrt[n]{\abs{a_n}} \Big)^{-1} \in \bR_{\ge 0} \cup \{ \infty\}
  \]
  be the radius of convergence of $f(z)$.

  We show $\rho=1$ by contradiction.
  Suppose first $\rho > 1$.
  Choose $\rho' \in \bR_{>0}$ with with $1 < \rho' < \rho$.
  Since $\limsup_{n\to\infty} \sqrt[n]{\abs{a_n}} = 1 / \rho < 1/\rho'$, we have 
  $\abs{a_n} \le (1/\rho')^{n}$ for all sufficiently large $n$.
  Since $f$ is not a polynomial, there exist infinitely many such $n$ with $a_n \ne 0$, and for these 
  $\abs{a_n^{-1}} \ge (\rho')^n$.
  One has $h(a_n)=h(a_n^{-1})$ as a consequence of the product formula.
  It follows that $h(a_n) = h(a_n^{-1}) \ge \log \abs{a_n^{-1}} \ge n \log(\rho')$, in contradiction to our assumption.

  Suppose now $\rho < 1$, and choose $\rho < \rho' < 1$.
  Then, for all $n_0 \ge 0$, there exists an $n \ge n_0$ such that $\abs{a_n} \ge (1/\rho')^n$.
  Hence $h(a_n) \ge \log \abs{a_n} \ge n \log(1/\rho')$ again yields a contradiction.
\end{proof}

We also require the notion of Cartier operators in the next proof.

\begin{definition} \label{d:cartier}
For every $r \in \digits$, we define a \emph{Cartier operator} 
$\Delta_r \colon \power{\Qbar} \to \power{\Qbar}$ by
\[
  \Delta_r\Big(\sum_{n=0}^\infty a_nz^n\Big) = \sum_{n=0}^\infty a_{kn+r} z^n\,.
\]
\end{definition}
Note that, if $p(z) \in \Qbar[z]$, then $\deg(\Delta_r(p(z))) \le (\deg p)/k$.
Moreover, if $j \ge 1$, then a short computation yields
\[
  \Delta_r\Big(p(z) f(z^{k^j})\Big) = \Delta_r(p(z)) \cdot f(z^{k^{j-1}}).
\]

\begin{proposition} \label{prop: radius}
  Let $f(z) \in \power{\Qbar}$ be $k$-Mahler and let $\mathfrak d(z) \in \Qbar[z]$ be its $k$-Mahler denominator.
  If $\lambda \in \Qbar$ is a root of $\mathfrak d(z)$, and $\abs{\cdot}$ is an absolute value on $\Qbar$ with 
  $0 < \abs{\lambda} < 1$, then the radius of convergence of $f(z)$ with respect to this absolute value 
  is strictly less than $1$.
\end{proposition}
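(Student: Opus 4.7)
My plan is to argue by contradiction: assume that $f(z)$ has radius of convergence at least $1$ with respect to $\abs{\cdot}_\sv$, so $f$ is analytic on the open unit disk of $C_\sv$ and every $f(\lambda^{k^j})$, $j \geq 0$, lies in $\overline{K}$. By the definition of the $k$-Mahler denominator, there exist polynomials $r_1(z), \ldots, r_d(z) \in K[z]$ with
\[
\mathfrak d(z) f(z) \,=\, \sum_{i=1}^{d} r_i(z) f(z^{k^i}).
\]
The minimality of $\mathfrak d$ forces $\gcd(\mathfrak d, r_1, \ldots, r_d) = 1$: otherwise, cancelling a non-unit common factor $g$ on both sides would give $(\mathfrak d/g) f(z) \in \sum_{i \geq 1} K[z] f(z^{k^i})$ with $\mathfrak d/g$ a proper divisor of $\mathfrak d$, violating minimality. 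Since $(z - \lambda) \mid \mathfrak d(z)$, some $r_i(\lambda) \neq 0$, so evaluating at $\lambda$ yields the nontrivial $\overline{K}$-linear relation $\sum_{i=1}^{d} r_i(\lambda) f(\lambda^{k^i}) = 0$.

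Next, I invoke the minimal $k$-Mahler equation $p_0 f + p_1 f(z^k) + \cdots + p_D f(z^{k^D}) = 0$ and its associated linear Mahler system $\vec f(z) = A(z) \vec f(z^k)$, with $\vec f = (f(z), f(z^k), \ldots, f(z^{k^{D-1}}))^T$ and $A(z) \in \GL_D(K(z))$. Minimality forces $f(z), f(z^k), \ldots, f(z^{k^{D-1}})$ to be linearly independent over $\overline{K}(z)$. By \cref{cor: lifting}, there is $r \in (0, 1)$ such that for every $\alpha \in \overline{K}$ with $0 < \abs{\alpha}_\sv < r$, the values $f(\alpha), f(\alpha^k), \ldots, f(\alpha^{k^{D-1}})$ are $\overline{K}$-linearly independent. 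Pick $N$ large enough that $\alpha := \lambda^{k^N}$ satisfies $\abs{\alpha}_\sv < r$ and that no $\lambda^{k^j}$ with $j \geq N$ is a root of $p_0(z) p_D(z)$; this is possible because $\abs{\lambda^{k^j}}_\sv \to 0$, so each fixed nonzero root of $p_0 p_D$ is attained for only finitely many $j$.

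The heart of the argument is to propagate the relation from $\lambda$ to $\alpha$. The shifted minimal equations at $z = \lambda^{k^j}$, for $j \geq N$, recursively express each $f(\lambda^{k^m})$ with $m \geq N$ as an explicit $\overline{K}$-linear combination of the basis $f(\alpha), f(\alpha^k), \ldots, f(\alpha^{k^{D-1}})$. For the finitely many indices $i \leq d$ with $i < N$, I extend the reduction backwards, stepping over the finitely many intermediate $\lambda^{k^j}$ where $p_0 p_D$ vanishes by working in the rational function field $K(z)$ and evaluating telescoped products $A(z) A(z^k) \cdots A(z^{k^{s-1}})$ in the limit, in the same spirit as the proof of \cref{cor: lifting}. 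Substituting these expressions into $\sum_{i=1}^d r_i(\lambda) f(\lambda^{k^i}) = 0$ yields an $\overline{K}$-linear relation $\sum_{\ell=0}^{D-1} \gamma_\ell \, f(\alpha^{k^\ell}) = 0$.

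The main obstacle is to verify that $(\gamma_0, \ldots, \gamma_{D-1}) \neq 0$. If these coefficients all vanished, then unraveling the reduction would exhibit the identity $\mathfrak d(z) f(z) - \sum_i r_i(z) f(z^{k^i}) \equiv 0$ as a $K(z)$-linear combination of shifted minimal equations whose coefficients are regular at $\lambda$; clearing denominators and dividing out the common factor $(z - \lambda)$ on both sides would then produce a polynomial $\tilde{\mathfrak d}(z)$ strictly dividing $\mathfrak d(z)$ with $\tilde{\mathfrak d}(z) f(z) \in \sum_{i \geq 1} K[z] f(z^{k^i})$, contradicting the minimality of $\mathfrak d$. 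Hence $(\gamma_0, \ldots, \gamma_{D-1}) \neq 0$, which contradicts the $\overline{K}$-linear independence guaranteed by \cref{cor: lifting}, completing the proof.
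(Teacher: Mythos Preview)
Your overall strategy coincides with the paper's: assume $f$ is analytic on the unit disk, use \cref{cor: lifting} to get $\overline{K}$-linear independence of $f(\lambda^{k^{N}}),\ldots,f(\lambda^{k^{N+D-1}})$ for large $N$, and produce a nontrivial relation among these values from the vanishing of $\mathfrak d(\lambda)$. But two steps in your execution are not justified.

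First, the ``backward reduction'' for $i<N$ is where things break. To express $f(\lambda^{k^{i}})$ in terms of $f(\lambda^{k^{i+1}}),\ldots,f(\lambda^{k^{i+D}})$ you must divide by $p_0(\lambda^{k^{i}})$, and nothing prevents $p_0(\lambda^{k^{j}})=0$ for some intermediate $1\le j<N$; indeed $\lambda^{k^{j}}$ may itself be a root of $\mathfrak d$. Your appeal to ``telescoped products $A(z)A(z^{k})\cdots A(z^{k^{s-1}})$ in the limit'' does not help: the entries of this product are rational functions whose poles at $\lambda^{k^{i}}$ have no reason to cancel, and there is no limiting process available over $\overline{K}$. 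Second, the non-vanishing of $(\gamma_0,\ldots,\gamma_{D-1})$ is not established. Your claim that vanishing would let you factor $(z-\lambda)$ out of a polynomial identity and obtain a smaller $\tilde{\mathfrak d}$ presupposes a concrete polynomial identity that you have not produced; the reduction you describe happens numerically at $z=\lambda$, not symbolically in $K[z]$, so there is nothing to divide.

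The paper avoids both problems in one stroke. Rather than evaluating and propagating at $\lambda$, it iterates the minimal equation \emph{symbolically} to obtain
\[
r_0(z)f(z)+r_1(z)f(z^{k^{n_0}})+\cdots+r_d(z)f(z^{k^{n_0+d-1}})=0
\]
with $r_0,\ldots,r_d\in K[z]$ coprime. Since this relation witnesses $r_0\in\mathfrak I$, the Mahler denominator divides $r_0$, so $r_0(\lambda)=0$. Evaluating at $\lambda$ (now legitimate: the $r_i$ are polynomials) and invoking linear independence forces $r_i(\lambda)=0$ for all $i$, contradicting coprimality. No division by possibly-zero values is needed, and the non-vanishing comes for free from coprimality. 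I recommend you replace your propagation argument with this symbolic iteration.
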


\begin{proof}
  Let us first consider a minimal homogeneous equation associated with $f(z)$:
  \begin{equation}\label{eq: min}
    p_0(z)f(z) + p_1(z)f(z^k)+\cdots+p_d(z)f(z^{k^d})=0 \,.
  \end{equation}
  By minimal, we mean that $p_0(z)p_d(z) \ne 0$, that $d$ is minimal, and that $p_0(z)$, $\ldots\,$,~$p_d(z) \in \Qbar[z]$ are relatively prime.
  If $f(z) = 0$, then we can take $d=0$ and $p_0(z) = 1$.
  Thus also $\mathfrak d(z)=1$ and the claim is trivially true.
  We may assume $f(z) \ne 0$, so that $d \ge 1$.

  As in \cref{sec:back-mahler}, we can assume that there exists a number field $K$ containing $\lambda$ and 
  all coefficients of $f(z)$ as well as the coefficients of the polynomials $p_0(z)$, $\ldots\,$,~$p_d(z)$.
  Further, $\abs{\cdot}$ on $K$ arises from a place $\sv$ of $K$, and $C_\sv$ is the algebraic closure 
  of the completion $K_\sv$, with $\overline{K}$ denoting the algebraic closure of $K$ inside $C_\sv$.

  We claim that  $f(z)$, $\ldots\,$,~$f(z^{k^{d-1}})$ are linearly independent over $\overline{K}(z)$.
  By way of contradiction, suppose that this is not the case.
  Then there exist $q_0(z)$, $\ldots\,$,~$q_{d-1}(z) \in \overline{K}[z]$, not all zero, such that
  \[
    q_0(z) f(z) + \cdots + q_{d-1}(z) f(z^{k^{d-1}}) = 0.
  \]
  Since the coefficients of $q_0(z)$, $\ldots\,$,~$q_d(z)$ are all algebraic over $\bQ$, we can  assume $q_0(z)$, $\ldots\,$,~$q_d(z) \in \Qbar[z]$.
  Let $t \in \{0,\ldots,d-1\}$ be minimal with $q_{t}(z) \ne 0$, say $q_t(z) = \sum_{j=m}^M b_j z^j$ with $b_m$,~$b_M \ne 0$.
  Let $m = \sum_{\nu=0}^\infty k^\nu m_\nu$ with $m_\nu \in \digits$ be the base-$k$ expansion of $m$ (with all but finitely many $m_\nu$ being zero).
  Setting $\Delta = \Delta_{m_t}\circ \cdots \circ \Delta_{m_0}$, we see $\Delta(q_t(z)) \ne 0$.
  Then
  \[
    \Delta(q_t(z)) f(z) + \cdots + \Delta(q_{d-1}(z)) f(z^{k^{d-1-t}}) = 0
  \]
  is a $k$-Mahler equation for $f(z)$, contradicting the minimality of $d$.
  Therefore $f(z)$, $\ldots\,$,~$f(z^{k^{d-1}})$ must be linearly independent over $\overline{K}(z)$, as claimed.

  Now
  \[
    \begin{pmatrix}
      f(z) \\
      \vdots \\
      f(z^{k^{d-1}})
    \end{pmatrix}
    = A(z)
    \begin{pmatrix}
      f(z^k) \\
      \vdots \\
      f(z^{k^d})
    \end{pmatrix}
  \]
  with
  \[
    A(z) =
    \begin{pmatrix}
      -\frac{p_1(z)}{p_0(z)} & -\tfrac{p_2(z)}{p_0(z)} & \cdots & -\tfrac{p_{d-1}(z)}{p_0(z)}  & -\tfrac{p_{d}(z)}{p_0(z)} \\
      1 & 0 & \cdots & 0 & 0 \\
      0 & 1 & \ddots & \vdots & \vdots \\
      \vdots & \ddots & \ddots & 0 & \vdots \\
      0 & \cdots & 0 & 1 & 0 \\
    \end{pmatrix} \in \GL_d(K(z)).
  \]
  By \cref{cor: lifting}, we have that
  \[
    f(\lambda^{k^n}), f(\lambda^{k^{n+1}}), \ldots, f(\lambda^{k^{n+d-1}})
  \]
  are linearly independent over $\overline{K}$, as soon as $n$ is large enough, say $n\geq n_0$.
  
  Now, iterating Equation \eqref{eq: min}, we obtain an equation of the form
  \begin{equation}\label{eq: iterate}
    r_0(z)f(z) + r_1(z) f(z^{k^{n_0}})+\cdots+r_d(z)f(z^{k^{n_0+d-1}})=0 \,,
  \end{equation}
  where we assume without any loss of generality that $r_0(z)$, $\ldots\,$,~$r_d(z) \in K[z]$ are relatively prime. 
  We claim that $f(z)$ has a pole at $\lambda$. 
  Let us assume by contradiction that $f(z)$ is well-defined at $\lambda$.
  Since $\mathfrak d(\lambda)=0$, it follows that $r_0(\lambda)=0$ and we get that 
  \[
  r_1(\lambda)f(\lambda^{k^{n_0}})+\cdots+r_d(\lambda)f(\lambda^{k^{n_0+d-1}})=0 \,.
  \]
  Since $f(\lambda^{k^{n_0}})$, $\ldots\,$,~$f(\lambda^{k^{n_0+d-1}})$ are linearly independent over 
  $\overline{K}$, all the $r_i(z)$ should vanish at $\lambda$, contradicting the fact that they are 
  relatively prime. Hence, $f(z)$ has a pole at $\lambda$ and its radius of convergence is therefore 
  less than $1$.  
\end{proof}

We now have the ingredients to characterize Mahler functions with $h(a_n) \in o(n)$.

\begin{proof}[Proof of \cref{t-main:hn}]
  Let $f(z) = \sum_{n=0}^\infty a_n z^n \in \power{\Qbar}$ be $k$-Mahler.
  
  \ref{hn:hon}$\,\Rightarrow\,$\ref{hn:radius}
  Suppose $h(a_n) \in o(n)$.
  By \cref{l:radius-1} the series $f(z)$ has radius of convergence at least $1$ with respect to every absolute 
  value $\abs{\cdot}$ on $\Qbar$.

  \ref{hn:radius}$\,\Rightarrow\,$\ref{hn:denominator}
  Suppose now $f(z)$ has radius of convergence at least $1$ with respect to every absolute value $\abs{\cdot}$ 
  on $\Qbar$. Let $\mathfrak d(z) \in \Qbar[z]$ be the $k$-Mahler denominator of $f(z)$.
  Suppose there exists $\lambda \in \Qbar \setminus \{0\}$ with $\mathfrak d(\lambda)=0$ such that 
  $\lambda$ is \emph{not} a root of unity.
  By Kronecker's Theorem there exists an absolute value $\abs{\cdot}$ on $\Qbar$ for which $\abs{\lambda} < 1$.
  By \cref{prop: radius}, the series $f(z)$ has radius of convergence strictly less than $1$ for this absolute value, a contradiction.

  \ref{hn:denominator}$\, \Rightarrow\,$\ref{hn:hOlog2}
  Suppose all roots of the $k$-Mahler denominator $\mathfrak d(z) \in \Qbar[z]$ of $f(z)$ are contained in 
  $\{0\} \cup \ru$. Then $h(a_n) \in O(\log^2n)$ by \ref{upper-generic:logn2} of \cref{p:upper-generic}.
  
  \ref{hn:hOlog2}$\, \Rightarrow\, $\ref{hn:hon} Clearly $h(a_n) \in O(\log^2n)$ implies $h(a_n) \in o(n)$.
\end{proof}

\section{Second gap: characterization of regular Mahler functions}\label{sec:hlog2}

In this section, we characterize Mahler functions 
$f(z) = \sum_{n=0}^\infty a_n z^n \in \power{\Qbar}$ with $h(a_n) \in o(\log^2 n)$. 
The following result also proves Case~\ref{aut-reg:reg} of Theorem \ref{thm: aut-reg}.

\begin{theorem} \label{t-main:hlog2}
  Let $f(z) = \sum_{n=0}^\infty a_n z^n \in \power{\Qbar}$ be a $k$-Mahler function.
  The following statements are equivalent.
  \begin{equivenumerate}
  \item \label{hlog2:holog2} We have $h(a_n) \in o(\log^2n)$.
  \item \label{hlog2:roots} Every non-zero root of the $k$-Mahler denominator of $f(z)$ 
  belongs to $\ru_k$. 
  \item \label{hlog2:regular}  The power series $f(z)$ is  $k$-regular. 
  \item \label{hlog2:hOlog} We have $h(a_n) \in O(\log n)$.
  \end{equivenumerate}
\end{theorem}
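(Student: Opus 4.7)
The plan is to establish the cycle \ref{hlog2:holog2} $\Rightarrow$ \ref{hlog2:roots} $\Rightarrow$ \ref{hlog2:regular} $\Rightarrow$ \ref{hlog2:hOlog} $\Rightarrow$ \ref{hlog2:holog2}, paralleling the scheme used for \cref{t-main:hn}. The closing implication $\ref{hlog2:hOlog} \Rightarrow \ref{hlog2:holog2}$ is trivial since $O(\log n) \subseteq o(\log^2 n)$. For $\ref{hlog2:regular} \Rightarrow \ref{hlog2:hOlog}$, \cref{t:regular-linrep} lets me write $a_n = u\mu(\langle n\rangle_k)v$, where $\mu(\langle n\rangle_k)$ is a product of at most $\lceil\log_k(n+1)\rceil$ matrices drawn from the finite set $\{\mu(0),\dots,\mu(k-1)\}\subseteq\Qbar^{d\times d}$, whose entries have bounded Weil height. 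Submultiplicativity of the Weil height on products together with the standard bound on the height of a bounded sum yields $h(a_n) \in O(\log n)$.

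For $\ref{hlog2:roots} \Rightarrow \ref{hlog2:regular}$, I would combine \cref{t:dumas-structure} with an algebraic lemma. After normalizing $p_0(0) = 1$ via \cref{l:kill-0}, Dumas writes $f(z) = g(z)/\prod_{i\geq 0}p_0(z^{k^i})$ with $g(z)$ $k$-Becker, and hence $k$-regular by \cref{t:becker}. The key lemma I would prove is: if $p_0(z) \in \Qbar[z]$ with $p_0(0) = 1$ has all its non-zero roots in $\ru_k$, there exists $q(z) \in \Qbar[z]$ with $q(0) = 1$ satisfying $p_0(z)q(z) = q(z^k)$. The point is that for $\zeta \in \ru_k$ the forward orbit of $\zeta$ under $z\mapsto z^k$ is eventually periodic in $\ru\setminus\ru_k$, while $\zeta$ itself lies strictly off that attracting cycle. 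Writing $e$ for the root-multiplicity function of $p_0$ and $R$ for that of $q$, the sought identity becomes the recurrence $R_{\eta^k} = R_\eta + e_\eta$, which one solves for a non-negative integer-valued $R$ of finite support by fixing $R$ to a sufficiently large constant on each cycle and propagating backward through pre-images. Telescoping $p_0(z^{k^i}) q(z^{k^i}) = q(z^{k^{i+1}})$ then gives $\prod_{i\geq 0}p_0(z^{k^i}) = 1/q(z)$ in $\power{\Qbar}$, so $f(z) = g(z)q(z)$ is $k$-regular because $k$-regular series form a ring containing $\Qbar[z]$.

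For $\ref{hlog2:holog2} \Rightarrow \ref{hlog2:roots}$, since $o(\log^2 n) \subseteq o(n)$, \cref{t-main:hn} already forces every non-zero root of $\mathfrak{d}(z)$ to lie in $\ru$. I would argue by contradiction: suppose some root $\zeta$ lies in $\ru \setminus \ru_k$, so $\zeta^{k^j} = \zeta$ for some $j\geq 1$. By \cref{l:mahler-power}, $f(z)$ is also $k^j$-Mahler, and since every $k^j$-Mahler equation is in particular a $k$-Mahler equation, the $k^j$-Mahler denominator $\mathfrak{D}(z)$ is divisible by $\mathfrak{d}(z)$, so $\mathfrak{D}(\zeta) = 0$. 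Dumas's decomposition applied to $f$ as a $k^j$-Mahler series yields $f(z) = G(z)/\prod_{i\geq 0}P_0(z^{k^{ji}})$ with $P_0(\zeta) = 0$; since $\zeta^{k^{ji}} = \zeta$ for every $i$, every factor $P_0(z^{k^{ji}})$ vanishes at $z = \zeta$, and the denominator has zeros at $\zeta$ of arbitrarily large order, mimicking the singular structure of $\prod 1/(1-z^{k^i})$ at $z = 1$. Adapting Mahler's asymptotic analysis \cite{mahler40} of the partition generating function should give $h(a_n) \in \Omega(\log^2 n)$ along a suitable subsequence, contradicting the hypothesis.

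The main obstacle will be this lower bound. Unlike in \cref{t-main:hn}, where loss of analyticity inside the unit disk immediately produced a linear lower bound via the product formula, here $|\zeta| = 1$ is on the boundary, and one must transform the accumulating pre-images of $\zeta$ into a \emph{quantitative} $\log^2 n$ lower bound on the Weil height. I expect this to require delicate Cauchy-integral or Tauberian estimates mirroring Mahler's treatment of partitions into $k$-powers, but robust enough to handle an arbitrary polynomial $P_0$ in place of $1-z$ and a general fixed point $\zeta$ in place of $1$.
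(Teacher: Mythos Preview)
Your overall cycle \ref{hlog2:holog2}$\Rightarrow$\ref{hlog2:roots}$\Rightarrow$\ref{hlog2:regular}$\Rightarrow$\ref{hlog2:hOlog}$\Rightarrow$\ref{hlog2:holog2} matches the paper exactly, and the arguments for \ref{hlog2:regular}$\Rightarrow$\ref{hlog2:hOlog} and \ref{hlog2:holog2}$\Rightarrow$\ref{hlog2:roots} are essentially those of \cref{l:growth-regular} and \cref{p:goodroots} (the latter via \cref{p:coprime-k-lower}, which carries out precisely the delicate asymptotic analysis at the boundary root that you anticipate as the main obstacle).

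There is, however, a genuine gap in your argument for \ref{hlog2:roots}$\Rightarrow$\ref{hlog2:regular}. The ``key lemma'' you propose---that whenever $p_0(z)\in\Qbar[z]$ with $p_0(0)=1$ has all non-zero roots in $\ru_k$, there exists a \emph{polynomial} $q(z)$ with $p_0(z)q(z)=q(z^k)$---is false. Take $k=2$ and $p_0(z)=(1+z)(1-iz)$, whose roots $-1$ and $-i$ both lie in $\ru_2$. Your multiplicity recurrence $R_{\xi^k}=R_\xi+e_\xi$ forces $R_1=R_{-1}+1$, $R_{-1}=R_{-i}+1$, and $R_{-1}=R_i$; the last equation (coming from the preimage $i$ of $-1$, where $e_i=0$) propagates the value $R_{-1}$ unchanged to $i$ and then to all its preimages among the primitive $8$th, $16$th, \ldots\ roots of unity. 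Finite support thus forces $R_{-1}=0$, whence $R_{-i}=-1<0$. A direct degree count confirms this: $\deg q(z^2)=2\deg q$ while $\deg(p_0 q)=2+\deg q$ forces $\deg q=2$, and one checks by hand that no quadratic $q$ satisfies the identity. The difficulty is that different backward paths from a cycle to the leaves accumulate different totals $\sum e_\xi$, so no single constant on the cycle can make $R$ simultaneously non-negative and finitely supported.

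The paper avoids this by not asking for equality or rationality at all: in \cref{p:hlog2} it uses the fact (quoted from \cite[Proposition~7.8]{adamczewski-bell17}) that for each $\zeta\in\ru_k$ the infinite product $\prod_{n\ge 0}(1-\zeta^{-1}z^{k^n})^{-1}$ is itself $k$-regular (generally \emph{not} rational), and then invokes closure of $k$-regular series under the Cauchy product. A repairable variant of your idea appears in \cref{sec: becker}: one can find a polynomial $q$ with $p_0(z)q(z)\mid q(z^{k})$ (divisibility, not equality), which suffices to make $f(z)/q(z)$ $k$-Becker via \cref{lem: easy}, but your telescoping identity $\prod_i p_0(z^{k^i})=1/q(z)$ does not survive this weakening.
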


We already know that, if $h(a_n) \in o(\log^2n)$, then every root $\zeta$ of the $k$-Mahler 
denominator $\mathfrak d(z)$ of $f(z)$ is contained in $\{0\} \cup \ru$. 
The brunt of the work in this section lies in showing $\zeta \in \{0\} \cup \rucommon$, that is, 
if $\zeta \ne 0$, then $\zeta^{k^j} \ne \zeta$ for all $j > 0$. This requires a careful analysis of the 
asymptotics of $f(z)$ at such a hypothetical root of $\mathfrak d(z)$ to establish a contradiction.

We start with some estimates.

\begin{lemma}
  Let $f(z) = \sum_{n = 0}^\infty a_n z^n \in \power{\bR}$ be a power series with non-negative coefficients.
  Suppose there exists $c \in \bR_{>0}$ with $a_n \le n^{c \log n}$ for all sufficiently large $n$.
  Let $c'$,~$\varepsilon \in \bR_{>0}$ with $c' > 2c$.
  Then there exists $t_0 \in [0,1)$ such that
  \[
    \sum_{n=\lceil m \log^2m \rceil}^\infty a_n t^n < 
    \varepsilon \qquad\text{for all $t \in [t_0,1)$ and $m \ge \frac{c'}{1-t}$}\,\cdot
  \]
\end{lemma}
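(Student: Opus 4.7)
The plan is to bound each summand via the elementary inequality $t^n \le e^{-n(1-t)}$ (from $\log t \le t-1$) combined with the hypothesis $a_n \le n^{c\log n} = e^{c \log^2 n}$, so that
\[
  a_n t^n \le e^{\, c \log^2 n - n(1-t)}
\]
for all sufficiently large $n$. The goal is to choose $t_0$ close to $1$ so that, writing $N \coloneqq \lceil m \log^2 m \rceil$, the exponent $c \log^2 n - n(1-t)$ is dominated by $-n(1-t)/2$ for every $n \ge N$, every $t \in [t_0, 1)$, and every $m \ge c'/(1-t)$; the tail then collapses into a geometric series that one can sum and make arbitrarily small.

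To establish this domination, I would study $\varphi(n) \coloneqq n(1-t) - 2c \log^2 n$ and verify $\varphi(n) \ge 0$ for $n \ge N$. At $n = N$, the inequalities $N(1-t) \ge m \log^2 m \cdot c'/m = c' \log^2 m$ and $\log N = \log m + 2 \log \log m + O(1)$ combine to give $\varphi(N) \ge (c' - 2c - o(1)) \log^2 m$ as $m \to \infty$. This is where the main subtlety lies: the cross-term $4 \log m \cdot \log \log m$ arising from expanding $\log^2 N$ is of lower order than $\log^2 m$, but must be absorbed into the slack $c' - 2c$, which is precisely why the hypothesis is $c' > 2c$ rather than merely $c' > c$. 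For monotonicity, $\varphi'(n) = (1-t) - 4c \log n / n$ is non-negative for $n \ge N$ because $\log n / n$ is decreasing on $[e, \infty)$ while $(1-t) \ge c'/m$ dominates $4c \log N / N \sim 4c / (m \log m)$ for $m$ large.

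With $\varphi \ge 0$ on $[N, \infty)$ one obtains $a_n t^n \le e^{-n(1-t)/2}$, and therefore
\[
  \sum_{n \ge N} a_n t^n \le \frac{e^{-N(1-t)/2}}{1 - e^{-(1-t)/2}} \le \frac{4m}{c'} \cdot e^{-(c'/2) \log^2 m},
\]
using $1 - e^{-x} \ge x/2$ for $x \in (0, 1]$ and $(1-t) \ge c'/m$. As a function of $m$ alone, the right-hand side is a polynomial in $m$ against a super-polynomially small factor, hence tends to $0$ as $m \to \infty$ and is eventually monotonically decreasing. It therefore suffices to pick $t_0 \in [0, 1)$ so that the smallest admissible value $m = \lceil c'/(1-t_0) \rceil$ already produces a bound strictly below $\varepsilon$; the estimate then holds uniformly for all $t \in [t_0, 1)$ and all $m \ge c'/(1-t)$. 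Outside of the $\varphi(N) > 0$ check, everything is routine geometric-series bookkeeping.
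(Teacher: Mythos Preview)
Your proposal is correct and follows essentially the same approach as the paper: both bound $a_n t^n$ by $\exp(c\log^2 n - \tfrac{1}{2}n(1-t))$ (the paper keeps $\log t$ a bit longer before invoking $\log t \le t-1 \le -c'/m$), verify the exponent is nonpositive for $n \ge m\log^2 m$ by checking the value and derivative at the left endpoint, and then sum the resulting geometric tail. The only cosmetic difference is that the paper sums $t^{n/2}$ and bounds $1/(1-\sqrt{t})$, whereas you sum $e^{-n(1-t)/2}$ and bound $1/(1-e^{-(1-t)/2})$; the final estimates are equivalent.
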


\begin{proof}
  By assumption, for large $n$,
  \[
    a_n t^n  \le \exp(c \log^2 n + n \log t).
  \]
  We will show that, for sufficiently large $m$ (ensured by choice of $t_0$) and  $n \ge m \log^2 m$,
  \begin{equation} \label{eq:1}
   c \log^2 n  + n \log t \le \tfrac{1}{2} n \log t.
 \end{equation}
  We first show how to conclude the proof using \cref{eq:1}.
  Then $a_n t^n \le t^{n/2}$ and
  \[
    \sum_{n=\lceil m \log^2 m \rceil}^\infty a_n t^n \le \frac{t^{\lceil m \log^2 m \rceil/2}}{1-\sqrt{t}} 
    = \frac{(1 + \sqrt{t}) t^{\lceil m \log^2 m \rceil/2}}{1-t} < \frac{2t^{\lceil m \log^2 m \rceil/2}}{1-t}\, \cdot
  \]
  We need to bound the right side by a constant.
  Using $m \ge c'/(1-t)$ and $t \in [0,1)$, we have
  \begin{equation} \label{eq:limit}
    \log\left( \frac{2t^{\lceil m \log^2 m \rceil/2}}{1-t} \right) \le \log 2 + \frac{c'\log t}{2(1-t)} 
    \log^2\left(\frac{c'}{1-t}\right) - \log(1-t).
  \end{equation}
  Recall $\lim_{t\to 1} \log t /(1-t) = -1$ and $\log^2(c'/(1-t)) \sim \log^2(1-t)$ for $t\to 1^{-}$.
  Hence the right side of \cref{eq:limit} tends to $-\infty$ as $t \to 1^{-}$.
  Choosing $t_0 \in [0,1)$ sufficiently close to $1$, therefore
  \[
    \sum_{n=\lceil m \log^2 m \rceil}^\infty a_n t^n \le \varepsilon 
    \qquad\text{for $t \in [t_0,1)$ and $m \ge c'/(1-t)$.}
  \]

  It remains to show the bound in \cref{eq:1}. The latter 
  is equivalent to $c \log^2 n + \tfrac{1}{2} n \log t \le 0$.
  Since $\log t \le t - 1 \le -c'/m$, it suffices to show
  \begin{equation} \label{l:ineq}
    c \log^2 n - n \frac{c'}{2m} \le 0 \qquad\text{ for $n \ge m \log^2 m$}.
  \end{equation}
  We first show this for $n = m \log^2 m$.
  Now
  \[
    \begin{split}
      c \log^2(m \log^2 m) - m (\log^2 m) \frac{c'}{2m} \sim (c - c'/2) \log^2 m
    \end{split}
  \]
  as a function in $m$ for $m \to \infty$, and $c - c'/2$ is negative by choice of $c'$.
  Thus, for sufficiently large $m$,  we have $c \log^2(m \log^2 m) - m (\log^2 m) \frac{c'}{2m} \le 0$.
  We can ensure a large enough $m$ by choosing $t_0 \in [0,1)$ sufficiently close to $1$.

  Now, set $g(n) := c \log^2n$ and $h(n) := n \frac{c'}{2m}$.
  Then $g'(n) = \frac{2 c \log n}{n}$, and hence
  \[
    g'(m \log^2m) = \frac{2c \log m + 2c \log(\log^2 m)}{m \log^2m} \sim \frac{2c}{m \log m}\,\cdot
  \]
  Thus, choosing $m$ sufficiently large, we may also ensure
  \[
    g'(m \log^2m) \le h'(m \log^2m) = \frac{c'}{2m}\,\cdot
  \]
  Since $g(n)$ is concave for $n \ge \exp(1)$, this ensures $g(n) \le h(n)$ for $n \ge m \log^2m$. 
  This proves \cref{eq:1} and ends the proof of the lemma. 
\end{proof}

\begin{lemma} \label{l:estimate}
  Let $f(z) = \sum_{n=0}^\infty a_n z^n \in \power{\bR}$ be a power series with non-negative coefficients.
  Let $a \in \bR_{\ge 0}$ and $b$,~$c \in \bR_{>0}$.
  Assume that there exist a sequence $(t_j)_{j \ge 0} \to 1$ in $[0,1)$ and $m_0 \in \bZ_{\ge 0}$ such that
  \[
    f(t_j) \ge (1-t_j)^a \exp(c \log^2m) t_j^{m b} \qquad\text{for all $j \ge 0$ and $m \ge m_0$.}
  \]
  Then there exist $c' \in \bR_{>0}$ and infinitely many $n \ge 1$ with $a_n > \exp(c' \log^2n)$.
\end{lemma}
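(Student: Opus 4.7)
The plan is to argue by contradiction. The negation of the conclusion is that for every positive $c_2$, only finitely many $n$ satisfy $a_n > \exp(c_2 \log^2 n)$; equivalently, for every such $c_2$ we have $a_n \le n^{c_2 \log n}$ for all sufficiently large $n$. Crucially, we are then free to take $c_2$ as small as we wish, and we will pick a $c_2$ with $0 < c_2 < c$.

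Next, apply the preceding lemma with this $c_2$, any constant $c_1 > 2 c_2$, and $\varepsilon = 1$. This furnishes $t_0 \in [0,1)$ such that
\[
  \sum_{n \ge \lceil m \log^2 m \rceil} a_n t^n < 1 \quad\text{for all } t \in [t_0,1) \text{ and } m \ge \tfrac{c_1}{1-t}.
\]
For $j$ large enough that $t_j \ge t_0$, set $m_j \coloneqq \lceil c_1/(1-t_j) \rceil$. Then $m_j \to \infty$ and $m_j(1-t_j) \to c_1$, so that $t_j^{m_j b} \to e^{-b c_1}$ and $(1-t_j)^a \sim (c_1/m_j)^a$.

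Now split $f(t_j)$ into its head (over indices $n < \lceil m_j \log^2 m_j \rceil$) and its tail. The tail is bounded by $1$ by the lemma. For the head, the termwise bound $a_n \le n^{c_2 \log n}$, the trivial count of summands, and the asymptotic $\log(m_j \log^2 m_j) \sim \log m_j$ combine to give an upper bound of $\exp((c_2 + o(1)) \log^2 m_j)$, whence $f(t_j) \le \exp((c_2 + o(1)) \log^2 m_j) + 1$. Meanwhile, the hypothesis applied at $m = m_j$ (valid once $m_j \ge m_0$) together with the above asymptotics yields
\[
  f(t_j) \,\ge\, C\, m_j^{-a} \exp(c \log^2 m_j)
\]
for some $C > 0$ and all large $j$.

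Since $c_2 < c$, the lower bound is of order $\exp(c \log^2 m_j - O(\log m_j))$ while the upper bound is of order $\exp((c_2 + o(1)) \log^2 m_j)$; these two rates are incompatible as $j \to \infty$, giving the desired contradiction. The main obstacle is the parameter balancing: the freedom to take $c_2 < c$ (from the contradiction hypothesis) is what produces a strict separation of the two growth rates, and the coupling $m_j \asymp 1/(1-t_j)$ is what simultaneously keeps $t_j^{m_j b}$ bounded below and forces $m_j \to \infty$.
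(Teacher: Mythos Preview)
Your proof is correct and follows the same overall architecture as the paper's: apply the preceding tail-bound lemma, couple $m_j$ to $1/(1-t_j)$, and compare the resulting lower bound on $f(t_j)$ against an upper bound coming from the head sum. The one genuine difference is in how you handle the head. The paper assumes only the weak \emph{a priori} bound $a_n \le \exp(c'\log^2 n)$ for some (possibly large) $c' > 2c$, then uses pigeonhole on the $\lfloor m_j\log^2 m_j\rfloor$ terms of the head to extract a single index $n_j$ with $\log a_{n_j} \ge \tfrac{c}{16}\log^2 n_j$. You instead exploit the full strength of the contradiction hypothesis to choose $c_2 < c$ from the outset, which lets you bound the entire head sum by $\exp((c_2+o(1))\log^2 m_j)$ and reach a contradiction without pigeonhole or any constant-tracking. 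Your route is shorter and cleaner; the paper's route is constructive and yields an explicit constant.
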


\begin{proof}
  Without any loss of generality we can assume that there exists a constant $c'>2c$ 
  such that $a_n \le \exp(c' \log(n)^2)$ 
  for all sufficiently large $n$. Indeed, otherwise the result holds trivially.  
  By the previous lemma, for all sufficiently large $j$ and $m \ge 3c'/(1-t_j)$, we have 
  \[
    \sum_{n=\lceil m \log^2m \rceil}^\infty a_n t_j^n \le 1 \,.
  \]

  Let $m_j = \lceil 3c'/(1-t_j) \rceil$ and
  \[
    \begin{split}
      A_j &= \log\big((1-t_j)^a \exp(c \log^2 m_j) t_j^{m_j b}\big)\\
      &= a \log(1 - t_j) + c \log^2m_j  + m_j b \log t_j \,.
    \end{split}
  \]
  Then, for sufficiently large $j$,
  \[
    \sum_{n=0}^{\lfloor m_j \log^2m_j \rfloor} a_n t_j^n \ge \exp(A_j) - 1 \,.
  \]

  Since
  \[
     \Big( \frac{3c'}{(1-t_j)} + 1\Big) b \log t_j  \le m_j b \log t_j \le \frac{3c'}{(1-t_j)} b \log t_j \,,
  \]
  and $((\log t_j) /(1-t_j))_{j \ge 0} \to -1$, we find $(m_j b \log t_j)_{j \ge 0} \to -3c'b$. 
Since $\log^2(1/(1-t_j)) = \log^2(1-t_j)$ and therefore $\log^2 m_j \sim \log^2(1-t_j)$, we see that 
$A_j \sim c \log^2m_j$. 
  Choosing $j$ sufficiently large, we may assume
  \[
    A_j \ge \frac{c}{2} \log^2m_j \,.
  \]
  Therefore, again restricting to large enough $j$ for the last inequality,
  \[
    \sum_{n=0}^{\lfloor m_j \log^2m_j \rfloor} a_n t_j^n \ge \exp(A_j) - 1  
    \ge \exp\Big(\frac{c}{4} \log^2m_j \Big) \,.
  \]
  
  By the pigeonhole principle, there exists $0 \le n_j \le m_j \log^2m_j$ such that
  \[
    a_{n_j} \ge \exp\Big(\frac{c}{4} \log^2m_j\Big) / (1+ m_j \log^2m_j )\,.
  \]
  Thus
  \[
    \log a_{n_j} \ge \frac{c}{4} \log^2m_j - \log(m_j \log^2 m_j + 1) \sim \frac{c}{4} \log^2m_j\,.
  \]
  We may assume $\log a_{n_j} \ge \frac{c}{8} \log^2m_j$.
  To finish, since $n_j \le m_j \log^2m_j$, we have
  \[
    \begin{split}
      \log^2n_j  &\le \log^2(m_j \log^2 m_j) \sim \log^2m_j \,.
    \end{split}
  \]
  We may take $\log^2n_j \le 2\log^2m_j$, so that $\log a_{n_j} \ge \frac{c}{16} \log^2n_j$. 
  Since $\log a_{n_j} \ge \frac{c}{8} \log^2m_j$ and $(m_j)_{j \ge 0} \to \infty$, also $(n_j)_{j \ge 0} \to \infty$.
  Thus there are in fact infinitely many distinct such $n_j$.
\end{proof}

\begin{lemma} \label{l:bound-other-roots}
  Let $\zeta \in \bC$ with $\zeta^k = \zeta$.
  Let $p(z) \in \bC[z]$ with $p(0)=1$ and $p(\zeta) \ne 0$.
  Then there exists $c \in \bR_{>0}$ such that, for all $t \in [0,1)$ with $p(\zeta t^{k^n}) \ne 0$ 
  for all $n \ge 0$,
  \[
    \abs[\bigg]{ \displaystyle\left(\prod_{n=0}^\infty p(\zeta t^{k^n})\right)^{-1} } > \abs{1-t}^c\, .
  \]
\end{lemma}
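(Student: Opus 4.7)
My plan is to reduce the infinite product to a product of single linear factors and then estimate these using a standard lacunary-series bound.

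Since $p(0) = 1$, I would first factor $p(z) = \prod_{i=1}^d (1 - \alpha_i z)$ over $\bC$, where $d = \deg p(z)$. Exchanging the order of the two products,
\[
  \prod_{n=0}^\infty p(\zeta t^{k^n}) \;=\; \prod_{i=1}^d \prod_{n=0}^\infty (1 - \gamma_i t^{k^n}), \qquad \gamma_i := \alpha_i \zeta.
\]
The condition $p(\zeta) = \prod_i (1 - \gamma_i) \ne 0$ is equivalent to $\gamma_i \ne 1$ for all $i$; combined with $|\zeta| \in \{0,1\}$ (a consequence of $\zeta^k = \zeta$), this guarantees that each inner product is a well-defined non-zero complex number whenever every factor is non-zero. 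Thus it suffices to prove, for each fixed $\gamma \in \bC$, an upper bound of the form
\[
  \prod_{n=0}^\infty |1 - \gamma t^{k^n}| \;\le\; C (1-t)^{-c}
\]
for constants $C,c > 0$ and all $t \in [0,1)$ where $1 - \gamma t^{k^n} \ne 0$ for every $n$.

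Taking logarithms and using $|1 - \gamma t^{k^n}| \le 1 + |\gamma| t^{k^n}$ together with $\log(1+x) \le x$ for $x \ge 0$, I obtain
\[
  \sum_{n=0}^\infty \log|1 - \gamma t^{k^n}| \;\le\; |\gamma| \sum_{n=0}^\infty t^{k^n}.
\]
The analytic heart of the argument is then the classical lacunary-series estimate $\sum_{n=0}^\infty t^{k^n} = O\bigl(\log\bigl(1/(1-t)\bigr)\bigr)$ as $t \to 1^-$. Setting $s := -\log t$ (so $s \sim 1-t$ for $t$ close to $1$) and $n_0 := \lceil \log_k(1/s) \rceil$, the terms $e^{-sk^n}$ for $n \le n_0$ are bounded by $1$ and contribute $O\bigl(\log(1/(1-t))\bigr)$, while for $n > n_0$ one has $sk^n > 1$ and the terms decay super-geometrically, contributing $O(1)$. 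Hence $\sum_n \log|1 - \gamma t^{k^n}| \le c' \log(1/(1-t)) + O(1)$, and exponentiating yields the desired polynomial bound for each factor. Multiplying the $d$ bounds and taking reciprocals gives $|\prod_n p(\zeta t^{k^n})|^{-1} \ge C_0 (1-t)^{c}$ for suitable $C_0, c > 0$.

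The main expected obstacle is the lacunary sum estimate, which is slightly delicate because one must split the range of summation carefully; once it is in place everything else is routine. A minor remaining issue is passing from the asymptotic inequality (with multiplicative constant $C_0$) to a strict bound $> |1-t|^c$ valid on the full admissible set. This is handled by continuity of $\prod_n p(\zeta t^{k^n})$ on any compact subset of $[0,1)$ bounded away from $1$ together with the asymptotic bound near $t=1$: one can absorb $C_0$ by enlarging $c$ on the subinterval close to $1$, and the hypothesis $\gamma_i \ne 1$ (i.e.\ $p(\zeta) \ne 0$) ensures the product stays uniformly away from $0$ on each compact piece, so no pathological behavior arises away from $t = 1$.
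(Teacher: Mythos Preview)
Your approach is essentially the same as the paper's: factor $p(z)$ into linear factors, bound $|1-\gamma t^{k^n}| \le \exp(|\gamma|t^{k^n})$, and control $\sum_{n\ge 0} t^{k^n}$ by $O(\log(1/(1-t)))$. The only difference is that the paper obtains this last estimate by citing \cite[Lemma~9.4]{adamczewski-bell17}, which gives the clean inequality $\sum_{n \ge 0} t^{k^n} \le \frac{k}{k-1}\sum_{n\ge 1} t^n/n = -\frac{k}{k-1}\log(1-t)$ with no additive constant; the bound $\ge (1-t)^c$ then falls out directly and your absorption step is not needed.

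One minor imprecision in your final paragraph: on a compact subinterval $[0,1-\varepsilon]$ what you need is an \emph{upper} bound on $|\prod_n p(\zeta t^{k^n})|$ (equivalently a positive lower bound on its reciprocal), not that the product ``stays uniformly away from $0$''. This upper bound is immediate from continuity of the convergent product on $[0,1-\varepsilon]$, so the fix is trivial. (Note also that at $t=0$ both sides of the stated inequality equal $1$, so only $\ge$ holds there; this is harmless for the application, which only uses $t\to 1^-$.)
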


\begin{proof}
  The proof is the same as the one of the lower bound in \cite[Lemma 9.5 and Proposition 9.2]{adamczewski-bell17}.
  Let $\alpha_1$, $\ldots\,$,~$\alpha_s$ denote the roots of $p(z)$ (with multiplicity).
  Then
  \[
    p(z) = (1-\alpha_1^{-1} z)\cdots (1-\alpha_s^{-1} z)\,.
  \]
  It suffices to show the claim for $1-\alpha_1^{-1}z$.
  Suppose $t \in [0,1]$ is such that $\zeta t^{k^n} \ne \alpha_1$ for all $n \ge 0$.
  Then the infinite product $\prod_{n=0}^\infty (1-\alpha_1^{-1}t^{k^n})^{-1}$ converges, and
  \[
    \abs[\bigg]{\prod_{n=0}^\infty \frac{1}{1-\alpha_1^{-1} \zeta t^{k^n}}}  
    \ge \prod_{n=0}^\infty \frac{1}{1+\abs{\alpha_1^{-1}}t^{k^n}} 
    \ge \prod_{n=0}^\infty \exp(-\abs{\alpha_1^{-1}}t^{k^n}) \,.
  \]
  Then, by \cite[Lemma 9.4]{adamczewski-bell17},
  \[
    \prod_{n=0}^\infty \exp(-\abs{\alpha_1^{-1}}t^{k^n}) \ge 
    \exp\Big( - \abs{\alpha_1^{-1}} (1-1/k)^{-1} \sum_{n=1}^\infty \frac{t^n}{n} \Big) 
    = (1-t)^{\frac{\abs{\alpha_1^{-1}}k}{k-1}}\,. \qedhere
  \]
\end{proof}

Let $B(\lambda,r) \subseteq \bC$, respectively $\overline{B(\lambda,r)} \subseteq \bC$, 
denote the open, respectively closed, disc of radius $r \in \bR_{\ge 0}$ with center $\lambda \in \bC$.

\begin{lemma}[{Special case of \cite[Lemma 10.2]{adamczewski-bell17}}] \label{l:bound-matrix-system}
  Let $d \in \bZ_{>0}$, let $\zeta \in\bC\setminus \{0\}$ such that $\zeta^k = \zeta$, and 
  let $A \colon \overline{B(0,1)} \to \bC^{d \times d}$ be a continuous, matrix-valued function.
  Assume that $w(z) \in \power{\bC}^d$ satisfies the equation
  \[
    w(\lambda) = A(\lambda) w(\lambda^k) \qquad\text{for all $\lambda \in B(0,1)$\,.}
  \]
  Assume also that the following properties hold.
  \begin{propenumerate}
  \item\label{bms:analytic} The coordinates of $w(z)$ are analytic in $B(0,1)$.
  \item\label{bms:nilpotent} The matrix $A(\zeta)$ is \emph{not} nilpotent.
  \item\label{bms:independent} The set $\{\, w(\lambda) : \lambda \in B(0,1) \,\}$ is not contained 
  in a proper vector subspace of $\bC^d$.
  \end{propenumerate}
  Then there exist $c \in \bR_{>0}$ and a sequence $(t_j)_{j \ge 0} \to 1$ in $[0,1)$ such that
  \[
    \norm{w(t_j\zeta)} > \abs{1-t_j}^c \qquad\text{for all $j \ge 0$}\,.
  \]
\end{lemma}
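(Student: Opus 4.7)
The plan is to iterate the functional equation along a specific real-parameter sequence approaching $\zeta$ and exploit the non-nilpotency of $A(\zeta)$ to extract a polynomial lower bound. Fix a real $t_0 \in (0,1)$ (to be chosen) and set $s_n \coloneqq t_0^{k^{-n}}$, so $s_n \nearrow 1$ with $\abs{1-s_n} \sim k^{-n}\log(1/t_0)$ as $n \to \infty$. Using $\zeta^k = \zeta$ and iterating the functional equation, one obtains
\[
  w(s_n \zeta) \,=\, A(s_n\zeta)\, A(s_n^k \zeta) \cdots A(s_n^{k^{n-1}}\zeta)\, w(t_0 \zeta) \,=\, B_n B_{n-1}\cdots B_1\, v_0,
\]
where $B_l \coloneqq A(t_0^{k^{-l}} \zeta)$ and $v_0 \coloneqq w(t_0 \zeta)$. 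By continuity of $A$, $B_l \to A(\zeta)$ as $l \to \infty$, so heuristically $B_n \cdots B_1$ behaves like $A(\zeta)^n$ on $v_0$ for large $n$.

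First I would choose $t_0$. Decompose $\bC^d = V \oplus W$, where $V$ is the generalized eigenspace of $A(\zeta)$ attached to its nonzero eigenvalues and $W$ is the generalized kernel; by \ref{bms:nilpotent}, $V \ne 0$. Let $\pi_V \colon \bC^d \to V$ denote the projection along $W$. If $\pi_V(w(t\zeta))$ were zero for every $t \in (0,1)$, then the analytic map $\pi_V \circ w \colon B(0,1) \to V$ would vanish on a subset of $B(0,1)$ with an accumulation point in $B(0,1)$; by \ref{bms:analytic} and the identity principle, $\pi_V \circ w$ would then vanish identically, forcing $w(B(0,1)) \subseteq W$ and contradicting \ref{bms:independent}. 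So pick $t_0 \in (0,1)$ with $\pi_V(v_0) \ne 0$.

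The core step is to prove, for some $c' > 0$ and all large $n$,
\[
  \norm{B_n \cdots B_1 v_0} \,\ge\, c' \abs{\mu_0}^n / n^{d-1},
\]
where $\mu_0$ denotes the smallest absolute value of a nonzero eigenvalue of $A(\zeta)$. Writing $B_l = A(\zeta) + \varepsilon_l$ with $\norm{\varepsilon_l} \to 0$ and expanding,
\[
  \pi_V\bigl(B_n\cdots B_1 v_0\bigr) \,=\, A(\zeta)^n \pi_V v_0 \;+\; \sum_{l=1}^n A(\zeta)^{n-l} \pi_V\bigl(\varepsilon_l B_{l-1}\cdots B_1 v_0\bigr),
\]
and since $A(\zeta)|_V$ is invertible, the leading term satisfies $\norm{A(\zeta)^n \pi_V v_0} \gtrsim \abs{\mu_0}^n/n^{d-1}$. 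One then shows that the error sum is dominated by the main term for large $n$. Combined with $\abs{1-s_n} \sim k^{-n}$, any $c > \log_k(1/\min\{1,\abs{\mu_0}\})$ yields $\norm{w(s_n\zeta)} > \abs{1-s_n}^c$ eventually, and $(s_n)_{n \ge 0}$ is the sought sequence.

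The main obstacle is the perturbation analysis in the last step: since $A$ is only assumed continuous on $\overline{B(0,1)}$, the rates $\norm{\varepsilon_l}$ are uncontrolled, so summability cannot be invoked directly. Instead, given any $\delta > 0$, one fixes $l_0$ with $\norm{\varepsilon_l} < \delta$ for $l > l_0$, absorbs the finite block $B_{l_0} \cdots B_1 v_0$ into a new initial vector (still with nontrivial $V$-projection by invertibility of $A(\zeta)|_V$ and continuity near $\zeta$), and carries out the induction on the controlled tail $B_n \cdots B_{l_0+1}$. This is precisely the content of \cite[Lemma 10.2]{adamczewski-bell17}, from which the stated special case — corresponding to the simpler situation where the Mahler orbit $(\zeta^{k^n})_{n\ge 0}$ is constant — follows directly.
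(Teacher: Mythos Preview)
Your proposal and the paper's proof both reduce to invoking \cite[Lemma~10.2]{adamczewski-bell17}; in fact the paper's proof is nothing more than that citation (identifying the present statement as the special case $\theta=0$), together with one remark you omit: the cited lemma additionally assumes $w$ to be continuous on the closed disk $\overline{B(0,1)}$, and the paper explicitly notes that this hypothesis is never used and hence may be dropped here.

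Your heuristic sketch of the mechanism behind the cited lemma is sound in outline, but one step is not justified as written. After absorbing the initial block you assert that the new initial vector $B_{l_0}\cdots B_1 v_0$ still has nontrivial $V$-projection ``by invertibility of $A(\zeta)|_V$ and continuity near $\zeta$''; however the early factors $B_1,\ldots,B_{l_0}$ need not be close to $A(\zeta)$ and need not respect the decomposition $V\oplus W$, so this reasoning does not apply. The clean fix is to note that $B_{l_0}\cdots B_1 v_0 = w(s_{l_0}\zeta)$ and that, by the same identity-principle argument you already invoked, $\pi_V(w(t\zeta))$ vanishes for at most a discrete set of $t\in(0,1)$; hence one may simply choose $t_0$ close enough to $1$ from the outset so that every $B_l$ is $\delta$-close to $A(\zeta)$ and $\pi_V(w(t_0\zeta))\ne 0$ simultaneously, obviating the absorption step.
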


\begin{proof}
  This is \cite[Lemma 10.2]{adamczewski-bell17} in the special case $\theta=0$.
  We do not assume $w(z)$ to be continuous in $\overline{B(0,1)}$, but this assumption 
  is never used in the proof and is therefore superfluous.
\end{proof}

\begin{lemma} \label{l:remove-root}
  Let $b \in \bZ_{>0}$ and ~$a$,~$a' \in \bR$ with $a' > a>0$.
  Then there exists $t_0 \in [0,1)$ such that
  \[
    (1 - t^{1/b})^a > (1-t)^{a'} \qquad\text{for all $t \in [t_0,1)$.}
  \]
\end{lemma}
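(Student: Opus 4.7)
The plan is to analyze the ratio $R(t) = (1 - t^{1/b})^a / (1-t)^{a'}$ as $t \to 1^{-}$ and show that it tends to $+\infty$. Once this is established, there exists $t_0 \in [0,1)$ such that $R(t) > 1$ for all $t \in [t_0,1)$, which is exactly the desired inequality.

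The first step is to factor the ratio as
\[
  R(t) = \left(\frac{1 - t^{1/b}}{1-t}\right)^{\!a} \cdot (1-t)^{a - a'}.
\]
For the first factor, I would use the standard limit
\[
  \lim_{t \to 1^{-}} \frac{1 - t^{1/b}}{1-t} = \frac{1}{b},
\]
which follows from Taylor expanding $(1-s)^{1/b} = 1 - s/b + O(s^2)$ at $s = 1-t$, or from an application of L'H\^opital's rule. Hence the first factor tends to the finite positive constant $b^{-a}$. For the second factor, since $a' > a$, we have $a - a' < 0$, and therefore $(1-t)^{a-a'} \to +\infty$ as $t \to 1^{-}$.

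Combining these two limits, $R(t) \to +\infty$ as $t \to 1^{-}$. In particular one can choose $t_0 \in [0,1)$ so that $R(t) > 1$ on $[t_0,1)$, which rearranges to $(1 - t^{1/b})^a > (1-t)^{a'}$ as required. There is no real obstacle here; the lemma is an entirely elementary asymptotic comparison, exploiting the fact that $1 - t^{1/b}$ and $1-t$ vanish to the same order at $t = 1$, while the exponent gap $a' - a > 0$ on the right-hand side makes it negligible.
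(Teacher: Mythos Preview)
Your proof is correct and follows essentially the same approach as the paper: both arguments rest on the fact that $(1-t^{1/b})/(1-t)$ is bounded away from zero near $t=1$ (the paper uses the algebraic factorization $1-t=(1-t^{1/b})\sum_{i=0}^{b-1}t^{i/b}\le b(1-t^{1/b})$, while you use the equivalent limit $1/b$), after which the exponent gap $a'-a>0$ does the rest.
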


\begin{proof}
  For $t \in [0,1)$ we have
  \[
    1-t = (1-t^{1/b}) \sum_{i=0}^{b-1} t^{i/b} \le b(1-t^{1/b}).
  \]
  Moreover $(1-t)^{a'-a} < 1/b^a$ for $t$ sufficiently close to $1$.
  Then
  \[
    (1-t^{1/b})^a \ge \frac{(1-t)^{a'}}{(1-t)^{a'-a}b^a} > (1-t)^{a'}. \qedhere
  \]
\end{proof}

Armed with these estimates, we can finally prove a further restriction on the roots of the 
$k$-Mahler denominator. This is the key step in the current section.
The arguments are in many aspects very similar to those used by Adamczewski and Bell 
in \cite[\S11]{adamczewski-bell17}.

\begin{proposition} \label{p:coprime-k-lower}
  Let $f(z)=\sum_{n=0}^\infty a_n z^n \in \power{\bC}$ be a $k$-Mahler series that is analytic in $B(0,1)$.
  Let $\zeta \in \cU$ with $\zeta^{k^{j_0}} = \zeta$ for some $j_0 \ge 1$, and let $l = k^{j_0}$.
  Suppose there exists an $l$-Mahler equation
  \[
    p_0(z) f(z) = p_1(z) f(z^{l}) + \cdots + p_d(z) f(z^{l^d}),
  \]
  with $p_0(z),\ldots,p_d(z) \in \bC[z]$ coprime, with $p_0(z)p_d(z) \ne 0$, and such that $p_0(\zeta) = 0$.

  Then there exist $a$,~$b$,~$c \in \bR_{>0}$, $m_0$,~$n_0 \in \bZ_{\ge 0}$, 
  and a sequence $(t_j)_{j \ge 0} \to 1$ in $[0,1)$ such that
  \[
    \left\vert\sum_{n=0}^\infty a_{n+n_0} (\zeta t_j)^n \right\vert \ge 
    (1-t_j)^a \exp(b \log^2m) t_j^{mc} \qquad\text{for all $j \ge 0$ and $m \ge m_0$\,.}
  \]
\end{proposition}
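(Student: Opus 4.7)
The plan is to combine the Dumas decomposition (Theorem~\ref{t:dumas-structure}) with the matrix-system lower bound of Lemma~\ref{l:bound-matrix-system}, using Lemma~\ref{l:bound-other-roots} to discard spurious factors and the asymptotics of partitions into $l$-powers to produce the $\exp(b\log^2 m)$ factor. First apply Lemma~\ref{l:kill-0} to reduce to $f_0(z) = \sum_{n\ge 0} a_{n+n_0} z^n$ satisfying an $l$-Mahler equation whose leading polynomial $q_0(z)$ satisfies $q_0(0)=1$ and, by property~(ii) of that lemma, $q_0(\zeta)=0$. Theorem~\ref{t:dumas-structure} then writes $f_0(z) = g(z)/D(z)$ with $D(z) = \prod_{i\ge 0} q_0(z^{l^i})$ and $g(z)$ an $l$-Becker series; $g$ is analytic on $B(0,1)$ because the product $\prod_i q_0(z^{l^i})$ converges uniformly on compacta.

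For the denominator, factor $q_0(z) = (1-\zeta^{-1}z)^e u(z)$ with $e \ge 1$ and $u(\zeta) \ne 0$. Since $\zeta^{l^i}=\zeta$ for every $i$, one has $D(\zeta t) = \prod_{i\ge 0}(1-t^{l^i})^e\, u(\zeta t^{l^i})$. Lemma~\ref{l:bound-other-roots} gives $\vert\prod_i u(\zeta t^{l^i})\vert^{-1} > (1-t)^{c_u}$ for some $c_u > 0$. The partition-into-$l$-powers generating function $\sum_m b_m t^m = \prod_i 1/(1-t^{l^i})$ satisfies the classical Mahler asymptotic $\log b_m \sim \log^2 m / (2\log l)$; retaining only the $m$th term of $\prod_i (1-t^{l^i})^{-e}$ yields
\[
\vert D(\zeta t)\vert^{-1} \,\ge\, (1-t)^{c_u}\exp(b\log^2 m)\, t^{me}
\]
for any fixed $b < e/(2\log l)$, every $m \ge m_0$, and every $t$ sufficiently close to $1$.

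For the numerator, pick a minimal Becker equation $g(z) = \sum_{i=1}^{d'}\tilde p_i(z) g(z^{l^i})$ and form the companion system $w(z) = A(z)w(z^l)$ with $w(z) = (g(z),g(z^l),\ldots,g(z^{l^{d'-1}}))^T$; here $A(z)$ has polynomial entries and is continuous on $\overline{B(0,1)}$. I apply Lemma~\ref{l:bound-matrix-system}. Hypothesis~(i) is immediate; hypothesis~(iii) follows from $\bC(z)$-linear independence of the entries of $w$, a consequence of minimality proved by a Cartier-operator argument analogous to the one used in the proof of Proposition~\ref{prop: radius}. The lemma supplies $(\tilde t_j)\to 1$ in $[0,1)$ and $c_w > 0$ with $\Vert w(\zeta\tilde t_j)\Vert > \vert 1-\tilde t_j\vert^{c_w}$. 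Passing to a subsequence on which the maximizing component index is constant, say equal to some $i^*$, and setting $t_j := \tilde t_j^{l^{i^*}}$, I obtain $t_j \to 1$ and $\vert g(\zeta t_j)\vert \ge C\vert 1-t_j\vert^{c_w}$ for some $C > 0$, using that $1-\tilde t_j$ and $1-t_j$ are of the same order as $t_j \to 1^-$. Combining with the bound on $\vert D(\zeta t_j)\vert^{-1}$ yields the claimed inequality with $a := c_w + c_u$, $b$ as above, and exponent $e$ playing the role of $c$ in the statement.

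The main obstacle is verifying hypothesis~(ii) of Lemma~\ref{l:bound-matrix-system}, namely that $A(\zeta)$ is not nilpotent. Since $A$ is a companion matrix, $A(\zeta)$ is nilpotent if and only if $\tilde p_i(\zeta) = 0$ for $i = 1,\ldots,d'$. Assuming this, the Becker equation at $\zeta$ forces $g(\zeta) = 0$, and differentiating the equation $k$ times and applying Leibniz's rule, together with $\zeta^{l^i}=\zeta$ and the vanishing of all $\tilde p_i(\zeta)$, shows by induction on $k$ that $g^{(k)}(\zeta)=0$ for every $k \ge 0$; hence $g \equiv 0$, contradicting $g \ne 0$ (which holds because Lemma~\ref{l:kill-0} ensures $f_0(0) = a_{n_0} \ne 0$ whenever $f \ne 0$, and $f = 0$ makes the desired conclusion vacuous).
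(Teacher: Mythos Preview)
Your argument for hypothesis~\ref{bms:nilpotent} of Lemma~\ref{l:bound-matrix-system} has a genuine gap. You claim that if all $\tilde p_i(\zeta)=0$ then $g\equiv 0$, reasoning by induction on $g^{(k)}(\zeta)$. But $|\zeta|=1$ lies on the boundary of the domain of analyticity of $g$, so $g^{(k)}(\zeta)$ need not exist, and the identity theorem does not apply from a boundary point. In fact the implication is false: for $l=2$ and $\zeta=1$, the Becker function $g(z)=\prod_{n\ge 0}(1-z^{2^n})$ satisfies the minimal Becker equation $g(z)=(1-z)\,g(z^2)$ with $\tilde p_1(1)=0$, so the $1\times 1$ companion matrix $A(1)=0$ is nilpotent, yet $g\not\equiv 0$. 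Here $|g(t)|$ decays faster than any power of $1-t$ as $t\to 1^-$, so Lemma~\ref{l:bound-matrix-system} genuinely fails and cannot supply the lower bound you need.

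The underlying problem is that splitting $f_0=g/D$ via Theorem~\ref{t:dumas-structure} and bounding the two factors separately can lose everything: both $|g(\zeta t)|$ and $|D(\zeta t)|$ may vanish like $\exp(-c\log^2(1/(1-t)))$, and only their precise ratio matters. The paper avoids this by building a tailored auxiliary function rather than using the generic Dumas $g$. With $\nu_i$ the order of vanishing of $q_i$ at $\zeta$, one sets $r:=\min_{1\le i\le d+1}(\nu_i+(i-1)\nu_0)/i$ and defines
\[
g(z):=f_0(\zeta z)\prod_{n\ge 0}\frac{q_0(\zeta z^{l^n})}{(1-z^{l^n})^r}\,.
\]
This choice of $r$ is exactly what makes the coefficients $r_i$ in the resulting equation $g(z)=\sum_i r_i(z)g(z^{l^i})$ polynomials with some $r_{i_0}(1)\ne 0$, i.e., non-nilpotency at the new base point $1$. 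Property~(iii) of Lemma~\ref{l:kill-0}, which you did not invoke, is what forces $r<\nu_0$, so that a genuine factor $\prod_n(1-t^{l^n})^{-(\nu_0-r)}$ survives and produces the $\exp(b\log^2 m)$ growth. (Your verification of hypothesis~\ref{bms:independent} via minimality of a Becker equation is also shaky, since a Cartier-operator reduction yields a shorter \emph{Mahler} equation, not a shorter Becker one; the paper instead appeals to \cite[Lemma~7.9]{adamczewski-bell17}, which only needs to rule out \emph{constant}-coefficient relations.)
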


\begin{proof}
  Applying \cref{l:kill-0}, there exist $n_0 \ge 0$ and $q_0(z),\ldots,q_{d+1}(z) \in \bC[z]$ such that $a_{n_0} \ne 0$ and
  \[
    f_0(z) = \sum_{n=0}^\infty a_{n+n_0}z^n
  \]
  satisfies
  \[
    q_0(z) f_0(z) = q_1(z) f_0(z^{l}) + \cdots + q_{d+1} f_0(z^{l^{d+1}})\,,
  \]
  with $q_0(0)=1$, with $q_0(\zeta)=0$, and with $q_i(\zeta) \ne 0$ for some $i \in \{1,\ldots,d+1\}$.

  Let $\nu_i \in \bZ_{\ge 0}$ be the order of vanishing of $q_i(z)$ at $\zeta$.
  Define
  \[
    r \coloneqq \min\Big\{\, \frac{\nu_i + (i-1)\nu_0}{i} : i \in 1, \ldots, d+1\,\Big\} \in \bQ_{\ge 0} \,.
  \]
  Since $\nu_0 > 0$ and $\nu_i=0$ for some $i \in \{1, \ldots, d+1\}$, we have $r < \nu_0$.
  Defining
  \[
    g(z) \coloneqq f_0(\zeta z) \prod_{n=0}^\infty \frac{q_0(\zeta z^{l^n}) }{(1 - z^{l^n})^r}
  \]
  we obtain
  \[
    g(z) = \sum_{i=1}^{d+1} r_i(z) g(z^{l^i}) \qquad\text{with}\qquad r_i(z) = q_i(\zeta z) \frac{1}{(1-z)^r} 
    \prod_{n=1}^{i-1} \frac{q_0(\zeta z^{l^n})}{(1 - z^{l^n})^r} \in \bC(z)\,.
  \]
  In the expression for $r_i(z)$, the denominator has roots at every $\omega \in \bC$ for which 
  $\omega^{l^{i-1}}=1$. If $\omega \ne 1$, then $r < \nu_0$ guarantees that $r_i(z)$ does not actually 
  have a pole at $\omega$.
  For $\omega=1$, this is ensured by $ir \le \nu_i + (i-1)\nu_0$.
  Thus, all $r_i(z)$ are in fact polynomials.
  Moreover, by choice of $r$, there exists an $i_0 \in \{1,\ldots,d+1\}$ such that $r_{i_0}(1) \ne 0$.

\medskip

 \noindent \textbf{Claim:} There exist $a \in \bR_{>0}$ and a sequence $(t_j)_{j \ge 0} \to 1$ in $[0,1)$ with
  \[
    \abs{g(t_j)} \ge (1-t_j)^a.
  \]

 \noindent \emph{Proof of Claim.}
  First we deal with the degenerate case in which $g(z)$ is constant.
  Then $g(z) = g(0)$ and from the definition of $g$ we see $g(0) \ne 0$ since $f_0(0) \ne 0$ and $q_0(0)=1$.
  Choosing $a=1$, any sequence $(t_j)_{j \ge 0} \to 1$ in $[0,1)$ satisfies $\abs{g(t_j)} > 1-t_j$ for sufficiently large $j$.
  From now on we may assume that $g(z)$ is \emph{not} constant.
  
  We are going to apply \cref{l:bound-matrix-system}.
  Denote by $\norm{\cdot}$ the maximum norm with respect to $\abs{\cdot}$.
  Let $w(z) = \begin{pmatrix} g(z), g(z^l), \dots, g(z^{l^{d}}) \end{pmatrix}^T$ and
  \[
    A(z) =
    \begin{pmatrix}
      r_1(z) & r_2(z) & \dots & r_{d-1}(z) & r_d(z) & r_{d+1}(z) \\
      1 & 0 & \dots & 0 & 0 & 0 \\
      0 & 1 & \dots & 0 & 0 & 0 \\
      \vdots & 0 & \ddots & \vdots & \vdots & \vdots \\
      \vdots & \vdots & \dots & 1 & 0 & 0 \\
      0 & 0 & \dots & 0 & 1 & 0 \\
    \end{pmatrix}
    \in \bC(z)^{(d+1) \times (d+1)}.
  \]
  Then $w(z) = A(z)w(z^l)$.
  The coordinates of $A(z)$ are polynomials and hence of course continuous.
  We verify the conditions of \cref{l:bound-matrix-system}.

\noindent \ref{bms:analytic} The coordinates of $w(z)$ are analytic in $B(0,1)$ since $g(z)$ is analytic in $B(0,1)$.

\noindent   \ref{bms:nilpotent} The characteristic polynomial of $A(z)$ is $y^{d+1} - r_1(z) y^d - \cdots  - r_{d+1}(z) 
  \in \bC(z)[y]$.
  Since $r_{i_0}(1) \ne 0$, the matrix $A(1)$ is not nilpotent. 

\noindent   \ref{bms:independent}
  Suppose that $S = \{\, w(\lambda) : \lambda \in B(0,1) \,\}$ is contained in a proper subspace of $\bC^d$.
  Then there exist $\alpha_0$, $\ldots\,$,~$\alpha_d \in \bC$, not all zero, such that 
  $\alpha_0 g(\lambda) + \cdots + \alpha_d g(\lambda^{l^d}) = 0$ for all $\lambda \in B(0,1)$.
  Since $g(z)$ is analytic in $B(0,1)$ this forces $\alpha_0 g(z) + \cdots + \alpha_d g(z^{l^d}) = 0$. But then $g(z)$ is constant by \cite[Lemma 7.9]{adamczewski-bell17}, a contradiction.
  Hence the set $S$ is not contained in a proper subspace of $\bC^d$.

  Applying \cref{l:bound-matrix-system}, there exist $a \in \bR_{>0}$ and a sequence $(t_j)_{j\ge 0} \to 1$ in $[0,1)$ such that
  \[
    \norm{w(t_j)} > (1-t_j)^a \qquad \text{for all $j \ge 0$}\,.
  \]
  Restricting to a subsequence and making a substitution, we may assume that there exists $i \in [0,d]$ 
  and $b = l^{i}$ such that $\abs{g(t_j)} > (1-t_j^{1/b})^a$ for $j \ge 0$.
  Applying \cref{l:remove-root} and replacing $a$ by a slightly larger constant, we may actually take $\abs{g(t_j)} > (1-t_j)^a$ for all $j \ge 0$. \hfill$\square$ (Claim)

  By the result of Mahler, see Example~\ref{ex:cyclo} in Section \ref{t:k-partition}, there exists a constant $c \in \bR_{>0}$ such that, for some $m_0 \ge 0$,
  \[
    \prod_{n=0}^\infty (1-t_j^{l^n})^{-1} \ge \sum_{n=m_0}^\infty \exp(c \log^2n) t_j^n\,.
  \]
  Thus
  \[
    \prod_{n=0}^\infty (1-t_j^{l^n})^{-1} \ge \exp(c \log^2m) t_j^m \qquad \text{for all $m \ge m_0$\,.}
  \]

  The lower bound for $g(z)$ together with the fact that $f_0(z)$ is analytic in $B(0,1)$ implies 
  $q_0(\zeta t_j^{l^n}) \ne 0$ for all $n \ge 0$.
  By \cref{l:bound-other-roots}, there exists $a' \in \bR_{>0}$ such that
  \[
    \abs[\bigg]{ \prod_{n=0}^\infty \frac{(1-t_j^{l^n})^{\nu_0}}{q_0(\zeta t_j^{l^n})} } > (1 - t_j)^{a'} 
    \qquad\text{for $j$ large enough.}
  \]
  With $b = \nu_0 - r > 0$, we conclude, for $m \ge m_0$, that 
  \[
    \abs{f_0(t_j\zeta)} = \abs{g(t_j)} \cdot \abs[\bigg]{\prod_{n=0}^\infty \frac{(1-t_j^{l^n})^r}{q_0(\zeta t_j^{l^n})}} 
    > (1-t_j)^{a+a'} \exp(c b \log^2m ) t_j^{m b}\, . \qedhere
  \]
\end{proof}

\begin{proposition} \label{p:goodroots}
  Let $f(z)=\sum_{n=0}^\infty a_n z^n \in \power{\Qbar}$ be $k$-Mahler and suppose that 
  $h(a_n) \in o(\log^2n)$.
  Then the roots of the $k$-Mahler denominator of $f(z)$ are contained in $\{0\} \cup \rucommon$.
\end{proposition}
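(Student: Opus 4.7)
The plan is to argue by contradiction. Suppose $h(a_n) \in o(\log^2 n)$ but the $k$-Mahler denominator $\mathfrak{d}(z)$ has a nonzero root $\zeta$ with $\zeta^{k^{j_0}} = \zeta$ for some $j_0 \geq 1$; equivalently, $\zeta$ is a root of unity whose order is coprime to $k$. Since $o(\log^2 n) \subseteq o(n)$, \cref{t-main:hn} guarantees that $f(z)$ is totally analytic; in particular, for any complex embedding $\sigma\colon \Qbar \hookrightarrow \bC$, the series $\sigma(f)(z) = \sum \sigma(a_n) z^n$ is analytic on the open unit disc of $\bC$.

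To bring \cref{p:coprime-k-lower} to bear, I first transfer to an $l$-Mahler equation with $l = k^{j_0}$. By \cref{l:mahler-power}, $f$ is $l$-Mahler; let $\mathfrak{d}_l(z)$ denote its $l$-Mahler denominator and $\mathfrak{I}_l$ the corresponding ideal. Since every $l$-Mahler equation is automatically a $k$-Mahler equation, $\mathfrak{I}_l \subseteq \mathfrak{I}$, so $\mathfrak{d}(z) \mid \mathfrak{d}_l(z)$ and in particular $\mathfrak{d}_l(\zeta) = 0$. Writing an equation
\[
\mathfrak{d}_l(z) f(z) = \sum_{i=1}^d r_i(z)\, f(z^{l^i})
\]
with $r_d \ne 0$, the minimality of $\mathfrak{d}_l$ forces $\gcd(\mathfrak{d}_l, r_1, \ldots, r_d)$ to be a unit, since otherwise dividing through would yield a strictly smaller generator of $\mathfrak{I}_l$. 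Applying $\sigma$, we obtain an $l$-Mahler equation for $\sigma(f)$ with coprime coefficients whose leading polynomial vanishes at $\sigma(\zeta)$, and $\sigma(\zeta)^l = \sigma(\zeta)$.

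Applying \cref{p:coprime-k-lower} to $\sigma(f)$ at $\sigma(\zeta)$ now produces constants $a$, $b$, $c > 0$, integers $m_0$, $n_0 \geq 0$, and a sequence $(t_j)_{j \geq 0} \to 1$ in $[0,1)$ such that
\[
\Big|\sum_{n=0}^\infty \sigma(a_{n+n_0})\,(\sigma(\zeta) t_j)^n\Big| \,\geq\, (1-t_j)^a \exp(b \log^2 m)\, t_j^{mc} \qquad \text{for all $j \ge 0$ and $m \ge m_0$.}
\]
Since $|\sigma(\zeta)| = 1$, the real power series $F(t) = \sum_{n=0}^\infty |\sigma(a_{n+n_0})|\, t^n$ has non-negative coefficients and dominates the modulus on the left-hand side, so \cref{l:estimate} furnishes a constant $c' > 0$ and infinitely many $n$ with $|\sigma(a_{n+n_0})| > \exp(c' \log^2 n)$. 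Reindexing via $m = n + n_0$ and using $\log^2 n \sim \log^2 m$, one obtains $|\sigma(a_m)| > \exp((c'/2) \log^2 m)$ for infinitely many $m$.

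Finally, let $K$ be a number field containing every $a_n$ and let $\sw$ be the archimedean place of $K$ induced by $\sigma$; then $|a_m|_\sw = |\sigma(a_m)|^{e_\sw/[K:\bQ]}$ with $e_\sw \in \{1,2\}$, so for each such $m$ we have $|a_m|_\sw > 1$ and therefore $h(a_m) \geq \log |a_m|_\sw \in \Omega(\log^2 m)$, contradicting $h(a_n) \in o(\log^2 n)$. The heavy lifting is done by \cref{p:coprime-k-lower}, already in hand; the delicate step in this proposal is the reduction from a root of the $k$-Mahler denominator to an $l$-Mahler equation meeting the hypotheses of that proposition, followed by transferring its analytic lower bound along the ray through $\zeta$ into a lower bound on the Weil heights of the coefficients.
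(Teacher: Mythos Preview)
Your argument is correct and follows essentially the same route as the paper: pass to $l=k^{j_0}$, produce an $l$-Mahler equation with coprime coefficients vanishing at $\zeta$, invoke \cref{p:coprime-k-lower}, and feed the resulting lower bound into \cref{l:estimate} to contradict $h(a_n)\in o(\log^2 n)$. The only cosmetic difference is that you manufacture the coprime equation via the $l$-Mahler denominator $\mathfrak d_l$ and the divisibility $\mathfrak d\mid\mathfrak d_l$, whereas the paper simply takes any coprime $l$-Mahler equation and notes that $\mathfrak d$ divides its leading coefficient because an $l$-Mahler equation is in particular a $k$-Mahler equation.

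One small logical gap in your setup: your contradiction hypothesis is that $\mathfrak d$ has a nonzero root $\zeta$ with $\zeta^{k^{j_0}}=\zeta$, which only covers roots in $\ru\setminus\rucommon$. You still need to exclude nonzero roots that are not roots of unity at all; this is exactly condition~\ref{hn:denominator} of \cref{t-main:hn}, which you cite only for total analyticity but which simultaneously gives you that every nonzero root of $\mathfrak d$ lies in $\ru$. Stating this explicitly at the outset (as the paper does) closes the gap.
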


\begin{proof}
  Let $\mathfrak d(z)$ be the $k$-Mahler denominator of $f(z)$.
  Suppose that $\zeta \in \Qbar \setminus \{0\}$ is such that $\mathfrak d(\zeta)=0$.
  Then $\zeta \in \ru$ by \cref{t-main:hn}.
  We have to show $\zeta^{k^j} \ne \zeta$ for all $j \ge 1$.

  Suppose to the contrary that $\zeta^{k^{j_0}}= \zeta$ for some $j_0 \ge 1$.
  Let $l = k^{j_0}$.
  Then $f(z)$ is also $l$-Mahler by \cref{l:mahler-power}.
  Let $p_0(z),\ldots,p_d(z) \in \Qbar[z]$ be coprime, with $p_0(z)p_d(z) \ne 0$, such that
  \[
    p_0(z) f(z) = p_1(z) f(z^{l}) + \cdots + p_d(z) f(z^{l^d})\,.
  \]
  Since this is also a $k$-Mahler equation for $f(z)$, the $k$-Mahler denominator $\mathfrak d(z)$ 
  divides $p_0(z)$, hence $p_0(\zeta) = 0$.
  Fix any embedding $\Qbar \hookrightarrow \bC$, and thereby an archimedean absolute value on $\Qbar$.
  We apply \cref{p:coprime-k-lower} to conclude that there exist 
  $a,b,c \in \bR_{>0}$, $m_0,n_0 \in \bZ_{\ge 0}$, and a sequence $(t_j)_{j \ge 0} \to 1$ in $[0,1)$ 
  such that
  \[
    \abs{f_0(\zeta t_j)} \ge (1-t_j)^a \exp(b \log^2m) t_j^{mc} \qquad\text{for all $j \ge 0$ and $m \ge m_0$\,,}
  \]
  where $f_0(z) = \sum_{n=0}^\infty a_{n+n_0} z^n$. 
  Since $\sum_{n=0}^\infty \abs{a_{n+n_0}} t^n \ge 
  \abs{f_0(\zeta t)}$ for $t \in [0,1)$, the conditions of \cref{l:estimate} are satisfied for 
  $\sum_{n=0}^\infty \abs{a_{n+n_0}} z^n$.
  Thus there exist $c' \in \bR_{>0}$ such that $\abs{a_n} \ge \exp(c' \log^2n)$ infinitely often.
  Thus $h(a_n) \not\in o(\log^2n)$; a contradiction.
\end{proof}

Once we know that all roots of the $k$-Mahler denominator of $f(z)$ are contained in 
$\{0\} \cup \rucommon$ it is not hard to show that $f(z)$ is $k$-regular.
This was shown by Dumas \cite[Th\'eor\`eme 30]{dumas93}; 
see also \cite[Proposition 2]{bell-chyzak-coons-dumas18}.
We recall the proof.

Keep in mind that $\rucommon$ consists of all roots of unity $\zeta$ for which $\zeta^{k^j} \ne \zeta$ for all $j \ge 1$.
In particular, $1 \not \in \rucommon$.
By \cite[Proposition 7.8]{adamczewski-bell17}, the infinite product
\[
  \prod_{n=0}^\infty (1 - \zeta^{-1} z^{k^n})^{-1}
\]
is $k$-regular for $\zeta \in \rucommon$.

\begin{proposition} \label{p:hlog2}
  Let $f(z)=\sum_{n=0}^\infty a_n z^n \in \power{\Qbar}$ be a $k$-Mahler series 
  and suppose that $h(a_n) \in o(\log^2n)$.
  Then $f(z)$ is a $k$-regular power series.
\end{proposition}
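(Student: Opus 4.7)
Since \Cref{p:goodroots} already establishes the hard structural step—every non-zero root of the $k$-Mahler denominator $\mathfrak d(z)$ of $f(z)$ lies in $\rucommon$—my proof of \Cref{p:hlog2} would essentially assemble that fact with Dumas's structural theorem \Cref{t:dumas-structure}, Becker's theorem \Cref{t:becker}, and the $k$-regularity of $\prod_{n=0}^\infty(1-\zeta^{-1}z^{k^n})^{-1}$ for $\zeta \in \rucommon$ recalled just above.

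Assume $f(z) \ne 0$, the other case being trivial. To apply \Cref{t:dumas-structure} one needs a $k$-Mahler equation whose leading polynomial has non-zero constant term, so I would first invoke \Cref{l:kill-0} to obtain $n_0 \ge 0$ with $a_{n_0} \ne 0$ and
\[
f_0(z) \coloneqq \sum_{n=0}^\infty a_{n+n_0} z^n
\]
satisfying a $k$-Mahler equation whose leading polynomial $q_0(z)$ satisfies $q_0(0)=1$. Since the sequence $(a_{n+n_0})_{n \ge 0}$ has the same asymptotic height growth as $(a_n)_{n \ge 0}$, one also has $h(a_{n+n_0}) \in o(\log^2 n)$. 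Applying \Cref{p:goodroots} to $f_0$ places every non-zero root of the $k$-Mahler denominator $\mathfrak d_0(z)$ of $f_0$ inside $\rucommon$. Moreover, $\mathfrak d_0(z)$ divides $q_0(z)$, so $\mathfrak d_0(0) \ne 0$, and the normalization in \Cref{d:mahler-denominator} forces $\mathfrak d_0(0)=1$. Therefore
\[
\mathfrak d_0(z) = \prod_{\zeta \in \rucommon}(1-\zeta^{-1}z)^{m_\zeta}
\]
for some non-negative integers $m_\zeta$, almost all zero.

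\Cref{t:dumas-structure} applied to the $k$-Mahler equation for $f_0$ with leading polynomial $\mathfrak d_0$ then produces a $k$-Becker power series $g(z)$ with
\[
f_0(z) = \frac{g(z)}{\prod_{i=0}^\infty \mathfrak d_0(z^{k^i})},
\]
and \Cref{t:becker} shows $g(z)$ is $k$-regular. Rearranging the denominator gives
\[
\frac{1}{\prod_{i=0}^\infty \mathfrak d_0(z^{k^i})} = \prod_{\zeta \in \rucommon}\prod_{i=0}^\infty (1-\zeta^{-1}z^{k^i})^{-m_\zeta},
\]
which is a finite product of $k$-regular series by the cited Adamczewski--Bell result. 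Since $k$-regular power series form a ring under multiplication, $f_0(z)$ is $k$-regular as a product of two $k$-regular series. Finally, $f(z) = z^{n_0} f_0(z) + \sum_{n=0}^{n_0-1} a_n z^n$; multiplication by $z^{n_0}$ and addition of a polynomial preserve $k$-regularity, so $f(z)$ itself is $k$-regular. The main obstacle in this proof is purely bookkeeping: one must reduce to a Mahler equation with constant-term-one leading polynomial in order to invoke Dumas's decomposition, since the genuine structural content (the location of the roots of $\mathfrak d(z)$) has already been established in \Cref{p:goodroots}.
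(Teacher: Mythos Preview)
Your proof is correct and follows essentially the same route as the paper: reduce via \cref{l:kill-0} to a shifted series whose Mahler denominator has constant term $1$, invoke \cref{p:goodroots} to locate its roots in $\rucommon$, apply Dumas's decomposition \cref{t:dumas-structure} and Becker's \cref{t:becker}, and conclude using the $k$-regularity of the infinite products $\prod_{i\ge 0}(1-\zeta^{-1}z^{k^i})^{-1}$ together with closure of $k$-regular series under products. Your write-up is in fact slightly more explicit than the paper's in justifying $\mathfrak d_0(0)=1$ and in recovering $f$ from $f_0$, but the argument is the same.
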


\begin{proof}
  Since every polynomial is $k$-regular, and sums of $k$-regular sequences are $k$-regular, 
  it suffices to show the claim for $\sum_{n=0}^\infty a_{n+n_0}z^n$ for some $n_0 \ge 0$.
  By \cref{l:kill-0} we may therefore assume $\mathfrak d(0)=1$ for the $k$-Mahler denominator $\mathfrak d(z)$ of 
  $f(z)$.
  By \cref{p:goodroots}, all roots of $\mathfrak d(z)$ are contained in $\rucommon$. 
  By \cref{t:dumas-structure} we can write
  \begin{equation*}
    f(z) = \frac{g(z)}{\prod_{n=0}^\infty \mathfrak d(z^{k^n})}
  \end{equation*}
  with a $k$-Becker series $g(z)$.
  By \cref{t:becker}, the series $g(z)$ is $k$-regular.
  Because 
  \[
    \prod_{n=0}^\infty (1 - \zeta^{-1} z^{k^n})^{-1}
  \]
  is $k$-regular for $\zeta \in \rucommon$, and products of $k$-regular series are $k$-regular, also $f(z)$ is $k$-regular.
\end{proof}

Allouche and Shallit \cite[Theorem 2.10]{allouche-shallit92} show $\abs{a_n} \in O(n^c)$ for a 
$\bC$-valued $k$-regular sequence.
A similar argument bounds the height of the coefficients.

\begin{lemma} \label{l:growth-regular}
  If $f(z) = \sum_{n=0}^\infty a_n z^n \in \power{\Qbar}$ is $k$-regular, then $h(a_n) \in O(\log n)$.
\end{lemma}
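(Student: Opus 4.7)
My plan is to exploit the linear representation characterization of $k$-regular sequences provided by \cref{t:regular-linrep}, and then bound $|a_n|_\sv$ place-by-place in terms of the length of the base-$k$ expansion of $n$.

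By \cref{t:regular-linrep}, there exists a linear representation $(u,\mu,v)$ with $u \in \Qbar^{1 \times d}$, $v \in \Qbar^{d \times 1}$, and $\mu \colon \digits^* \to \Qbar^{d \times d}$ a monoid homomorphism, such that $a_{[w]_k} = u\mu(w) v$ for all $w \in \digits^*$. Let $K$ be a number field containing the finitely many coefficients appearing in $u$, $v$, and $\mu(r)$ for $r \in \digits$; then all $a_n$ lie in $K$. For each place $\sv$ of $K$, let $\norm{\cdot}_\sv$ denote the maximum norm on $K^{1\times d}$, $K^{d \times 1}$, and the corresponding operator norm on $K^{d \times d}$.

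For every $n \ge 1$, the canonical base-$k$ expansion $w = \langle n \rangle_k$ has length $\ell(n) = \lfloor \log_k n \rfloor + 1 \in O(\log n)$. Setting $C_\sv \coloneqq \max\{\, \norm{\mu(r)}_\sv : r \in \digits \,\}$, submultiplicativity of the operator norm gives $\norm{\mu(w)}_\sv \le C_\sv^{\ell(n)}$, and hence
\[
  \abs{a_n}_\sv \,=\, \abs{u\mu(w)v}_\sv \,\le\, \norm{u}_\sv \norm{v}_\sv C_\sv^{\ell(n)}.
\]
Let $S$ be the finite set of places $\sv$ of $K$ for which at least one of $\norm{u}_\sv$, $\norm{v}_\sv$, $C_\sv$ exceeds $1$ (this set is finite because each of the finitely many matrix entries of $u$, $v$, and $\mu(r)$ for $r \in \digits$ has $\sv$-adic absolute value at most $1$ for all but finitely many $\sv$). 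For $\sv \notin S$ we have $\abs{a_n}_\sv \le 1$, so these places contribute nothing to the height.

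Combining these estimates,
\[
  h(a_n) \,=\, \log \prod_{\sv \in M_K} \max\{1, \abs{a_n}_\sv\} \,=\, \sum_{\sv \in S} \log \max\{1, \abs{a_n}_\sv\} \,\le\, \sum_{\sv \in S} \bigl( \log D_\sv + \ell(n) \log\max\{1,C_\sv\} \bigr),
\]
where $D_\sv = \max\{1, \norm{u}_\sv \norm{v}_\sv\}$. Since $S$ is finite, the right-hand side is bounded by $A + B \ell(n)$ for constants $A$, $B \in \bR_{\ge 0}$ independent of $n$, and $\ell(n) \in O(\log n)$ yields $h(a_n) \in O(\log n)$. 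I do not anticipate any genuine obstacle here: the only subtlety is noting that only finitely many places contribute, which follows from the standard fact that each element of a number field is an $\sv$-adic integer at almost all places.
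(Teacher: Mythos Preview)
Your proof is correct and follows essentially the same approach as the paper: invoke the linear representation from \cref{t:regular-linrep}, bound $h(u\mu(w)v)$ linearly in $\length{w}$, and use $\length{\langle n\rangle_k} \in O(\log n)$. The paper condenses your place-by-place estimate into a single appeal to ``basic properties of the logarithmic Weil height,'' whereas you have written out those properties explicitly; the content is the same.
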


\begin{proof}
  For $n \in \bZ_{\ge 0}$, we recall that $\langle n\rangle_k \in \digits^*$ is the canonical base-$k$ expansion of $n$.  
  By \cref{t:regular-linrep} there exists a linear representation $(u,\mu,v)$ 
  (of some dimension $d \in \bZ_{\ge 0}$) such that  $a_n = u \mu(\langle n\rangle_k) v$ 
  for all $n \in \bZ_{\ge 0}$.
  Moreover, using basic properties of the logarithmic Weil height (see \cite[Chapter 3]{Miw}), we deduce that 
  $h(u\mu(w)v) \in O(\length{w})$ for $w \in \digits^*$. 
  Noting that $\length{ \langle n\rangle_k } \in O(\log n)$, we obtain $h(a_n) \in O(\log n)$.
\end{proof}

At this point, we are ready to prove \cref{t-main:hlog2}.

\begin{proof}[Proof of \cref{t-main:hlog2}]
  Let $f(z)=\sum_{n=0}^\infty a_n z^n \in \power{\Qbar}$ be a $k$-Mahler function.
   
  \ref{hlog2:holog2}$\,\Rightarrow\,$\ref{hlog2:roots}
  Suppose $h(a_n) \in o(\log^2n)$.
  By \cref{p:goodroots} all roots of the $k$-Mahler denominator of $f(z)$ are contained 
  in $\{0\} \cup \rucommon$.

  \ref{hlog2:roots}$\,\Rightarrow\,$\ref{hlog2:regular}
  Suppose that all roots of the $k$-Mahler denominator are contained in $\{0\} \cup \rucommon$.
  Then $f(z)$ is $k$-regular by \cref{p:hlog2}.

  \ref{hlog2:regular}$\,\Rightarrow\,$\ref{hlog2:hOlog}
  Suppose $f(z)$ is $k$-regular.
  Then $h(a_n) \in O(\log n)$ by \cref{l:growth-regular}.

  \ref{hlog2:hOlog}$\,\Rightarrow\,$\ref{hlog2:holog2}
  Clearly $h(a_n) \in O(\log n)$ implies $h(a_n) \in o(\log^2n)$.
\end{proof}

\section{Third gap:  word-convolution products of automatic sequences}
\label{sec:hlog}

In this section, we characterize Mahler functions 
$f(z)= \sum_{n=0}^\infty a_n z^n\in \power{\Qbar}$ 
with $h(a_n) \in o(\log n)$. 
The arguments are similar to the ones used in \cite{bell05} and \cite{bell-coons-hare16}.
As before, we actually prove a more extensive characterization involving a structural property.

Before  stating the main result of this section, 
we first recall the definition of the word-convolution product following \cite{bell-coons-hare16}. 

\begin{definition}\label{def: wordproduct}
Given two sequences of complex numbers 
$(a(n))_{n\geq 0}$ and $(b(n))_{n\geq 0}$, their \emph{word-convolution product} 
is the sequence $a\star_w b$ defined by  
$$
a\star_w b(n) = \sum_{j=0}^s a([ i_{1}\cdots i_{j} ]_k) b([ i_{j+1}\cdots i_s]_k) \,, 
$$
where $\langle n\rangle_k=i_1i_2\cdots i_s\in \digits^*$.
\end{definition}

We also need the notion of tame semi-group of matrices.

\begin{definition}
  Let $d$ be a positive integer. 
  A semigroup of matrices $\cS \subseteq K^{d \times d}$ is \emph{tame}, 
  if all eigenvalues of all matrices $A \in \cS$ are contained in $\{0\} \cup \ru$.
\end{definition}

We are now ready to state the main result of this section. 
We already know that a $k$-Mahler function with $h(a_n)  \in O(\log n)$ is $k$-regular. 
The sequence of coefficients $(a_n)_{n\ge 0}$ therefore has a minimal linear representation 
$(u,\mu,v)$ by \cref{t:regular-linrep}.  

\begin{theorem} \label{t-main:hlog}
  Let $f(z) = \sum_{n=0}^\infty a_n z^n \in \power{\Qbar}$ be a $k$-Mahler function.
  The following statements are equivalent.
  \begin{equivenumerate}
  \item \label{hlog:hlog} We have $h(a_n) \in o(\log n)$.
  \item \label{hlog:tame} For every minimal 
  linear representation $(u, \mu, v)$ of $(a_n)_{n \ge 0}$, the matrix semigroup $\mu(\digits^*)$ is tame. 
  \item \label{hlog:convolution} The sequence $(a_n)_{n \ge 0}$ is a $\Qbar$-linear combination of 
  word-convolution products of $k$-automatic sequences.
   \item \label{hlog:hloglog} We have $h(a_n) \in O(\log\log n)$.
  \end{equivenumerate}
\end{theorem}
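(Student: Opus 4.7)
The plan is to establish a cyclic chain of implications
\ref{hlog:hloglog}$\,\Rightarrow\,$\ref{hlog:hlog}$\,\Rightarrow\,$\ref{hlog:tame}$\,\Rightarrow\,$\ref{hlog:convolution}$\,\Rightarrow\,$\ref{hlog:hloglog}.
Note that by \cref{t-main:hlog2} the hypothesis $h(a_n) \in o(\log n)$ already forces $f(z)$ to be $k$-regular, so throughout we may fix a minimal linear representation $(u, \mu, v)$ with $a_n = u\mu(\langle n\rangle_k) v$ (via \cref{t:regular-linrep}). The implication \ref{hlog:hloglog}$\,\Rightarrow\,$\ref{hlog:hlog} is trivial. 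For \ref{hlog:convolution}$\,\Rightarrow\,$\ref{hlog:hloglog}, a direct height computation suffices: in each summand of $a\star_w b(n)$ both factors lie in finite subsets of $\Qbar$ (the ranges of the $k$-automatic sequences $a$ and $b$), so every summand has height bounded by an absolute constant, while the number of summands is $\length{\langle n\rangle_k}+1 \in O(\log n)$. Standard properties of the Weil height then yield $h(a\star_w b(n)) \in O(\log\log n)$, a bound that passes to any finite $\Qbar$-linear combination.

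For the contrapositive of \ref{hlog:hlog}$\,\Rightarrow\,$\ref{hlog:tame}, assume that some $\mu(w_0)$ admits an eigenvalue $\lambda \in \Qbar \setminus (\{0\} \cup \ru)$. By Kronecker's theorem $h(\lambda) > 0$, hence there is a place $\sv$ of a number field $K$ containing the entries of $(u,\mu,v)$ with $\abs{\lambda}_\sv > 1$. Minimality of the representation ensures that $u\mu(\digits^*)$ spans $K^{1\times d}$ and $\mu(\digits^*)v$ spans $K^{d\times 1}$; a standard argument with generalized eigenspaces of $\mu(w_0)$ then produces words $w_1$,~$w_2 \in \digits^*$ such that $c(m) \coloneqq u\mu(w_1)\mu(w_0)^m\mu(w_2)v$ is a linear combination of $m$-th powers of eigenvalues of $\mu(w_0)$ in which the coefficient of $\lambda^m$ is non-zero; consequently $\abs{c(m)}_\sv \in \Omega(\abs{\lambda}_\sv^m)$. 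Setting $n_m \coloneqq [w_1 w_0^m w_2]_k$, we have $a_{n_m} = c(m)$ while $\log n_m \sim m\length{w_0}\log k$, so
\[
  h(a_{n_m}) \,\geq\, \log \max(1, \abs{a_{n_m}}_\sv) \,\geq\, m\log\abs{\lambda}_\sv - O(1) \,\in\, \Omega(\log n_m),
\]
contradicting $h(a_n) \in o(\log n)$.

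The main obstacle is \ref{hlog:tame}$\,\Rightarrow\,$\ref{hlog:convolution}. The strategy, in the spirit of \cite{bell05} and \cite{bell-coons-hare16}, is to exploit tameness as a strong structural constraint on the finitely generated matrix semigroup $\mu(\digits^*)$. After replacing $k$ by a suitable power $k^N$ (permissible by \cref{l:mahler-power}; this only refines the alphabet to $\digits^N$ and preserves both $k$-regularity and the class of word-convolution products of automatic sequences) and after conjugating, each matrix $\mu(w)$ can be put into a block upper triangular form whose diagonal-block entries take only finitely many values as $w$ ranges over $\digits^*$. The diagonal blocks therefore correspond to $k$-automatic sequences. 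Expanding the product $\mu(\langle n\rangle_k) = \mu(i_1)\cdots\mu(i_s)$ in block form and summing, for each possible sequence of off-diagonal block transitions, over the positions $j$ at which a transition occurs, produces sums of products of a prefix-depending automatic factor and a suffix-depending automatic factor, that is, word-convolution products. Collecting the contributions from finitely many such transition patterns expresses $a_n = u\mu(\langle n\rangle_k)v$, after suitable manipulation, as a $\Qbar$-linear combination of word-convolution products of $k$-automatic sequences, yielding \ref{hlog:convolution}. The delicate point here is to establish the conjugation-and-block-triangularization step in sufficient strength, and to verify that the resulting decomposition collapses to \emph{two-factor} word-convolutions in the sense of \cref{def: wordproduct}; this is where the bulk of the technical work, modelled on \cite{bell05,bell-coons-hare16}, resides.
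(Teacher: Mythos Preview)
Your cycle \ref{hlog:hloglog}$\Rightarrow$\ref{hlog:hlog}$\Rightarrow$\ref{hlog:tame}$\Rightarrow$\ref{hlog:convolution}$\Rightarrow$\ref{hlog:hloglog} differs from the paper's route, which proves \ref{hlog:hlog}$\Rightarrow$\ref{hlog:tame}$\Rightarrow$\ref{hlog:hloglog}$\Rightarrow$\ref{hlog:hlog} directly and then appends \ref{hlog:tame}$\Leftrightarrow$\ref{hlog:convolution} by citing \cite[Theorem~13]{bell-coons-hare16}. Your \ref{hlog:hlog}$\Rightarrow$\ref{hlog:tame} is essentially the paper's \cref{l:tame}; the paper's version uses an eigenvector of $\mu(w_0)$ directly (expressing it in the spanning set $\mu(\digits^*)v$ and pairing against some $u\mu(w')$), which is a bit cleaner than invoking generalized eigenspaces and sidesteps any worry about cancellation among several eigenvalues of the same absolute value. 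Your \ref{hlog:convolution}$\Rightarrow$\ref{hlog:hloglog} is correct and is, morally, the \emph{same} height computation the paper uses for \ref{hlog:tame}$\Rightarrow$\ref{hlog:hloglog}: in both cases one writes $a_n$ as a sum of $O((\log n)^{d-1})$ terms drawn from a fixed finite subset of $\Qbar$, so that only the archimedean places contribute growth and one gets $h(a_n)\in O(\log\log n)$. The paper simply applies this computation to the $b_J$'s coming from the block-triangular decomposition of \cref{l:structure-tame} (writing $\mu(a_i)=D_i+N_i$ with $D_i$ in a finite semigroup and $N_i$ strictly block-upper-triangular, and expanding), rather than first packaging the decomposition as word-convolutions.

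The practical consequence is that you have made your life harder: your chain forces you to prove \ref{hlog:tame}$\Rightarrow$\ref{hlog:convolution}, which is exactly the step the paper outsources to \cite{bell-coons-hare16}. Your own sketch correctly identifies the obstacle: with $r$ diagonal blocks the expansion of $\mu(i_1)\cdots\mu(i_s)$ produces terms with up to $r-1$ off-diagonal transitions, hence products of up to $r$ automatic factors indexed by consecutive subwords, and it is not obvious how to collapse these to \emph{two}-factor word-convolutions in the sense of \cref{def: wordproduct}. That collapse is the actual content of \cite[Theorem~13]{bell-coons-hare16}. Two further remarks: the passage to a power $k^N$ is unnecessary here (\cref{l:structure-tame} already gives the block-triangular form over the original alphabet), and your parenthetical claim that this substitution ``preserves the class of word-convolution products of automatic sequences'' is not obvious, since the word-convolution in \cref{def: wordproduct} depends on the base. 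If you want a self-contained proof, it is simpler to follow the paper and go \ref{hlog:tame}$\Rightarrow$\ref{hlog:hloglog} directly via the $D+N$ expansion, then quote \cite{bell-coons-hare16} for the equivalence with \ref{hlog:convolution}.
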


Our first goal is to use the additional restriction $h(a_n) \in o(\log n)$ to obtain a restriction 
on the possible eigenvalues of the matrices $\mu(w)$. 
The following lemma is similar to \cite[Lemma 2.3]{bell05}.

\begin{lemma} \label{l:tame}
  Let $(a_n)_{n \ge 0}$ be a $k$-regular sequence in $\Qbar$, with a minimal linear representation 
  $(u,\mu,v)$.
  Suppose $h(a_n) \in o(\log n)$.
  Then the finitely generated matrix semigroup $\mu(\digits^*)$ is tame.
\end{lemma}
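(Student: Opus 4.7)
The plan is to prove the contrapositive: if some matrix in $\mu(\digits^*)$ has an eigenvalue outside $\{0\} \cup \ru$, then $h(a_n) \not\in o(\log n)$. The strategy mirrors \cite{bell05}: find a single place $\sv$ of a number field where the spectral radius of $\mu(w)$ exceeds $1$, and then use minimality of the linear representation to transfer this growth onto coefficients $a_n$ indexed by concatenated words whose length is $O(m)$.

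Suppose $\mu(w)$ has an eigenvalue $\lambda \in \Qbar \setminus (\{0\} \cup \ru)$ for some $w \in \digits^*$. Enlarge $K$ to a number field containing $\lambda$ and all entries of $u$, $v$, $\mu(d)$ for $d \in \digits$. By Kronecker's theorem, there is a place $\sv$ of $K$ at which $\abs{\lambda}_\sv > 1$: if $\lambda$ is not an algebraic integer, it has positive valuation at some non-archimedean place; otherwise Kronecker provides an archimedean place of $K$ at which some conjugate of $\lambda$ has absolute value exceeding $1$. In particular the spectral radius $\rho$ of $\mu(w)$ at $\sv$ satisfies $\rho \ge \abs{\lambda}_\sv > 1$, so by Gelfand's formula applied to any operator norm $\norm{\cdot}_\sv$, one has $\norm{\mu(w)^m}_\sv \ge (\rho - \varepsilon)^m$ for every $\varepsilon > 0$ and all sufficiently large $m$.

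By minimality, $u \mu(\digits^*)$ spans $K^{1 \times d}$ and $\mu(\digits^*) v$ spans $K^{d \times 1}$, so there exist words $w_1, \ldots, w_d \in \digits^*$ and $w_1', \ldots, w_d' \in \digits^*$ such that $u \mu(w_i)$ is a $K$-basis of $K^{1 \times d}$ and $\mu(w_j') v$ is a $K$-basis of $K^{d \times 1}$. The linear map $A \mapsto \bigl( u \mu(w_i) A \mu(w_j') v \bigr)_{i,j}$ is then an isomorphism of finite-dimensional $K$-vector spaces, hence equivalent at $\sv$ to the norm $\norm{\cdot}_\sv$: there exists a constant $C > 0$ such that for every $A \in K^{d \times d}$,
\[
  \max_{i,j} \bigl\lvert u \mu(w_i) A \mu(w_j') v \bigr\rvert_\sv \ \ge\ C \norm{A}_\sv.
\]
Applied to $A = \mu(w)^m$ and using $u \mu(w_i w^m w_j') v = a_{n_{i,j}(m)}$ with $n_{i,j}(m) = [w_i w^m w_j']_k$, we obtain some pair $(i,j)$ (depending on $m$, but by the pigeonhole principle we may fix one that works for infinitely many $m$) with $\abs{a_{n_{i,j}(m)}}_\sv \ge C (\rho - \varepsilon)^m$.

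Finally, $n_{i,j}(m) \le k^{\abs{w_i} + m\abs{w} + \abs{w_j'}}$, so $\log n_{i,j}(m) \le m \abs{w} \log k + O(1)$. Using $h(a) \ge \log^+ \abs{a}_\sv$ (at a single place, with the normalization of \cite{bombieri-gubler06}) and taking logarithms yields
\[
  h(a_{n_{i,j}(m)}) \ \ge \ m \log(\rho - \varepsilon) + O(1) \ \ge \ \frac{\log(\rho-\varepsilon)}{\abs{w}\log k}\, \log n_{i,j}(m) + O(1)
\]
for infinitely many $m$. Since $\log(\rho - \varepsilon) > 0$ for $\varepsilon$ small enough, this gives $h(a_n) \in \Omega(\log n)$, contradicting $h(a_n) \in o(\log n)$. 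The main delicate point is the use of Kronecker's theorem in the non-integer case to produce the place $\sv$, together with the verification that the spectral radius lower bound $\norm{\mu(w)^m}_\sv \gtrsim \rho^m$ holds at \emph{any} absolute value, archimedean or not — both of which are standard.
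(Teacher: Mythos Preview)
Your proof is correct and follows essentially the same strategy as the paper: use Kronecker's theorem to find a place where $\abs{\lambda}_\sv>1$, then exploit minimality of the representation to transfer the exponential growth of $\mu(w)^m$ onto values $a_{[w_i w^m w_j']_k}$. The paper's version is marginally more direct---it works with a fixed eigenvector $v_0$ (so that $u\mu(w')\mu(w)^m v_0 = \lambda^m u\mu(w')v_0$ exactly) rather than passing through operator norms and Gelfand's formula, and it only needs to expand $v_0$ in a one-sided basis $\mu(w_i)v$; but the two arguments are interchangeable. One small slip: when $\lambda$ is not an algebraic integer you want a place of \emph{negative} valuation (equivalently $\abs{\lambda}_\sv>1$), not positive.
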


\begin{proof}
  By definition of the linear representation, we have $a_{[w]_k} = u \mu(w) v$ for all $w \in \digits^*$. 
  Since $[w]_k \in O(k^{\length{w}})$ for all $w \in \digits^*$, our assumption on the sequence translates into 
  $h(u \mu(w) v) \in o(\length{w})$.
  
  Write $d$ for the dimension of $(u,\mu,v)$.
  If $d=0$, the claim is trivially true.
  Let $d > 0$.
  Suppose there exist a word $w \in \digits^*$ and $\lambda \in \Qbar\setminus\{0\}$ 
  not a root of unity such that $\lambda$ is an eigenvalue of $\mu(w)$. 
  Then there exists  a non-zero vector $v_0 \in \Qbar^{d \times 1}$ with $\mu(w)v_0 = \lambda v_0$.
  By minimality of the linear representation, there exist $w_1$, $\ldots\,$,~$w_d \in \digits^*$ such that $\mu(w_1)v$, $\ldots\,$,~$\mu(w_d) v$ form a basis of $\Qbar^{d \times 1}$.
  Let $\alpha_1$, $\ldots\,$,~$\alpha_d \in \Qbar$ be such that $v_0 = \alpha_1 \mu(w_1) v + \cdots + \alpha_d \mu(w_d) v$.
  Again by minimality, the set $\{\, u \mu(w') : w' \in \digits^* \,\}$ spans $\Qbar^{1 \times d}$.
  Therefore there exists $w' \in \digits^*$ such that $u \mu(w') v_0 \ne 0$.

  Now
  \[
    \sum_{i=1}^d \alpha_i u \mu(w'w^nw_i) v =  u \mu(w')\mu(w^n)v_0 = \lambda^n u \mu(w')v_0 \ne 0.
  \]
  Since $\lambda$ is not a root of unity, there exists an absolute value $\abs{\cdot}$ on $\Qbar$ with $\abs{\lambda} > 1$.
  We conclude that there exists an $i \in \{1,\ldots\,d\}$ with
  \[
    \abs{\alpha_i u\mu(w'w^nw_i)v} \ge \abs{\lambda}^n \cdot \frac{\abs{u \mu(w')v_0}}{d}
  \]
  for infinitely many $n \ge 0$.
  Hence there exists $c' \in \bR_{>0}$ such that, for these $n \ge 0$,
  \[
    h(u \mu(w'w^nw_i)v) \ge \log \abs{u \mu(w'w^nw_i)v} \ge n c'.
  \]
  This is a contradiction.
\end{proof}

Tame semigroups afford a particular block diagonal decomposition.

\begin{lemma} \label{l:structure-tame}
  Let $\cS \subseteq \Qbar^{d \times d}$ be a finitely generated tame semigroup.
  Then there exist $d_1$, $\ldots$,~$d_r \in \bZ_{\ge 0}$ with $d = d_1 + \cdots + d_r$, finite 
  semigroups $\cS_i \in \Qbar^{d_i \times d_i}$ for $i \in \{1,\ldots, r\}$, and a matrix $T \in \GL_d(\Qbar)$ 
  such that
  \[
    T^{-1} \cS T \subseteq
    \begin{pmatrix}
      \cS_1    & \Qbar^{d_1 \times d_2} & \Qbar^{d_1 \times d_3} & \dots   & \Qbar^{d_1\times d_r}  \\
      0      & \cS_2                  & \Qbar^{d_2 \times d_3} & \dots   & \Qbar^{d_2\times d_r}  \\
      0      & 0                    & \cS_3                  & \dots   & \Qbar^{d_3\times d_r}  \\
      \vdots & \vdots               & \vdots               & \ddots  & \vdots               \\
      0      & 0                    & 0                    & \dots   & \cS_{r}                \\ 
    \end{pmatrix}\, .
  \]
\end{lemma}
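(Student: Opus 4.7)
My plan is to obtain $T$ by refining a flag of $\cS$-invariant subspaces and then to show that the induced semigroups on the simple composition factors must be finite. More precisely, since $\cS$ is finitely generated, all its matrices have entries in some number field $K \subseteq \Qbar$. I view $\Qbar^d$ as an $\cS$-module and choose a Jordan-H\"older composition series
\[
  0 = V_0 \subsetneq V_1 \subsetneq \cdots \subsetneq V_r = \Qbar^d
\]
of $\cS$-invariant subspaces, so that each quotient $V_i/V_{i-1}$ is a simple $\cS$-module. A basis adapted to this flag gives a change-of-basis $T \in \GL_d(\Qbar)$ in which every element of $\cS$ is block upper-triangular with diagonal blocks of sizes $d_i = \dim(V_i/V_{i-1})$; the images on the $i$-th diagonal block then form a subsemigroup $\cS_i \subseteq \Qbar^{d_i \times d_i}$, and the theorem reduces to proving that each $\cS_i$ is finite.

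Each $\cS_i$ is still tame, since the eigenvalues of a block upper-triangular matrix are the union of the eigenvalues of its diagonal blocks. The crucial number-theoretic ingredient is that if $\zeta \in \Qbar$ is a primitive $n$-th root of unity that occurs as an eigenvalue of some $A \in \cS_i$, then $\zeta$ is a root of $\chi_A(t) \in K[t]$, so $\phi(n) = [\bQ(\zeta):\bQ] \le [K(\zeta):\bQ] \le d\,[K:\bQ]$. Since $\phi(n) \to \infty$, this forces $n$ to be uniformly bounded, and hence the eigenvalues of matrices in $\cS_i$ lie in a common finite subset of $\{0\} \cup \ru$. In particular, the set of traces of matrices in $\cS_i$ is finite.

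By construction $\cS_i$ acts irreducibly on $\Qbar^{d_i}$, and its $\Qbar$-linear span is automatically closed under products because $\cS_i$ is a semigroup. The Jacobson density theorem (applied to this non-unital $\Qbar$-algebra acting on the simple module $\Qbar^{d_i}$) therefore yields that this span equals all of $M_{d_i}(\Qbar)$. Picking a $\Qbar$-basis $e_1, \ldots, e_{d_i^2} \in \cS_i$, the non-degeneracy of the trace form identifies each $A \in \cS_i$ with the tuple $(\tr(Ae_j))_{j=1}^{d_i^2}$; since $Ae_j \in \cS_i$, each component belongs to the finite set of traces from the previous paragraph, and $\cS_i$ is finite.

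The step I expect to be the main obstacle is not the flag construction but the final combination: upgrading Burnside's theorem from (unital) algebras to semigroups without identity and coupling it cleanly with the cyclotomic bound. The $\phi(n) \to \infty$ argument in particular relies on having a single number field $K$ serving every $A \in \cS$, which is why the finite generation of $\cS$ is essential at the outset.
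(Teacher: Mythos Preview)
Your argument is correct and follows essentially the same route as the paper, which simply cites \cite[Lemmas 4 and 5]{bell-coons-hare16} for the two ingredients you spell out: Burnside/density on the irreducible diagonal blocks, and the cyclotomic degree bound forcing finitely many eigenvalues (hence finitely many traces). One small slip: for $A \in \cS_i$ you cannot assert $\chi_A(t) \in K[t]$, since the conjugating matrix $T$ need only lie in $\GL_d(\Qbar)$; what you actually use is that every eigenvalue of $A$ is an eigenvalue of the corresponding full matrix $M \in \cS \subseteq K^{d\times d}$, so $\zeta$ is a root of $\chi_M(t) \in K[t]$ and the bound $\phi(n) \le d\,[K:\bQ]$ follows as you claim.
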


\begin{proof}
  If $\cS$ spans $\Qbar^{d \times d}$, then  $\mu(\digits^*)$ is finite \cite[Lemma 4]{bell-coons-hare16} 
  and we are done.  Otherwise, we iterate Lemma 5 of \cite{bell-coons-hare16} to get a 
  block-upper-triangular decomposition with finite semigroup diagonals.
\end{proof}

The arguments in the following proof are similar to \cite[Theorem 2.6]{bell05}.

\begin{proof}[Proof of \cref{t-main:hlog}]
  Let $f(z) = \sum_{n=0}^\infty a_n z^n \in \power{\Qbar}$ be $k$-Mahler.
  
  \ref{hlog:hlog}$\,\Rightarrow\,$\ref{hlog:tame}
  Suppose $h(a_n) \in o(\log n)$.
  Then $(a_n)_{n \ge 0}$ is $k$-regular by \cref{t-main:hlog2}.
  \Cref{l:tame} implies that $\mu(\digits^*)$ is tame.
  
  \ref{hlog:tame}$\,\Rightarrow\,$\ref{hlog:hloglog}
  Let $(u,\mu,v)$ be a minimal linear representation of the $k$-regular sequence $(a_n)_{n \ge 0}$.
  Suppose $\mu(\digits^*)$ is tame.
  We have to show $h(a_n) \in O(\log\log n)$.
  For this, it suffices to show $h(u \mu(w) v) \in O(\log \length{w})$ for non-empty words $w \in \digits^*$.
  We can apply \cref{l:structure-tame}.
  Thus, there exists a finite semigroup $\cS$ of block-diagonal matrices such that, after a change of basis, 
  for every $w \in \digits^*$ the matrix $\mu(w)$ is of the form $D + N$ with $D \in \cS$ and $N$ 
  strictly upper triangular. We may assume that $\cS$ contains the identity matrix.
  Since $\digits$ is finite, there exists a finite set $\cN$ of strictly upper triangular matrices such 
  that $\mu(\digits) \subseteq \cS + \cN$.

  Let $w = a_1 \cdots a_l \in \digits^*$ with $a_1$, $\ldots\,$,~$a_l \in \digits$, and let 
  $\mu(a_i) = D_i + N_i$ with $D_i \in \cS$ and $N_i \in \cN$.
  For $J \subseteq \{1,\ldots,l\}$ with $J = \{j_1 < j_2 < \cdots < j_r \}$ define
  \[
    b_J = uD_{1} \cdots D_{j_1-1} N_{j_1} D_{j_1+1} \cdots  D_{j_2-1} N_{j_2} D_{j_2+1} 
    \cdots D_{j_r-1} N_{j_r} D_{j_r+1} \cdots D_{l}v \,.
  \]
  Then
  \[
    u\mu(w) v = u \mu(a_1\cdots a_l) v = u (D_1 + N_1)\cdots (D_l + N_l) v 
    = \sum_{J \subseteq \{1,\ldots,l\}} b_J\,.
  \]
  Any product that includes $d$ or more of the $N_i$'s is $0$, and hence 
  $b_J=0$ whenever $\card{J} \ge d$. Thus, the previous sum reduces to
  \[
    u\mu(w) v = \sum_{\substack{J \subseteq \{1,\ldots,l\} \\ \card{J} < d}} b_J\,.
  \]
  This sum has at most $\binom{l}{d-1} + \cdots + \binom{l}{0} \le C l^{d-1}$ 
  non-zero terms for some constant $C \in \bR_{>1}$.
  As $\cS$ is a semigroup, each product $D_{j_i+1} \cdots D_{j_{i+1}-1}$ is 
  again contained in the finite set $\cS$. Hence
  \[
    \card{ \big\{\, b_{J} : J \subseteq \{1,\ldots,l\} \,\big\} } \le d\, \card{\mathcal S}^{d}\, 
    \card{\mathcal N}^{d-1} < \infty\,.
  \]

  Let $K$ be the number field generated by the finitely many coordinates of $u$, $v$, 
  and $\mu(a)$ for $a \in \digits$. Then $b_J \in K$ for each $J \subseteq \{1,\ldots,l\}$.
  Since there are only finitely many of these elements, for every place $\sv$ of $K$, 
  there exists a constant $c_\sv \in \bR_{>0}$ such that $\abs{b_{J}}_\sv \leq c_\sv$ for all 
  $J \subseteq \{1,\ldots,l\}$, and we can take $c_\sv = 1$ for all but finitely many places.
  For $m \in \bZ_{\ge 0}$, let
  \[
    \varepsilon_\sv(m) =
    \begin{cases}
      m &\text{if $\sv$ is archimedean,}\\
      1 &\text{if $\sv$ is non-archimedean.}
    \end{cases}
  \]
  Note $\prod_{\sv \in M_K} \varepsilon_{\sv}(m) \le m^{[K:\bQ]}$.
  With this definition $\abs{u \mu(w) v}_\sv \le \varepsilon_\sv(Cl^{d-1})c_\sv$ and
 
 \begin{eqnarray*}
      h(u\mu(w)v) &=& \log \prod_{\sv \in M_K} \max\{ 1, \abs{u \mu(w) v}_\sv \} \\
      & \le& \log \prod_{\sv \in M_K} \max\{1, \varepsilon_\sv(C l^{d-1})c_\sv \} \\
                  &\le &\log\big( (C l^{d-1})^{[K:\bQ]} \big) + \log\bigg( \prod_{\sv \in M_K} \max\{1,c_\sv\} \bigg) \in O(\log l).
  \end{eqnarray*}
  Since $l = \length{w}$ this proves the claim.

  \ref{hlog:hloglog}$\,\Rightarrow\,$\ref{hlog:hlog}
  Clearly $h(a_n) \in O(\log\log n)$ implies $h(a_n) \in o(\log n)$.

  Finally, the equivalence \ref{hlog:tame}$\,\Leftrightarrow\,$\ref{hlog:convolution} is precisely the equivalence (i)$\,\Leftrightarrow\,$(ii) proved by Bell, Coons, and Hare in \cite[Theorem 13]{bell-coons-hare16} (which does not require the sequences to be $\bZ$-valued).
\end{proof}

\section{Fourth gap: characterization of automatic Mahler functions}
\label{sec:hloglog}

In this section, we characterize Mahler functions $f(z)= \sum_{n=0}^\infty a_n z^n\in \power{\Qbar}$ 
with  $h(a_n) \in o(\log\log n)$, extending \cite[Theorem 1.1]{bell-coons-hare14}.

\begin{theorem} \label{t-main:hloglog}
  Let $f(z)= \sum_{n=0}^\infty a_n z^n \in \power{\Qbar}$ be a $k$-Mahler function.
  The following statements are equivalent.
  \begin{equivenumerate}
  \item \label{hloglog:hloglog} We have $h(a_n) \in o(\log\log n)$.
  \item \label{hloglog:semigroupfinite} For every minimal linear representation  of 
  $(a_n)_{n \ge 0}$, the matrix semigroup $\mu(\digits^*)$ is finite.
  \item \label{hloglog:automatic} The power series $f(z)$ is $k$-automatic.
  \item \label{hloglog:finite} We have $h(a_n)\in O(1)$. Equivalently, the set $\{\, a_n : n \ge 0 \,\}$ is finite.
  \end{equivenumerate}
\end{theorem}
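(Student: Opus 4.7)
The plan is to establish the equivalences via the cycle \ref{hloglog:finite}$\,\Rightarrow\,$\ref{hloglog:hloglog}$\,\Rightarrow\,$\ref{hloglog:semigroupfinite}$\,\Rightarrow\,$\ref{hloglog:automatic}$\,\Rightarrow\,$\ref{hloglog:finite}. Three of these steps are short: \ref{hloglog:finite}$\,\Rightarrow\,$\ref{hloglog:hloglog} is trivial since $O(1) \subseteq o(\log\log n)$; \ref{hloglog:automatic}$\,\Rightarrow\,$\ref{hloglog:finite} follows because an automaton has finitely many output states; and for \ref{hloglog:semigroupfinite}$\,\Rightarrow\,$\ref{hloglog:automatic} I would use that the $k$-kernel subsequences of $(a_n)_{n \ge 0}$ can be expressed as $a_{k^en + r} = u\mu(\langle n\rangle_k)\mu(w_{e,r})v$ for suitable padded words $w_{e,r}$, so that finiteness of $\mu(\digits^*)v$ forces a finite $k$-kernel, yielding automaticity via \cite[Theorem 16.1.5]{allouche-shallit03} together with the $k$-regularity already established by \cref{t-main:hlog2}.

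The heart of the argument is \ref{hloglog:hloglog}$\,\Rightarrow\,$\ref{hloglog:semigroupfinite}. Since $o(\log\log n) \subseteq o(\log n)$, \cref{t-main:hlog} tells us that any minimal linear representation $(u,\mu,v)$ has $\mu(\digits^*)$ tame. Arguing by contradiction, suppose this semigroup is infinite. I would apply \cref{l:structure-tame} to put $\mu(\digits^*)$ in block upper triangular form with finite diagonal semigroups. Since the finitely generated diagonal part is bounded, the unboundedness of $\mu(\digits^*)$ must come from the strictly block upper triangular pieces, which are nilpotent of bounded rank. This forces the existence of a word $w_0 \in \digits^*$ whose iterates $\mu(w_0^n)$ are unbounded as $n \to \infty$.

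After replacing $w_0$ by a suitable power, the finiteness of the diagonal semigroup lets me write $\mu(w_0)^n$ in a form whose entries are, componentwise, expressions $\sum_{j} P_j(n) \zeta_j^n$ with $\zeta_j \in \ru$ and at least one $P_j$ a non-constant polynomial. Minimality of $(u,\mu,v)$ then provides words $u_1$,~$u_2 \in \digits^*$ witnessing this growth through the scalar $u\mu(u_1 w_0^n u_2)v$, which is again a non-constant polynomial-exponential in $n$. Along an arithmetic progression of $n$ where every $\zeta_j^n$ is a fixed root of unity, this scalar becomes a non-constant polynomial $Q(n) \in \Qbar[n]$, so its Weil height grows like $\deg(Q) \log n$ for infinitely many $n$. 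Setting $N_n = [u_1 w_0^n u_2]_k$, we have $N_n \asymp k^{n\length{w_0}}$, hence $\log\log N_n \sim \log n$; thus $h(a_{N_n}) \in \Omega(\log\log N_n)$, contradicting \ref{hloglog:hloglog}.

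The main obstacle is the key step of translating infiniteness of $\mu(\digits^*)$ into explicit polynomial growth of $u\mu(u_1 w_0^n u_2)v$. It requires combining the Jordan-like structure delivered by tameness with a careful exploitation of both-sided minimality (so that the polynomial growth hidden in some entry of $\mu(w_0^n)$ is actually observable by the fixed row $u$ and column $v$). Once this is in place, the height estimate for a non-constant polynomial in $n$ and the doubly-exponential change of scale $n \mapsto N_n$ deliver the contradiction in a straightforward way.
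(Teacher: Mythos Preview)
Your cycle of implications matches the paper's, and the easy steps \ref{hloglog:finite}$\,\Rightarrow\,$\ref{hloglog:hloglog}, \ref{hloglog:semigroupfinite}$\,\Rightarrow\,$\ref{hloglog:automatic}, \ref{hloglog:automatic}$\,\Rightarrow\,$\ref{hloglog:finite} are fine (the paper does \ref{hloglog:semigroupfinite}$\,\Rightarrow\,$\ref{hloglog:automatic} by noting that finiteness of $\mu(\digits^*)$ already gives finitely many values, then citing \cite[Theorem 16.1.5]{allouche-shallit03}; your $k$-kernel argument is an equally valid variant).

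For \ref{hloglog:hloglog}$\,\Rightarrow\,$\ref{hloglog:semigroupfinite} there is one genuine gap. You assert that infiniteness of $\mu(\digits^*)$, together with the block upper-triangular structure from \cref{l:structure-tame}, ``forces the existence of a word $w_0 \in \digits^*$ whose iterates $\mu(w_0^n)$ are unbounded.'' This inference is exactly the Burnside problem for matrix semigroups: one needs to know that a finitely generated infinite semigroup of matrices over a field contains an element of infinite order. The block structure alone does not give this---one can have infinitely many matrices sharing the same (finite-semigroup) diagonal part, each individually of finite order. The paper fills this gap by invoking the theorem of McNaughton and Zalcstein \cite{mcnaughton-zalcstein75}, which supplies a word $w$ with $\mu(w^m) \ne \mu(w^n)$ for all $m \ne n$.

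Once such a $w$ is in hand, the paper's route is a bit more direct than yours: rather than going through \cref{l:structure-tame} and polynomial-exponential sums, it observes that tameness (already known from \cref{l:tame}) forces the eigenvalues of $A=\mu(w)$ into $\{0\}\cup\ru$, while infinite order forces a nontrivial Jordan block at some root of unity $\zeta$; the $(1,2)$ entry of $B^{-1}A^nB$ is then $n\zeta^{n-1}$, so $h(e_1^TB^{-1}A^nBe_2) \ge \log n$. Minimality is used exactly as you describe to transfer this to some $h(u\mu(w_iw^nw_j')v)$, and the change of scale $n \mapsto [w_iw^nw_j']_k$ converts $\Omega(\log n)$ into $\Omega(\log\log N)$. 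Your polynomial-exponential/arithmetic-progression argument is correct and reaches the same contradiction, but only after McNaughton--Zalcstein has been supplied.
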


The following lemma closely follows \cite[Lemma 2.1]{bell-coons-hare14}
\begin{lemma}  \label{l:semigroup-finite}
  Let $(a_n)_{n \ge 0}$ be a $k$-regular sequence in $\Qbar$, with a minimal linear representation 
  $(u,\mu,v)$. If $h(a_n) \in o(\log\log n)$, then the semigroup $\mu(\digits^*)$ is finite.
\end{lemma}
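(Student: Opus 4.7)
The overall strategy is to argue by contradiction. Assuming $\mu(\digits^*)$ is infinite, I will construct a subsequence of indices $(m_n)_{n\ge 1}$ with $h(a_{m_n}) \in \Omega(\log\log m_n)$, contradicting the hypothesis $h(a_n) \in o(\log\log n)$. Since $o(\log\log n) \subseteq o(\log n)$, \cref{t-main:hlog} first yields that $\mu(\digits^*)$ is tame, and \cref{l:structure-tame} then allows us, after a change of basis, to write $\mu(w) = D(w) + N(w)$ with $D \colon \digits^* \to \mathcal{S}$ a semigroup homomorphism into a finite semigroup $\mathcal{S}$ of block-diagonal matrices and $N(w)$ strictly block-upper-triangular.

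The heart of the proof --- and its main obstacle --- is to extract from the infinitude of $\mu(\digits^*)$ a word $w_0 \in \digits^*$ and an integer $\alpha \ge 1$ such that the iterates $\mu(w_0^{\alpha n})$ grow polynomially in $n$ with respect to some archimedean absolute value on $\Qbar$. By tameness and the estimate underlying \cref{l:opnorm-growth}, such growth is present precisely when $\mu(w_0^\alpha)$ admits a Jordan block of size at least two at some non-zero eigenvalue. I plan to justify the existence of such $w_0$ as follows: if every matrix $\mu(w)$ had only trivial Jordan blocks at its non-zero eigenvalues, then every cyclic sub-semigroup $\{\mu(w)^n : n \ge 1\}$ would be finite, so $\mu(\digits^*)$ would be a finitely generated torsion semigroup of tame matrices in characteristic zero. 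A Burnside-type argument, exploiting the specific block-upper-triangular structure of \cref{l:structure-tame} and the nilpotency of strictly upper-triangular matrices, then forces such a semigroup to be finite, contradicting the standing assumption. Carrying this step out rigorously in the semigroup setting (rather than the original group setting of Burnside's theorem) is where the main technical work will lie.

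Once such $w_0$ and $\alpha$ are in hand, minimality of the representation $(u,\mu,v)$ closes the argument. Fix entries $i,j$ for which $\abs{(\mu(w_0^{\alpha n}))_{i,j}}_\sigma \ge c\, n$ for some archimedean place $\sigma$ of a number field $K$ containing all relevant quantities and some $c > 0$. Since the sets $\{\,u\mu(w_1) : w_1 \in \digits^*\,\}$ and $\{\,\mu(w_2)v : w_2 \in \digits^*\,\}$ span $\Qbar^{1\times d}$ and $\Qbar^{d\times 1}$ respectively, the linear functional $M \mapsto (M)_{i,j}$ can be written as a finite $\Qbar$-linear combination of functionals of the form $M \mapsto u\mu(w_1) M \mu(w_2) v$. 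An averaging argument therefore produces fixed words $w_1, w_2 \in \digits^*$ and a constant $c' > 0$ such that $\abs{u\mu(w_1 w_0^{\alpha n} w_2) v}_\sigma \ge c' n$ for infinitely many $n$. Setting $m_n \coloneqq [w_1 w_0^{\alpha n} w_2]_k$, we have $\log m_n \in \Theta(n)$, hence $h(a_{m_n}) \ge \log(c' n)/[K:\bQ] \in \Omega(\log n) = \Omega(\log\log m_n)$, yielding the desired contradiction.
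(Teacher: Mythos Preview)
Your overall architecture matches the paper's proof exactly: assume $\mu(\digits^*)$ infinite, use tameness (via \cref{l:tame}, which is what \cref{t-main:hlog} rests on), find a word $w$ whose powers $\mu(w)^n$ have a coordinate growing linearly in $n$ because of a non-trivial Jordan block at a root-of-unity eigenvalue, then use minimality to push this growth into $u\mu(\cdot)v$ and contradict $h(a_n)\in o(\log\log n)$.

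The one place where you diverge is the step you yourself flag as ``the main technical work'': showing that if every $\mu(w)$ has only $1\times 1$ Jordan blocks at its non-zero eigenvalues (equivalently, every cyclic sub-semigroup is finite), then $\mu(\digits^*)$ is finite. You propose to extract this from the block-triangular structure of \cref{l:structure-tame} by a Burnside-type argument. That can be made to work, but it is precisely the content of the McNaughton--Zalcstein theorem (a finitely generated torsion semigroup of matrices over a field is finite), and the paper simply cites it. Invoking McNaughton--Zalcstein eliminates the need for \cref{l:structure-tame} in this proof altogether and collapses your ``main obstacle'' to a one-line reference; the paper then works with the Jordan normal form of the single matrix $\mu(w)$ rather than the global block decomposition. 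Your route is not wrong, just longer: you would essentially be reproving a known semigroup Burnside theorem in a special case.

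A minor stylistic point: where you bound an archimedean absolute value and divide by $[K:\bQ]$, the paper computes the height of $n\zeta^{n-1}$ directly as $h(n\zeta^{n-1})\ge \log n$, which is cleaner and avoids any normalization bookkeeping. Either way the conclusion is the same.
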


\begin{proof}
  Again $a_{[w]_k} = u \mu(w) v$ for all $w \in \digits^*$.
  By our assumption
  \[
    h(u \mu(w) v) \in o(\log \length{w})\,.
  \]
  
  Now suppose to the contrary that $\mu(\digits^*)$ is infinite.
  A theorem of McNaughton and Zalcstein \cite{mcnaughton-zalcstein75} gives a positive answer 
  to the strong Burnside problem for semigroups of matrices over a field.
  Since $\mu(\digits^*)$ is a finitely generated semigroup of matrices, but not finite, 
  this theorem implies that there exists $w \in \digits^*$ such that $\mu(w^m) \ne \mu(w^n)$ 
  for all $m$,~$n \in \bZ_{\ge 0}$ with $m \ne n$.
  Fix such a word $w$.

  Set $A\coloneqq \mu(w)$.
  By \cref{l:tame} every eigenvalue of $A$ is either $0$ or a root of unity. Our choice of $w$ 
  ensures that there exists at least one non-zero eigenvalue $\zeta$ with a non-trivial Jordan block.
  Let $B \in \Qbar^{d \times d}$ be an invertible matrix such that $B^{-1}AB$ is in Jordan normal form.
  Without restriction we may assume
  \[
    B^{-1}AB =
    \begin{pmatrix}
      \zeta & 1 & 0 & \dots & 0 \\
      0 & \zeta & * & \dots & 0 \\
      0 & 0 & * & \dots & 0 \\
      \vdots & \vdots & \vdots & \ddots & \vdots \\
      0 & 0 & \dots & \dots & *  \\
    \end{pmatrix}\,.
  \]
  The $(1,2)$ entry of $B^{-1}A^nB$ is $n\zeta^{n-1}$.
  Hence $h(e_1^T B^{-1}A^nBe_2) = h(n \zeta^{n-1}) \ge \log n$.

  Using the minimality of the linear representation, we can write 
  $e_1^TB^{-1} = \sum_{i=1}^d \lambda_i u \mu(w_i)$ 
  and $Be_2 = \sum_{i=1}^d \tau_i \mu(w_i') v$ with suitable 
  $\lambda_i$,~$\tau_i \in \Qbar$ and $w_i$,~$w_i' \in \digits^*$.
  It follows that
  \[
    h\Big( \sum_{i,j=1}^d \lambda_i \tau_j u \mu(w_i w^n w_j') v \Big)  \ge \log n\,.
  \]
  Hence there exist $i$,~$j \in \{1,\ldots,d\}$ and $c \in \bR_{>0}$ such that
  \[
    h( u\mu(w_i w^n w_j')v ) > c \log n \qquad\text{for infinitely many $n$.}
  \]
  This is a contradiction to $h(u\mu(w_i w^n w_j')v) \in o(\log n)$.
\end{proof}

\begin{proof}[Proof of \cref{t-main:hloglog}]
  Let $f(z) = \sum_{n=0}^\infty a_n z^n \in \power{\Qbar}$ be a $k$-Mahler function.
 
  \ref{hloglog:hloglog}$\,\Rightarrow\,$\ref{hloglog:semigroupfinite}
  Let $h(a_n) \in o(\log\log n)$.
  Then $(a_n)_{n \ge 0}$ is $k$-regular by \cref{t-main:hlog2}, with a minimal linear representation $(u,\mu,v)$.
  By \cref{l:semigroup-finite} the semigroup $\mu(\digits^*)$ is finite.

  \ref{hloglog:semigroupfinite}$\,\Rightarrow\,$\ref{hloglog:automatic}
  Let $(u,\mu,v)$ be a minimal linear representation of the regular sequence $(a_n)_{n \ge 0}$.
  Suppose $\mu(\digits^*)$ is finite.
  Then $(a_n)_{n \ge 0}$ takes only finitely many values, and $k$-regular sequences taking finitely many values are automatic (\cite[Theorem 16.1.5]{allouche-shallit03} or \cite[Proposition 5.3.3]{berstel-reutenauer11}).

  \ref{hloglog:automatic}$\,\Rightarrow\,$\ref{hloglog:finite}
  Let $(a_n)_{n \ge 0}$ be $k$-automatic.
  Then the sequence only takes finitely many values by definition.

  \ref{hloglog:finite}$\,\Rightarrow\,$\ref{hloglog:hloglog}
  Clearly, if $\{\, a_n : n \ge 0 \,\}$ is finite, then $h(a_n) \in o(\log\log n)$.
\end{proof}

\section{Comments on Becker's conjecture} \label{sec: becker}

Every $k$-regular power series is $k$-Mahler, and as a partial converse Becker showed 
that a $k$-Becker power series is $k$-regular.
He also conjectured a full description of $k$-regular power series in terms of $k$-Becker power series.
This conjecture was recently proven by Bell, Chyzak, Coons, and Dumas, as the main result in 
\cite{bell-chyzak-coons-dumas18}. The proof in \cite{bell-chyzak-coons-dumas18} is stated for $K=\mathbb C$, 
but the same arguments apply equally well to arbitrary fields of characteristic zero. 

\begin{theorem}[{\cite[Theorem 1]{bell-chyzak-coons-dumas18}}]\label{thm: becker}
  Let $K$ be a field of characteristic $0$.
  If $f(z) \in \power{K}$ is $k$-regular, there exist a polynomial $q(z) \in K[z]$ with $q(0)=1$ such 
  that $1/q(z)$ is $k$-regular and a non-negative integer $\gamma$ such $f(z)/z^\gamma q(z)$ is a $k$-Becker 
  Laurent series.
\end{theorem}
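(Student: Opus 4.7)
The plan is to deduce Becker's conjecture from \cref{t-main:hlog2} together with the Dumas structure theorem \cref{t:dumas-structure}. Starting with $k$-regular $f \in \power{K}$, I would first apply \cref{l:kill-0} to obtain a shifted power series $f_0(z) = \sum_{n \ge 0} a_{n+n_0} z^n$ together with a Mahler equation $p_0(z) f_0(z) = \sum_{i=1}^d p_i(z) f_0(z^{k^i})$ with $p_0(0) = 1$ and $p_0, \ldots, p_d$ coprime; write $f(z) = r(z) + z^{n_0} f_0(z)$ where $r \in K[z]$ has degree less than $n_0$. Since $f$ is $k$-regular, so is $f_0$, and \cref{t-main:hlog2} then forces every non-zero root of $p_0$ to lie in $\rucommon$.

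The central task is to construct a polynomial $q(z) \in K[z]$ with $q(0) = 1$, all of whose roots are roots of unity, such that $p_0(z) q(z)$ divides $p_i(z) q(z^{k^i})$ in $K[z]$ for every $i \in \{1, \ldots, d\}$. Given such a $q$, dividing the Mahler equation by $p_0(z) q(z)$ shows that $h(z) \coloneqq f_0(z)/q(z)$ is a $k$-Becker power series; and unfolding $1/(1-\alpha^{-1}z)^m$ as the product of the $k$-regular polynomial-in-$n$ sequence $\binom{n+m-1}{m-1}$ with the $k$-automatic periodic sequence $(\alpha^{-n})_{n\ge 0}$ shows that $1/q$ is $k$-regular. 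The identity $f(z)/(z^{n_0} q(z)) = r(z)/(z^{n_0} q(z)) + h(z)$ then exhibits $f/(z^{n_0} q)$ as a Laurent series; the Laurent-polynomial correction coming from $r$ is absorbed into an adjusted $k$-Becker equation, which is precisely what necessitates the Laurent version in the conclusion.

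The construction of $q$ is an orbit analysis. Because every root $\zeta$ of $p_0$ lies in $\rucommon$, its forward orbit $\{\zeta^{k^j}\}_{j \ge 0}$ under $z \mapsto z^k$ is finite, and after finitely many pre-periodic steps enters a cycle whose elements all lie in $\cU \setminus \rucommon$, that is, are fixed by some iterate of $z \mapsto z^k$. The divisibility condition at a root $\zeta$ of $p_0$ translates into a multiplicity inequality of the form $m_q(\zeta^{k^i}) \ge m_q(\zeta) + e_\zeta - e_{i,\zeta}$ (for every $i$), where $e_\zeta$ and $e_{i,\zeta}$ denote the multiplicities of $\zeta$ as a root of $p_0$ and $p_i$ respectively; by coprimality of $p_0, \ldots, p_d$ at least one $i$ satisfies $e_{i,\zeta} = 0$. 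Propagating these demands along each orbit starting from $m_q \equiv 0$ on the pre-periodic portion, and assigning a common sufficiently large multiplicity on the periodic cycle, yields a consistent assignment: on the cycle the weaker constraints $m_q(\alpha^k) \ge m_q(\alpha)$ become equalities and the system closes up.

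The main obstacle is precisely this orbit-theoretic construction, whose feasibility hinges on \cref{t-main:hlog2} pinning down the roots of $p_0$ inside $\rucommon$ and thereby guaranteeing the finiteness and eventual stability of the orbits; once this is established, the verification that $1/q$ is $k$-regular and that $h$ is $k$-Becker are direct consequences of the divisibility condition. The secondary technical point is the absorption of $r(z)/(z^{n_0}q(z))$ into the $k$-Becker Laurent equation, a small polynomial bookkeeping exercise that explains why the conclusion is in the Laurent, rather than power-series, sense.
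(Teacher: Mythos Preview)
Your overall strategy---locate the roots of $p_0$ inside $\{0\}\cup\rucommon$ via \cref{t-main:hlog2}, then build a polynomial $q$ so that $p_0(z)q(z)$ divides $q(z^{k^i})$ (or $p_i(z)q(z^{k^i})$) and conclude that $f/(z^\gamma q)$ is $k$-Becker---is exactly the two-step plan the paper follows. The paper carries out the construction of $q$ more concretely than your orbit-multiplicity scheme: for each root $\xi$ of $p_0$ of order $\ell$, one sets $a(\xi)=\ell/\gcd(\ell,k^\infty)$ and takes $q(z)=\prod_\xi \phi_{a(\xi)}(z^{k^s})$, using the cyclotomic divisibility $\phi_\ell(z)\,\phi_a(z^{k^s})\mid\phi_a(z^{k^{s+j}})$. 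This also yields $q(z)\mid q(z^k)$ for free, whence $1/q$ is $k$-Becker directly. Your orbit approach can be made to work, but the paper's explicit choice avoids the inequality-propagation entirely.

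There is, however, a genuine gap in your reassembly step. After shifting via \cref{l:kill-0} you write $f=r+z^{n_0}f_0$ with $r$ a polynomial of degree less than $n_0$, and then $f/(z^{n_0}q)=r/(z^{n_0}q)+h$. You call $r/(z^{n_0}q)$ a ``Laurent-polynomial correction'' to be absorbed, but it is not a Laurent polynomial: since $q(0)=1$, the factor $1/q$ is a genuine power series, so $r/(z^{n_0}q)$ is an infinite Laurent series. There is no evident mechanism to absorb a rational (non-polynomial) correction into a $k$-Becker equation, and in fact the sum of a $k$-Becker series and an arbitrary rational function need not be $k$-Becker. The fix is simply not to shift: apply the root-location result directly to a Mahler equation for $f$ with $p_0(z)=z^\gamma\tilde p_0(z)$, build $q$ from $\tilde p_0$, and invoke \cref{lem: easy} with the polynomial $z^\gamma q(z)$, exactly as the paper does. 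A secondary point: \cref{t-main:hlog2} is stated over $\Qbar$, whereas the theorem is for arbitrary characteristic-zero $K$; the paper devotes Step~I of \cref{sec: becker} to extending the needed implication to general $K$, and you should acknowledge that this extension is required.
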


By a \emph{$k$-Becker Laurent series}, we of course mean a Laurent series satisfying a functional equation as in \cref{d:becker}.
We stress that it is not always possible to obtain a $k$-Becker power series of the form $f(z)r(z)$ with $r(z)$ a rational function \cite[Theorem 14]{bell-chyzak-coons-dumas18}.

The proof of Bell, Chyzak, Coons, and Dumas breaks down into two steps:
\begin{enumerate}[label=(\Roman*)]
  \item First they show that a $k$-regular power series 
    $f(z) \in \power{K}$ satisfies a $k$-Mahler equation
    \[
      p_0(z) f(z) + p_1(z) f(z^k) + \cdots + p_d(z) f(z^{k^d}) =0\,,
    \]
    where all roots of $p_0(z)$ belong to $\{0\} \cup \ru_k$. Equivalently, the $k$-denominator 
    $\mathfrak d(z)$ of $f(z)$ has all its roots in $\{0\} \cup \ru_k$. 
  \item They show that any such series has the required decomposition.
\end{enumerate}

We now give alternative arguments for both of these steps using our results.
In particular, for $K=\Qbar$, step I is immediate from \cref{t-main:hlog2} and our argument for step II is somewhat shorter.
We also recover Proposition 2 and Corollary 3 of \cite{bell-chyzak-coons-dumas18} (Corollary 3 follows as in the proof of \cref{p:hlog2}). 

\subsection{Step I}
For $K = \Qbar$, \cref{t-main:hlog2} immediately establishes step I.
We now show how to extend the relevant part of \cref{t-main:hlog2} to arbitrary fields of characteristic $0$.

Let $K$ be a field of characteristic $0$ and let $f(z)=\sum_{n=0}^\infty a_n z^n \in \power{K}$ be $k$-regular.
Let $\mathfrak d(z)$ denote the Mahler denominator of $f(z)$ over $K$.
We also have that the $k$-kernel of $f(z)$ generates a finite-dimensional $K$-vector space.
In particular, there is some fixed $M > 0$ such that for every $j\in \{0,1,\ldots ,k^M-1\}$, we have
\[
  a_{k^M n + j} = \sum_{e<M} \sum_{i=0}^{k^e-1} c_{j,i,e} a_{k^e n+i} \qquad\text{for $n \ge 0$.}
\]
We have that for some $s \ge 0$, the power series $f(z),f(z^k),\ldots,f(z^{k^s})$ satisfy a Mahler system of the form \cref{eq:mahler-lin} with some invertible matrix $A(z)$ with entries in $K(z)$.
Let $R$ be a finitely generated $\mathbb{Z}$-algebra that contains:
\begin{enumerate}
\item $a_0,\ldots,a_{k^M-1}$;
\item the roots of $\mathfrak d(z)$ and the reciprocals of all non-zero roots of $\mathfrak d(z)$;
\item the structure constants $c_{i,j,e}$.
\item the non-zero coefficients and their inverses of each polynomial appearing in either the numerator or denominator of an entry of $A(z)$.
\end{enumerate}
Then by construction, $f(z) \in \power{R}$.
If $\mathfrak p$ is a prime ideal of $R$ and $g(z)\in \power{R}$, then we let $g_{\mid \mathfrak p}(z)$ denote the power series in $\power{R/\mathfrak p}$ obtained by reducing the coefficients of $g(z)$ modulo $\mathfrak p$.
Then by construction, $f_{\mid \mathfrak p}(z)$ is a regular power series in $\power{(R_\mathfrak p/\mathfrak p_\mathfrak p)}$.

\begin{lemma}
  Let $\lambda\in R$ be a non-zero root of $\mathfrak d(z)$.
  Then $\lambda$ is a root of unity.  
\end{lemma}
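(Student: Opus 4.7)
The plan is to argue by contradiction, reducing to the already-established case $K = \Qbar$ of \cref{t-main:hlog2}. Suppose $\lambda \in R$ is a non-zero root of $\mathfrak d$ that is not a root of unity. I will construct a ring homomorphism $\phi\colon R \to \Qbar$ such that $\phi(\lambda)$ is a non-zero, non-root-of-unity root of the $k$-Mahler denominator of the specialized series $\tilde f(z) \coloneqq \sum_{n\geq 0}\phi(a_n)z^n$; the implication \ref{hlog2:regular}$\,\Rightarrow\,$\ref{hlog2:roots} of \cref{t-main:hlog2} applied to $\tilde f$ then forces $\phi(\lambda)$ to be a root of unity, contradicting the choice of $\phi$.

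First, I construct $\phi$ so that $\phi(\lambda)$ is a non-root-of-unity. Since $R \subset K$ is a domain in characteristic zero, $R_{\bQ} \coloneqq R \otimes_{\bZ} \bQ$ is a finitely generated $\bQ$-algebra domain. If $\lambda$ is algebraic over $\bQ$, then $\bQ(\lambda) \subset R_{\bQ}$ is a number field, and Hilbert's Nullstellensatz yields a maximal ideal $\mathfrak m \subset R_{\bQ}$ whose residue field is a finite extension of $\bQ(\lambda)$; any embedding of that residue field into $\Qbar$ sends $\lambda$ to a conjugate, still not a root of unity. If $\lambda$ is transcendental over $\bQ$, then $\bQ[\lambda]$ is a polynomial subring of $R_{\bQ}$ and the induced morphism $\Spec R_{\bQ} \to \bA^1_{\bQ}$ is dominant of finite type; by Chevalley's theorem its image contains a non-empty open subset of $\bA^1_{\bQ}$, hence all but finitely many closed points, so I pick any $\alpha \in \Qbar \setminus \ru$ in this image and lift to a maximal ideal of $R_{\bQ}$ to get $\phi$ with $\phi(\lambda) = \alpha$. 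In either case $\lambda^{-1}\in R$ guarantees $\phi(\lambda)\neq 0$.

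Next I descend regularity and the Mahler equation. Because the initial coefficients $a_0,\dots,a_{k^M-1}$ and the structure constants $c_{j,i,e}$ all lie in $R$, applying $\phi$ coefficientwise produces identities witnessing that the $k$-kernel of $\tilde f$ is finite-dimensional, so $\tilde f$ is $k$-regular over $\Qbar$. Taking any Mahler equation $\mathfrak d(z) f(z) = \sum_{i\geq 1} p_i(z) f(z^{k^i})$ (with coefficients of the $p_i$ adjoined to $R$ if necessary), specialization under $\phi$ yields the analogous relation for $\tilde f$, so $\phi(\mathfrak d)$ belongs to the Mahler ideal of $\tilde f$ and the $k$-Mahler denominator $\tilde{\mathfrak d}$ of $\tilde f$ divides $\phi(\mathfrak d)$.

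The main obstacle is to ensure that $\phi(\lambda)$ is actually a root of $\tilde{\mathfrak d}$, and not merely of the possibly larger $\phi(\mathfrak d)$, since specialization can \emph{a priori} introduce new relations that strictly enlarge the Mahler ideal for $\tilde f$. To handle this, I refine the choice of $\mathfrak m$ so as to avoid the degenerate behaviour. The construction of $R$, which includes the non-zero coefficients of numerators and denominators of entries of $A(z)$ together with their inverses, guarantees that specialization preserves the invertibility of $A(z)$; consequently the order of the minimal Mahler system over $\Qbar$ equals that over $K$ whenever $\mathfrak m$ avoids finitely many bad conditions cutting out a proper Zariski closed subset of $\Spec R_{\bQ}$. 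Similarly, forbidding the coprime polynomials $p_0,\dots,p_d$ of the minimal $K$-Mahler equation from acquiring a common factor at $\phi(\lambda)$ excludes another proper closed subset, and enough specializations remain to yield $\phi$ satisfying all these conditions together with $\phi(\lambda) \notin \ru$. For such $\phi$, the polynomial $\phi(\mathfrak d)$ agrees with $\tilde{\mathfrak d}$ up to a unit of $\Qbar$, so $\tilde{\mathfrak d}(\phi(\lambda)) = 0$; then \cref{t-main:hlog2} applied to $\tilde f$ gives $\phi(\lambda) \in \rucommon \subseteq \ru$, the desired contradiction.
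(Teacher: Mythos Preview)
Your overall strategy---specialize to $\Qbar$ and invoke \cref{t-main:hlog2}---is natural, but the argument has a genuine gap at the crucial step: the assertion that ``for such $\phi$, the polynomial $\phi(\mathfrak d)$ agrees with $\tilde{\mathfrak d}$ up to a unit of $\Qbar$''. Your justification is that generically the matrix $A(z)$ stays invertible and the coprime polynomials $p_0,\dots,p_d$ of the minimal equation do not acquire a common zero at $\phi(\lambda)$. But these conditions only control $p_0$, not $\mathfrak d$. As \cref{exm: denominator} illustrates, the Mahler denominator can be a \emph{proper} divisor of the $p_0$ of the minimal equation, and nothing you have said prevents the same phenomenon after specialization: even if $\phi(p_0),\dots,\phi(p_d)$ are coprime and give a minimal equation for $\tilde f$, the Mahler ideal of $\tilde f$ may contain a polynomial $q_0$ with $q_0(\phi(\lambda))\ne 0$ coming from a higher-order equation that has no lift to $K$. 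You would need an argument bounding, uniformly in $\phi$, the equations that can contribute to $\tilde{\mathfrak I}$, and you have not supplied one. A secondary issue is that preservation of the invertibility of $A(z)$ does not by itself force $\tilde f,\dots,\tilde f(z^{k^{L-1}})$ to remain linearly independent over $\Qbar(z)$; you assert this is a proper closed condition but do not justify it.

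The paper circumvents the problem of controlling $\tilde{\mathfrak d}$ entirely. Instead of a single specialization, it uses Noether normalization to produce a Zariski-dense family of primes $\mathfrak p$ of $R$ with $R/\mathfrak p$ a number field of \emph{uniformly bounded} degree; this bound is what guarantees that $\lambda_\mathfrak p$ is not a root of unity on a dense subfamily. For each such $\mathfrak p$ one evaluates the iterated minimal equation at $\lambda_\mathfrak p$ and applies the lifting theorem (\cref{thm: lifting}) to obtain a linear relation among $f_{\mid\mathfrak p}(z^{k^n}),\dots,f_{\mid\mathfrak p}(z^{k^{n+L-1}})$ with the \emph{specific} coefficients $r_{i\mid\mathfrak p}(z)$. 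The point is then that this relation, holding for a Zariski-dense set of $\mathfrak p$, lifts back to a relation over $R$ itself, contradicting the minimality of $L$. Thus the paper never needs to know what $\tilde{\mathfrak d}$ looks like; it works directly with the iterated minimal equation and exploits density to transfer the contradiction back to $K$.
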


\begin{proof}
  Suppose that $\lambda$ is not root of unity.
  Then, since the coefficients of $A(z)$ have only finitely many poles, there is some $n$ such that $\lambda^{k^{n}}$ is a regular point with respect to this Mahler system.
  We now take a $k$-Mahler equation
  \begin{equation} \label{eq:kk-min-k-mahler}
    q_0(z) f(z) = \sum_{i=1}^L q_i(z) f(z^{k^i}), \qquad q_0(z), q_1(z),~\ldots\,,~q_L(z) \in K[z],
  \end{equation}
  with $q_0(z) \ne 0$ and $L$ minimal.
  Iterating \cref{eq:kk-min-k-mahler} we find an equation
  \begin{equation} \label{eq:kk-iterated}
    r_0(z) f(z) = \sum_{i=n}^{L + n-1} r_i(z) f(z^{k^i}),
  \end{equation}
  where we may assume that the polynomials $r_0(z),r_{n}(z),\ldots,r_{L+n+1}(z) \in K[z]$ are coprime.
  The Mahler denominator $\mathfrak d(z)$ divides $r_0(z)$ and so $r_0(\lambda) = 0$.
  By coprimality of the coefficients, in particular there is some $i_0$ such that $r_{i_0}(\lambda)\neq 0$.
    Now we adjoin the coefficients of $r_0(z)$ and $r_n(z),\ldots,r_{n+L-1}(z)$ to $R$.
  
    By Noether normalization, there is a positive integer $N$ and $x_1,\ldots,x_d \in R$ such that 
    $x_1,\ldots,x_d$ are algebraically independent over $\mathbb Q$ and such that 
    $R[1/N]$ is a finite integral extension of $\mathbb{Z}[1/N][x_1,\ldots, x_d]$.
    Let $\mathcal{S}$ denote the set of prime ideals $\mathfrak p$ of $R[1/N]$ with the property that 
    $\mathfrak p\cap \mathbb{Z}[1/N][x_1,\ldots ,x_d] = (x_1-b_1,\ldots ,x_d-b_d)$ with $b_1, \ldots,b_d$ integers.
    By integrality, there is at least one such prime for each $d$-tuple $(b_1,\ldots, b_d)$ of integers.
    Moreover, $R/\mathfrak p$ is a finite extension of $\mathbb{Z}[1/N]$, generated by at most $\kappa$ elements for some $\kappa$ that is independent of $\mathfrak p$ (indeed, we may take $\kappa$ to be the cardinality of the set of generators of $R[1/N]$ as a $\mathbb{Z}[1/N][x_1,\ldots ,x_d]$-module).
    Hence $R_\mathfrak p/\mathfrak p_\mathfrak p$ is a number field of degree at most $\kappa$ for each $\mathfrak p\in \mathcal{S}$.
    
    Moreover, the intersection of the prime ideals in $\mathcal{S}$ is $(0)$.  
    For $\mathfrak p\in \mathcal{S}$, we let $\lambda_\mathfrak p=\lambda+\mathfrak p\in R/\mathfrak p$.
    Then we reduce \cref{eq:kk-iterated} modulo $\mathfrak p$ and plug in $z=\lambda_\mathfrak p$ to obtain
\begin{equation}\label{eq: 0}
0 =\sum_{i=n}^{L+n-1} r_{i\mid \mathfrak p}(\lambda_\mathfrak p) f_{\mid \mathfrak p}(\lambda_\mathfrak p^{k^{i}}),
\end{equation}
where the left side follows from the fact that $\mathfrak d(z)$ divides $r_0(z)$.
It is straightforward to see that $\lambda_\mathfrak p^{k^{n}}\in (R/\mathfrak p)_\mathfrak p$ is a regular point of the reduced Mahler system for $f_{\mid \mathfrak p}(z),f_{\mid \mathfrak p}(z^k),\ldots,f_{\mid \mathfrak p}(z^{k^{L-1}})$ for 
$\mathfrak p$ in a Zariski dense subset $\mathcal{T}$ of $\mathcal{S}$.
Moreover, there is a Zariski dense subset $\mathcal{T}'$ of $\mathcal{T}$ such that $r_{i_0\mid \mathfrak p}(\lambda_\mathfrak p)\neq 0$ for $\mathfrak p\in \mathcal{T}'$. 
We remark that there is a Zariski dense subset $\mathcal{T}''$ such that $\lambda_\mathfrak p$ is not a root of unity.
To see this, observe that if $\mathfrak p\in \mathcal{T}'$ is such that $\lambda_\mathfrak p$ is a root of unity, then $\mathbb{Q}(\lambda_\mathfrak p)$ is an extension of degree at most $\kappa$ and hence there is some fixed $M=M(\kappa)$ such that $\lambda_\mathfrak p^M=1$.
Since $\mathcal{T}'$ is Zariski dense, this gives that $\lambda$ is a root of unity, which is a contradiction.

Now for $\mathfrak p\in \mathcal{T}''$, we have $\lambda_\mathfrak p\in K_\mathfrak p:=(R/\mathfrak p)_\mathfrak p$.
Then there is some place $\sv$ on the number field $K_\mathfrak p$ such that $|\lambda_\mathfrak p|_{\sv} <1$.
Equation \eqref{eq: 0} combined with \cref{thm: lifting} yields
\[
  0 = \sum_{i=n}^{L+n-1} r_{i\mid \mathfrak p}(z) f_{\mid \mathfrak p}(z^{k^i}).
\]
By Zariski density of $\mathcal{T}''$, also $0 =\sum_{i=n}^{L+n-1} r_i(z) f(z^{k^i}) \in \power{R}  \cap \power{K}$, in contradiction to the minimality of $L$.
The result follows. 
\end{proof}

By looking at the asymptotic behavior on the unit circle we may once again strengthen the previous lemma.

\begin{lemma}
  Let $\lambda\in R$ be a non-zero root of $\mathfrak d(z)$.
  Then $\lambda \in \rucommon$.
\end{lemma}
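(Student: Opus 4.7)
The plan is to argue by contradiction, translating the proof of \cref{p:goodroots} to the present setting through the reduction-mod-$\mathfrak{p}$ framework of the previous lemma. Suppose $\lambda \in R$ is a non-zero root of $\mathfrak{d}(z)$ with $\lambda \notin \rucommon$. By the previous lemma $\lambda$ is a root of unity; since $\lambda \notin \rucommon$ its order is coprime to $k$, so there exists $j_0 \geq 1$ with $\lambda^l = \lambda$, where $l := k^{j_0}$. By \cref{l:mahler-power}, $f(z)$ is also $l$-Mahler.

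Fix a minimal $l$-Mahler equation
$$P_0(z) f(z) = P_1(z) f(z^l) + \cdots + P_d(z) f(z^{l^d})$$
with $P_0,\ldots,P_d \in K[z]$ coprime and $P_0 P_d \neq 0$. Since every $l$-Mahler equation is also a $k$-Mahler equation, the $k$-Mahler denominator $\mathfrak{d}(z)$ divides $P_0(z)$, so $P_0(\lambda) = 0$. Pick a Bezout identity $A_0 P_0 + \cdots + A_d P_d = 1$ in $K[z]$, and enlarge $R$ (adjoining the coefficients of all $P_i$ and $A_i$, and inverting finitely many elements) so that this identity already holds in $R[z]$. Running the Noether normalization / Zariski density machinery of the previous lemma, I obtain a Zariski dense family $\mathcal{T}$ of prime ideals $\mathfrak{p}$ of $R[1/N]$ (for suitable $N$) such that $K_\mathfrak{p} \coloneqq R_\mathfrak{p}/\mathfrak{p}_\mathfrak{p}$ is a number field of degree bounded by some $\kappa$, and such that, for every $\mathfrak{p} \in \mathcal{T}$: $\lambda_\mathfrak{p} \ne 0$, $\lambda_\mathfrak{p}^l = \lambda_\mathfrak{p}$ and $P_{0\mid\mathfrak{p}}(\lambda_\mathfrak{p}) = 0$ (both automatic by reducing identities in $R$), $P_{0\mid\mathfrak{p}} P_{d\mid\mathfrak{p}} \neq 0$, and the reduced polynomials $P_{0\mid\mathfrak{p}},\ldots,P_{d\mid\mathfrak{p}}$ are coprime (via the reduced Bezout identity). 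Moreover, as observed in the discussion preceding the previous lemma, the reduced series $f_{\mid\mathfrak{p}}(z)$ is $k$-regular.

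Fix $\mathfrak{p} \in \mathcal{T}$ and an archimedean embedding $\sigma \colon K_\mathfrak{p} \hookrightarrow \bC$. By \cref{l:growth-regular} the coefficients of $f_{\mid\mathfrak{p}}$ have logarithmic Weil height in $O(\log n)$, and in particular their images under $\sigma$ have polynomial modulus, so $\sigma(f_{\mid\mathfrak{p}})(z)$ is analytic in the open unit disc $B(0,1) \subseteq \bC$. All hypotheses of \cref{p:coprime-k-lower} are then satisfied for $\sigma(f_{\mid\mathfrak{p}})(z)$ at $\zeta = \sigma(\lambda_\mathfrak{p})$, yielding constants $a,b,c > 0$, $m_0,n_0 \in \bZ_{\ge 0}$, and a sequence $(t_j)_{j \ge 0} \to 1$ in $[0,1)$ with
$$\Big| \sum_{n=0}^\infty \sigma(a_{n+n_0}^{(\mathfrak{p})}) (\zeta t_j)^n \Big| \ge (1-t_j)^a \exp(b \log^2 m) t_j^{mc}$$
for all $j \ge 0$ and all $m \ge m_0$. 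Applying \cref{l:estimate} to the dominating non-negative series $\sum_n \abs{\sigma(a_n^{(\mathfrak{p})})} t^n$ produces a constant $c' > 0$ and infinitely many $n$ with $\abs{\sigma(a_n^{(\mathfrak{p})})} > \exp(c' \log^2 n)$, forcing $h(a_n^{(\mathfrak{p})}) \notin o(\log^2 n)$. This contradicts $h(a_n^{(\mathfrak{p})}) \in O(\log n)$ obtained above from regularity, completing the proof.

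The main technical obstacle is ensuring that all the hypotheses required for \cref{p:coprime-k-lower} — most notably the coprimality of the reduced polynomials $P_{i\mid\mathfrak{p}}$ and the preservation of the key vanishing $P_{0\mid\mathfrak{p}}(\lambda_\mathfrak{p}) = 0$ — survive reduction modulo $\mathfrak{p}$ on a Zariski dense set of primes. This is essentially a bookkeeping exercise: it is handled by absorbing all relevant denominators into the enlarged $R$ and reducing the two identities $A_0 P_0 + \cdots + A_d P_d = 1$ and $P_0(\lambda) = 0$, after which the Zariski density framework of the previous lemma applies verbatim.
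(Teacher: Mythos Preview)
Your proof is correct, but it takes an unnecessarily circuitous route compared to the paper. You faithfully transplant the reduction-mod-$\mathfrak{p}$ framework from the previous lemma, arranging for the $l$-Mahler equation, the Bezout identity, and the root $\lambda$ all to survive reduction to a number field $K_\mathfrak{p}$, and then running the analytic argument there. This works, and your bookkeeping is sound.

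The paper, however, observes that the reduction step is simply not needed here. In the previous lemma the descent to a number field was forced: the input was \cref{thm: lifting}, which is only available over number fields and their completions. In the present lemma the input is \cref{p:coprime-k-lower}, which is a purely complex-analytic statement and requires only an archimedean absolute value. Since $R$ is a finitely generated integral $\bZ$-algebra, it embeds directly into $\bC$; the image of $\lambda$ is still a root of unity with $\lambda^{k^j}=\lambda$, the $l$-Mahler equation and the coprimality of its coefficients transfer verbatim, and $f(z)$ itself (not a reduction) is analytic in $B(0,1)$ because the linear-representation bound gives $\log\abs{a_n}\in O(\log n)$. One then applies \cref{p:coprime-k-lower} and \cref{l:estimate} directly to $f$, obtaining the same contradiction in two lines. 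Your approach buys nothing extra here; the paper's is strictly simpler.
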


\begin{proof}
  Let $\lambda$ be a non-zero root of $\mathfrak d(z)$.
  By the previous lemma $\lambda \in \ru$.
  Therefore it suffices to show $\lambda^{k^j} \ne \lambda$ for all $j \ge 1$.

  Suppose to the contrary that $\lambda^{k^{j}} = \lambda$ for some $j \ge 1$.
  Since $R$ is finitely generated, it embeds into $\mathbb C$.
  Let $\abs{\cdot}$ denote the induced absolute value on $R$.
  We may now conclude as in the proof of \cref{p:goodroots}:
  from \cref{p:coprime-k-lower} we obtain $\log \abs{a_n} \ge c \log^2 n$ infinitely often.
  However, using the linear representation of a $k$-regular sequence, we easily obtain $\log \abs{a_n} \in O(\log n)$ as in \cref{l:growth-regular} (or \cite[Theorem 2.10]{allouche-shallit92}), a contradiction.
\end{proof}

We thus have the following theorem, extending a part of \cref{t-main:hlog2} to fields of characteristic $0$ and also generalizing \cite[Proposition 2]{bell-chyzak-coons-dumas18}.

\begin{theorem}
  Let $K$ be a field of characteristic $0$, let $f(z) \in \power{K}$ be $k$-Mahler, and let $\mathfrak d(z)$ be the Mahler denominator of $f(z)$.
  Then $f(z)$ is $k$-regular if and only if every non-zero root of $\mathfrak d(z)$ \textup{(}in the algebraic closure $\overline{K}$\textup{)} is a root of unity with order not coprime to $k$.
\end{theorem}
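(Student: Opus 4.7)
The forward direction is exactly the content of the two lemmas immediately preceding this theorem, so my plan is to cite them. The heavy lifting in those lemmas is precisely the passage from the $\Qbar$-setting of \cref{t-main:hlog2} to an arbitrary characteristic zero field $K$: one enlarges the finitely generated $\bZ$-algebra $R$ to contain all relevant structural data, applies Noether normalization, and reduces modulo a Zariski-dense family of maximal ideals of $R$ to land in a number-field setting where the already established $\Qbar$-results apply.

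For the converse direction, my plan is to closely mirror the proof of \cref{p:hlog2}, which establishes the case $K = \Qbar$, and to observe that every tool used there extends to arbitrary fields of characteristic zero. First, using \cref{l:kill-0}, I would reduce to the case $\mathfrak d(0) = 1$; shifting the power series by a polynomial prefix preserves both the hypothesis on the roots of the $k$-Mahler denominator and the conclusion of $k$-regularity. Next, I would invoke \cref{t:dumas-structure}, which is already stated over an arbitrary field $K$, to obtain the decomposition
\[
  f(z) = \frac{g(z)}{\prod_{n=0}^\infty \mathfrak d(z^{k^n})},
\]
where $g(z) \in \power{K}$ is a $k$-Becker series. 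Becker's theorem (\cref{t:becker}) then yields that $g(z)$ is $k$-regular; although the excerpt only states it over $\Qbar$, its proof relies purely on algebraic manipulations of the Mahler equation and the $k$-kernel, and works verbatim in characteristic zero. Finally, I would factor $\mathfrak d(z)$ over $\overline K$ as $\prod_i (1 - \zeta_i^{-1} z)$ with $\zeta_i \in \rucommon$, so that
\[
  \prod_{n=0}^\infty \mathfrak d(z^{k^n})^{-1} = \prod_i \prod_{n=0}^\infty (1 - \zeta_i^{-1} z^{k^n})^{-1},
\]
and each inner product is $k$-regular by \cite[Proposition 7.8]{adamczewski-bell17}. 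Since products of $k$-regular series are $k$-regular, $f(z)$ is $k$-regular as a series over $\overline K$.

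The one step that needs genuine attention, and which I expect to be the main (though still minor) obstacle in the write-up, is the descent of $k$-regularity from $\overline K$ back to $K$. This is routine: since $f(z) \in \power{K}$, the $K$-linear span of the $k$-kernel of $f(z)$ is equal to the intersection with $\power{K}$ of its $\overline K$-linear span, and finite-dimensionality is preserved under this restriction. Apart from that descent, and the routine verification that \cref{t:dumas-structure}, \cref{t:becker}, and \cite[Proposition 7.8]{adamczewski-bell17} all carry over to an arbitrary field of characteristic zero (a point already remarked upon in the excerpt), there is no serious new obstacle.
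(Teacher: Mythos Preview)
Your proposal is correct and matches the paper's approach exactly: the paper's proof simply cites the preceding lemma for the forward direction and says the converse ``follows exactly as in the proof of \cref{p:hlog2}''. Your additional care about the descent from $\overline K$ to $K$ is a reasonable elaboration that the paper leaves implicit.
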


\begin{proof}
  If $f(z)$ is $k$-regular, the claim follows from the previous lemma.
  The converse direction follows exactly as in the proof of \cref{p:hlog2}.
\end{proof}

\subsection{Step II}
We now provide a somewhat shorter argument for the second step of \cite{bell-chyzak-coons-dumas18}.
First recall the following easy lemma. 

\begin{lemma}\label{lem: easy}
Let $K$ be a field and let $f(z)\in \power{K}$ be a $k$-Mahler power series solution to the equation 
\begin{equation}\label{eq1}
p_0(z)f(z)+p_1(z)f(z^k)+\cdots +p_d(z)f(z^{k^d})=0 \,.
\end{equation}
If there exists a polynomial $q(z)$ such that $p_0(z)q(z)$ divides $q(z^{k^j})$ for all $j$, $1\leq j\leq d$, then 
$f(z)/q(z) \in \laurent{K}$ is a $k$-Becker Laurent series. 
\end{lemma}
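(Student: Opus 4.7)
The approach is a direct manipulation of the $k$-Mahler equation. Set $g(z) \coloneqq f(z)/q(z)$, and rewrite the given equation as
\[
  p_0(z) f(z) = -\sum_{j=1}^d p_j(z) f(z^{k^j}).
\]
Dividing both sides by $p_0(z) q(z)$ and inserting the factors $q(z^{k^j})/q(z^{k^j})$ on the right, one obtains
\[
  g(z) = -\sum_{j=1}^d \frac{p_j(z)\, q(z^{k^j})}{p_0(z)\, q(z)} \cdot g(z^{k^j}).
\]
By the hypothesis that $p_0(z) q(z) \mid q(z^{k^j})$ for each $j \in \{1, \ldots, d\}$, each ratio $q(z^{k^j})/(p_0(z) q(z))$ is a polynomial in $K[z]$. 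Hence the coefficients
\[
  r_j(z) \coloneqq -\frac{p_j(z)\, q(z^{k^j})}{p_0(z)\, q(z)} \in K[z]
\]
give a relation $g(z) = \sum_{j=1}^d r_j(z)\, g(z^{k^j})$ of precisely Becker type.

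Two small points remain. First, one should confirm that $g(z) = f(z)/q(z)$ indeed lies in $\laurent{K}$: factoring $q(z) = z^m \tilde{q}(z)$ with $\tilde{q}(0) \ne 0$, the series $1/\tilde{q}(z)$ is a power series in $\power{K}$, so $g(z) = z^{-m} f(z)/\tilde{q}(z) \in \laurent{K}$. Second, one must observe that the algebraic identity derived above, which \emph{a priori} holds only in $K(z)$, makes sense and remains valid in $\laurent{K}$; this is immediate because $p_0(z) q(z)$ is non-zero in $K[z] \subseteq \laurent{K}$ and the above manipulations take place inside the field of fractions of $\power{K}$.

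I don't anticipate any real obstacle: the proof is essentially one line of algebra once the substitution $g = f/q$ and the multiplicative adjustment by $q(z^{k^j})/q(z^{k^j})$ are made. The only thing to be careful about is not to divide by a non-invertible element in $\power{K}$, which is why one must pass through the Laurent series ring $\laurent{K}$ (or, equivalently, separate out the factor $z^m$ from $q(z)$ as above).
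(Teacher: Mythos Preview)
Your proof is correct and follows essentially the same approach as the paper: substitute $g(z)=f(z)/q(z)$, multiply through, and use the divisibility hypothesis to conclude that the coefficients $r_j(z)=p_j(z)q(z^{k^j})/(p_0(z)q(z))$ are polynomials. Your additional remarks on why $g(z)\in\laurent{K}$ and why the manipulation is valid are more careful than the paper's version, but the argument is the same one-line computation.
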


\begin{proof}
Set $g(z):=f(z)/q(z)$, then \eqref{eq1} gives 
$$
p_0(z)q(z)g(z) +p_1(z)q(z^k)g(z^k)+\cdots +p_d(z)q(z^{k^d}) g(z^{k^d})=0 \,.
$$
Thus, $g(z) = -\sum_{i=1}^d r_i(z)g(z^{k^i})$, where   
$r_i(z)=p_i(z)q(z^{k^i})/(p_0(z)q(z))\in K[z]$.  
\end{proof}

\begin{proof}[Proof of Becker's conjecture \textup{(}\cref{thm: becker}\textup{)}]
Since $f(z)$ is $k$-regular, we know, by the first step, that 
$f(z)$ satisfies an equation of the form 
$$
p_0(z)f(z)+ p_1(z)f(z^k)+\cdots + p_d(z)f(z^{k^d})=0 \,,
$$
where all roots of $p_0(z)$ belong to $\{0\}\cup \ru_k$. Thus, every non-zero root is  a 
primitive $\ell$-root of unity  for some $\ell$ not coprime with $k$. 
For such a natural number $\ell$, there exist a positive integer $r$ and a non-negative integer $s$ such that 
$\gcd(\ell,k^j)=r$ for all $j> s$. Let $s$ be minimal with this property.  
Let $A$ denote the set of non-zero roots of $p_0(z)$, and, for $\xi\in A$, set 
$a(\xi):=\ell(\xi)/r(\xi)$.  Let $\phi_n(z)$ denote the $n$th cyclotomic polynomial. Then 
$\phi_{\ell(\xi)}(z)\phi_{a(\xi)}(z^{k^s})$ divides $\phi_{a(\xi)}(z^{k^{s+j}})$ for all $j\geq 1$.  
In particular, $(z-\xi)\phi_{a(\xi)}(z^{k^s})$ divides $\phi_{a(\xi)}(z^{k^{s+j}})$ for all $j$, $1\leq j\leq m$. 
Setting 
$$
q(z):= \prod_{\xi\in A}\phi_{a(\xi)}(z^{k^s})\,,
$$ 
and applying Lemma \ref{lem: easy}, we obtain that $f(z)/z^{\gamma}q(z)$ is a $k$-Becker Laurent series, 
where $\gamma$ is the valuation of $p_0(z)$. 
Moreover, $1/q(z)$ is a $k$-Becker power series for 
$$
q(z)^{-1}= \frac{q(z^k)}{q(z)} \cdot q(z^k)^{-1}
$$
and by construction $q(z)$ divides $q(z^k)$.  In particular, $1/q(z)$ is $k$-regular. 
\end{proof}


\section{Automatic Mahler power series over arbitrary fields}\label{sec: automatic}

We first show how to extend our characterization of $k$-automatic Mahler functions to arbitrary 
ground fields 
of characteristic zero.

\begin{theorem} \label{c:finite-automatic}
  Let $K$ be a field of characteristic $0$ and let $f(z)= \sum_{n=0}^\infty a_n z^n\in\power{K}$ 
be a $k$-Mahler power series. 
  Then $(a_n)_{n\ge 0}$ is $k$-automatic if and only if   $\{\, a_n : n \ge 0 \,\}$ is finite.  
\end{theorem}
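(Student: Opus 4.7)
The ``only if'' direction is immediate: a $k$-automatic sequence is produced by a finite automaton and hence takes only finitely many values. The content lies in the converse; the plan is to descend to the ground field $\Qbar$, where \cref{t-main:hloglog} applies.

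Assume $\{a_n : n \ge 0\} = \{v_1, \ldots, v_s\}$ is finite with the $v_i$ pairwise distinct. I would fix a $k$-Mahler equation $p_0(z) f(z) + \cdots + p_d(z) f(z^{k^d}) = 0$ with $p_0(z)p_d(z) \ne 0$, and let $R$ be the finitely generated $\bQ$-subalgebra of $K$ generated by $v_1, \ldots, v_s$ together with the coefficients of all the $p_j(z)$. I then localize $R$ by inverting the differences $v_i - v_j$ (for $i \ne j$) and the non-zero coefficients of $p_0(z)$ and $p_d(z)$ to obtain a non-zero finitely generated $\bQ$-algebra $R'$. By Hilbert's Nullstellensatz, $R'$ has a maximal ideal whose residue field is a number field, producing a ring homomorphism $\phi \colon R' \to \Qbar$. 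By construction, $\phi$ is injective on $\{v_1, \ldots, v_s\}$ and preserves non-vanishing of $p_0(z)$ and $p_d(z)$.

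Applying $\phi$ coefficient-wise to the Mahler equation gives a valid $k$-Mahler equation over $\Qbar$ satisfied by $\tilde f(z) \coloneqq \sum_{n=0}^\infty \phi(a_n) z^n$. Since $(\phi(a_n))_{n\ge 0}$ takes values in the finite set $\{\phi(v_1), \ldots, \phi(v_s)\}$, \cref{t-main:hloglog} implies that it is $k$-automatic. To transfer $k$-automaticity back to $(a_n)_{n \ge 0}$, note that because $\phi$ is injective on $\{v_1, \ldots, v_s\}$, two subsequences $(a_{k^e n + r})_{n \ge 0}$ and $(a_{k^{e'} n + r'})_{n \ge 0}$ of $(a_n)$ coincide if and only if their $\phi$-images do; hence the $k$-kernel of $(a_n)$ is in natural bijection with the $k$-kernel of $(\phi(a_n))$, and finiteness of the latter forces finiteness of the former.

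The plan relies only on the standard Nullstellensatz-based specialization together with the already-proved $\Qbar$-characterization \cref{t-main:hloglog}, so no serious obstacle is expected; the main point of care is to arrange the localizations so that the Mahler equation genuinely survives specialization, which is why both $p_0$ and $p_d$ must be kept alive by inverting their non-zero coefficients when forming $R'$.
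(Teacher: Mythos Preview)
Your proof is correct and follows essentially the same approach as the paper: specialize to $\Qbar$ via a Nullstellensatz argument that keeps the finite value set injective and the Mahler equation nontrivial, apply \cref{t-main:hloglog}, and pull $k$-automaticity back using injectivity on the values. The only cosmetic difference is that the paper first enlarges $K$ so that $\Qbar \subseteq K$ and then invokes a ready-made specialization lemma (\cref{l:specialization}), whereas you build the specialization directly from a finitely generated $\bQ$-algebra; the content is identical.
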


In order  to prove \cref{c:finite-automatic}, we use a standard specialization argument.

\begin{lemma} \label{l:specialization}
  Let $K$ be a field of characteristic zero containing $\Qbar$, and let $u_1$, $\ldots\,$,~$u_d \in K$.
  Then there exists a ring homomorphism $\varphi \colon \Qbar[u_1,\ldots,u_d] \to \Qbar$ leaving $\Qbar$ invariant.
\end{lemma}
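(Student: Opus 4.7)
The plan is to observe that $R := \Qbar[u_1,\ldots,u_d]$ is a finitely generated $\Qbar$-algebra, and then invoke Zariski's lemma (i.e.\ the weak Nullstellensatz) together with the algebraic closedness of $\Qbar$ to produce a residue map.

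More precisely, I would proceed as follows. First, since $R$ is a subring of the field $K$, it is a non-zero commutative ring, so by Zorn's lemma it admits a maximal ideal $\mathfrak m$. The quotient $R/\mathfrak m$ is then a field, and it is a finitely generated $\Qbar$-algebra because $R$ is. By Zariski's lemma, every field that is finitely generated as an algebra over a field $F$ is a finite algebraic extension of $F$. Applying this with $F = \Qbar$, we conclude that $R/\mathfrak m$ is a finite extension of $\Qbar$. Since $\Qbar$ is algebraically closed, this forces $R/\mathfrak m = \Qbar$, and the canonical projection
\[
  \varphi \colon R \twoheadrightarrow R/\mathfrak m = \Qbar
\]
is a ring homomorphism that restricts to the identity on $\Qbar$, as required.

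There is no real obstacle here: the only content beyond general nonsense is Zariski's lemma, which I am using as a black box. The hypothesis that $K$ has characteristic zero is used only to ensure the inclusion $\Qbar \subseteq K$ makes sense in a canonical way; the argument itself works over any algebraically closed base field.
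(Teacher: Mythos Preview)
Your proof is correct and is essentially the same approach as the paper's: the paper simply remarks that this is an easy consequence of the weak Nullstellensatz and cites a reference, while you have spelled out the standard argument (take a maximal ideal of the finitely generated $\Qbar$-algebra and apply Zariski's lemma). There is nothing to add.
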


\begin{proof}
  This is an easy consequence of  the weak Nullstellensatz.
  A proof can be found in \cite[Lemma 6.3.3]{evertse-gyory15}.
\end{proof}

\begin{proof}[Proof of Theorem \ref{c:finite-automatic}]
  Let $f(z) = \sum_{n=0}^\infty a_n z^n \in \power{K}$ be a $k$-Mahler power series 
  with finite set of coefficients. Replacing $K$ by its algebraic closure, we may assume 
  $\Qbar \subseteq K$.
  Let $p_0(z),\ldots,p_d(z) \in K[z]$ be such that
  \[
    p_0(z) f(z) + p_1(z) f(z^k) + \cdots + p_d(z) f(z^{k^d})=0 \,.
  \]
  Let $C$ be the finite set consisting of all coefficients of $f(z)$ and $p_0(z),\ldots,p_d(z)$. 
  We apply \cref{l:specialization} with the set $\{u_1,\ldots,u_d\}$ consisting of all 
  $c \in C$, all $c -d$ with $c,d \in C$, as well as the inverses of all these elements that are non-zero.
  Thus $\varphi(c) \ne 0$ for $c \ne 0$ and $\varphi(c) \ne \varphi(d)$ for $c \ne d$.
  The resulting homomorphism extends to $\varphi\colon \power{\Qbar[u_1,\ldots,u_m]} \to \power{\Qbar}$, and
  \[
    \varphi(p_0)(z)\cdot \varphi(f)(z) + \varphi(p_1)(z) \cdot \varphi(f)(z^k) + \cdots + \varphi(p_d)(z)\cdot \varphi(f)(z^{k^d})=0
  \]
  is a $k$-Mahler equation for $\varphi(f)(z)$.
  Thus \cref{t-main:hloglog} implies that the sequence $(\varphi(a_n))_{n \ge 0}$ is $k$-automatic.
  Since $\varphi \colon C \to \Qbar$ is injective, the same is true for $(a_n)_{n \ge 0}$.
\end{proof}

\subsection{The case of a base field of positive characteristic}

\Cref{c:finite-automatic} strongly depends on the characteristic of the field being zero.
If $K$ is a field of characteristic $p>0$, we still have a similar result for $p$-Mahler power series 
(see Proposition \ref{prop: charp}), but if $k$ is coprime to $p$ this is no longer true. Indeed, 
the series
  \[
     \prod_{i=0}^\infty (1 - z^{k^i})^{-1} \in \power{K}
  \]
  is not $k$-automatic by \cite[Proposition 1]{becker94}, despite being $k$-Mahler with 
  coefficients taking only finitely many values (because they belong to the prime field). 

\begin{proposition}\label{prop: charp}
Let $K$ be a field of characteristic $p$ and let $f(z)= \sum_{n=0}^\infty a_n z^n\in\power{K}$ 
be a $k$-Mahler power series where $k$ is a power of $p$. Then 
the sequence $(a_n)_{n\ge 0}$ is $k$-automatic 
 if and only if $\{\, a_n : n \ge 0 \,\}$ is finite. 
\end{proposition}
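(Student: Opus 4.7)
The reverse direction is immediate from the definition of $k$-automatic, since the output alphabet of a finite automaton is finite. For the forward direction I would follow the specialization strategy of \cref{c:finite-automatic}, adapted to positive characteristic, and then combine it with an algebraicity argument that uses the assumption $k = p^e$. After replacing $K$ by its algebraic closure, I may assume $\overline{\bF_p} \subseteq K$. The Nullstellensatz argument underlying \cref{l:specialization} works equally well over $\overline{\bF_p}$, so I apply the obvious analog to the finitely generated $\overline{\bF_p}$-subalgebra of $K$ generated by the coefficients of $f$, the coefficients of a fixed $k$-Mahler equation for $f$, and the inverses of the nonzero pairwise differences of the coefficients of $f$. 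This produces a ring homomorphism $\varphi$ into $\overline{\bF_p}$ that is injective on the finite coefficient set $S$ of $f$. The image $g = \varphi(f)$ is then a $k$-Mahler power series whose coefficients, together with those of the specialized Mahler equation, lie in some finite field $\bF_q$ with $q = p^r$. If I can show that $g$ is $k$-automatic, then relabeling the outputs of its $k$-automaton via $\varphi^{-1}|_{\varphi(S)}$ yields a $k$-automaton for $f$.

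The crux is to prove that $g \in \power{\bF_q}$ is algebraic over $\bF_q(z)$, so that one can invoke Christol's theorem. Writing $k = p^e$ and $s = r / \gcd(e, r)$, the integer $es$ is divisible by $r$, so $k^{js}$ is a power of $q$ for every $j \ge 0$, and coefficientwise Frobenius yields
\[
  g(z^{k^{js}}) = g(z)^{k^{js}} \qquad \text{for all } j \ge 0.
\]
On the other hand, iterating the $k$-Mahler equation shows that the set $\{ g(z^{k^i}) : i \ge 0 \}$ spans an $\bF_q(z)$-subspace of $\laurent{\bF_q}$ of dimension at most $d$. Therefore the subsequence $(g(z^{k^{js}}))_{j \ge 0}$ admits a nontrivial $\bF_q(z)$-linear relation
\[
  q_0(z) g(z) + q_1(z) g(z^{k^s}) + \cdots + q_N(z) g(z^{k^{Ns}}) = 0,
\]
and substituting via the Frobenius identity rewrites this as a nontrivial polynomial equation for $g(z)$ over $\bF_q(z)$.

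By Christol's theorem, $g$ is then $q$-automatic, and since $q$ and $k$ are both powers of $p$ (so that the $k$-kernel of any sequence is contained in its $p$-kernel, which is in turn contained in its $q$-kernel after grouping digits), $g$ is $k$-automatic as well. Pulling back through $\varphi$ gives the $k$-automaticity of $f$. The main obstacle, as I see it, is ensuring the nontriviality of the extracted algebraic equation: the linear relation among the $g(z^{k^{js}})$ and the polynomial identity in $g(z)$ are literally the same equation, so one must observe that a trivial polynomial equation would force the original linear relation to be trivial, contradicting its choice. The remaining bookkeeping---that the positive-characteristic specialization preserves the $k$-Mahler structure and that $k$-automaticity pulls back through an injective relabeling of outputs---is routine.
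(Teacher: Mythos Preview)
Your argument is correct and follows essentially the same route as the paper: specialize to a finite field while keeping distinct coefficients distinct, use Frobenius to turn the Mahler relation into an algebraic equation, apply Christol, and pull back. The paper reaches the finite field via a maximal ideal of a finitely generated $\bF_p$-algebra rather than the Nullstellensatz over $\overline{\bF_p}$, and it invokes \cref{l:mahler-power} instead of your explicit extraction of a Frobenius-compatible subsequence, but these are cosmetic differences.

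Two small points. First, your labels are swapped: ``automatic $\Rightarrow$ finite'' is the trivial direction you describe, but that is the forward (``only if'') implication, not the reverse one. Second, to guarantee that $g=\varphi(f)$ is still $k$-Mahler you must prevent the specialized equation from collapsing to $0$; include the inverse of at least one nonzero coefficient of the $p_i$ (or normalize one coefficient to $1$) among your generators, exactly as the paper does.
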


\begin{proof} 
Let $f(z)= \sum_{n=0}^\infty a_n z^n\in\power{K}$ be  $p^m$-Mahler for some positive integer $m$. 
Let us assume that  $\{\, a_n : n \ge 0 \,\}$ is finite. 
Let us consider a  non-trivial equation 
$$
p_0(z)f(z)+p_1(z) f(z^{p^m})+\cdots+ p_d(z) f(z^{p^{md}})  = 0 \, . 
$$   
We let $R$ denote the finitely generated $\mathbb F_p$-algebra generated by the 
coefficients $a_n$, the inverses of all non-zero differences $a_i-a_j$, and the coefficients of the 
polynomials $p_i(z)$, as well as the inverses of their non-zero coefficients.  

Let $\mathfrak M$ be some maximal ideal of $R$.  Then $R/\mathfrak M=\mathbb F_q$ 
with $q$ a power of $p$, say $q=p^\ell$. 
 Let $f_{\mid \mathfrak M}(z):= \sum_{n=0}^\infty (a_n\bmod \mathfrak M) z^n$ 
 denote the reduction of $f(z)$ modulo $\mathfrak M$. 
 Then $f_{\mid \mathfrak M}(z)$  is $p^m$-Mahler and hence it is also $p^{m\ell}$-Mahler by \cref{l:mahler-power}.   
 Thus, we deduce that  $f_{\mid \mathfrak M}(z)$ is algebraic over $\mathbb F_q(z)$.  By Christol's theorem (see \cite[Chapter 12]{allouche-shallit03}), the sequence 
 $(a_n \bmod \mathfrak M)_{n\geq 0}$ is $p$-automatic. But by definition of $R$, if $a_i\not= a_j$ then 
 $a_i \bmod \mathfrak M \not= a_j \bmod \mathfrak M$. Thus the sequence $(a_n)_{n\geq 0}$  
is also $p$-automatic, and hence  $p^m$-automatic.    
\end{proof}

\section{Decidability}\label{sec: decidability}

A $k$-Mahler function can be uniquely specified by the finite data consisting of a $k$-Mahler 
equation it satisfies and sufficiently many initial coefficients of the power series.
Therefore it is reasonable to ask whether, for a given $k$-Mahler function, it can be decided 
which of the five cases of \cref{thm: hgt} it falls into. However, we neither try to describe an efficient 
algorithm to perform this task, nor do we provide an upper bound for the complexity of the algorithm that 
could be extracted from what follows. 

\begin{theorem} \label{t:decidability}
  Let $f(z) = \sum_{n=0}^\infty a_n z^n \in \power{\Qbar}$ be a $k$-Mahler function 
  \textup{(}specified by a $k$-Mahler equation and sufficiently many initial coefficients\textup{)}.
  Then it is decidable which of the five growth classes in \cref{thm: hgt} the function $f$ falls into.
\end{theorem}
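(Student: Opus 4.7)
The plan is to decide the class of $f(z)$ by a cascade of four tests, each exploiting one of the structural characterizations from \cref{t-main:hn,t-main:hlog2,t-main:hlog,t-main:hloglog}. The first step is to effectively compute the $k$-Mahler denominator $\mathfrak d(z)$ from the given equation: since $\mathfrak d(z)$ divides $p_0(z)$, I enumerate the divisors of $p_0(z)$ over $\Qbar$ and for each candidate $q(z)$ test the membership $q(z) \in \mathfrak I$ by linear algebra, exploiting $\dim_{\Qbar(z)} \sum_{i \ge 0} \Qbar(z) f(z^{k^i}) \le d$ to reduce this to a finite-dimensional system. Having $\mathfrak d(z)$, I factor it over $\Qbar$. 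By \cref{t-main:hn}, $f(z)$ lies in class (2)--(5) iff every non-zero root of $\mathfrak d(z)$ is a root of unity, a decidable condition (a given algebraic number $\alpha$ is a root of unity iff its minimal polynomial divides $z^N - 1$ for some $N$ bounded in terms of $[\bQ(\alpha):\bQ]$). By \cref{t-main:hlog2}, $f(z)$ lies in class (3)--(5) iff, in the cyclotomic factorization $\mathfrak d(z) = z^m \prod_j \Phi_{n_j}(z)^{e_j}$, every index satisfies $\gcd(n_j,k) > 1$.

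When $f(z)$ is $k$-regular, I compute a minimal linear representation $(u,\mu,v)$ from the Mahler equation and sufficiently many initial coefficients. For Stage 3, by \cref{t-main:hlog}, I must decide whether $\mu(\digits^*)$ is tame. The crucial claim is: $\mu(\digits^*)$ is tame if and only if its image $\bar S$ in the semisimple quotient $A/\rad A$ is finite, where $A = \Qbar\langle\mu(\digits)\rangle$ is the finite-dimensional $\Qbar$-algebra generated by $\mu(\digits)$. The forward direction follows from \cref{l:structure-tame}: tameness yields a block-upper-triangular form whose diagonal blocks are finite semigroups each spanning its full matrix algebra, so $\rad A$ equals the strictly block-upper-triangular part of $A$ and $\bar S$ is contained in the finite product of the diagonal semigroups. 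The reverse direction uses two standard facts: a finite matrix semigroup has all eigenvalues in $\{0\} \cup \ru$ (the pigeonhole relation $\bar M^n = \bar M^m$ forces $\lambda^n = \lambda^m$ on every eigenvalue), and the eigenvalues of $\mu(w)$ acting on $\Qbar^d$ are precisely those of its image $\bar \mu(w)$ on the composition factors of $\Qbar^d$ as an $A$-module. Since $A$, $\rad A$, and the projections of the generators can be effectively computed, and finiteness of a finitely generated matrix semigroup over a number field is decidable by classical results of Mandel--Simon and Jacob, Stage 3 is effective. Finally, \cref{t-main:hloglog} reduces class (4) versus (5) to deciding finiteness of $\mu(\digits^*)$ itself, handled by the same finiteness theorem.

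The main obstacle is Stage 3: establishing the reduction of tameness to finiteness in the semisimple quotient and combining it with the effective computation of Jacobson radicals and the decidability of finiteness for finitely generated matrix semigroups over a number field. The remaining stages are standard manipulations of polynomials over $\Qbar$: factorization, divisor enumeration, recognition of cyclotomic polynomials, and computation of cyclotomic orders.
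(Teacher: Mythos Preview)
Your approach has a genuine gap in the computation of the Mahler denominator $\mathfrak d(z)$. You claim that testing $q(z) \in \mathfrak I$ reduces to finite-dimensional linear algebra because $\dim_{\Qbar(z)} \sum_{i \ge 0} \Qbar(z) f(z^{k^i}) \le d$, but this does not suffice: membership in $\mathfrak I$ asks whether $q(z) f(z)$ lies in the $\Qbar[z]$-module $\sum_{i \ge 1} \Qbar[z] f(z^{k^i})$, whereas the corresponding $\Qbar(z)$-span is all of $V$ (since $f$ itself is a $\Qbar(z)$-combination of $f(z^k),\ldots,f(z^{k^d})$ via the Mahler equation), so the $\Qbar(z)$-dimension carries no information. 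For each fixed $N$ one can indeed compute $\mathfrak I_N = \{\, s_0 : s_0 f \in \sum_{i=1}^N \Qbar[z] f(z^{k^i}) \,\}$ by a syzygy computation, but you provide no effective bound on the $N$ at which the ascending chain $\mathfrak I_d \subseteq \mathfrak I_{d+1} \subseteq \cdots$ stabilizes to $\mathfrak I$, and one stationary step does not guarantee termination. \Cref{exm: denominator} shows that $\mathfrak I_d$ can be strictly smaller than $\mathfrak I$, so the issue is not vacuous.

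The paper circumvents this by never computing $\mathfrak d(z)$. For Stage~1 (\cref{p:decide-ru}) it iterates the minimal equation to an explicitly computable index $n_0$, chosen via the absolute values of the roots of $p_0 p_d$, and proves that the leading coefficient $q_0$ of the iterated equation has a root outside $\{0\} \cup \ru$ if and only if $\mathfrak d$ does, by rerunning the pole argument of \cref{prop: radius}. For Stage~2 (\cref{p:decide-rucommon}) it tests each root $\zeta \in \ru \setminus \rucommon$ of $q_0$ individually: passing to an $\ell = k^{j_0}$-Mahler equation with $\zeta^\ell = \zeta$, it compares the asymptotic lower bound of \cref{p:coprime-k-lower} (which forces $\mathfrak d(\zeta)=0$) against the upper bound of \cref{l:upper-bound-matrix-system} (which holds when $\mathfrak d(\zeta)\ne 0$). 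Thus the decidability of Stages~1--2 rests on the analytic machinery of \cref{sec:hn,sec:hlog2}, not on linear algebra over $\Qbar[z]$.

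Your Stages~3--4 are correct and essentially equivalent to the paper's: your reduction of tameness to finiteness in $A/\rad A$ is the algebra-theoretic phrasing of the paper's decomposition via iterated common invariant subspaces (\cref{p:decide-semigroup} together with \cref{l:inv-subspace}), and finiteness is handled identically via Mandel--Simon plus a bound on finite linear groups over a number field. One further point you elide: passing from a Mahler equation to a linear representation, needed to enter Stage~3, is not immediate; the paper devotes Section~12.1 to this, using the explicit equation with good leading coefficient produced by \cref{p:decide-rucommon}, Dumas' decomposition, and explicit linear representations for Becker series and for Cauchy products (\cref{l:becker-linrep,l:cauchy-linrep}).
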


Let $f(z) = \sum_{n=0}^\infty a_n z^n \in \power{\Qbar}$ be $k$-Mahler.
As \cref{t-main:hn,t-main:hlog2} show, the minimal denominator $\mathfrak d(z) \in \Qbar[z]$ of $f(z)$ 
plays a crucial role in determining the growth class that $f(z)$ falls into: 
the growth depends on whether $\mathfrak d(z)$ has roots outside $\{0\} \cup \ru$, 
respectively outside $\{0\} \cup \rucommon$. 
This raises the question whether there is an effective way of deciding which of the three cases occurs.
Along similar lines, if $f(z)$ is $k$-regular, the question arises whether it is decidable into which 
of the three cases ($k$-regular, $\Qbar$-linear combination of word-convolution products of 
$k$-automatic sequences, 
and $k$-automatic) the coefficients of $f(z)$ falls. 
In this section, we establish that all these properties are decidable. 

Suppose $f(z)$ is specified by a $k$-Mahler equation and sufficiently many initial coefficients 
to determine the solution uniquely. 
Then we can compute any finite number of initial coefficients by recursion.
By work of Adamczewski and Faverjon \cite{adamczewski-faverjon18} it is possible to find a 
\emph{minimal} (homogeneous) $k$-Mahler equation, that is, polynomials 
$p_0(z),p_1(z),\ldots,p_d(z) \in K[z]$, where $K$ is a number field, 
\begin{equation} \label{eq:eff-min}
  p_0(z) f(z) + p_1(z) f(z^k) + \cdots + p_d(z) f(z^{k^d})=0\,,
\end{equation}
where $d$ is minimal and $p_0(z),\ldots,p_d(z)$ are coprime. 

By definition $\mathfrak d(z)$ divides $p_0(z)$.
It is tempting to hope that, to determine the types of roots of $\mathfrak d(z)$, 
it suffices to consider those of $p_0(z)$.
Unfortunately, this hope is thwarted by Example \ref{exm: denominator}. 
We can however still determine the types of roots of $\mathfrak d(z)$.

\begin{proposition} \label{p:decide-ru}
  There exists an algorithm to determine whether the $k$-Mahler denominator $\mathfrak d(z)$ of a 
  $k$-Mahler function $f(z)$ has a root outside $\{0\} \cup \ru$. 

  Moreover, if all roots of $\mathfrak d(z)$ are contained in $\{0\} \cup \ru$, then we can find an explicit 
  $k$-Mahler equation
  \[
    q_0(z) f(z) = q_1(z) f(z^{k^{n_0}}) + \cdots + q_d(z) f(z^{k^{n_0+d-1}})
  \]
  with $n_0 \ge 1$, with $q_0(z),\ldots,q_d(z) \in \Qbar[z]$ and all roots of $q_0(z)$ contained in $\{0\} \cup \ru$. 
\end{proposition}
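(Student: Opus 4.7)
The plan is to iteratively substitute a minimal Mahler equation for $f(z)$ into itself, tracking when the leading polynomial's bad roots (those outside $\{0\} \cup \ru$) stabilize. First, using the algorithm of Adamczewski and Faverjon \cite{adamczewski-faverjon18}, compute a minimal homogeneous $k$-Mahler equation \eqref{eq:eff-min} with coprime $p_0(z), \ldots, p_d(z) \in K[z]$ in some number field $K$. Factor $p_0(z)$ over $\Qbar$ and classify each non-zero root as either good (a root of unity, detected by checking cyclotomicity of its minimal polynomial over $\bQ$) or bad. Since $\mathfrak d(z) \mid p_0(z)$, if no root of $p_0(z)$ is bad, then no root of $\mathfrak d(z)$ is either, and the minimal equation (iterated once if needed to ensure $n_0 \geq 1$) is the desired output.

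If $p_0(z)$ has bad roots---as in \cref{exm: denominator}---such roots may or may not survive in $\mathfrak d(z)$. To detect this, iterate via the companion system $v(z) = A(z) v(z^k)$ with $v(z) = (f(z), f(z^k), \ldots, f(z^{k^{d-1}}))^T$, obtaining $v(z) = A(z) A(z^k) \cdots A(z^{k^{N-1}})\, v(z^{k^N})$ for each $N \geq 1$. The first component, after clearing denominators and making the resulting polynomials coprime, yields an equation
\[
  P_0^{(N)}(z) f(z) = \sum_{i=1}^{d_N} P_i^{(N)}(z) f(z^{k^{N+i-1}})
\]
with $P_i^{(N)}(z) \in K[z]$ and $\mathfrak d(z) \mid P_0^{(N)}(z)$. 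Equivalently, track the generators $\mathfrak d^{(M)}(z)$ of the ascending chain of principal ideals $\mathfrak I^{(M)} = \{\, q(z) \in K[z] : q(z) f(z) \in \sum_{i=1}^M K[z] f(z^{k^i}) \,\}$, which stabilizes at $(\mathfrak d(z))$ by the Noetherian property of $K[z]$. Each $\mathfrak d^{(M)}(z)$ is a divisor of $p_0(z)$ and is effectively computable from the iterated equation by linear algebra over $K(z)$.

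The main obstacle is producing an effective stopping criterion, that is, an explicit $N_0$ such that $\mathfrak d^{(N_0)}(z) = \mathfrak d(z)$. The idea is that $\mathfrak d(z)$ itself satisfies a witnessing equation $\mathfrak d(z) f(z) = \sum_{i=1}^{M_0} c_i(z) f(z^{k^i})$ whose order $M_0$ and coefficient degrees can be bounded explicitly in terms of $d$, $\deg p_0$, and $\max_i \deg p_i$, via a careful analysis of the finite-dimensional $K(z)$-span of $\{f(z^{k^i})\}_{i \geq 0}$ modulo the minimal equation combined with the companion-matrix structure. Once such an $N_0$ is in hand, compute $\mathfrak d^{(N_0)}(z)$ and inspect its non-zero roots: if all lie in $\ru$, then $q_0(z) \coloneqq \mathfrak d^{(N_0)}(z)$ together with the corresponding $q_i$'s is an equation of the desired form; otherwise, $\mathfrak d(z)$ has a root outside $\{0\} \cup \ru$ and we report this verdict.
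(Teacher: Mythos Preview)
The proposal correctly identifies the setup (minimal equation, ascending chain of ideals $\mathfrak I^{(M)}$ stabilizing at $(\mathfrak d(z))$), but the central step---the effective bound $N_0$ on when the chain stabilizes---is only asserted, not proved. You say the witnessing order $M_0$ ``can be bounded explicitly in terms of $d$, $\deg p_0$, and $\max_i \deg p_i$, via a careful analysis,'' but no such analysis is given, and it is not clear that one exists along purely algebraic lines. The ascending chain $\mathfrak I^{(d)} \subseteq \mathfrak I^{(d+1)} \subseteq \cdots$ need not be strictly increasing, and equality $\mathfrak I^{(M)} = \mathfrak I^{(M+1)}$ does not obviously force $\mathfrak I^{(M)} = \mathfrak I$: eliminating $f(z^{k^{M+2}})$ from a relation via the minimal equation introduces the extraneous factor $p_d(z^{k^{M+2-d}})$, so a naive recursion yields no bound. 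This is the place where the argument, as written, fails.

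The paper takes a genuinely different route that sidesteps this obstacle entirely: it never attempts to compute $\mathfrak d(z)$ or to detect stabilization. Instead it chooses $n_0$ explicitly so that, for every root $\lambda$ of $p_0 p_d$ and every place $\sv$ with $\abs{\lambda}_\sv < 1$, one has $\abs{\lambda^{k^{n_0}}}_\sv$ smaller than all such root absolute values, making $\lambda^{k^{n_0}}$ a regular point of the companion system. One then iterates once to the equation $q_0(z) f(z) = \sum_i q_i(z) f(z^{k^{n_0+i-1}})$ with the $q_i$ coprime, and simply inspects the roots of $q_0$. If $q_0$ has a root $\lambda \notin \{0\} \cup \ru$, Kronecker's theorem gives a place with $\abs{\lambda}_\sv < 1$, and the argument of \cref{prop: radius} (ultimately \cref{cor: lifting}, hence Nishioka's theorem) forces $f$ to have a pole at $\lambda$; by \cref{t-main:hn} this means $\mathfrak d(z)$ itself has a root outside $\{0\} \cup \ru$. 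If $q_0$ has no such root, neither does $\mathfrak d(z)$, since $\mathfrak d(z) \mid q_0(z)$. Thus the decision reduces to a single explicitly computable iterate, with the transcendence-theoretic input replacing the stabilization bound your approach would need.
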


\begin{proof}
  Let us consider the minimal equation \eqref{eq:eff-min}.
  By \cite{adamczewski-faverjon18}, this equation can be explicitly determined 
  (this is a variation of Algorithm 1.3 in \cite{adamczewski-faverjon18}). 
  We may assume that the number field $K$ contains all coefficients and roots of 
  $p_0(z),\ldots,p_d(z)$. 
  Set
  \[
    \mathcal S \coloneqq \{\,\lambda : p_0(\lambda)p_d(\lambda)=0 \,\}
  \]
  and 
  \[
    \rho=\min_{\sv\in \mathcal M_K}\{\,\min\{\, \vert \lambda\vert_\sv : p_0(\lambda)p_d(\lambda)=0\,\}\,\}\,.
  \]
  Now, let $n_0$ be the minimal positive integer such that $\vert \lambda^{k^{n_0}}\vert_{\sv}<\rho$ 
  for all $\lambda$ in $\mathcal S$ and all places $\sv$ such that $\vert \lambda\vert_{\sv}<1$ 
  (there are only a finite number of such places).
  The integer $n_0$ can be explicitly determined.
  By repeated substitution, we can explicitly determine an equation
  \begin{equation}\label{eq:eff-iterate}
    q_0(z)f(z) = q_1(z)f(z^{k^{n_0}})+\cdots+q_d(z)f(z^{k^{n_0+d-1}})\,,
  \end{equation}
  for $f(z)$. 
  Suppose first that $q_0(z)$ does \emph{not} have a non-zero root $\lambda$ that is not a root of unity.
  Then neither does $\mathfrak d(z)$, because $\mathfrak d(z)$ divides $q_0(z)$.

  Suppose now $q_0(z)$ has a non-zero root $\lambda$ that is not a root of unity.
  By Kronecker's theorem, there exists a place $\sv$ such that $0<\abs{\lambda}_\sv<1$.
  Arguing exactly as in the proof of \cref{prop: radius}, we see that $\lambda$ is a pole of $f(z)$.
  Thus $f(z)$ has a radius of convergence strictly less than $1$ with respect to $\abs{\cdot}_\sv$. 
  By \cref{t-main:hn} also $\mathfrak d(z)$ must have a non-zero root that is not a root of unity. 
\end{proof}

Assuming $\mathfrak d(z)$ does not have a root outside of $\{0\} \cup \ru$, 
we now want to determine if it has a root in $\ru \setminus \rucommon$.

\begin{lemma}\label{l:upper-bound-matrix-system}
  Let $d \in \bZ_{>0}$, let $\zeta \in \bC\setminus\{0\}$ such that $\zeta^k = \zeta$, 
  and let $A(z) \in \Qbar(z)^{d \times d}$.
  Assume that $w(z) \in \power{\bC}^d$ satisfies the equation
  \[
    w(z) = A(z) w(z^k)\,.
  \]
  Assume also that the following properties hold.
  \begin{propenumerate}
  \item The coordinates of $A(z)$ have no poles at $\zeta$ and no poles in $B(0,1)$.
  \item The coordinates of $w(z)$ are continuous in $B(0,1)$.
  \end{propenumerate}
  Then, there exists $c \in \bR_{>0}$ such that
  \[
    \norm{w(t \zeta) } < \abs{1 - t}^{-c} \qquad \text{for all $t \in (0,1)$}.
  \]
\end{lemma}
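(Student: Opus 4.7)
The plan is to exploit the fact that $\zeta^k = \zeta$ (hence $|\zeta|=1$ and $\zeta^{k^j} = \zeta$ for every $j \ge 0$) by iterating the functional equation along the radial segment $\{s\zeta : s \in [0,1]\}$. Iterating $w(z) = A(z)w(z^k)$ gives, for every $n \ge 0$ and every $z \in B(0,1)$,
\[
  w(z) = A(z)\,A(z^k) \cdots A(z^{k^{n-1}})\, w(z^{k^n}),
\]
and substituting $z = t\zeta$ with $t \in (0,1)$ yields the key identity
\[
  w(t\zeta) \;=\; \Bigl(\,\prod_{j=0}^{n-1} A(t^{k^j}\zeta)\Bigr)\, w(t^{k^n}\zeta).
\]

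Next I would bound the two factors on the right using compactness. By hypothesis \ref{bms:analytic} (adapted to the present lemma), $A$ has no poles in $B(0,1)$ and no pole at $\zeta$, and $A$ is a rational function, hence continuous on the compact segment $\{s\zeta : s \in [0,1]\}$. Therefore there exists $M \ge 1$ such that $\norm{A(s\zeta)} \le M$ for all $s \in [0,1]$, and a fortiori $\norm{A(t^{k^j}\zeta)} \le M$ for all $j \ge 0$ and all $t \in (0,1)$. By continuity of $w$ on $B(0,1)$, the set $\{\,\norm{w(s\zeta)} : s \in [0,1/2]\,\}$ is bounded by some constant $C > 0$.

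Given $t \in (0,1)$ sufficiently close to $1$, I would then choose the smallest $n = n(t) \ge 0$ satisfying $t^{k^n} \le 1/2$. Since $-\log t \sim 1-t$ as $t \to 1^-$, a short computation yields $n \le \log_k\!\bigl(2/(1-t)\bigr) + O(1)$. Plugging into the iterated identity and using $w(t^{k^n}\zeta)$ lies in the bounded region gives
\[
  \norm{w(t\zeta)} \;\le\; M^n \cdot C \;\le\; C' \left(\frac{1}{1-t}\right)^{\log_k M}
\]
for some constant $C' > 0$ and all $t$ close enough to $1$.

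Finally, for $t$ bounded away from $1$, say $t \in (0, t_1]$ with $t_1 < 1$, the quantity $\norm{w(t\zeta)}$ is bounded by continuity of $w$ on the compact set $\{s\zeta : s \in [0, t_1]\} \subset B(0,1)$; this bound can be absorbed into $(1-t)^{-c}$ by enlarging $c$. Combining the two regimes, we obtain a single exponent $c \in \bR_{>0}$ for which $\norm{w(t\zeta)} < |1-t|^{-c}$ on the whole interval $(0,1)$. The argument is essentially a direct iteration, so there is no serious obstacle; the only bookkeeping point is combining the asymptotic bound near $t = 1$ with the trivial bound away from $1$ into one inequality.
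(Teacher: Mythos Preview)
Your proposal is correct and follows essentially the same approach as the paper: iterate the functional equation along the ray $\{s\zeta : s \in [0,1]\}$, bound $\norm{A(s\zeta)}$ uniformly by compactness, choose $n$ so that $t^{k^n}$ lands in a fixed compact subset of $B(0,1)$ where $\norm{w}$ is bounded, and estimate $n$ by $\log_k(1/(1-t)) + O(1)$ to obtain $\norm{w(t\zeta)} \le C M^n \le C'(1-t)^{-\log_k M}$. The only cosmetic difference is that the paper handles all $t \in (0,1)$ in one stroke (the choice of $n$ is valid for every $t$, with $n=0$ for small $t$), whereas you split into the regimes $t$ near $1$ and $t$ bounded away from $1$; both are fine.
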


\begin{proof}
  Since the map $[0,1] \to \bC^{d \times d}$, $t \mapsto A(t\zeta)$ is continuous, there exists $c_0 \ge 1$ such that $\norm{A(t \zeta)} \le c_0$ for all $t \in [0,1]$.
  Let $\varepsilon \in (0,1)$ and $c_1 = \max\{\, \norm{w(t \zeta)} : t \in [0, \varepsilon] \,\}$.

  Let $t \in [0,1)$, and let $n \in \bZ_{\ge 0}$ be minimal such that $t^{k^n} \le \varepsilon$.
  We can obtain an upper bound on $n$ as follows.
  The inequality $t^{k^n} \le \varepsilon$ is equivalent to $k^n \log t \le \log \varepsilon$, 
  which is equivalent to $k^n (-\log t) \ge -\log \varepsilon$.
  In turn, this is equivalent to $n + \log_k(-\log t) \ge \log_k(-\log \varepsilon)$.
  So
  \[
    n = \lceil \log_k(-\log \varepsilon) - \log_k(-\log t ) \rceil\,.
  \]
  Thus
  \[
    n \le c_2  - \log_k(-\log t) \qquad\text{with $c_2 = 1 + \log_k(-\log \varepsilon)$\,.}
  \]
  
  Now
  \[
    k^n \le k^{c_2} k^{-\log_k(-\log t)} \le k^{c_2} \frac{1}{-\log t} \le k^{c_2} \frac{1}{1-t}\,,
  \]
  where we used $\log t \le t - 1$ for the last inequality.
  We have
  \[
    w(t \zeta) = A(t \zeta) A(t^k \zeta) \cdots A(t^{k^{n-1}} \zeta) w(t^{k^n}\zeta)\,,
  \]
  and thus $\norm{w(t\zeta)} \le c_0^n c_1$.
  Now
  \[
    c_0^n c_1 = c_1 k^{n \log_k c_0} \le c_1  k^{c_2 \log_k c_0} (1-t)^{-\log_k c_0}.
  \]
  The constant may be absorbed by replacing the exponent by a bigger one.
\end{proof}

\begin{proposition} \label{p:decide-rucommon}
  Let $f(z) \in \power{\Qbar}$ be $k$-Mahler with $k$-Mahler denominator $\mathfrak d(z)$.
  Suppose all roots of $\mathfrak d(z)$ are contained in $ \{0\}\cup \ru$.
  There exists an algorithm to decide whether $\mathfrak d(z)$ has a root in $\ru \setminus \rucommon$.

  Moreover, if all roots of $\mathfrak d(z)$ are contained in $\{0\} \cup \rucommon$, 
  then we can find an explicit $k$-Mahler equation
  \[
    s_0(z) f(z) = s_1(z) f(z^{k}) + \cdots + s_d(z) f(z^{k^{d}})
  \]
  with $s_0(z),\ldots,s_d(z) \in \Qbar[z]$ and all roots of $s_0(z)$ contained in $\{0\} \cup \rucommon$.
\end{proposition}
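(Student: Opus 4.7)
The plan is to bootstrap on \cref{p:decide-ru}. That proposition already produces an explicit equation
\[
  q_0(z) f(z) = q_1(z) f(z^{k^{n_0}}) + \cdots + q_d(z) f(z^{k^{n_0+d-1}})
\]
with all roots of $q_0(z)$ in $\{0\} \cup \ru$. Since $\mathfrak d(z)$ divides $q_0(z)$, any hypothetical root of $\mathfrak d(z)$ in $\ru \setminus \rucommon$ must appear among the finitely many roots $\zeta_1, \ldots, \zeta_m$ of $q_0(z)$ that are roots of unity of order coprime to $k$; these can be extracted effectively by factoring $q_0(z)$ and inspecting orders. If $m = 0$, all roots of $q_0(z)$ already lie in $\{0\} \cup \rucommon$ and we simply take $s_0 = q_0$, reading off the remaining $s_i(z)$ from the equation above.

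If $m > 0$, I would pick the smallest $j_i \ge 1$ with $\zeta_i^{k^{j_i}} = \zeta_i$, set $N = \operatorname{lcm}(j_1, \ldots, j_m)$ and $l = k^N$, so that $\zeta_i^l = \zeta_i$ for every $i$. Using the algorithm of \cite{adamczewski-faverjon18}, one can explicitly determine a minimal $l$-Mahler equation
\[
  p_0(z) f(z) = p_1(z) f(z^l) + \cdots + p_D(z) f(z^{l^D})
\]
with $p_0(z), \ldots, p_D(z) \in \Qbar[z]$ coprime and $p_0(z) p_D(z) \ne 0$. The decision then reduces to testing whether $p_0(\zeta_i) = 0$ for some $i$. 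If so, \cref{p:coprime-k-lower} applied to this $l$-Mahler equation (after fixing any embedding $\Qbar \hookrightarrow \bC$) yields $h(a_n) \notin o(\log^2 n)$, which combined with the bound $h(a_n) \in O(\log^2 n)$ from \cref{p:upper-generic} (applicable because the roots of $\mathfrak d$ lie in $\{0\} \cup \ru$) and the equivalence of \cref{t-main:hlog2} forces $\mathfrak d(z)$ to possess a root in $\ru \setminus \rucommon$. If instead $p_0(\zeta_i) \ne 0$ for every $i$, then, since $\mathfrak d \mid p_0$, no $\zeta_i$ is a root of $\mathfrak d$; and since $\mathfrak d \mid q_0$ already constrains the bad roots of $\mathfrak d$ to lie in $\{\zeta_1, \ldots, \zeta_m\}$, we conclude that every root of $\mathfrak d(z)$ belongs to $\{0\} \cup \rucommon$.

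For the explicit equation in this second case, the idea is to take $s_0(z) := \gcd(p_0(z), q_0(z))$. Since $\mathfrak d$ divides both $p_0$ and $q_0$, it divides $s_0$, so $s_0$ belongs to the defining ideal of $\mathfrak d$. Its roots are common roots of $p_0$ and $q_0$, hence lie in $\{0\} \cup \ru$ (from $q_0$), and none of them equals any $\zeta_i$ (since $p_0(\zeta_i) \ne 0$), so all roots of $s_0$ in fact lie in $\{0\} \cup \rucommon$. B\'ezout's identity in $K[z]$ provides polynomials $a(z), b(z)$ with $s_0 = a p_0 + b q_0$, and therefore
\[
  s_0(z) f(z) = a(z) \sum_{i=1}^D p_i(z) f(z^{l^i}) + b(z) \sum_{i=1}^d q_i(z) f(z^{k^{n_0 + i - 1}}),
\]
which, after collecting terms by the exponent of $k$, is precisely an equation of the required form.

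The main technical obstacle is the implication ``$p_0(\zeta_i) = 0 \Rightarrow \mathfrak d$ has a root in $\ru \setminus \rucommon$'': this is not direct, since $\zeta_i$ itself need not be a root of $\mathfrak d$. It goes instead through the quantitative lower bound of \cref{p:coprime-k-lower} together with the equivalence of \cref{t-main:hlog2}, whose proofs form the technical heart of \cref{sec:hlog2}; with those results available, the remainder of the argument is essentially bookkeeping.
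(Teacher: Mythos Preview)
Your argument is correct and follows the same overall architecture as the paper: reduce to the finitely many candidate roots $\zeta_i \in \ru \setminus \rucommon$ of an explicit leading coefficient, pass to an $l$-Mahler equation with $\zeta_i^l = \zeta_i$, test whether its leading coefficient vanishes at $\zeta_i$, and build the explicit ``good'' equation by taking a gcd of leading coefficients. There are two genuine differences worth noting. First, you treat all candidates at once with a single $l = k^{\operatorname{lcm}(j_i)}$, whereas the paper loops over the $\zeta_i$ with a separate $l = k^{j_0}$ for each; this is purely cosmetic. Second, and more interestingly, in the implication ``$p_0(\zeta_i)=0 \Rightarrow \mathfrak d$ has a root in $\ru\setminus\rucommon$'' you appeal to \cref{p:coprime-k-lower} together with the already-established equivalence of \cref{t-main:hlog2} to conclude that \emph{some} root of $\mathfrak d$ is bad. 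The paper instead proves the sharper statement $\mathfrak d(\zeta_i)=0$ directly, by pairing the lower bound of \cref{p:coprime-k-lower} with a new upper bound (\cref{l:upper-bound-matrix-system}) that would hold if $\mathfrak d(\zeta_i)\neq 0$. Your route is more economical---it recycles the main theorem and makes \cref{l:upper-bound-matrix-system} unnecessary for this proposition---while the paper's route is more informative, pinpointing the offending root. For the decision problem and the construction of $s_0$, either suffices.
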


\begin{proof}
  Let $\mathfrak d(z)f(z) = p_1(z) f(z^k) + \cdots + p_d(z) f(z^{k^d})$ with $p_1(z),\ldots,p_d(z) \in \Qbar[z]$.
  Using the condition on $\mathfrak d(z)$ together with the fact that $f(z)$ converges in a neighborhood of $0$, 
  this equation implies that $f(z)$ is analytic in $B_{\abs{\cdot}}(0,1)$ for every absolute value $\abs{\cdot}$ 
  on $\Qbar$.

  Now let $q_0(z) f(z) = q_1(z) f(z^k) + \cdots + q_d(z) f(z^{k^d})$ with $q_0(z),\ldots,q_d(z) \in \Qbar[z]$ 
  and $q_0(z)q_d(z) \ne 0$ be an explicit $k$-Mahler equation for $f(z)$. 
  Since $\mathfrak d(z)$ divides $q_0(z)$, we only have to check if any of the finitely many roots of $q_0(z)$ in 
  $\ru \setminus \rucommon$ are roots of $\mathfrak d(z)$.

  Suppose $\zeta$ is such a root of $q_0(z)$.
  Then there exists an, explicitly determinable, integer $j_0 \ge 1$ such that 
  $\zeta^{k^{j_0}} = \zeta$. Let $\ell = k^{j_0}$.
  Again using \cite{adamczewski-faverjon18} we can find an $\ell$-Mahler equation for $f(z)$, say
  \begin{equation} \label{eq:mahler-r}
    r_0(z) f(z) = r_1(z) f(z^\ell) + \cdots + r_e(z) f(z^{\ell^e})
  \end{equation}
  with $r_0(z),\ldots,r_e(z) \in \Qbar[z]$ coprime and $r_0(z) r_e(z) \ne 0$.
  If $r_0(\zeta) \ne 0$, then $\mathfrak d(\zeta) \ne 0$.

  Suppose now $r_0(\zeta) = 0$.
  We will show $\mathfrak d(\zeta) = 0$.
  Fix any embedding $\Qbar \hookrightarrow \bC$, and thereby an archimedean absolute value $\abs{\cdot}$ on $\Qbar$.
  \Cref{p:coprime-k-lower} implies that there exist $a,b,c \in \bR_{>0}$, $m_0,n_0 \in \bZ_{\ge 0}$, 
  and a sequence $(t_j)_{j \ge 0} \to 1$ in $[0,1)$ such that
  \[
    \abs[\big]{\sum_{n=0}^\infty a_{n+n_0} (\zeta t_j)^n } \ge (1-t_j)^a \exp(b \log^2m) t_j^{mc} 
    \qquad\text{for all $j \ge 0$ and $m \ge m_0$.}
  \]
  Define $f_0(z) = \sum_{n=0}^\infty a_{n+n_0} z^n$ and $m_j = \lceil 1/(1-t_j) \rceil$.
  Then
  \[
    \log \abs{f_0(\zeta t_j)} \ge a \log(1-t_j) +  b \log^2 \lceil 1/(1-t_j) \rceil + \lceil 1/(1-t_j) \rceil c \log t_j\,.
  \]
  As in the proof of \cref{l:estimate} we see that the right side is asymptotically equivalent to $b \log^2 (1-t_j)$. 
  Now, if we had $\mathfrak d(\zeta) \ne 0$, then \cref{l:upper-bound-matrix-system} would give 
  $\log\abs{f_0(\zeta t_j)} \le -c \log(1-t_j)$ for some $c \in \bR_{>0}$, a contradiction. 

  To explicitly find an equation with $s_0(z)$ as desired, note that for each 
  $\zeta \in \ru \setminus \rucommon$ that is a root of $q_0(z)$, we have found some 
  $k$-Mahler equation, \cref{eq:mahler-r}, for $f(z)$ with $r_0(\zeta) \ne 0$.
  Taking the greatest common divisor of $q_0(z)$ and all these $r_0(z)$ as $\zeta$ varies over the roots, 
  we obtain the desired equation.
\end{proof}

We have now shown that it is possible to decide algorithmically which of Cases \ref{tms:generic}  and 
\ref{tms:hn} 
of \cref{thm: hgt} a given Mahler function $f(z)=\sum_{n=0}^\infty a_n z^n \in \power{\Qbar}$ falls into.
Suppose now that $f(z)$ is $k$-regular. 
In this case, we wish to also decide whether $f(z)$ belongs to class \ref{tms:hlog2}, \ref{tms:hlog}, or 
\ref{tms:hloglog} of \cref{thm: hgt}. 

\subsection{From \texorpdfstring{$k$}{k}-Mahler equations to linear representations}

We have represented an arbitrary $k$-Mahler function $f(z)$ by a $k$-Mahler equation 
and sufficiently many initial coefficients.
If $f(z)$ is $k$-regular, it is more natural to represent the sequence of coefficients by a linear representation.
We show that such a linear representation is computable from a $k$-Mahler equation satisfied by $f(z)$. 
Recall that $\Delta_r$ with $r \in \digits$ denotes the Cartier operators (Definition~\ref{d:cartier}).

\begin{lemma} \label{l:relations-to-linrep}
  Let $f_1(z),\ldots,f_d(z) \in \power{\Qbar}$ with $f_i(z) = \sum_{n=0}^\infty a_{i,n} z^n$.
  Suppose that, for every $r \in \digits$ and every $1 \le i \le d$, there are explicitly known 
  coefficients $\lambda_{r,1},\ldots,\lambda_{r,d} \in \Qbar$ such that
  \[
    \Delta_r(f_i(z)) = \lambda_{r,1,i} f_1(z) + \cdots + \lambda_{r,d,i} f_d(z)\,.
  \]
  Then we get an explicit linear representation for the $k$-regular sequence $(a_{1,n})_{n \ge 0}$.
\end{lemma}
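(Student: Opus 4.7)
The plan is to directly unpack the hypothesis into a recursion on the coefficients and then package it as a linear representation in the sense of \cref{def: lr}. Comparing the coefficient of $z^n$ in the identity $\Delta_r(f_i(z)) = \sum_{j=1}^d \lambda_{r,j,i} f_j(z)$, and using $\Delta_r(f_i(z)) = \sum_{n \ge 0} a_{i,kn+r} z^n$, immediately yields the scalar recursion
\[
  a_{i, kn+r} \;=\; \sum_{j=1}^d \lambda_{r,j,i}\, a_{j,n} \qquad (n \ge 0,\ r \in \digits,\ 1 \le i \le d).
\]

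Next I would bundle these into a matrix identity. Collect the coefficients into column vectors $\vec{a}(n) := (a_{1,n}, \ldots, a_{d,n})^T \in \Qbar^{d \times 1}$, and, for each letter $r \in \digits$, define $\mu(r) \in \Qbar^{d \times d}$ by $(\mu(r))_{j,i} := \lambda_{r,j,i}$. The recursion above then reads $\vec{a}(kn+r)^T = \vec{a}(n)^T \mu(r)$. Extend $\mu$ to a monoid homomorphism $\mu : \digits^* \to \Qbar^{d \times d}$ by setting $\mu(\varepsilon) := I$ and using concatenation. A routine induction on $\length{w}$, based on the identity $[w_0 w_1 \cdots w_s]_k = k \cdot [w_0 \cdots w_{s-1}]_k + w_s$ in the inductive step, gives
\[
  \vec{a}([w]_k)^T \;=\; \vec{a}(0)^T\, \mu(w) \qquad \text{for every } w \in \digits^*.
\]
Leading zeros are harmless: applying the recursion at $n = r = 0$ yields $\vec{a}(0)^T \mu(0) = \vec{a}(0)^T$, so the formula is consistent for all words in $\digits^*$, not only for canonical base-$k$ expansions.

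Finally, put $u := \vec{a}(0)^T = (a_{1,0}, \ldots, a_{d,0}) \in \Qbar^{1 \times d}$ and let $v := e_1 \in \Qbar^{d \times 1}$ be the first standard basis vector. Projecting the displayed equation onto its first coordinate gives $a_{1, [w]_k} = u\, \mu(w)\, v$ for every $w \in \digits^*$, so $(u, \mu, v)$ is the desired linear representation for $(a_{1,n})_{n \ge 0}$ by \cref{t:regular-linrep}. Since the constant terms $a_{i,0}$ can be read off from the data defining the $f_i(z)$, and the matrices $\mu(r)$ are built directly from the explicitly known scalars $\lambda_{r,j,i}$, the triple $(u, \mu, v)$ is explicit. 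There is no real obstacle; the entire argument is a direct unpacking of definitions, and the only point worth being careful about is the matrix/vector convention, chosen precisely so that concatenation of words in $\digits^*$ is realized by matrix multiplication in the natural left-to-right order.
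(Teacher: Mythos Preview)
Your proposal is correct and follows essentially the same approach as the paper: define $\mu(r)$ by $(\mu(r))_{j,i}=\lambda_{r,j,i}$, read off the recursion $\mathbf a(kn+r)=\mathbf a(n)\mu(r)$ from the Cartier-operator hypothesis, and take $u=\mathbf a(0)$, $v=e_1$. Your additional remarks on the induction over $\length{w}$ and on leading zeros are just minor elaborations of the same argument.
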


\begin{proof}
  Let $\mu \colon \digits^* \to \Qbar^{d \times d}$ be defined by
  \[
    \mu(r) \coloneqq
    \begin{pmatrix}
      \lambda_{r,1,1} & \dots & \lambda_{r,1,d} \\
      \vdots & \ddots & \vdots \\
      \lambda_{r,d,1} & \dots & \lambda_{r,d,d} \\
    \end{pmatrix}
    \qquad\text{and let}\qquad
    \mathbf a(n) \coloneqq (a_{1,n}, \dots,  a_{d,n}).
  \]
  Since $\Delta_r(f_i(z)) = \sum_{n=0}^\infty a_{i,kn+r}z^n$, we obtain 
  $\mathbf a(kn+r) =  \mathbf a(n) \mu(r)$ for $r \in \digits$. 
  Finally let $e_1 = (1,0,\ldots,0)^T  \in \Qbar^{d\times 1}$.
  Then $a_{1,[w]_k} = \mathbf a([w]_k)e_1 = \mathbf a(0) \mu(w) e_1$ for all words $w\in \digits^*$.
\end{proof}

\begin{lemma} \label{l:becker-linrep}
  Let $p_1(z),\ldots,p_d(z) \in \Qbar[z]$ with $e = \max\{ \deg p_1(z), \ldots, \deg p_d(z) \}$.
  If $f(z)=\sum_{n=0}^\infty a_n z^n \in \power{\Qbar}$ satisfies the $k$-Becker equation
  \[
    f(z) = p_1(z) f(z^k) + \cdots + p_d(z) f(z^{k^d})\,,
  \]
  then a linear representation for the $k$-regular sequence $(a_n)_{n\ge 0}$ is computable from 
  $a_0$ and $p_1(z),\ldots,p_d(z)$. 
\end{lemma}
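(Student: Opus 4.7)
The plan is to exhibit an explicit, finite-dimensional $\Qbar$-subspace $V$ of $\power{\Qbar}$ containing $f(z)$ that is stable under every Cartier operator $\Delta_r$, $r \in \digits$, and whose description is computable from $p_1(z),\ldots,p_d(z)$. A basis of $V$ then provides a family of power series to which \cref{l:relations-to-linrep} applies, yielding the desired linear representation for $(a_n)_{n \ge 0}$.

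Set $M \coloneqq \lceil e/(k-1) \rceil$ and take as candidate spanning set
\[
  g_{j,i}(z) \coloneqq z^i f(z^{k^j}), \qquad 0 \le j \le d-1,\ 0 \le i \le M,
\]
with $g_{0,0}(z) = f(z)$ placed in the first position. For $j \ge 1$, the expansion $z^i f(z^{k^j}) = \sum_n a_n z^{nk^j + i}$ has all exponents congruent to $i$ modulo $k$; hence $\Delta_r(g_{j,i}(z))$ vanishes unless $i \equiv r \pmod k$, in which case it equals $g_{j-1,\lfloor i/k \rfloor}(z)$, which lies in $V$ because $\lfloor i/k \rfloor \le M$. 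For $j = 0$, I multiply the Becker equation by $z^i$ and apply $\Delta_r$, using the identity $\Delta_r(q(z) f(z^{k^s})) = \Delta_r(q(z))\, f(z^{k^{s-1}})$ for $s \ge 1$ recalled right after \cref{d:cartier}; this gives
\[
  \Delta_r(g_{0,i}(z)) = \sum_{\ell=1}^d \Delta_r(z^i p_\ell(z))\, f(z^{k^{\ell-1}}).
\]
Each polynomial $\Delta_r(z^i p_\ell(z))$ has degree at most $(i+e)/k \le (M+e)/k \le M$ by the choice of $M$, so expanding it in the monomial basis $1,z,\ldots,z^M$ writes $\Delta_r(g_{0,i}(z))$ as an explicit $\Qbar$-linear combination of the $g_{\ell-1,s}(z)$, $0 \le s \le M$, which also lies in $V$.

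It remains only to record the initial values required by \cref{l:relations-to-linrep}: since $g_{j,i}(0) = a_0$ if $i = 0$ and $0$ otherwise, these are immediate from $a_0$. The hypotheses of \cref{l:relations-to-linrep} are thus satisfied, and the resulting linear representation (with $g_{0,0}$ placed first) recovers $(a_n)_{n \ge 0}$; all coefficients involved are effectively computable from $a_0$ and $p_1(z),\ldots,p_d(z)$. The only mildly delicate point is to fix $M$ so that the span is genuinely $\Delta_r$-stable when $j = 0$; the choice $M = \lceil e/(k-1) \rceil$ is dictated by the inequality $(M+e)/k \le M$, and the rest is bookkeeping.
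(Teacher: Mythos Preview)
Your proof is correct and follows the same approach as the paper's, namely Becker's argument that the span of the monomial-shifted substitutions $z^i f(z^{k^j})$ is closed under the Cartier operators, combined with \cref{l:relations-to-linrep}. The only differences are cosmetic: you take $0 \le j \le d-1$ rather than $0 \le j \le d$ (the top layer is indeed superfluous) and you sharpen the monomial bound to $M = \lceil e/(k-1) \rceil$ instead of $e$, which coincides with the paper's choice when $k=2$ and is slightly smaller otherwise.
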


\begin{proof}
  Following Becker \cite[Theorem 2]{becker94}, we see that the $\Qbar$-vector space $V$ 
  spanned by $\{\, z^i f(z^{k^j}) : 0\le i \le e,\, 0 \le j \le d \,\}$ is closed under all Cartier operators.
  Explicitly, if $r \in \digits$, $0\leq i \leq e$, and $j \ge 1$, then
  \[
    \Delta_r(z^i f(z^{k^j})) = \Delta_r(z^i) f(z^{k^{j-1}}) \in V,
  \]
  since $\Delta_r(z^i) = z^{(i-r)/k}$ if $i \equiv r \mod k$ and $\Delta_r(z^i)=0$ otherwise.
  If $j=0$, then $\deg(\Delta_r(z^i p_j(z))) \le 2e/k \le e$, and thus
  \[
    \Delta_r(z^i f(z)) = \Delta_r\Big( \sum_{j=1}^d z^i p_j(z) f(z^{k^j})\Big) = 
    \sum_{j=1}^d \Delta_r(z^i p_j(z)) f(z^{k^{j-1}}) \in V\, .
  \]
  Since $\Delta_r(z^i p_j(z))$ can be explicitly computed, we may apply \cref{l:relations-to-linrep} 
  to find a linear representation of $(a_n)_{n \ge 0}$.
  Since $0^i f(0^{k^j}) \in \{0, a_0\}$, the resulting linear representation only depends on 
  $p_1(z),\ldots,p_d(z)$ and $a_0$.
\end{proof}

It is rather non-trivial that the convolution product of $k$-regular sequences is again $k$-regular.
The standard way to show this uses the module-theoretic characterization of $k$-regularity; 
see \cite[Theorem 16.4.1]{allouche-shallit03} or \cite[Proposition 5.2.7]{berstel-reutenauer11}.
To see that a linear representation of the convolution product is computable from linear representations, 
we need to revisit this proof.

\begin{remark}
  Using the growth-based characterization of $k$-regular sequences in \cref{t-main:hn}, 
  it is easy to show that the convolution product of $k$-regular sequences is $k$-regular.
  However, since this characterization already makes use of this fact that convolution products of 
  $k$-regular sequences are $k$-regular (in \cref{p:hlog2}), this does not actually give a new, independent proof.
\end{remark}

\begin{lemma} \label{l:cauchy-linrep}
  Let $(a(n))_{n \ge 0}$ and $(b(n))_{n \ge 0}$ be two $k$-regular sequences in $\Qbar$, 
  each being given by a linear representation.
  Then a linear representation of the convolution product 
  $(a \star b(n))_{n \ge 0}$ is computable.
\end{lemma}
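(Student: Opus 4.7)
The plan is to make fully effective the classical module-theoretic argument (cf.\ \cite[Theorem 16.4.1]{allouche-shallit03}) showing that the Cauchy product of two $k$-regular sequences is $k$-regular; the construction is implicit there but is not usually made explicit. Write $f(z) = \sum_{n\geq 0} a(n) z^n$ and $g(z) = \sum_{n\geq 0} b(n) z^n$, so that the generating series of $a\star b$ is $fg$.

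First, from each input linear representation $(u_a,\mu_a,v_a)$ I would extract a Cartier-module description of the $k$-kernel span: a basis $f_1,\ldots,f_{d_a}$ of the finite-dimensional $\Qbar$-subspace $V_f \subseteq \power{\Qbar}$ spanned by the $k$-kernel of $f$, matrices $M_r^f$ describing the action of each Cartier operator $\Lambda_r$ on this basis, and coordinates expressing $f$ itself. Concretely this is routine linear algebra: compute a basis of $W_a := \operatorname{span}_{\Qbar}\{\mu_a(w) v_a : w \in \digits^*\} \subseteq \Qbar^{d_a\times 1}$ by iteratively applying the finitely many matrices $\mu_a(r)$ to $v_a$ until the span stabilizes, and translate via the identification $x \leftrightarrow (u_a \mu_a(\langle n\rangle_k) x)_{n\geq 0}$. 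The same procedure applied to $g$ yields $g_1,\ldots,g_{d_b}$ and matrices $M_r^g$.

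Next I would consider the finite-dimensional subspace
\[
  U \,:=\, \operatorname{span}_{\Qbar}\{\, f_i g_j,\ z f_i g_j \,:\, 1\le i\le d_a,\ 1\le j\le d_b\,\} \,\subseteq\, \power{\Qbar},
\]
of dimension at most $2 d_a d_b$, and show it is stable under every Cartier operator. Decomposing $h_\ell(z) = \sum_{r=0}^{k-1} z^r (\Lambda_r h_\ell)(z^k)$ for $\ell = 1,2$ and expanding the product yields
\[
  \Lambda_t(h_1 h_2) \,=\, \sum_{\substack{r+s=t\\ 0\le r,s<k}} (\Lambda_r h_1)(\Lambda_s h_2) \,+\, z \sum_{\substack{r+s=t+k\\ 0\le r,s<k}} (\Lambda_r h_1)(\Lambda_s h_2),
\]
while a direct computation gives $\Lambda_t(zh) = \Lambda_{t-1}(h)$ for $1\le t<k$ and $\Lambda_0(zh) = z\,\Lambda_{k-1}(h)$. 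Applied with $h_1 = f_i$ and $h_2 = g_j$, and using $M_r^f,M_r^g$ to re-express each $\Lambda_r f_i$ and $\Lambda_s g_j$ in the chosen bases, these formulae simultaneously prove stability of $U$ and produce explicit matrices $M_t^U$ for the Cartier action on a chosen basis of $U$.

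Finally, $fg$ lies in $U$ with coordinate vector directly readable from the coordinates of $f$ and $g$. From a Cartier-stable subspace of $\power{\Qbar}$ equipped with explicit Cartier matrices, a linear representation of the coefficient sequence is standard: take $v$ to be the coordinate vector of $fg$, let $\mu(r) := M_r^U$ extended multiplicatively to $\digits^*$, and let $u$ be the row vector extracting the constant term. A short induction on $\length{w}$ shows that $u\mu(w)v$ equals the constant term of $\Lambda_w(fg)$, which is $(a \star b)([w]_k)$. Every step reduces to finitely many linear-algebra operations over $\Qbar$ and is therefore effective; the main obstacle is not substantive but bookkeeping, namely keeping straight the two kinds of spanning elements ($f_i g_j$ versus $z f_i g_j$) and reconciling the base-$k$ digit conventions used by the input representations with those governing $\Lambda_w$.
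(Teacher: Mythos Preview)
Your proposal is correct and follows essentially the same approach as the paper. Both arguments rest on the same product rule for Cartier operators---your identity $\Lambda_t(h_1 h_2)=\sum_{r+s=t}(\Lambda_r h_1)(\Lambda_s h_2)+z\sum_{r+s=t+k}(\Lambda_r h_1)(\Lambda_s h_2)$ is exactly the generating-series form of the paper's sequence-level reduction $\Delta_r(a\star b)(n)=\sum_{s\le r}\Delta_s(a)\star\Delta_{r-s}(b)(n)+\sum_{s>r}\Delta_s(a)\star\Delta_{k+r-s}(b)(n-1)$---and both build a representation of dimension $2d_ad_b$ on the span of the products $f_ig_j$ and their shifts (your $zf_ig_j$ corresponds to the paper's second block of coordinates $a_\ell\star b_m(n-1)$); the paper merely writes the resulting matrices out explicitly via Kronecker products.
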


Let us recall that by definition $a \star b(n)=\sum_{i=0}^na(i)b(n-i)$. 

\begin{proof}
  For $r \in \digits$, let $(\Delta_r(a)(n))_{n \ge 0}$ be the sequence defined by 
  $\Delta_r(a)(n) \coloneqq a(kn+r)$. The key step in the proof of Allouche and Shallit 
  \cite[Theorem 16.4.1]{allouche-shallit03} is the reduction (with $r \in \digits$)
  \begin{equation} \label{eq:cauchy}
    \Delta_r(a\star b)(n) = \sum_{0 \le s \le r} \Delta_s(a) \star \Delta_{r-s}(b) (n) +  \sum_{r < s \le k-1}  
    \Delta_s(a) \star \Delta_{k+r-s}(b) (n-1)\, .
  \end{equation}

  We will adapt this proof to the case where $a$ and $b$ are given by linear representations.
  Without restriction we can assume that the linear representations of $a$ and $b$ both have 
  the same dimension $d \ge 0$. 
  After a change of basis, we may take the linear representation of $a$ to be $(\mathbf a(0),\kappa,e_1)$, 
  where $\mathbf a(0) =(a_1(0),\ldots,a_d(0))$ with $a_1(0)=a(0)$, 
  where $e_1 = (1, 0, \dots, 0)^T$, and where 
  $\kappa\colon \digits^* \to \Qbar^{d \times d}$ is a monoid homomorphism. 
  Define $\mathbf a(n) \coloneqq \mathbf a(0) \kappa(\langle n\rangle_k)$, where 
  $\langle n\rangle_k \in \digits^*$ is the canonical base-$k$ expansion of $n$.
  Then, in particular,
  \[
    (a_1(kn+r),\dots, a_d(kn+r)) =
    (a_1(n),\dots,a_d(n))\, \kappa(r) \qquad\text{for $r \in \digits$.}
  \]
  For $b$ we have a linear representation $(\mathbf b(0), \lambda, e_1)$ with analogous definitions.

  We construct a linear representation for $a \star b$ of dimension $2d^2$.
  For this, we index the first set of $d^2$ coordinates by $(i,j)$ in lexicographic order, 
  and the second by $(i',j')$, where $1 \le i,j \le d$. That is, the coordinates are indexed by 
  $(1,1)$, $(1,2)$, $\ldots\,$,~$(d,d)$, $(1',1')$, $(1',2')$, $\ldots\,$,~$(d',d')$.
  We use subscripts to indicate the corresponding entry of a matrix, e.g., $\kappa(r)_{i,j}$ is the entry in the $i$th row and $j$th column of $\kappa(r)$.
  For $1 \le i, j \le d$ and $r$,~$s \in \digits$, we get
  \[
    \Delta_r(a_i) \star \Delta_s(b_j) = \Big( \sum_{\ell=1}^d a_\ell\kappa(r)_{\ell,i} \Big) \star \Big( \sum_{m=1}^d b_m\lambda(s)_{m,j} \Big) = \sum_{\ell,m=1}^d  \kappa(r)_{\ell,i} \lambda(s)_{m,j} ( a_\ell  \star b_m ) \,.
  \]

  Using \cref{eq:cauchy},
  \[
    \begin{split}
      \Delta_r(a_i \star b_j)(n) &= \sum_{0 \le s \le r} \sum_{\ell,m=1}^d \kappa(s)_{\ell,i} \lambda(r-s)_{m,j} 
      \, ( a_\ell  \star b_m  )(n) \\
      &+  \sum_{r < s \le k-1}  \sum_{\ell,m=1}^d \kappa(s)_{\ell,i} \lambda(k+r-s)_{m,j}\,( a_\ell  \star b_m )(n-1)  \\
      &= \sum_{\ell,m=1}^d  \Big( \sum_{0 \le s \le r} \kappa(s)_{\ell,i} \lambda(r-s)_{m,j} \Big) ( a_\ell  \star b_m  )(n)\\
      &+  \sum_{\ell,m=1}^d \Big( \sum_{r < s \le k-1}  \kappa(s)_{\ell,i} \lambda(k+r-s)_{m,j} \Big)
       ( a_\ell  \star b_m )(n-1)\, .
    \end{split}
  \]
  Further note if $r \ge 1$, then $a_i \star b_j(kn+r-1) = \Delta_{r-1}(a_i \star b_j)(n)$.
  For $r=0$ we have $a_i \star b_j(kn-1) = a_i \star b_j(k(n-1) + (k-1)) = \Delta_{k-1}(a_i \star b_j)(n-1)$.
  In this case, in \cref{eq:cauchy}, the second sum vanishes, and we again obtain $a_i \star b_j(kn-1)$ as a linear combination of the $a_\ell\star b_m(n-1)$, namely,
  \[
    \Delta_{k-1}(a_i \star b_j)(n-1) = \sum_{\ell,m}^d \big(\sum_{0 \le s \le k-1} \kappa(s)_{\ell,i} \lambda(k-1-s)_{m,j} \big) 
    (a_\ell \star b_m)(n-1) .
  \]

  For two $d \times d$-matrices $A$,~$B$, the Kronecker product $A \otimes B$ is the 
  $d^2 \times d^2$-matrix defined by $(A\otimes B)_{(i,j),(\ell,m)} = A_{i,\ell}B_{j,m}$.
  For $r \in \digits$ with $r \ne 0$, we define the $2d^2 \times 2d^2$-matrix $\mu(r)$ by the block structure
  \[
    \mu(r) \coloneqq
    \begin{pmatrix}
      \sum_{0 \le s \le r} \kappa(s) \otimes \lambda(r-s)
         &  \sum_{0 \le s \le r} \kappa(s) \otimes \lambda(r-1-s)    \\
      \sum_{r < s \le k-1} \kappa(s) \otimes \lambda(k + r-s) 
         & \sum_{r < s \le k-1} \kappa(s) \otimes \lambda(k+r-1-s)
    \end{pmatrix}.
  \]
  Similarly
  \[
    \mu(0) \coloneqq
    \begin{pmatrix}
      \kappa(0) \otimes \lambda(0)
         &  0   \\
     \sum_{s=1}^{k-1} \kappa(s) \otimes \lambda(k-s)
         & \sum_{s=0}^{k-1} \kappa(s) \otimes \lambda(k-1-s)
    \end{pmatrix}.
  \]
  Define for each $n \ge 0$ the $2d^2$ row vector $\mathbf v(n)$ by $\mathbf v(n)_{(\ell,m)} = a_\ell \star b_m(n)$ 
  and $\mathbf v(n)_{(\ell',m')} = a_\ell \star b_m(n-1)$.
  Then
  \[
    \mathbf v(kn+r) = \mathbf v(n) \mu(r).
  \]
  Now $\mathbf v_{(1,1)}(n) = a_1 \star b_1(n) = a\star b(n)$.
  Thus, the triple $(\mathbf v(0), \mu, e_{(1,1)})$, where $e_{(1,1)}$ is the $2d^2$ column vector with $1$ in the coordinate $(1,1)$ and zeroes everywhere else, is a linear representation for $a \star b$.
\end{proof}

\begin{proposition}
  Let $f(z) = \sum_{n=0}^\infty a_n z^n \in \power{\Qbar}$ be $k$-regular, 
  given by a $k$-Mahler equation, the minimal $n_0 \ge 0$ with $a_{n_0} \ne 0$, 
  and the value $a_{n_0}$.
  Then a linear representation for the $k$-regular sequence $(a_n)_{n \ge 0}$ is computable.
\end{proposition}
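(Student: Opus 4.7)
The plan is to assemble constructive tools already developed in the paper: the effective reductions from \cref{p:decide-ru,p:decide-rucommon}, the explicit proof of Becker's conjecture carried out in \cref{sec: becker}, and the effective \cref{l:becker-linrep,l:cauchy-linrep}. Starting from the given $k$-Mahler equation, I would first invoke \cref{p:decide-rucommon} (on top of \cref{p:decide-ru}) to produce an explicit Mahler equation
\[
  s_0(z) f(z) = s_1(z) f(z^k) + \cdots + s_D(z) f(z^{k^D}),
\]
in which all non-zero roots of $s_0(z)$ lie in $\rucommon$; that this is achievable follows from \cref{t-main:hlog2} together with the assumed $k$-regularity of $f$. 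An application of \cref{l:kill-0}, while tracking the induced shift on $n_0$ and on the initial datum $a_{n_0}$, then enforces $s_0(0) = 1$; should this step introduce new non-zero roots outside $\rucommon$, one simply reruns \cref{p:decide-rucommon} on the new equation and iterates until both conditions hold simultaneously.

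Second, I would mimic the explicit construction from the proof of Becker's conjecture in \cref{sec: becker}: for each root $\zeta$ of $s_0(z)$, compute the integer $a(\zeta)$ (the $k$-coprime part of the order of $\zeta$) and a uniform integer $s$, and form $q(z) = \prod_{\zeta} \phi_{a(\zeta)}(z^{k^s})$. The divisibility $s_0(z) q(z) \mid q(z^{k^j})$ established there, combined with \cref{lem: easy}, yields an explicit $k$-Becker equation for the power series $g(z) := f(z)/q(z)$, whose coefficients are the polynomials $s_i(z) q(z^{k^i})/(s_0(z) q(z))$. Because $q(0) \in \{-1,+1\}$, division by $q(z)$ is legitimate in the power series ring, and any desired number of initial coefficients of $g(z)$ are computable from those of $f(z)$, which are in turn obtained from the Mahler equation and $a_{n_0}$ via \eqref{eq: recurrence}.

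Finally, \cref{l:becker-linrep} converts the Becker equation for $g(z)$ and the value $g(0)$ into an explicit linear representation of $g$; the polynomial $q(z)$ admits a trivial linear representation; and \cref{l:cauchy-linrep} then delivers a linear representation for the Cauchy product $f = q \cdot g$. Any initial shift introduced by \cref{l:kill-0} is undone by adding a polynomial of degree less than $n_0$, a correction that preserves $k$-regularity and whose linear representation splices onto the one above. The substantive mathematical work has already been carried out in the paper's earlier sections, so the only real obstacle I expect is organizational: one must check that the interplay of \cref{p:decide-rucommon} with \cref{l:kill-0} terminates while preserving the condition that the non-zero roots of the leading polynomial lie in $\rucommon$, and one has to carry out all intermediate factorizations over a single sufficiently large number field.
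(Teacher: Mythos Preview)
Your approach is correct and takes a genuinely different route from the paper's own proof. The paper, after reducing to $a_0\ne 0$ and invoking \cref{p:decide-ru,p:decide-rucommon}, applies Dumas' structure theorem (\cref{t:dumas-structure}) to write $f(z)=g(z)\big/\prod_{i\ge 0}p_0(z^{k^i})$ with $g$ a $k$-Becker series, computes a linear representation for $g$ via \cref{l:becker-linrep}, and then handles each factor $\prod_{i\ge 0}(1-\zeta^{-1}z^{k^i})^{-1}$ separately by appealing to \cite[Proposition~7.8]{adamczewski-bell17}, which exhibits it explicitly as a polynomial times a $k$-Becker series. You instead use the constructive proof of Becker's conjecture from \cref{sec: becker} to write $f=q\cdot g$ with $q$ a single explicit polynomial and $g$ a $k$-Becker power series; this is arguably tidier, since it avoids the detour through infinite products and the external reference, and the polynomial $q$ has a trivial linear representation. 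Both routes then finish identically via \cref{l:cauchy-linrep}.

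On your organizational worry: the iteration you describe is unnecessary once the order of operations is fixed. First shift by the given $n_0$ so that $a_0\ne 0$; with this in hand, \cref{l:kill-0} (that is, \cite[Lemma~6.1]{adamczewski-bell17}) applied to any Mahler equation for $f$ produces, without further shifting, an equation whose leading polynomial $q_0$ satisfies $q_0(0)=1$. Now feed this equation as the starting equation into the procedures of \cref{p:decide-ru} and \cref{p:decide-rucommon}: the final leading polynomial $s_0$ is obtained as a $\gcd$ of $q_0$ with various other leading polynomials, hence $s_0\mid q_0$ and in particular $s_0(0)\ne 0$. So $\gamma=0$ in the Becker construction, $g=f/q$ is an honest power series, and no iteration is needed. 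The paper's proof glosses over this same point when it asserts ``in particular, we may assume $p_0(0)=1$''; your explicit concern and the resolution just described are worth recording.
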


\begin{proof}
  From a linear representation of $(a_n)_{n \ge n_0}$ it is easy to find one for $(a_n)_{n \ge 0}$.
  We may therefore without restriction assume $n_0=0$. 
  (An explicit $k$-Mahler equation for this power series can be found using \cite[Lemma 6.1]{adamczewski-bell17}.)

  Using Propositions \ref{p:decide-ru} and \ref{p:decide-rucommon}, we can further find a $k$-Mahler equation
  \[
    p_0(z) f(z) = p_1(z) f(z^k) + \cdots + p_d f(z^{k^d})\,,
  \]
  with $p_0(z),\ldots\,p_d(z) \in \Qbar[z]$, with $p_0(z)$ and $p_d(z)$ coprime, and with the property that 
  all roots of $p_0(z)$ are contained in $\rucommon$.
  In particular, we may assume $p_0(0)=1$. 

  Now \cref{t:dumas-structure} gives a decomposition
  \[
    f(z) = g(z) \Big( \prod_{i=0}^\infty p_0(z^{k^i}) \Big)^{-1}\,,
  \]
  where $g(z)$ is $k$-Becker, and a $k$-Becker equation for $g(z)$ can be computed.
  \Cref{l:becker-linrep} yields a linear representation for the coefficient sequence of $g(z)$. 
  Factoring $p_0(z)$ into linear factors of the form $1-z\zeta^{-1}$ with $\zeta \in \rucommon$, 
  we recall that $\prod_{i=0}^\infty (1 - z^{k^i} \zeta^{-1})^{-1}$ is $k$-regular.
  Indeed, by \cite[Proposition 7.8]{adamczewski-bell17}, this infinite product factors as a polynomial and a 
  $k$-Becker function (both computable). 
  Using \cref{l:cauchy-linrep} we find a linear representation for $\prod_{i=0}^\infty (1 - z^{k^i} \zeta^{-1})^{-1}$. 
  Finally, \cref{l:cauchy-linrep} allows us to find a linear representation for $f(z)$ itself.
\end{proof}

\subsection{Tame and finite semigroups}

From a linear representation, a minimal linear representation is computable, 
and we may now assume that the $k$-regular sequence $(a_n)_{n \ge 0}$ is given by 
such a minimal linear representation $(u,\mu,v)$. 
To decide which of Cases \ref{tms:hlog2}--\ref{tms:hloglog} of \cref{thm: hgt} 
the sequence belongs to, it now suffices to decide whether or not the finitely generated 
matrix semigroup $\mu(\digits^*)$ is finite, respectively, tame. 

For this, we first need the following two lemmas.

\begin{lemma} \label{l:inv-subspace}
  Let $A_1,\ldots,A_t \in \Qbar^{d \times d}$.
  It is possible to decide whether or not the matrices $A_1,\ldots,A_t$ have a proper 
  non-zero common invariant subspace, and if so, to compute one. 
\end{lemma}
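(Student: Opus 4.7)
The plan is to reduce the problem to the classical theorem of Burnside, which states that a subalgebra $\mathcal{A} \subseteq M_d(\Qbar)$ acts irreducibly on $\Qbar^d$ if and only if $\mathcal{A} = M_d(\Qbar)$. This turns the existence question into a matter of linear algebra.

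First I would compute a basis of the unital subalgebra $\mathcal{A}$ generated by $I, A_1, \ldots, A_t$. Starting from $\{I\}$, one iteratively multiplies each matrix in the current list by each generator $A_i$ and appends any product that is linearly independent from the matrices already listed. Since $\dim \mathcal{A} \leq d^2$, the procedure terminates after at most $d^2$ iterations and yields a basis $B_1, \ldots, B_m$ of $\mathcal{A}$. By Burnside's theorem, the matrices $A_1, \ldots, A_t$ admit a proper non-zero common invariant subspace if and only if $m < d^2$; this settles decidability.

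If $m < d^2$, I would construct an invariant subspace as follows. For a column vector of indeterminates $\mathbf{x} = (x_1, \ldots, x_d)^T$, let $M(\mathbf{x})$ be the $d \times m$ matrix whose columns are $B_1 \mathbf{x}, \ldots, B_m \mathbf{x}$; its entries are linear forms in $x_1, \ldots, x_d$. For any $v \in \Qbar^d$, the column span of $M(v)$ is exactly $\mathcal{A} v$, a cyclic $\mathcal{A}$-invariant subspace containing $v$. Hence $\dim \mathcal{A} v < d$ if and only if every $d \times d$ minor of $M(\mathbf{x})$ vanishes at $v$. These minors are explicit homogeneous polynomials of degree $d$ in $x_1, \ldots, x_d$ and define a Zariski-closed cone $X \subseteq \Qbar^d$. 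Since $\mathcal{A}$ is reducible, any non-zero vector in any proper invariant subspace lies in $X$, so $X \supsetneq \{0\}$. Using standard effective algebraic geometry (for instance, computing a Gr\"obner basis of the ideal generated by the minors, then recursively restricting to coordinate hyperplanes until a point is exhibited), one extracts a non-zero $v \in X$. The columns of $M(v)$ then span a proper non-zero common invariant subspace of $A_1, \ldots, A_t$, which we output.

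The main obstacle is the constructive step: passing from the mere existence of a proper invariant subspace (guaranteed by Burnside once $m<d^2$) to the explicit exhibition of one. The minor-vanishing reformulation above reduces this task to finding a non-zero point in an explicit algebraic variety defined over a number field, which is solved by classical computer-algebra techniques. Everything else is straightforward finite-dimensional linear algebra over $\Qbar$.
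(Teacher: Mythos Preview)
Your argument is correct. The paper does not actually give a proof of this lemma; it simply cites Arapura--Peterson (exterior powers and Gr\"obner bases) and Pastuszak (a model-theoretic approach) for the result. Your route is genuinely different and more self-contained: you use Burnside's theorem to reduce the decision problem to computing $\dim \mathcal{A}$, and then, for the constructive part, you exploit the fact that cyclic submodules $\mathcal{A}v$ are automatically invariant, recasting the search for a proper invariant subspace as the search for a nonzero point on the determinantal variety $X$ cut out by the maximal minors of $M(\mathbf{x})$. The Arapura--Peterson approach instead parametrises $r$-dimensional invariant subspaces, for each $1\le r<d$, via the Grassmannian inside $\Lambda^r\Qbar^d$ and checks for suitable common eigenvectors of the induced maps $\Lambda^r A_i$; this is more uniform across dimensions but heavier in setup. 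Your approach has the advantage that the decision step is pure linear algebra with no algebraic geometry at all, and the constructive step reduces to a single variety rather than one per dimension $r$. One small point worth making explicit: when $m<d$ the matrix $M(\mathbf{x})$ has fewer than $d$ columns, so there are no $d\times d$ minors and every nonzero $v$ already works; your minor-vanishing description tacitly assumes $m\ge d$.
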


\begin{proof}
  This can be done using exterior powers and Gr\"obner bases, see Arapura and Peterson \cite{arapura-peterson04}.
  A model-theoretic approach is given by Pastuszak in \cite{pastuszak17}.
  Both papers discuss the history of this problem.
\end{proof}

\begin{lemma} \label{l:bound-semigroup}
  Let $K$ be a number field and $d \ge 0$.
  For every $r \ge 0$, there exists a computable $n = n(r,K)$ with the following property: 
  if $\cS \subseteq K^{d \times d}$ is a \emph{finite} semigroup generated by $r$ matrices, 
  then $\card{\cS} \le n$.
\end{lemma}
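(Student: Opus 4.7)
The plan is to combine a Burnside-type argument with Minkowski's bound on finite subgroups of $\GL_n$ over a number field. Let $\cS \subseteq K^{d\times d}$ be a finite semigroup generated by $r$ matrices $A_1,\ldots,A_r$. Since $\cS$ is finite, each $A\in\cS$ satisfies $A^{m+p}=A^m$ for some $m\le d$ and $p\ge 1$, so its non-zero eigenvalues are roots of unity lying in a finite extension of $K$ of degree at most $d$. The orders of such roots of unity are bounded by a computable $M=M(K,d)$ (via $\phi(n)\le d\cdot[K:\bQ]$), so all elements of $\cS$ jointly satisfy a common semigroup identity of the form $x^{d+M!}=x^d$.

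To bound $\card{\cS}$, I would decompose $\cS$ using its $\cJ$-classes via the Rees--Sushkevich structure theorem. Each $\cJ$-class $J$ is either null or, by Rees's theorem, isomorphic to a Rees matrix semigroup $M^0(G_J;I_J,\Lambda_J;P_J)$ whose Schützenberger group $G_J$ is realized as the $\cH$-class of some idempotent $e\in J$. Since $ex=xe=x$ for $x\in H_e$, restriction to $\mathrm{im}(e)$ yields an injective group homomorphism $H_e\hookrightarrow \GL_{d_e}(K)$ with $d_e=\mathrm{rank}(e)\le d$; hence by Minkowski's theorem (with refinements by Schur and Feit), one obtains a computable bound $\card{G_J}\le B(d,K)$.

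It then remains to bound $\card{I_J}$, $\card{\Lambda_J}$, and the number of $\cJ$-classes themselves. Within each $\cJ$-class, the $\cR$- and $\cL$-classes correspond to distinct column and row spaces of the matrices in the class; since $\cS$ is generated by only $r$ matrices, these subspaces arise as finite orbits of the action of $\cS$ on the Grassmannians of $K^d$, and the number of such orbits can be controlled effectively in terms of $r$, $d$, $K$, and $M$. Combining these bounds yields a computable $n=n(r,K)$. The main obstacle is making the bound on the Rees index sets fully effective: a conceptually cleaner alternative is to invoke the classical algorithm of Jacob (or of Mandel and Simon) which decides finiteness of a finitely generated matrix semigroup over a field and enumerates it in the affirmative case; the termination analysis of that algorithm directly supplies a computable upper bound on $\card{\cS}$ in terms of $r$, $d$, and $K$, which is all that the lemma requires.
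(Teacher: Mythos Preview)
Your fallback alternative in the last paragraph is exactly the paper's proof: the paper simply cites Mandel--Simon \cite{mandel-simon78}, whose Theorem~1.2 gives a computable bound $n(r,d,g)$ depending on the maximal size $g$ of a subgroup of $\cS$, and then invokes Schur's explicit bound on finite subgroups of $\GL_d(K)$ over a number field to eliminate the dependence on $g$. That is the entire argument---two sentences. You have identified both ingredients (Mandel--Simon and the Minkowski/Schur group bound), so the alternative route is correct and matches the paper.

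Your primary approach, however, is not a complete proof as written. You yourself flag the gap: bounding the number of $\cR$- and $\cL$-classes (equivalently, the Rees index sets $I_J$, $\Lambda_J$) and the number of $\cJ$-classes. The assertion that these ``arise as finite orbits of the action of $\cS$ on the Grassmannians'' and ``can be controlled effectively in terms of $r$, $d$, $K$, and $M$'' is not justified, and it is precisely this step that is doing the real work in any such structural argument---it is essentially what Mandel--Simon's analysis accomplishes. So the Green's-relations decomposition is not wrong in spirit, but as presented it defers the hard part rather than resolving it. If you want a self-contained proof along these lines, you would need to reproduce the combinatorial core of Mandel--Simon (or Jacob) to bound the index sets effectively; otherwise, you may as well cite Mandel--Simon directly, which is what both your alternative and the paper do.
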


\begin{proof}
  By a result of Mandel and Simon \cite[Theorem 1.2]{mandel-simon78} there exists such a bound $n(r,K,g)$, 
  that however also depends on the maximal size $g$ of a subgroup of $\cS$.
  Over a number field, Schur \cite{schur05} proved that there exists an explicit bound on the size of a finite subgroup of $\GL_d(K)$, so we can bound $g$ independently of $\cS$.
  (See also the, largely expository, article \cite{guralnick-lorenz06} for this and later results.) 
\end{proof}

\begin{proposition} \label{p:decide-semigroup}
  Let $\cS \subseteq \Qbar^{d \times d}$ be a finitely generated matrix semigroup, 
  given by a finite set of generators.
  It is decidable whether or not $\cS$ is 
  \begin{enumerate}
  \item \label{decide-semigroup:finite} finite,
  \item \label{decide-semigroup:tame} tame.
  \end{enumerate}
\end{proposition}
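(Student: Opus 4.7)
The plan is as follows. For \ref{decide-semigroup:finite}, given generators $A_1,\ldots,A_t \in \Qbar^{d\times d}$, I would first locate a number field $K$ containing all their entries. By \cref{l:bound-semigroup}, there is an explicit bound $n = n(t,K)$ on the cardinality of any finite semigroup in $K^{d\times d}$ generated by $t$ elements. Enumerate $\cS$ by breadth-first search, starting from $T_0 = \{A_1,\ldots,A_t\}$ and iteratively setting $T_{i+1} = T_i \cup \{\, BA_j : B \in T_i,\ 1 \le j \le t\,\}$. If at some step $T_{i+1} = T_i$, then $\cS = T_i$ is finite; as soon as $|T_i|$ exceeds $n$, the semigroup $\cS$ must be infinite. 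The procedure terminates in at most $n+1$ iterations.

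For \ref{decide-semigroup:tame}, I proceed by induction on $d$. The case $d = 0$ is trivial. For $d \ge 1$, apply \cref{l:inv-subspace} to $A_1,\ldots,A_t$ to decide whether they admit a common non-zero proper invariant subspace, and if so to compute one, say $V$. In that case $\cS$ preserves $V$, inducing a restricted semigroup $\cS|_V \subseteq \End(V)$ and a quotient semigroup $\overline\cS \subseteq \End(\Qbar^d/V)$; both are finitely generated in strictly smaller dimension, and their generators (the restrictions and the induced actions of the $A_i$'s) are computable by linear algebra. Choosing a basis adapted to $0 \subsetneq V \subsetneq \Qbar^d$ makes every element of $\cS$ block upper-triangular, so its spectrum is the union of the spectra of the two diagonal blocks. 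Hence $\cS$ is tame if and only if both $\cS|_V$ and $\overline{\cS}$ are tame, which is decidable by the inductive hypothesis.

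In the remaining, irreducible case (no common non-zero proper invariant subspace), I claim that $\cS$ is tame if and only if $\cS$ is finite. Indeed, if $\cS$ is finite, then any $A \in \cS$ satisfies $A^m = A^{m'}$ for some $m > m' \ge 0$ by pigeonhole, so every non-zero eigenvalue $\lambda$ of $A$ obeys $\lambda^{m-m'} = 1$ and lies in $\ru$. Conversely, if $\cS$ is tame, then by \cref{l:structure-tame} it is conjugate to a block upper-triangular semigroup with finite diagonal blocks of sizes $d_1,\ldots,d_r \ge 0$; any splitting with two positive blocks would yield a proper non-zero invariant subspace, contradicting irreducibility. Thus all $d_j$ but one vanish, so $\cS$ itself is conjugate to a single finite diagonal block and is finite. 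In this case tameness is decided by invoking part \ref{decide-semigroup:finite}.

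The bulk of the technical content is already packaged in \cref{l:bound-semigroup,l:inv-subspace,l:structure-tame}; the remaining, and main conceptual step, is the recursive reduction via invariant subspaces together with matching the structural description of tame semigroups with the elementary pigeonhole argument on eigenvalues of elements of finite semigroups.
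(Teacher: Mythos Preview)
The proposal is correct and follows essentially the same approach as the paper: for finiteness, both use the explicit bound from \cref{l:bound-semigroup}; for tameness, both use \cref{l:inv-subspace} to reduce to irreducible pieces and then invoke \cref{l:structure-tame} to identify tameness with finiteness at that level. Your recursive presentation (splitting off one invariant subspace at a time and passing to restriction and quotient) is a minor organizational variant of the paper's all-at-once decomposition into a full composition series.
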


\begin{proof}
  Let $A_1$, $\ldots\,$,~$A_l$ be the given set of generators for $\cS$, and let $K$ be the number field generated by all the coefficients of the matrices $A_i$.
  Then $\cS \subseteq K^{d \times d}$ and it suffices to consider the problem over the field $K$.
  
  \ref*{decide-semigroup:finite}
  With the bound from \cref{l:bound-semigroup}, one can decide whether or not $\cS$ is finite.
  
  \ref*{decide-semigroup:tame}
  This problem can be reduced to the finiteness problem using \cref{l:structure-tame}.
  Indeed, by iterated application of \cref{l:inv-subspace}, we may decompose $K^{d \times 1} = V_1 \oplus \cdots \oplus V_s$ with each $V_i$ a $\cS$-invariant subspace that contains no proper, non-zero $\cS$-invariant subspace.
  Then \cref{l:structure-tame} implies that $\cS$ is tame if and only if $\cS|_{V_i}$ is finite for each $1 \le i \le s$.
  This can be decided using \ref*{decide-semigroup:finite}.
\end{proof}
\bibliographystyle{hyperalphaabbr}
\bibliography{height_mahler}

\end{document}